\tikzset{
    moment polytope/.style={thick, fill=lightgray, draw=black}
}
\author{Matthew~R.~Buck}
\title{Lagrangian spheres and cyclic quotient T-singularities}
\date{\today}
\newcommand{\JTS}{J_{T^*S^2}}
\newcommand{\Jref}{J_{\mathrm{ref}}}
\newcommand{\splitcurve}[1]{({#1}^{(0)}, \ldots, {#1}^{(N)})}
\newcommand{\toplevel}[1]{#1_+^{(0)}}
\newcommand{\bottomlevel}[1]{#1_-^{(0)}}
\newcommand{\selfastint}[1]{i_U(#1,#1)}
\newcommand{\constraint}[1]{\mathbf{c}_{#1}}
\newcommand{\Eplus}{\mathcal{E}^+}
\newcommand{\Eminus}{\mathcal{E}^-}
\newcommand{\FEplus}{\mathcal{G}^+}
\newcommand{\FEminus}{\mathcal{G}^-}
\newcommand{\Eplusminus}{\mathcal{E}^\pm}
\newcommand{\FEplusminus}{\mathcal{G}^\pm}
\newcommand{\cotangentPlanesModuli}{\mathcal{M}}
\newcommand{\cylindersModuli}{\mathcal{M}_\mathrm{cyl}}
\newcommand{\cylindersModuliBar}{\overline{\mathcal{M}}_\mathrm{cyl}}
\newcommand{\weinsteinModuli}[1]{\overline{\mathcal{M}}_L({#1})}
\newcommand{\TSlefFib}{\pi_{T^*S^2}}
\newcommand{\refFib}{\pi_{\mathrm{ref}}}
\newcommand{\fibrationSectionL}{S}
\newcommand{\fibrationSectionR}{S'}
\newcommand{\Mod}{\mathrm{Mod}}
\newcommand{\building}[1]{F_{#1}}
\newcommand{\weinsteinNbhd}{Y_-}
\newcommand{\Cinfloc}{C_\mathrm{loc}^\infty}
\newcommand{\pqtwist}{\tau_{p,q}}
\newcommand{\infinityCurve}[1]{\mathbf{u}_\infty^{#1}}
\newcommand{\XJ}[1]{X_{#1}}
\newcommand{\DinftyJ}[1]{D_{\infty,#1}}
\numberwithin{equation}{section}
\begin{document}

\maketitle
\section*{\centering Abstract}

We study the Lagrangian isotopy classification of Lagrangian spheres
in the Milnor fibre, \(B_{d,p,q}\), of the cyclic quotient
surface T-singularity \(\frac{1}{dp^2}(1,dpq-1)\). We prove that there is a
finitely generated group of symplectomorphisms such that the orbit of a fixed
Lagrangian sphere exhausts the set of Lagrangian isotopy classes. Previous
classifications of Lagrangian spheres have been established in simpler
symplectic \(4\)-manifolds, being either monotone, or simply connected, whilst
the family studied here satisfies neither of these properties. We construct
Lefschetz fibrations for which the Lagrangian spheres are isotopic to matching
cycles, which reduces the problem to a computation involving the mapping class
group of a surface. These fibrations are constructed using the techniques of
\(J\)-holomorphic curves, and Symplectic Field Theory, culminating in the
construction of a \(J\)-holomorphic foliation by cylinders of \(T^*S^2\). Our
calculations provide evidence towards the symplectic mapping class group of
\(B_{d,p,q}\) being generated by Lagrangian sphere Dehn twists and another
type of symplectomorphism arising as the monodromy of the
\(\frac{1}{p^2}(1,pq-1)\) singularity.

\tableofcontents

\section{Introduction}
In this paper we study the Lagrangian isotopy classification of
Lagrangian spheres in the family of 4-dimensional symplectic manifolds
\(B_{d,p,q}\). These manifolds are the Milnor fibres of the cyclic quotient
singularities \linebreak\({\frac{1}{dp^2}(1,dpq-1)}\), where \(d,p,q\) are
positive integers such that \(p > q \ge 1\) are coprime. When \(d=1\), there
are no Lagrangian spheres at all, whereas for \(d > 1\), we
prove that there is a finitely generated subgroup \(G\) of the symplectic
mapping class group and a fixed Lagrangian sphere \(L_0\) such that for any other Lagrangian
sphere \(L \subset B_{d,p,q}\) there is a symplectomorphism \(\phi \in G\)
such that \(L\) is Lagrangian isotopic to \(\phi(L_0)\).

First, we recall some elementary concepts. Let \(n > 1\) be an integer and
let \(a \ge 1\) be coprime to \(n\). Consider the action of the group of
\(n\)-th roots of unity, \(\Gamma_n\), on \(\C^2\) with weights \((1,a)\):
\[
    \mu \cdot (x,y) = (\mu x, \mu^a y).
\]
The quotient space \(\C^2/\Gamma_n\) is a singular manifold called the cyclic
quotient surface singularity of type \(\frac{1}{n}(1,a)\). The \(A_{dp-1}\)
singularity is the variety
\[
    \{(z_1,z_2,z_3) \in \C^3 \mid z_1z_2 = z_3^{dp}\},
\]
and the group \(\Gamma_p\) acts on it with weights \((1,-1,q)\). The cyclic
quotient singularity \(\frac{1}{dp^2}(1,dpq-1)\) is analytically isomorphic to
the quotient \(A_{dp-1}/\Gamma_p\) via the map
\[
    \C^2/\Gamma_{dp^2} \to A_{dp-1}/\Gamma_p : (x,y) \mapsto
    (x^{dp},y^{dp},xy).
\]
The symplectic manifold \(B_{d,p,q}\) is defined to be the Milnor fibre of
this singularity. Its symplectic structure is inherited from \(\C^3\) since
the \(\Gamma_p\) action is by symplectomorphisms. As a result, we can
explicitly write
\[
    B_{d,p,q} := \left.\left(z_1z_2 - \prod_{i=1}^d (z_3^p -
	i)\right)\right/\Gamma_p.
\]

Following \S{}7.4 of \cite{evansLTF}, \(B_{d,p,q}\) can be equipped with a
Hamiltonian system that gives it the structure of an almost toric manifold.
A fundamental action domain for this system is shown in
Figure~\ref{fig:bdpqFundamentalDomain}.
\begin{figure}\centering \begin{tikzpicture}[
    moment polytope,
    >=Rays,
]
    \filldraw[>=To,<->] (0,3) -- node [left] {\((0,1)\)} (0,0) --
	node [below right] {\((dp^2, dpq - 1)\)} (8,3) ;
    \draw[style=dotted] (0,0) -- (2,1) node {\(\times\)};
    \draw[style=dotted] (2,1) -- (3,1.5) node {\(\times\)} node [above] {\((p,q)\)};
\end{tikzpicture}
    \caption{
	A fundamental action domain for \(B_{d,p,q}\). The labels indicate the
	primitive integer direction of each arrow and ray. Drawn here is the
	case \((d,p,q) = (2,2,1)\).
    }
    \label{fig:bdpqFundamentalDomain}
\end{figure}
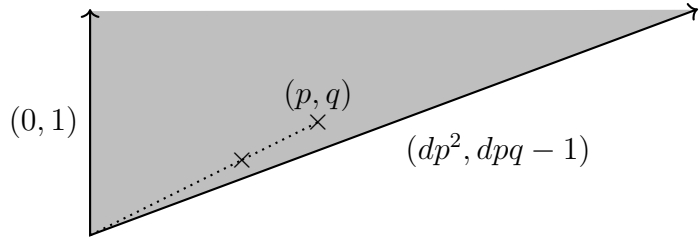
This picture determines \(B_{d,p,q}\) up to symplectomorphism, and is useful
for visualising its topology. Over the dotted lines connecting the crosses live \(d-1\)
Lagrangian 2-spheres, and over the line connecting the unique vertex of the
wedge to a cross lives a Lagrangian CW-complex called a \((p,q)\)-pinwheel.
This is a Lagrangian immersion of a 2-disc, which is an embedding on the
interior, and maps \(p\)-to-\(1\) on the boundary
\cite{khodorovskiy2013symplecticrationalblowup, evansSmith18markov}.
We shall refer to these \(d-1\) spheres as the
\emph{standard} Lagrangian spheres in \(B_{d,p,q}\), and denote their
associated Dehn twists by \(\tau_1,\ldots,\tau_{d-1}\) \cite{seidelLectures}.
The CW-complex formed by the topological wedge sum of these \(d-1\) spheres
and the \((p,q)\)-pinwheel is called the \emph{Lagrangian skeleton} of
\(B_{d,p,q}\). 

Similarly to the case of Lagrangian spheres, one can define a
symplectomorphism associated to a Lagrangian \((p,q)\)-pinwheel, which we
write as \(\tau_{p,q}\). The construction is carried out in
Section~\ref{ch:mcg}. The only thing to note for now is that \(\tau_{p,q}\) is
a compactly-supported symplectomorphism of \(B_{d,p,q}\) with support in a
neighbourhood of the pinwheel itself.

We are now ready to state the main result.
\begin{theorem}[]
    \label{thm:intro:main}
    Let \(L \subset B_{d,p,q}\) be a Lagrangian sphere. Then \(L\) is
compactly-supported Hamiltonian isotopic to a sphere obtained by applying a
word of symplectomorphisms generated by \(\tau_1,\ldots,\tau_{d-1}\), and
\(\tau_{p,q}\), to a standard sphere.
\end{theorem}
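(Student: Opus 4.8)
The plan is to reduce the classification of Lagrangian spheres to an orbit computation in the mapping class group of a surface, via the theory of Lefschetz fibrations and matching cycles. The first stage is to produce, from an arbitrary Lagrangian sphere \(L \subset B_{d,p,q}\), a symplectic Lefschetz fibration \(\pi\) — a ``model'' fibration that, up to symplectomorphism, does not depend on \(L\) — for which \(L\) is isotopic to a matching cycle. By the Weinstein neighbourhood theorem a neighbourhood of \(L\) is symplectomorphic to a disc cotangent bundle \(D^*S^2\); on \(T^*S^2\) one has the \(J\)-holomorphic foliation by cylinders (whose construction is the technical core of the paper), which provides a \emph{local} Lefschetz fibration near \(L\) with annular regular fibre and with the zero section as the matching cycle joining its two nodal fibres. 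To globalise this I would take a tame almost complex structure \(J\) on \(B_{d,p,q}\) that restricts to the cotangent model near \(L\) and is cylindrical along the unit cotangent bundle \(\partial D^*S^2\), stretch the neck there, and analyse the resulting \(J\)-holomorphic curves.

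The second stage is the neck-stretching analysis in the sense of Symplectic Field Theory. As the neck length tends to infinity the foliating cylinders, together with the holomorphic curves they meet on the complementary side, converge to holomorphic buildings; the point is to show that every such building has a top level in \(T^*S^2\) consisting of one foliating cylinder — or, in the two degenerate cases, a pair of planes asymptotic to a common Reeb orbit, forming a nodal fibre — together with a bottom level in the completion of \(B_{d,p,q} \setminus D^*S^2\) that is a genuine holomorphic curve with the expected asymptotics. This requires the usual package: uniform energy bounds, automatic transversality for the index-\(0\) and index-\(1\) curves in dimension four, and the exclusion of multiply covered components and of bubbling into the complement, for which positivity of intersection with the foliation is the main tool. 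A gluing argument then reassembles the buildings into a smooth foliation of \(B_{d,p,q}\) which is the fibre structure of a Lefschetz fibration \(\pi\), and tracking the zero section through the degeneration shows \(L\) is a matching cycle of \(\pi\). One also reads off the combinatorics of \(\pi\): the regular fibre is an explicit surface with boundary, \(d-1\) of the critical points have the standard spheres as matching cycles, and the \((p,q)\)-pinwheel accounts for one further distinguished Dehn twist in the fibre.

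The third stage is purely combinatorial. Once \(L\) is a matching cycle of the fixed fibration \(\pi\), the classification of Lagrangian spheres up to compactly-supported Hamiltonian isotopy reduces, by the standard dictionary between matching cycles and matching paths (following Seidel, \cite{seidelLectures}), to classifying embedded paths between critical values of \(\pi\) up to isotopy and the equivalence generated by the monodromy of \(\pi\) — equivalently, to an orbit computation for the action on such paths of the subgroup of \(\Mod\) of the fibre generated by the Dehn twists along the vanishing cycles of \(\pi\). The symplectomorphisms \(\tau_1, \ldots, \tau_{d-1}, \tau_{p,q}\) realise precisely these twists — the \(\tau_i\) are the Dehn twists in the standard spheres, and \(\tau_{p,q}\), being supported near the pinwheel, realises the remaining generator — so it suffices to verify that this subgroup acts transitively on the isotopy classes of the relevant matching paths, carrying any one of them to the path representing a standard sphere \(L_0\). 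That is a finite, if delicate, manipulation of Dehn twists and half-twists on the fibre surface.

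I expect the analytic content of the first two stages to be the main obstacle: constructing the \(J\)-holomorphic cylinder foliation of \(T^*S^2\), and then showing it survives neck-stretching and glues to a global Lefschetz fibration on \(B_{d,p,q}\) that is independent of \(L\). Because \(B_{d,p,q}\) is neither monotone nor simply connected, existing foliation and fibration results do not apply off the shelf, so compactness must be controlled by hand; ruling out degenerate limit buildings — extra levels, multiply covered components, or bubbling into the complement — and verifying transversality for the relevant low-index curves is where the real work lies. By contrast the mapping class group computation of the third stage is elementary, and the passage from ``matching cycle of \(\pi\)'' to ``a word in \(\tau_1,\ldots,\tau_{d-1},\tau_{p,q}\) applied to \(L_0\)'' is bookkeeping once the dictionary is in place.
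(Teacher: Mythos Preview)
Your overall architecture is right --- neck stretch around \(L\), analyse SFT limits to see a cylinder foliation of \(T^*S^2\) with \(L\) as matching cycle, then reduce to a mapping-class-group computation --- but there is one genuine missing idea and one confusion in the combinatorial stage.

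\textbf{The missing compactification.} You propose to work directly on the open manifold \(B_{d,p,q}\): take a tame \(J\), stretch the neck, and study ``the foliating cylinders, together with the holomorphic curves they meet on the complementary side''. But you have not said what these global curves \emph{are}. The cylinder foliation you invoke lives only in \(T^*S^2\); to run SFT compactness you need a sequence of \(J_k\)-holomorphic curves in the ambient manifold with uniform energy bound, and in the open \(B_{d,p,q}\) there is no obvious supply of such curves (closed curves don't exist in an exact manifold, and punctured curves at both the convex end of \(B_{d,p,q}\) and the neck around \(L\) would require a much more delicate setup). The paper's key device is to first compactify \(B_{d,p,q}\) to a closed rational symplectic 4-manifold \(X_{d,p,q}\) by almost-toric symplectic cuts, so that there is a distinguished class \(F \in H_2(X)\) with \(F^2 = 0\) and \(c_1(F) = 2\). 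Then for every \(J \in \mathcal{J}(D')\) the moduli space \(\overline{\mathcal{M}}_{0,0}(X,F;J)\) is a genuine Lefschetz pencil on \(X\), and it is these closed \(F\)-curves that one neck-stretches. The energy bound is automatic (it's \(\omega(F)\)), Gromov compactness controls bubbling against the toric divisor, and the limit buildings are the objects you describe. Without this compactification, your step ``a gluing argument then reassembles the buildings into a smooth foliation of \(B_{d,p,q}\)'' has no input to glue.

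\textbf{Base versus fibre in the third stage.} You write that the problem reduces to ``an orbit computation for the action on such paths of the subgroup of \(\Mod\) of the fibre generated by the Dehn twists along the vanishing cycles''. This is the wrong group. Matching paths live in the \emph{base} of the fibration, which here (after removing the exotic fibre and the two sections) is a \(d\)-punctured annulus \(A^{\times d}\); the relevant group is \(\Mod(A^{\times d})\), generated by the half-twists \(\sigma_i\) about the standard arcs and a central twist \(\tau\) about the inner boundary. The half-twists correspond to the sphere Dehn twists \(\tau_i\) in \(B_{d,p,q}\), and the paper shows that \(\tau\) corresponds to the symplectic monodromy \(\tau_{p,q}\) of the \(\frac{1}{p^2}(1,pq-1)\) singularity --- not to a Dehn twist in the fibre. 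The combinatorics you need is that \(\Mod(A^{\times d})\) acts transitively on isotopy classes of matching paths, which is elementary, together with this identification of \(\tau\) with \(\tau_{p,q}\), which is not.
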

For a more precise statement, see Theorem~\ref{thm:main}.

\subsection{Historical context}

Classifications of Lagrangian spheres have been achieved in various cases:
starting with Hind \cite{hind2004Spheres} who proved that every Lagrangian
sphere in \({S^2 \times S^2}\) is Hamiltonian isotopic to the
anti-diagonal \(\bar{\Delta} = \{(x,-x) \in S^2 \times
S^2\}\). Evans \cite{evans2010delPezzo} then proved a similar theorem for
Lagrangian spheres in certain del~Pezzo surfaces.\footnote{A symplectic del~Pezzo
surface is either \(S^2 \times S^2\) or \(\CP^2\) blown-up in \(n < 9\)
generic points equipped with an anticanonical K\"ahler form
\cite[Definition~1.3]{evans2010delPezzo}. Evans proves that no Lagrangian
knotting occurs in the \(n=2,3,4\) cases.} These theorems give examples of
symplectic manifolds where Lagrangian knotting does not occur, that is,
two Lagrangian spheres are Lagrangian isotopic if, and only if, they are
smoothly isotopic.

On the other hand, Seidel proved \cite{seidel99SympKnotting} that Lagrangian
knotting occurs in general. The proof revolves around a special
symplectomorphism associated to a Lagrangian sphere \(L\) called a generalised
Dehn twist\footnote{In the case \(n=1\) this is the classical Dehn twist.}
\(\tau_L\). Many of Seidel's papers give the explicit construction of the Dehn
twist, and we refer to \cite{seidelLectures}. The squared twist \(\tau_L^2\)
is always smoothly isotopic to the identity map, but, depending on the ambient
symplectic manifold, it may, or may not, be symplectically
isotopic. For example, in \(T^*S^2\) equipped with the
canonical symplectic form \(\omega_\mathrm{can}\), any iterate \(\tau^k\) of
the Dehn twist about the zero-section is \emph{not} isotopic to the identity
map through symplectomorphisms. In fact, more is true: \(\tau\) generates the
\emph{symplectic mapping class group} of \((T^*S^2,\omega_\mathrm{can})\),
which is defined to be the group of connected components of the group of
compactly-supported symplectomorphisms, see \cite{seidel1998automorphisms}.
However, the squared Dehn twist about the antidiagonal in \(S^2 \times S^2\)
is symplectically isotopic to the identity map.

Despite this, classifications of Lagrangian spheres in manifolds where
knotting occurs have been achieved. Consider the Milnor fibre of the \(A_n\)
surface singularity. That is, the complex manifold \(W_n\) given by the
equation
\[
    W_n = \{(z_1,z_2,z_3) \mid z_1^2 + z_2^2 + z_3^{n+1} = 1\} \subset \C^3.
\]
Equipped with the restriction of the symplectic form \(\omega_{\C^3}
= \frac{i}{2}\sum_{i=1}^3 \ud z_i \wedge \ud\bar{z_i}\), this is a symplectic
4-manifold. In \cite{hind12Stein}, Hind proves that any Lagrangian sphere in
\(W_1\) or \(W_2\) is Lagrangian isotopic to one obtained from a finite set of
``standard'' Lagrangian spheres by applying Dehn twists about these standard
spheres. Wu \cite{wu14spheresInAnSingularities} then extended this result to
all \(W_n\). One can rephrase these results as saying that the only symplectic
knotting of Lagrangian spheres in the \(A_n\) Milnor fibres comes from Dehn
twists. The main result here (Theorem~\ref{thm:intro:main}) proves the
corresponding result for the family \(B_{d,p,q}\).

\subsection{Outline of the proof}

At its core, the idea of the proof is to use the theory of Lefschetz fibrations
and matching cycles to construct Lagrangian isotopies of spheres. The primary
step is to construct a Lefschetz fibration for which an arbitrary Lagrangian
sphere \(L \subset B_{d,p,q}\) is Lagrangian isotopic to  a matching cycle. We
achieve this by neck stretching around \(L\), which necessitates an
understanding of how \(J\)-holomorphic curves behave under deformations of the
almost complex structure \(J\). To facilitate this analysis, we first
compactify \(B_{d,p,q}\) to simplify the holomorphic curve analysis.

The inspiration for this method is the following local example of a Lefschetz
fibration on \(T^*S^2\).
\begin{example}
    \label{ex:quadricLefFib}
    Let \(Q\) be the affine quadric \((z_1^2 + z_2^2 + z_3^2 = 1) \subset
\C^3\) equipped with the restriction of the standard symplectic form on
\(\C^3\). This is symplectomorphic to \((T^*S^2,\omega_\mathrm{can})\)
\cite[Lemma~18.1]{seidelThesis} via a symplectomorphism that identifies the
zero-section with the real locus of \(Q\). The projection \(\pi_Q : Q \to \C :
\pi(z) = z_3\) is a genus 0 Lefschetz fibration with exactly two singular
fibres \(\pi_Q^{-1}(\pm 1)\). Moreover, the matching cycle of the path
\(\gamma : [-1,1] \to \C : \gamma(t) = t\) is exactly the real locus.
\end{example}

Via Weinstein's Lagrangian neighbourhood theorem, we can compare Lefschetz
fibrations on \(B_{d,p,q}\) to \(\pi_Q\) by restricting them to a
neighbourhood of \(L\). We use a process called neck stretching to construct a
sequence of Lefschetz fibrations \(\pi_k : B_{d,p,q} \to \C^\times\) which
converge (in a suitable sense) to \(\pi_Q\) in a neighbourhood of \(L\).

In Section~\ref{sec:compactification} we find a suitable compactification
\(X_{d,p,q}\) (of a suitable subset) of \(B_{d,p,q}\) whose symplectic
geometry reflects enough of that of \(B_{d,p,q}\) to be able to construct the
Lagrangian isotopy in \(X_{d,p,q}\) and then pull it back to \(B_{d,p,q}\).
In Section~\ref{sec:fibreModuliSpaces} we construct the Lefschetz fibrations
by considering the moduli space of genus 0 curves living in a distinguished
homology class \(F \in H_2(X_{d,p,q};\Z)\) satisfying \(F^2 = F \cdot F = 0\).
For each suitable almost complex structure \(J\), there is a natural map
\(\pi_J : X_{d,p,q} \to \overline{\mathcal{M}}_{0,0}(X_{d,p,q},F;J)\) which
sends a point to the unique stable \(F\)-curve it lies on. This has three
types of fibre: a regular fibre (which is an element of
\(\mathcal{M}_{0,0}(X_{d,p,q},F;J)\)), a nodal fibre with exactly two
transversely intersecting components, and a so-called \emph{exotic fibre} with
potentially many components. After removing the exotic fibre, the map becomes
a Lefschetz fibration with base \(\C\).

The method of producing the compactification \(X_{d,p,q}\) comes from the
almost toric structure on \(B_{d,p,q}\). Using a technique called symplectic
cut (originally due to Lerman \cite{lerman95symplecticCut}) one can
essentially ``chop off'' the non-compact end of \(B_{d,p,q}\) to obtain a
closed symplectic orbifold (see Figure~\ref{fig:bdpqCut}). We do this in
Lemma~\ref{lem:compactify}, and resolve the orbifold singularities
to produce a smooth manifold. The requisite knowledge for these techniques is
presented lucidly in \cite{evansLTF}.

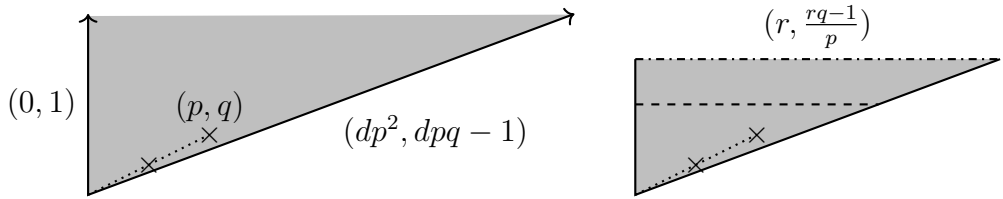
\begin{figure}\centering \begin{tikzpicture}[
    moment polytope,
    >=Rays,
    scale=0.4,
]
    \filldraw[>=To,<->] (0,6) -- node [left] {\((0,1)\)} (0,0) --
	node [below right] {\((dp^2, dpq - 1)\)} (16,6); 
    \draw[style=dotted] (0,0) -- (2,1) node {\(\times\)} -- (4,2) node {\(\times\)} node [above] {\((p,q)\)};
    \begin{scope}[shift={(18,0)}]
	\filldraw (0,4.5) -- (0,0) -- (12,4.5);
	\draw[style=dotted] (0,0) -- (2,1) node {\(\times\)} -- (4,2) node
	    {\(\times\)};
	\draw[style=dash dot] (0,4.5) -- node [above] {\((r,\frac{rq-1}{p})\)}
	    (12,4.5);
	\draw[dashed] (0,3) -- (8,3);
    \end{scope}
\end{tikzpicture}
    \caption{
	The symplectic cut chopping off the non-compact end of
	\(B_{d,p,q}\) is indicated by the dash-dot line, which has
	primitive integer direction \(\left(r,\frac{rq-1}{p}\right)\), where \(r\) is the
	unique integer \(0 < r < p\) such that \(rq \equiv 1 \mod p\). The
	preimage of the area below the horizontal dashed line is a compact
	symplectic submanifold with boundary whose symplectic
	completion is symplectomorphic to \(B_{d,p,q}\). Drawn here is the most
	basic example \((d,p,q)=(2,2,1)\). However, this is not indicative of
	the general case: when \(q > 1\) the dash dot line corresponding to
	the symplectic cut will have positive gradient. See
	Figure~\ref{fig:B322} for another example.
    }
    \label{fig:bdpqCut}
\end{figure}
\begin{figure}\centering \begin{tikzpicture}[
    moment polytope,
    scale=0.5,
]
    \filldraw (0,2) -- (0,0) -- node [below right] {\((18,11)\)} (18,11) -- node
	[above left] {\((2,1)\)} cycle;
    \draw[dotted] (0,0) --  (3,2) node {\(\times\)} -- (6,4) node
	{\(\times\)};
\end{tikzpicture}
    \caption{
	The result of cutting \(B_{2,3,2}\). As is clear, the pictures
	quickly become unwieldy in this form. See
	Figure~\ref{fig:BdpqTransformation} for an integral affine
	transformation of the fundamental action domain of \(B_{d,p,q}\) that
	behaves better under increasing the values \((d,p,q)\).
    }
    \label{fig:B322}
\end{figure}
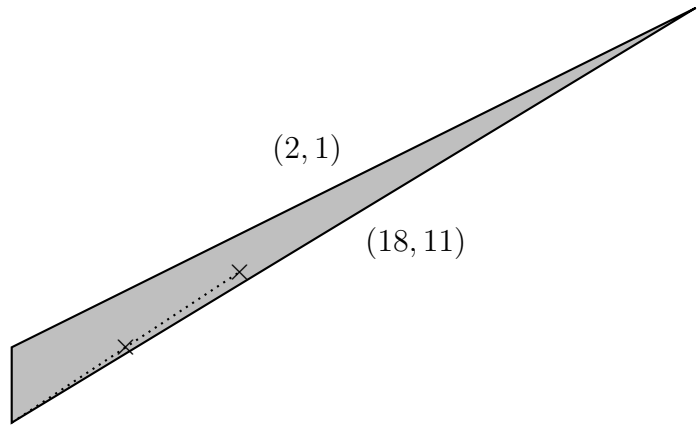

In the spirit of the previously mentioned theorems on Lagrangian isotopy
classifications \cite{hind2004Spheres, evans2010delPezzo}, we use the neck
stretching (or the splitting) technique. We refer to
\cite[\S{}1.3]{eghIntroSFT}, \cite[\S{}3.4]{behwzSFTCompactness}, or
\cite[\S{}2.7]{cieliebakMohnkeSFTCompactness} for the full neck stretching
setup, as we will only need a special case of it here, although we will review
the necessary details in Section~\ref{sec:neck-stretch}. Given a neck
stretching sequence \(J_k\) around a Lagrangian sphere \(L\), we use the SFT
compactness theorem to extract limits of sequences of \(J_k\)-holomorphic
curves. An analysis of these limits is conducted in
Section~\ref{sec:buildingAnalysis}, which is the most technical part of the
paper. We show that, in the limit, we obtain a \(J\)-holomorphic foliation of
\(T^*S^2\) by cylinders (copies of \(\C^\times = \C \backslash \{0\}\)), whose
leaves form the fibres of a Lefschetz fibration akin to \(\pi_Q\) of
Example~\ref{ex:quadricLefFib}. This analysis is largely driven by
intersection theory of punctured curves (reviewed in
Section~\ref{sub:intersection-theory}) and Hind's foliations
\cite{hind2004Spheres} by \(J\)-holomorphic planes of \(T^*S^2\)
(reviewed in Section~\ref{sub:cotangentPlanes}).

The crucial consequence of the SFT limit analysis is that our
Lagrangian sphere \(L\) will be a matching cycle of a limiting Lefschetz
fibration. In Section~\ref{ch:isotopy} we apply a uniform convergence argument
to show that, for sufficiently large neck stretches (sufficiently large
\(k\)), \(L\) is Lagrangian isotopic to a matching cycle of one of the
Lefschetz fibrations \(\pi_{J_k}\).

Section~\ref{ch:mcg} examines the relationship between the mapping class group
of the punctured annulus and Lagrangian knotting of spheres in \(B_{d,p,q}\).
In particular, Section~\ref{sec:dpqSymplectomorphisms} shows that the
symplectic monodromy of the \({\frac{1}{p^2}(1,pq-1)}\) singularity gives a
notion of Dehn twisting about a Lagrangian\linebreak\((p,q)\)-pinwheel. The proof of
the main theorem is completed in Section~\ref{sec:mainTheoremProof}.

\subsection{Acknowledgements}

This work forms the bulk of the author's PhD thesis, which was supported by
the Engineering and Physical Sciences Research Council. Naturally, I am deeply
indebted to my PhD supervisor, Jonny Evans, whom I thank wholeheartedly.
Thanks to my colleagues at Lancaster University for facilitating a stimulating
environment over the past four years. Thanks also to Nikolas Adaloglou and
Johannes Hauber for our many discussions. I am also thankful to Richard
Siefring for pointing me in the right direction when I was confused about some
aspects of their work on intersection theory.

\section{A compactification of \(B_{d,p,q}\) and its Lef\-schetz fibrations}
This section achieves two goals. Firstly, we use an almost toric structure on
\(B_{d,p,q}\) to construct a compactification \(X = X_{d,p,q}\) via a series
of symplectic cuts (see Lemma~\ref{lem:compactify}). Secondly, we show that
for every almost complex structure \(J\) on \(X\) satisfying some
constraints (but importantly including any almost complex structure arising
from neck stretching) there exists a \(J\)-holomorphic foliation of \(X\)
whose leaves form a Lefschetz fibration (Proposition~\ref{prop:lefFibXJ}).

\subsection{Spiel on base diagrams}
\label{sec:momentPolytopes}

In this section we use a little of the theory of (almost) toric base diagrams.
As mentioned in the introduction, the requisite knowledge can be found in
\cite{evansLTF} and the references therein. We state the main definition of a
\emph{Delzant polytope} here for convenience (taken from Definition~3.5 in
\cite{evansLTF}).
\begin{definition}
    A rational convex polytope \(P\) (which we will call a \emph{moment
polytope} or simply a \emph{polytope}) is a subset of \(\R^n\) defined as the
intersection of a finite collection of half spaces \(\{ x \in \R^n \mid
\sum_{i=1}^n \alpha_i x_i \le b\}\) with \(\alpha_i \in \Z\) and \(b \in \R\).
We say that \(P\) is a \emph{Delzant polytope} if it is a convex rational
polytope such that every point on a \(k\)-dimensional facet has a
neighbourhood isomorphic (via a transformation of the form \(Ax + b\) with \(A
\in SL_n(\Z)\) and \(b \in \R^n\)) to a neighbourhood of the origin in the
polytope \([0,\infty)^{n-k} \times \R^k\). A vertex of a polytope is called
\emph{Delzant} if the germ of the polytope at that vertex is Delzant.
\end{definition}

\begin{remark}
    \begin{enumerate}
        \item The Delzant condition is important since it means that the
            symplectic manifold corresponding to the polytope is smooth.
        \item Importantly, as we only work with \(4\)-dimensional symplectic
manifolds (corresponding to \(n=2\) in the above definition), a vertex \(v\)
of a polygon is Delzant if, and only if,\footnote{Note that, for \(n\)-dimensional
    facets (interior points) the Delzant condition is trivial, and for \(n-1\)
    dimensional facets (faces) the condition is automatically satisfied since
    they lie on hypersurfaces of the form \(\{x \in \R^n \mid \sum_{i=1}^n
\alpha_i x_i = b\}\). Therefore, checking the vertices of a \(2\)-dimensional
polygon is enough to verify whether it is Delzant or not.} the primitive integer vectors \(u\)
and \(w\) pointing along the edges that meet at \(v\) form a matrix \(u \wedge
w\) with determinant \(\pm 1\).
    \end{enumerate}
\end{remark}

Let \(p,q \in \Z\) be such that \(\gcd(p,q) = 1\), then write \(0 < r <
p\) for the unique integer such that \(qr \equiv 1 \mod p\). Consider the
singular toric manifold with the non-Delzant moment polygon, \(\Pi(p,q)\),
shown in Figure~\ref{fig:pqPolytope}.
\begin{figure}[ht]\centering
    \begin{tikzpicture}[
        moment polytope,
    ]
        \filldraw (0,2) -- (0,0) node [left] {\(\frac{1}{p}(1,q)\)}
    	-- node [below right] {\((p,q)\)}
    	(2,1) node [right] {\(\frac{1}{p}(1,p-r)\)} -- (2,2);
    
    \end{tikzpicture}
    \caption{
	The moment polygon \(\Pi(p,q)\).
    }
    \label{fig:pqPolytope}
\end{figure}
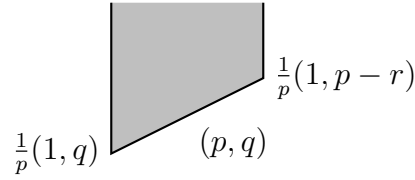
As in \cite[\S{}4.5]{evansLTF}, we can use symplectic cuts
to resolve the cyclic quotient singularities
present in the symplectic orbifold corresponding to the polygon \(\Pi(p,q)\).
The minimal resolution of \(\Pi(p,q)\) is a smooth toric manifold with moment
polygon sketched in Figure~\ref{fig:minRes}.
\begin{figure}[ht]\centering
    \begin{tikzpicture}[
        moment polytope,
        scale=1.5,
    ]
        \coordinate (X-1) at (0,0);
        \foreach \i [remember=\i as \ilast (initially -1)] in {0,...,6}
    	\coordinate (X\i) at ($ (X\ilast) + (7*\i:1cm) $);
        \coordinate (Y) at ($ (X6) + (0,1) $);
        \filldraw let \p1 = (Y) in
    	(0,\y1) -- (0,0) -- node [below] {\(-x_1\)} (X0)
    	\foreach \i/\itext/\pos in
    	{1/\cdots/below,2/-x_m/below,3/{(p,q)}/below right,
    	4/-y_1/below right,5/\iddots/below right,6/-y_k/below right}
    	    { -- node [\pos] {\(\itext\)} (X\i)} -- +(0,1);
        \fill[color=black] (0,0) circle [radius=2pt]
    	\foreach \i in {0,...,6} {(X\i) circle [radius=2pt]};
    \end{tikzpicture}
    \caption{
	The minimal resolution of \(\Pi(p,q)\). The vertices are marked for
        clarity. The labels \(-x_i\), and \(-y_j\) represent the
        self-intersection numbers of the symplectic spheres that live above
        the corresponding edges. The label \((p,q)\) indicates the primitive
        integer direction of its edge.
    }
    \label{fig:minRes}
\end{figure}
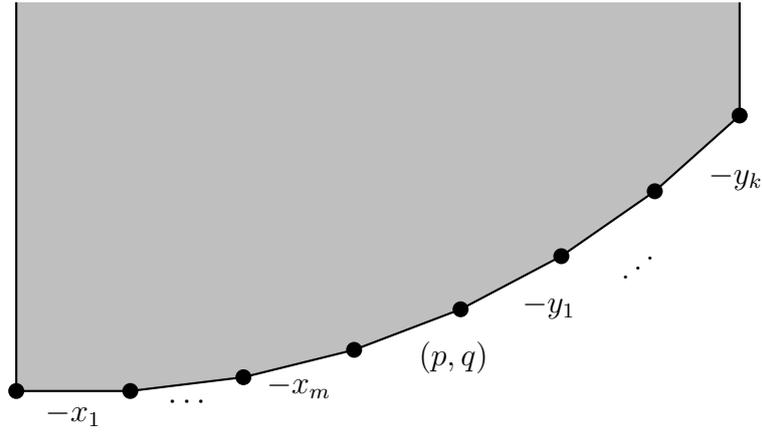

Denote the continued fractions \(\frac{p}{q} = [x_1, \ldots, x_m]\) and
\(\frac{p}{p-r} = [y_1, \ldots, y_k]\). We use the Hirzebruch-Jung convention
for continued fractions, meaning:
\[
    [x_1,\ldots,x_m] := x_1 - \frac{1}{x_2 - \frac{1}{\ddots -\frac{1}{x_m}}}.
\]
Recall that a zero continued fraction (ZCF) is one which
evaluates to zero:
\[
    [a_1,\ldots,a_l] = 0.
\]
Blowing up \([a_1,\ldots,a_l]\) corresponds to replacing it with any one of
the following:
\[
    [1,a_1+1,\ldots,a_l],\, [a_1,\ldots,a_i+1,1,a_{i+1}+1,\ldots,a_l],
    \text{ or } [a_1,\ldots,a_l+1,1],
\]
all of which are ZCFs themselves \cite[Example~9.10]{evansLTF}. The reverse
operation is called \emph{blow down}, and any ZCF admits a blow down to
\([1,1]\) \cite[Lemma~9.11]{evansLTF}. The following lemma collects some facts
that we will use when constructing the compactification. 
\begin{lemma}[]
    \label{lem:ZCF}
    \begin{enumerate}
	\item The continued fraction \(\chi = [x_1, \ldots, x_m, 1, y_1,
	    \ldots, y_k]\) is a ZCF. Then, since \(x_i,y_i \ge 2\),  there
	    is a \emph{unique} sequence of blow ups from the ZCF \([1,1]\) to
	    \(\chi\).
	\item The symplectic sphere corresponding to the edge marked
	    \((p,q)\) in the polygon of Figure~\ref{fig:minRes} has self
	    intersection \(-1\).
    \end{enumerate}
\end{lemma}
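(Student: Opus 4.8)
The plan is to recognise $\chi$ as the sequence of self-intersections read along the boundary of a smooth toric surface, and to use the fact that such sequences are exactly the ZCFs. Recall (see \S9 of \cite{evansLTF}) the dictionary between ZCFs and what I will call \emph{maximal chains}: a maximal chain is a sequence of primitive integer vectors $v_0 = (1,0), v_1, \ldots, v_N, v_{N+1} = (-1,0)$, all lying in the closed upper half-plane, listed in counterclockwise angular order, with $\det(v_i, v_{i+1}) = 1$ for all $i$; to it one associates the integers $b_i$ determined by $v_{i-1} + v_{i+1} = b_i v_i$, and then $[b_1, \ldots, b_N]$ is a ZCF, and every ZCF arises this way. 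Inserting the mediant $v_i + v_{i+1}$ between $v_i$ and $v_{i+1}$ is a blow-up of $[b_1, \ldots, b_N]$, and deleting a vector $v_i$ with $b_i = 1$ is a blow-down; by the defining relations this deletion leaves every other $b_j$ unchanged except that $b_{i-1}$ and $b_{i+1}$ each drop by $1$. Finally, if $v_i$ and $v_{i+1}$ are both ``$(-1)$-vectors'' (i.e. $b_i = b_{i+1} = 1$) then $v_{i-1} = v_i - v_{i+1} = -(v_{i+1} - v_i) = -v_{i+2}$; two antiparallel primitive vectors in the closed upper half-plane must be $\pm(1,0)$, so $v_{i-1} = (1,0)$ and $v_{i+2} = (-1,0)$, forcing $N = 2$ and $[b_1, b_2] = [1,1]$. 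In particular, no ZCF with three or more entries has two adjacent entries equal to $1$.

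Next, the minimal resolution of the toric orbifold with moment polygon $\Pi(p,q)$ has moment polygon as in Figure~\ref{fig:minRes}, so it is cut out by the maximal chain
\[(1,0) = v_0,\ v_1, \ldots, v_m,\ v_{m+1} = (-q,p),\ w_1, \ldots, w_k,\ w_{k+1} = (-1,0),\]
in which $v_1, \ldots, v_m$ (resp. $w_1, \ldots, w_k$) are the rays introduced to resolve the $\frac{1}{p}(1,q)$-singularity (resp. the $\frac{1}{p}(1,p-r)$-singularity) and carry the spheres of self-intersection $-x_1, \ldots, -x_m$ (resp. $-y_1, \ldots, -y_k$), since $\frac{p}{q} = [x_1, \ldots, x_m]$ and $\frac{p}{p-r} = [y_1, \ldots, y_k]$. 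Writing $-e$ for the self-intersection of the sphere over the $(p,q)$-edge, the dictionary gives that $\psi := [x_1, \ldots, x_m, e, y_1, \ldots, y_k]$ is a ZCF. It has $m + k + 1 \ge 3$ entries (note $m, k \ge 1$), so $\psi \ne [1,1]$ and therefore admits a blow-down; a blow-down removes an entry equal to $1$, and since every $x_i, y_j \ge 2$ the only possibility is $e = 1$. This is the self-intersection statement, and at the same time it exhibits $\chi = \psi$ as a ZCF.

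For the uniqueness I would prove, by induction on the number of entries, that every ZCF with exactly one entry equal to $1$ admits a unique sequence of blow-downs to $[1,1]$ (equivalently, a unique sequence of blow-ups from $[1,1]$); this covers $\chi$, whose unique entry equal to $1$ is the one separating the $x$'s from the $y$'s, as all $x_i, y_j \ge 2$. Given a ZCF $\eta$ with exactly one entry equal to $1$ — in particular $\eta$ has $\ge 3$ entries, the only ZCFs with at most $2$ entries being $[0]$ and $[1,1]$ — $\eta$ is not $[1,1]$, so it admits a blow-down; a blow-down removes an entry equal to $1$, of which there is only one, so the first blow-down is forced. If the result is $[1,1]$, we are done. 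Otherwise it is a ZCF $\ne [1,1]$, hence has an entry equal to $1$; but by the blow-down rule its entries coincide with those of $\eta$ save for the two neighbours of the removed $1$, each reduced by $1$, all other entries having been $\ge 2$ — so its entries equal to $1$ lie among those two neighbours, and by the last sentence of the first paragraph there can be only one. Thus the result is again a ZCF with a single entry equal to $1$ and strictly fewer entries, and the induction hypothesis applies.

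The step I expect to be the main obstacle is setting up the toric dictionary carefully enough — in particular confirming that the minimal resolution of $\Pi(p,q)$ is the maximal chain displayed above, so that the $(p,q)$-edge supplies precisely the ``middle'' entry, and verifying the elementary blow-down and no-two-adjacent-$(-1)$-vectors facts — after which everything is bookkeeping. (If one prefers, ``$\chi$ is a ZCF'' can also be obtained without resolutions: the Riemenschneider dual of $\frac{p}{q} = [x_1, \ldots, x_m]$ is $\frac{p}{p-q} = [z_1, \ldots, z_k]$, whose reversal is the Hirzebruch-Jung expansion of $\frac{p}{(p-q)^{-1} \bmod p} = \frac{p}{p-r}$ because $qr \equiv 1 \bmod p$; and $[x_1, \ldots, x_m, 1, z_k, \ldots, z_1] = 0$ follows by induction on $m$, each of the two Riemenschneider moves on the dual pair being an instance of the ZCF blow-up that inserts $a_i + 1, 1, a_{i+1} + 1$.)
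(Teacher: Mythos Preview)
Your proof is correct and takes a genuinely different route from the paper's main proof. The paper treats the two parts separately: for (1) it directly manipulates the continued fraction, using the reversal identity $\frac{p}{r}=[x_m,\ldots,x_1]$ to telescope $\chi$ down to $0$; for (2) it computes the self-intersection of the $(p,q)$-edge via the determinant formula $\det\begin{pmatrix}-\gamma&-\delta\\\alpha&\beta\end{pmatrix}$ after extracting the neighbouring edge directions from the Delzant conditions. You instead prove both parts in one stroke: since the minimal resolution of $\Pi(p,q)$ is a smooth toric surface, its boundary self-intersections read off as a ZCF $\psi=[x_1,\ldots,x_m,e,y_1,\ldots,y_k]$; any ZCF of length $\ge 3$ has an entry equal to $1$, and as the $x_i,y_j$ are all $\ge 2$ this forces $e=1$, simultaneously identifying $\psi=\chi$ and pinning down the self-intersection. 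This is essentially the ``more conceptual'' argument the paper sketches in the Remark immediately following its proof, except that there (1) is taken as input to deduce (2), whereas you get both at once. Your approach is cleaner and avoids the somewhat fiddly determinant bookkeeping; the paper's approach has the minor advantage that its computations are entirely self-contained and do not rely on the ZCF/maximal-chain dictionary or on the fact that every ZCF of length $\ge 3$ admits a blow-down (which you use implicitly and which is Lemma~9.11 of \cite{evansLTF}). Your uniqueness argument and the paper's are the same idea---the unique $1$ forces each blow-down step---though you spell out the induction more carefully, including the pleasant observation that no ZCF of length $\ge 3$ has two adjacent $1$'s.
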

\begin{proof}
    (1). The fact that \(\chi = 0\) follows from a simple computation using the
relation \(qr \equiv 1 \mod n\). Since
\[
    \chi = x_1 - \frac{1}{x_2 - \frac{1}{\ddots
	-\frac{1}{1-[y_1,\ldots,y_k]^{-1}}}},
\]
we calculate that
\[
    1-[y_1,\ldots,y_k]^{-1} = 1 - \frac{p-r}{p} = \frac{r}{p}.
\]
Since the continued fraction of \(\frac{p}{r}\) is just the reverse of that of
\(\frac{p}{q}\), we find that
\[
    \frac{1}{1 - [y_1,\ldots,y_k]^{-1}} = \frac{p}{r} = [x_m,\ldots,x_1],
\]
which we use to see
\[
    \chi = x_1 - \frac{1}{x_2 - \frac{1}{\ddots - \frac{1}{x_m -
    [x_m,\ldots,x_1]}}}.
\]
Since
\[
    x_m - [x_m,\ldots,x_1] = \frac{1}{[x_{m-1},\ldots,x_1]},
\]
it follows that \(\chi = 0\).

As is well-known, see for example \cite[Lemma~9.11]{evansLTF},
every ZCF can be obtained from \([1,1]\) via iterated blow up, and the
uniqueness follows from the observation that \(\chi\) has a unique entry equal
to \(1\). Therefore, the initial blow up must be \([1,1]
\to [2,1,2]\), and subsequent blow ups must be adjacent to the unique \(1\)
entry in the ZCF.\footnote{For example, \([2,1,2] \to [3,1,2,2]\) is
permissible, whilst \([2,1,2] \to [1,3,1,2]\) isn't.}

(2). We prove this by direct computation. Consider the part of the polygon
shown in Figure~\ref{fig:pqNeighbourhoodAndCyclicQuot}(a).
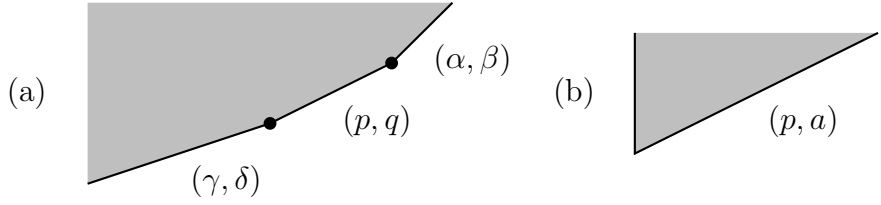
\begin{figure}\centering \begin{tikzpicture}[
   moment polytope, 
   scale=0.8,
   ]
   \draw (-1,1.5) node {(a)};
   \fill (0,0) -- ++(3,1) -- ++(2,1) -- ++(1,1) -- (0,3) -- cycle;
   \draw (0,0) -- node [below right] {\((\gamma,\delta)\)} ++(3,1)
       -- node [below right] {\((p,q)\)} ++(2,1)
       -- node [below right] {\((\alpha,\beta)\)} ++(1,1);
   \fill[black] (3,1) circle [radius=3pt] ++(2,1) circle [radius=3pt];
   \begin{scope}[shift={(9,0.5)},scale=2]
       \draw (-0.5,0.5) node {(b)};
       \filldraw (0,1) -- (0,0) -- node [below right] {\((p,a)\)} (2,1);
   \end{scope}
\end{tikzpicture}
    \caption{
        (a) A neighbourhood of the edge \((p,q)\) in the polygon of
	Figure~\ref{fig:minRes}. (b) the moment polygon \(\pi(p,a)\).
    }
    \label{fig:pqNeighbourhoodAndCyclicQuot}
\end{figure}
Recall that each of the vertices were the result of resolving a singularity 
corresponding to the polygon \(\pi(p,a)\) in
Figure~\ref{fig:pqNeighbourhoodAndCyclicQuot}(b)
for some \(0 < a < p\). In particular, let \(A \in SL_2(\Z)\) be the unique
matrix that maps a vertex of the form shown in the right panel of
Figure~\ref{fig:pqNeighbourhoodAndCyclicQuot} to the \(\frac{1}{p}(1,p-r)\)
vertex in Figure~\ref{fig:pqPolytope}. That is, \(A\) satisfies
\[
    (0,1)A = (-p,-q), \qquad\text{and}\qquad (0,1)A^{-1} = (p,a).
\]
Then, since \((\alpha,\beta) = (1,0)A\) we have that\footnote{Indeed,
    \[
        A^{-1} = \begin{pmatrix} * & * \\ p & a \end{pmatrix}
    \]
    so that
    \[
        \alpha = (1,0)A \begin{pmatrix} 1 \\ 0\end{pmatrix}
        = (1,0) \begin{pmatrix} a & * \\ -p & * \end{pmatrix}
        \begin{pmatrix} 1 \\ 0 \end{pmatrix} = a.
\]} \(0 < \alpha=a< p\). A similar argument shows that \(0 < \gamma < p\).

Both vertices in Figure~\ref{fig:pqNeighbourhoodAndCyclicQuot}(a) are Delzant,
which means
\begin{equation}
    \label{eq:delzantCorners}
    \begin{aligned}
        -\delta p + \gamma q &= \det \begin{pmatrix} p & q \\ -\gamma & -\delta
    	\end{pmatrix} = 1, \text{ and}\\
        -\alpha q + \beta p &= \det \begin{pmatrix} \alpha & \beta \\ -p & -q
    	\end{pmatrix} = 1.
    \end{aligned}
\end{equation}
Reducing modulo \(p\) yields \(\gamma \equiv -\alpha \mod p\), which combined
with the fact that \(0 < \alpha,\gamma < p\), means \(\gamma = p-\alpha\).
Lemma~3.20 of \cite{evansLTF}, asserts that the self intersection of the
\((p,q)\)-edge is given by
\[
    \det \begin{pmatrix} -\gamma & -\delta \\ \alpha & \beta \end{pmatrix} = 
	-\gamma\beta + \delta\alpha.
\]
Therefore, combined with equations \eqref{eq:delzantCorners}, this implies that
\[
    -\gamma\beta + \delta\alpha = \frac{-\gamma - \alpha}{p} = -1,
\]
which completes the proof.
\end{proof}

\begin{remark}
    A more conceptual way to prove statement (2) above is to use the fact that
there is a unique series of blow ups
\[
    [1,1] \to [x_1, \ldots, x_m, 1, y_1, \ldots, y_k]
\]
and perform the corresponding symplectic blow ups as in
Figure~\ref{fig:11polygonBlowUp}. Indeed, Example~9.10 and Corollary~9.13 of
\cite{evansLTF} show that the combinatorics of these two procedures are the
same.
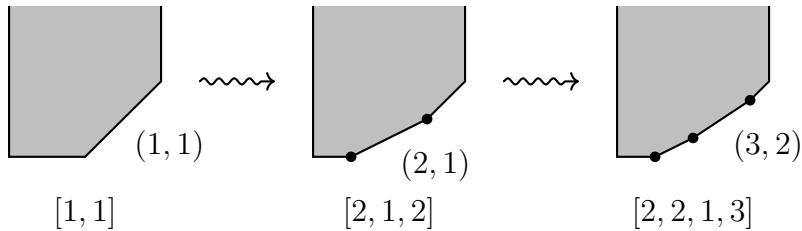
\begin{figure}[ht]\centering \begin{tikzpicture}[
    moment polytope,
]
    \filldraw (0,2) -- (0,0) -- (1,0) -- node [below right] {\((1,1)\)} (2,1)
	-- (2,2);
    \draw[->,decorate,
	decoration={snake,amplitude=.4mm,segment length=2mm,post length=1mm}]
	(2.5,1) -- (3.5,1);
    \filldraw (4,2) -- (4,0) -- (4.5,0) -- node [below right] {\((2,1)\)}
	(5.5,0.5) -- (6,1) -- (6,2);
    \draw[->,decorate,
	decoration={snake,amplitude=.4mm,segment length=2mm,post length=1mm}]
	(6.5,1) -- (7.5,1);
    \filldraw (8,2) -- (8,0) -- (8.5,0) -- (9,0.25) -- node [below right]
	{\((3,2)\)} (9.75,0.75) -- (10,1) -- (10,2);
    \fill[fill=black] (4.5,0) circle [radius=2pt]
	(5.5,0.5) circle [radius=2pt];
    \fill[fill=black] (8.5,0) circle [radius=2pt]
	(9,0.25) circle [radius=2pt]
	(9.75,0.75) circle [radius=2pt];
    \draw (1,-0.75) node {\([1,1]\)} (5,-0.75) node {\([2,1,2]\)}
	(9,-0.75) node {\([2,2,1,3]\)};
\end{tikzpicture}
    \caption{
	An example of blowing up the toric manifold associated to the ZCF
	\([1,1]\). The symplectic cuts made are labelled with the primitive
	integer direction they point in. The vertices are marked for clarity.
    }
    \label{fig:11polygonBlowUp}
\end{figure}
Thus, we obtain two families of toric moment polygons, one from
resolving all the polygons \(\Pi(p,q)\), and the other from blowing
up ZCFs. Note in particular that the second family of polygons will
necessarily have a unique edge with self intersection \(-1\). An inductive
argument on the length of the blow-ups of the ZCFs shows that these families
coincide, from which it follows that the \((p,q)\) edge in
Figure~\ref{fig:minRes} must be a \(-1\)-curve, since all the other edges are
of self intersection at most \(-2\).
\end{remark}

\subsection{Compactification construction}
\label{sec:compactification}

The complex projective plane \(\CP^2\) equipped with the Fubini-Study form
\(\omega_\mathrm{FS}\) has a Hamiltonian torus action with moment polygon
Figure~\ref{fig:CP2polygon}(a).
\begin{figure}\centering \begin{tikzpicture}[
    moment polytope,
]
    \draw (-1,-1.5) node {(a)};
    \filldraw (0,0) -- (3,-3) -- (3,0) -- cycle;
    \begin{scope}[shift={(9,-3)}]
        \draw (-4,1.5) node {(b)};
	\coordinate (X) at (0,1.7);
	\fill (0,0) -- (1,1.5) -- (1,3) -- (-1,3) -- (-1,0.5) -- cycle;
	\draw (-1,0.5) -- (0,0) -- (1,1.5) (1,3) -- (-1,3);
	\draw (-1,0.5) -- (-1,3) (1,1.5) -- (1,3);
	\fill[color=black]
	    (1,1.5) circle [radius=3pt] node [right] {\(\frac{1}{p}(1,q)\)}
	    (-1,0.5) circle [radius=3pt] node [left] {\(\frac{1}{p}(1,p-r)\)};
	\draw[style=dotted] (0,0) -- (X) node {\(\times\)};
	\draw[style=dotted] (X) -- +(0,0.5) node {\(\times\)};
    \end{scope}
    
\end{tikzpicture}
    \caption{
        (a) The moment polygon of \((\CP^2,\omega_\mathrm{FS})\). (b)
        The almost toric base diagram of the compactification \(X\) (modulo
        resolving the marked singularities as in Figure~\ref{fig:minRes}).
    }
    \label{fig:CP2polygon}
\end{figure}
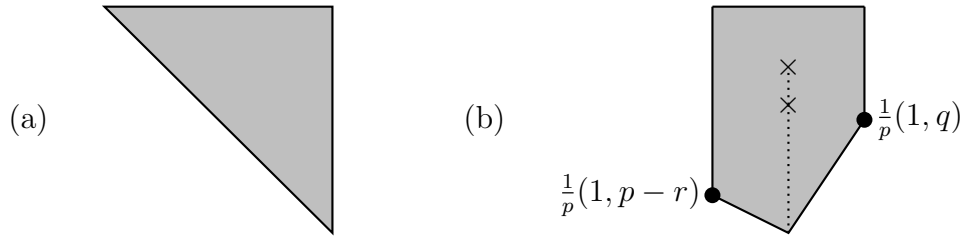
The next result shows that we can perform a series of alterations to this
polygon, called symplectic cuts and non-toric blow ups, to obtain a
symplectic compactification of \(B_{d,p,q}\) determined up to symplectic
deformation\footnote{The reason this is not ``up to symplectomorphism'' is
    because of the freedom of choice over the affine lengths of the edges
introduced by resolving the singularities. However, this is largely
inconsequential.} by the polygon Figure~\ref{fig:CP2polygon}(b). Performing a
symplectic cut to a Delzant vertex in a moment polygon is characterised by
Figure~\ref{fig:symplecticCut}(a).
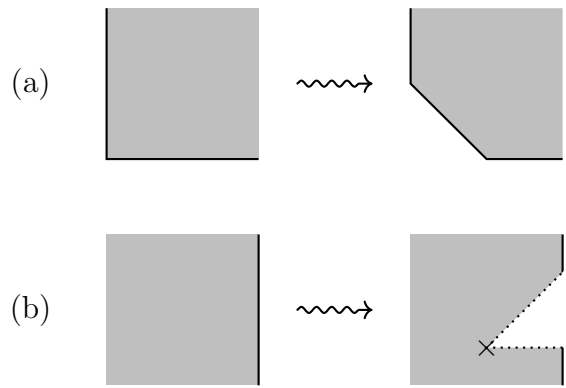
\begin{figure}\centering \begin{tikzpicture}[
    moment polytope
]
    \draw (-1,1) node {(a)};
    \fill (0,2) -- (0,0) -- (2,0) -- (2,2) -- cycle;
    \draw (0,2) -- (0,0) -- (2,0);
    \draw[->,decorate,
	decoration={snake,amplitude=.4mm,segment length=2mm,post length=1mm}]
	(2.5,1) -- (3.5,1);
    \begin{scope}[shift={(4,0)}]
        \fill (0,2) -- (0,1) -- (1,0) -- (2,0) -- (2,2) -- cycle;
        \draw (0,2) -- (0,1) -- (1,0) -- (2,0);
    \end{scope}
    \begin{scope}[shift={(0,-3)}]
        \draw (-1,1) node {(b)};
        \fill (0,0) -- (2,0) -- (2,2) -- (0,2) -- cycle;
        \draw (2,0) -- (2,2);
    \draw[->,decorate,
        decoration={snake,amplitude=.4mm,segment length=2mm,post length=1mm}]
        (2.5,1) -- (3.5,1);
    \begin{scope}[shift={(4,0)}]
        \fill (0,0) -- (2,0) -- (2,2) -- (0,2) -- cycle;
        \draw (2,0) -- (2,2);
        \fill[white] (2.1,0.5) -- (1,0.5) -- (2.1,1.6);
        \draw[dotted] (2,0.5) -- (1,0.5) node {\(\times\)} -- (2,1.5);
    \end{scope}
    \end{scope}
\end{tikzpicture}
    \caption{
        (a) A symplectic cut at a Delzant vertex. (b) A non-toric blow up
        at an edge of a moment polygon.
    }
    \label{fig:symplecticCut}
\end{figure}
The non-toric blow up transforms a polygon as in
Figure~\ref{fig:symplecticCut}(b). Both of these operations correspond to the
symplectic blow up. The reader may consult Sections~4.4 and 9.1 of
\cite{evansLTF} for further details.

\begin{lemma}[]
    \label{lem:compactify}
    There exists a number\footnote{The actual value of \(\mu\) is
    irrelevant, but one can calculate it to be \(\sqrt{2}\) times the
    Euclidean distance of the diagonal edge from the top-right vertex in
the polygon shown in Figure~\ref{fig:CP2polygon}(a).}
\(\mu > 0\) and a symplectic compactification \((X,\omega) = (X_{d,p,q},
\omega_{d,p,q})\) of \(B_{d,p,q}\) such that \((X,\omega)\) is a blow up of
\((\CP^2,\mu\omega_\mathrm{FS})\). The blow ups compute a basis
\(\{H,\fibrationSectionL,E_i,\mathcal{E}_j \mid 0 \le i \le n, 1 \le j \le
d\}\) of \(H_2(X)\) such that the following holds:
\begin{enumerate}
    \item \(H\) is the class of a line in \(\CP^2\), \fibrationSectionL{} and
        \(E_i\) are exceptional curves corresponding to symplectic cuts, and
        \(\mathcal{E}_j\) are exceptional curves corresponding to non-toric
        blow ups;
    \item \(n=m+k-1\) is equal to the number of blow ups
        \[
            [1,1] \to [x_1, \ldots, x_m, 1, y_1, \ldots, y_k]
        \]
        determined by Lemma~\ref{lem:ZCF};
    \item the \(\omega\)-areas of the non-toric curves \(\mathcal{E}_j\) are
	all equal, that is, there exists \(l > 0\) such that, for each \(1 \le
	j \le d\), \[ \omega(\mathcal{E}_j) = l; \] and,
    \item define the \emph{homology class of a smooth fibre} by
        \(F := H - \fibrationSectionL{}\), then \(\omega(F) = 2l\), and
        \(\omega(F) > \omega(E_n)\).
\end{enumerate}
\end{lemma}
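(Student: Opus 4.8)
The plan is to build $X$ from $(\CP^2,\mu\omega_{\mathrm{FS}})$ by a controlled sequence of symplectic blow ups --- some realised as toric symplectic cuts at Delzant vertices, some as non-toric blow ups at edges --- so that the resulting almost toric base diagram is the one in Figure~\ref{fig:CP2polygon}(b), and then to check that deleting a neighbourhood of the divisor lying over the ``infinity'' portion of that diagram and taking the symplectic completion recovers $B_{d,p,q}$. Concretely, I would start from the fundamental action domain of $B_{d,p,q}$ (Figure~\ref{fig:bdpqFundamentalDomain}), perform the compactifying symplectic cut along a line of primitive direction $\bigl(r,\tfrac{rq-1}{p}\bigr)$ as in Figure~\ref{fig:bdpqCut} to obtain a closed symplectic orbifold whose base diagram is Figure~\ref{fig:CP2polygon}(b) modulo the two cyclic quotient singularities at the vertices labelled $\tfrac1p(1,q)$ and $\tfrac1p(1,p-r)$, and then resolve those two singularities by the minimal resolution of Figure~\ref{fig:minRes}. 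By Lemma~\ref{lem:ZCF}, the combinatorics of this resolution is exactly the unique blow-up sequence $[1,1]\to[x_1,\dots,x_m,1,y_1,\dots,y_k]$, of length $n=m+k-1$, which accounts for the toric exceptional classes; the remaining toric cut is recorded as the distinguished class \fibrationSectionL{}. The $d$ crosses in Figure~\ref{fig:CP2polygon}(b) are the base nodes produced by $d$ non-toric blow ups, contributing the classes $\mathcal{E}_1,\dots,\mathcal{E}_d$.

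Next I would invoke the fact (Sections~4.4 and 9.1 of \cite{evansLTF}) that each of these operations is a symplectic blow up, so $X$ is a blow up of $(\CP^2,\mu\omega_{\mathrm{FS}})$. Since each symplectic blow up of a rational surface enlarges $H_2$ by one exceptional sphere class in accordance with the blow-up formula, and $X$ is obtained from $\CP^2$ by finitely many of them, the collection $\{H,\fibrationSectionL,E_i,\mathcal{E}_j\}$ is a basis of $H_2(X)$; this gives (1) together with the interpretation of each class, and (2) is precisely the count of resolving blow ups supplied by Lemma~\ref{lem:ZCF}. For (3), the affine length associated to a non-toric blow up is a free parameter of the construction, so I simply choose all $d$ of them to take a common value $l>0$; by the footnote preceding the statement this only affects $X$ up to symplectic deformation.

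The remaining claim, (4), is an affine-geometry computation in the polygon of Figure~\ref{fig:CP2polygon}(b). Since $F:=H-\fibrationSectionL$ and $\omega$ is linear on $H_2$, we have $\omega(F)=\omega(H)-\omega(\fibrationSectionL)$, which equals the affine width of the polygon in the fibration (horizontal) direction, whereas $\omega(E_n)$ is one of the small edge-areas introduced near the $(p,q)$-edge during the resolution of Figure~\ref{fig:minRes}. I would show that the set of admissible parameters --- the affine lengths of all edges and the sizes of all cuts --- is non-empty, and that inside it one can first fix $l$, then rescale $\mu$ and the cut sizes so that $\omega(F)=2l$, and finally take the resolving cuts small enough that $\omega(F)>\omega(E_n)$. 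Finally, one verifies that the preimage of the region below the dashed line in Figure~\ref{fig:bdpqCut} is a compact symplectic submanifold with boundary whose completion is symplectomorphic to $B_{d,p,q}$, so $X$ genuinely compactifies $B_{d,p,q}$.

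I expect the main obstacle to be the last bookkeeping step: one must check that the linear constraints coming from Delzant-ness of every vertex, positivity of every edge affine-length, the geometry forced by the compactifying cut and the pinwheel region, and the normalisations $\omega(\mathcal{E}_j)=l$ and $\omega(F)=2l$ are mutually compatible --- that is, that the polytope of valid parameters is non-empty and still admits the strict inequality $\omega(F)>\omega(E_n)$. This is a finite but somewhat delicate feasibility check; by contrast, the identification of $X$ as an iterated blow up of $\CP^2$ and the extraction of the homology basis are routine once the almost-toric moves and the blow-up formula are chained together carefully, and the match between the resolution combinatorics and the ZCF blow-up sequence is already packaged in Lemma~\ref{lem:ZCF}.
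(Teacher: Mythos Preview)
Your overall strategy matches the paper's --- compactify $B_{d,p,q}$ by almost-toric cuts, resolve the resulting orbifold corners, then identify the result as an iterated blow-up of $\CP^2$ --- but there is a genuine gap in the execution. You perform a \emph{single} cut along the line of direction $\bigl(r,\tfrac{rq-1}{p}\bigr)$ from Figure~\ref{fig:bdpqCut} and assert that this yields the pentagon of Figure~\ref{fig:CP2polygon}(b) with its two $\tfrac1p$ corners. It does not: one cut of the wedge produces a triangle, and a short determinant check shows its corners are cyclic quotient singularities of orders $r$ and $dp-r$, not the $\tfrac1p(1,q)$ and $\tfrac1p(1,p-r)$ corners that feed into Lemma~\ref{lem:ZCF}. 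The paper instead first applies an $SL_2(\Z)$ change sending the branch-cut eigenline to the vertical and then makes \emph{three} cuts (one horizontal, two vertical; Figure~\ref{fig:BdpqCuts}). The two vertical cuts are what create the toric boundary edges carrying $S$ and $S'$, and the horizontal edge carries $F=H-S$. Without them your phrase ``the remaining toric cut is recorded as the distinguished class $S$'' has no referent, and you have no natural candidate for the fibre class.

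This propagates to (3) and (4). You treat $\omega(\mathcal{E}_j)=l$ and $\omega(F)=2l$ as free parameters to be tuned via a feasibility argument, but in the paper they are forced by the geometry of the three cuts: after the $SL_2(\Z)$ change all $d$ focus-focus critical values lie on the line $x=0$, so once the branch cut is rotated each non-toric exceptional curve has area equal to the distance $l$ from that line to the right-hand vertical cut, and placing the vertical cuts symmetrically at $x=\pm l$ gives $\omega(F)=2l$ automatically. The inequality $\omega(F)>\omega(E_n)$ is then a two-line affine-length computation using $p>1$ (Figure~\ref{fig:affineLengths}), not a separate polytope-feasibility check. Finally, note that the paper deduces rationality of $X$ from McDuff's non-negative-sphere theorem and only \emph{then} exhibits the blow-down sequence (rotate branch cut, blow down the $\mathcal{E}_j$, apply the unique ZCF blow-down of Lemma~\ref{lem:ZCF}); your appeal to ``each operation is a symplectic blow-up'' is circular until that sequence is in hand, since the cuts you perform on $B_{d,p,q}$ compactify rather than blow up.
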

\begin{proof}
    Consider the almost toric base diagram of \(B_{d,p,q}\) shown in\linebreak
Figure~\ref{fig:BdpqTransformation}(a).\footnote{Note that the figures drawn
here are for the case \((d,p,q) = (2,2,1)\) but the process works for all
triples.} Choose \(A \in SL_2(\Z)\) such that \((0,1)A = (p,q)\). 
\begin{figure}[ht]\centering
    \begin{tikzpicture}[%
    moment polytope,
    scale=0.67,
    >=Rays,
]
    \coordinate (O) at (15,0);
    \draw (4,-1) node {(a)};
    \filldraw[>=To,<->] (0,3) -- node [left] {\((0,1)\)} (0,0) --
	node [below right] {\((dp^2, dpq - 1)\)} (8,3) ;
    \draw[style=dotted] (0,0) -- (2,1) node {\(\times\)};
    \draw[style=dotted] (2,1) -- (3,1.5) node {\(\times\)} node [above] {\((p,q)\)};
    \begin{scope}[shift={(O)}]
        \draw (-2,-1) node {(b)};
	\filldraw[>=To,<->] (-6,3) -- node [below left] {\((-p,r)\)} (0,0) --
	    (2,3) node [above] {\((p, dp - r)\)};
	\draw[color=black, style=dashed, >=To, <->]
	    (-3,3) node [above] {\((-1,1)\)} -- (0,0)
	    -- node [below right] {\((1,1)\)} (3,3);
	\draw[style=dotted] (0,0) -- (0,1) node {\(\times\)};
	\draw[style=dotted] (0,1) -- (0,1.5) node {\(\times\)} node [above] {\((0,1)\)};
    \end{scope}
\end{tikzpicture}
    \caption{
	The figure from left to right shows the effect of the transformation
        \(A^{-1}\). The dashed rays in (b) are included to show the extremes
        that the solid rays cannot cross since \(0 < r < p\) and \(d > 1\).
    }
    \label{fig:BdpqTransformation}
\end{figure}
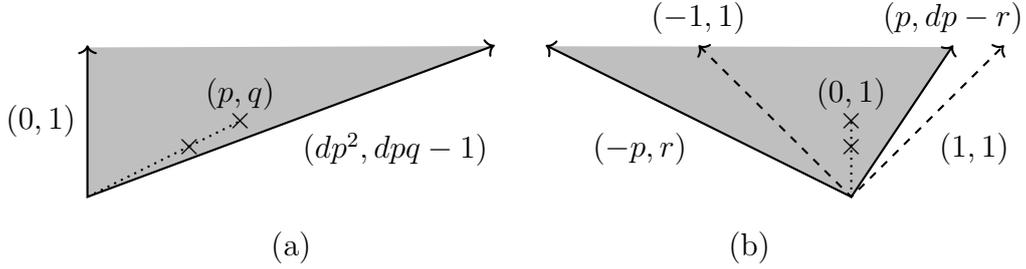
Writing \(A = \begin{pmatrix} r &s \\ p & q\end{pmatrix}\), the condition
\(\det A = 1\) ensures that \(qr \equiv 1 \mod p\) and we may choose \(A\)
such that \(0 < r < p\). Applying \(A^{-1}\) to the base diagram then
yields Figure~\ref{fig:BdpqTransformation}(b). We make a
series of symplectic cuts, shown as dash dot lines in
Figure~\ref{fig:BdpqCuts}: first a horizontal one at some level above the
focus-focus critical values; and then vertical cuts on either
side.\footnote{As the diagram makes clear, the vertical cuts can be chosen so
that \(\omega(F) = 2l\).} Doing so introduces two cyclic quotient
singularities, which are marked in the figure. Taking the minimal resolution
yields the desired compactification \(X\).
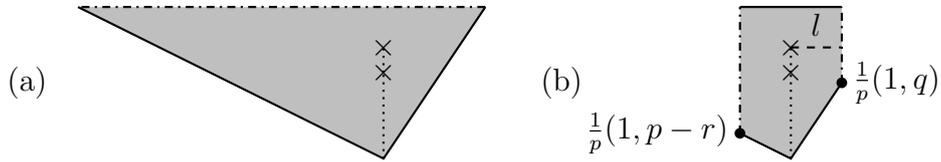
\begin{figure}[ht]\centering
\begin{tikzpicture}[
    moment polytope,
    scale=0.67,
]
    \draw (-7,1.5) node {(a)};
    \coordinate (X) at (0,1.7);
    \filldraw (-6,3) -- (0,0) -- (2,3);
    \draw[style=dash dot] (2,3) -- (-6,3);
    \draw[style=dotted] (0,0) -- (X) node {\(\times\)};
    \draw[style=dotted] (X) -- +(0,0.5) node {\(\times\)};

    \begin{scope}[shift={(8,0)}]
        \draw (-4.5,1.5) node {(b)};
	\coordinate (X) at (0,1.7);
	\fill (0,0) -- (1,1.5) -- (1,3) -- (-1,3) -- (-1,0.5) -- cycle;
	\draw (-1,0.5) -- (0,0) -- (1,1.5) (1,3) -- (-1,3);
	\draw[style=dash dot] (-1,0.5) -- (-1,3) (1,1.5) -- (1,3);
	\fill[color=black]
	    (1,1.5) circle [radius=3pt] node [right] {\(\frac{1}{p}(1,q)\)}
	    (-1,0.5) circle [radius=3pt] node [left] {\(\frac{1}{p}(1,p-r)\)};
	\draw[style=dotted] (0,0) -- (X) node {\(\times\)};
	\draw[style=dotted] (X) -- +(0,0.5) node {\(\times\)};
	\draw[style=dashed] (X) ++(0,0.5) -- node [above] {\(l\)} +(1,0);
    \end{scope}
\end{tikzpicture}
    \caption{
	Performing symplectic cuts (dash dot lines) to compactify \(B_{d,p,q}\). After
    performing the vertical cuts, we obtain a manifold with two cyclic
    quotient singularities, which are marked as bullets in the figure, along
    with their type. The dashed line labelled \(l\) indicates the affine displacement of
    the right hand vertical cut from the monodromy eigenline.
    }
    \label{fig:BdpqCuts}
\end{figure}

    Since \(X\) contains a symplectic sphere with non-negative self
intersection (take the top edge in the base diagram, for example), a
well-known result of McDuff \cite{mcduff90RationalRuled} shows that \(X\) is
symplectomorphic to a blow-up of \(\CP^2\). Thus, the 
remainder of the proof is to choose a sequence of blow-ups that satisfies the
claims of the lemma. To this end, we rotate the branch cut by \(90^\circ\)
anticlockwise (see \cite[Section~7.2]{evansLTF}), to obtain\footnote{Note that
    we continue to draw the base diagram of the singular manifold
    (Figure~\ref{fig:BdpqCuts}(b)) prior to taking the minimal resolution.
    This is to make the pictures easier to draw and interpret. Drawing the
resolved manifold would involve long chains of edges as in
Figure~\ref{fig:minRes}, the precise directions and lengths of which add
nothing to the proof, so we omit them.} Figure~\ref{fig:nonToricBlowDown}(a).
The \(d\) bites in the diagram correspond to the exceptional loci of \(d\)
non-toric blow ups. In particular, they represent a collection of \(d\)
disjoint symplectic \(-1\)-spheres. Moreover, each of them has \(\omega\)-area
equal to \(l\), since the focus-focus critical values are all the same affine
length \(l\) from the right-hand vertical edge. Label the homology classes of
these spheres by \(\mathcal{E}_j\) and blow them down to obtain
Figure~\ref{fig:nonToricBlowDown}(b).
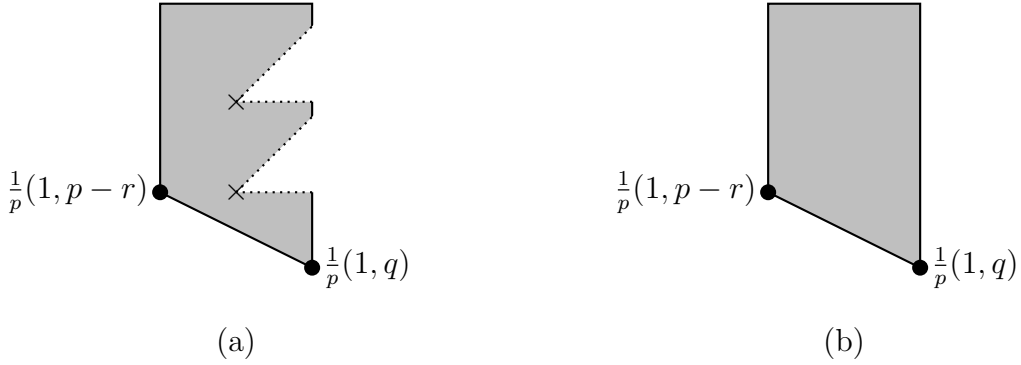
\begin{figure}[ht]\centering
    \begin{tikzpicture}[moment polytope, >=Rays]
        \draw (0,-1.5) node {(a)};
        \filldraw (-1,0.5) -- (1,-0.5) -- (1,3) -- (-1,3) -- cycle;
	\fill[color=black]
	    (-1,0.5) circle [radius=3pt] node [left] {\(\frac{1}{p}(1,p-r)\)}
	    (1,-0.5) circle [radius=3pt] node [right] {\(\frac{1}{p}(1,q)\)};
	\fill[color=white]
	    (1.1,0.5) -- (0,0.5) -- (1.1,1.6)
	    (1.1,1.7) -- (0,1.7) -- (1.1,2.8);
	\begin{scope}[style=dotted]
	    \draw (1,0.5) -- (0,0.5) node {\(\times\)};
	    \draw (1,1.7) -- (0,1.7) node {\(\times\)};
	    \draw (0,0.5) -- (1,1.5) (0,1.7) -- (1,2.7);
	\end{scope}
	\begin{scope}[shift={(8,0)}]
        \draw (0,-1.5) node {(b)};
	\filldraw (-1,0.5) -- (1,-0.5) -- (1,3) -- (-1,3) -- cycle;
	\fill[color=black]
	    (-1,0.5) circle [radius=3pt] node [left] {\(\frac{1}{p}(1,p-r)\)}
	    (1,-0.5) circle [radius=3pt] node [right] {\(\frac{1}{p}(1,q)\)};
	\end{scope}
    \end{tikzpicture}
    \caption{
	Non-toric blow downs.
    }
    \label{fig:nonToricBlowDown}
\end{figure}
Observe that the singularities of the resultant manifold are modelled on a
reflection about the vertical axis of those of\footnote{This is not a typo;
reflection in the vertical acts as inverse modulo \(p\) on the singularity:
\(\frac{1}{p}(1,q) \mapsto \frac{1}{p}(1,r)\).} \(\Pi(p,r)\). Therefore, the
minimal resolution is given by (the reflection of) Figure~\ref{fig:minRes}.
Then Lemma~\ref{lem:ZCF} implies that there is a unique sequence of toric
boundary blow downs to the polygon in
Figure~\ref{fig:blowDownToP2}(a), which is just \(\CP^2 \# 2\overline{\CP^2}\).
\begin{figure}[ht]\centering \begin{tikzpicture}[moment polytope]
    \draw (-2,1.5) node {(a)};
    \filldraw[shift={(0,0.5)}] (0,0)
	-- node [below] {\(E_0\)} (1,0)
	-- node [right] {\(H - E_0\)} (1,2)
	-- node [above] {\(F = H - \fibrationSectionL{}\)} (-1,2)
	-- node [left]  {\(\fibrationSectionL{}\)} (-1,1)
	-- cycle;    
    \draw (4,1.5) node {(b)};
    \filldraw[shift={(8,0)}] (0,0) -- (0,3) -- node [above] {\(H\)} (-3,3) --
	cycle;
\end{tikzpicture}
    \caption{
        (a) The toric manifold obtained by blowing down \(X_{d,p,q}\). This is
	exactly \(\CP^2 \# 2\overline{\CP^2}\). The edges are labelled with
	the corresponding homology classes. (b) The moment polygon of
	\(\CP^2\), obtained by blowing down the edges labelled \(\fibrationSectionL{}\) and
	\(E_0\).
    }
    \label{fig:blowDownToP2}
\end{figure}
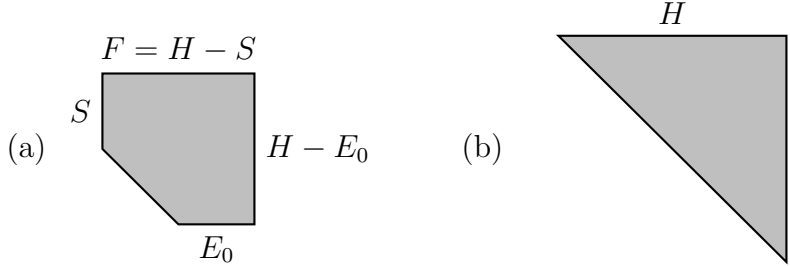
Reversing this process, that is blowing up instead of down, yields the desired
sequence of blow ups from \((\CP^2,\mu\omega_\mathrm{FS})\) to \((X,\omega)\).
Labelling the homology classes of the  toric boundary blow ups as
\(\fibrationSectionL\) and \(E_i\), and the non-toric ones as
\(\mathcal{E}_j\), we obtain the claimed basis
\[
    \{H,\fibrationSectionL,E_i,
    \mathcal{E}_j \mid 0 \le i \le n, 1 \le j \le d\}.
\]

We have established properties (1)--(3), so it remains to show (4). The
inequality \(\omega(H - \fibrationSectionL{}) > l\) holds as \(\omega(H -
\fibrationSectionL{})\) is given by the affine length of the horizontal edge of
the polygon in Figure~\ref{fig:BdpqCuts}(b). The second
inequality follows from considering Figure~\ref{fig:affineLengths},
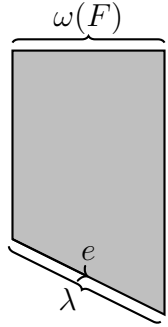
\begin{figure}[ht]\centering \begin{tikzpicture}[moment polytope]
    \filldraw (-1,0.5) -- (1,-0.5) -- (1,3) -- (-1,3) -- cycle;
    \draw[decorate, decoration={brace,mirror,raise=3pt}] (1,3) -- node
	[above=2pt] {\(\omega(F)\)}
	(-1,3);
    \draw[->] (-1,0.5) -- (0,0) node [above=2pt] {\(e\)};
    \draw[decorate, decoration={brace,mirror,raise=3pt}] (-1,0.5) -- node [below left]
	{\(\lambda\)} (1,-0.5);
\end{tikzpicture}
    \caption{
        The moment polygon of Figure~\ref{fig:nonToricBlowDown}(b) with affine
        lengths indicated. The edge \(e\) points in the \((p,-r)\) direction.
        Since \(E_n\) is obtained by symplectically cutting the bottom edge,
        we have the inequality \(\omega(E_n) < \lambda\).
    }
    \label{fig:affineLengths}
\end{figure}
which is the same polygon as in Figure~\ref{fig:nonToricBlowDown}(b). The edge
\(E_n\) is obtained by symplectic cutting the bottom edge \(e\) of
Figure~\ref{fig:affineLengths}, so its affine length \(\omega(E_n)\) is
strictly smaller than that of \(e\), which we denote by \(\lambda\). Since
\(e\) points in the direction \((p,-r)\), we must have that \(\lambda{}p =
\omega(H-\fibrationSectionL{})\), since the top edge is horizontal. As \(p
> 1\), we obtain that \(\lambda < \omega(H-\fibrationSectionL{})\). This
completes the proof.
\end{proof}

\begin{remark}
    The toric boundary \(D \subset X\) is a \emph{symplectic divisor} of
\((X,\omega)\), meaning that is a union of (real) codimension \(2\) symplectic
submanifolds of \((X,\omega)\). Furthermore, it is a cycle of transversely
intersecting spheres, which we represent by the dual intersection
graph of its components in Figure~\ref{fig:dualIntGraphD}(a).
\begin{figure}[ht]\centering
\begin{tikzpicture}[
    colorstyle/.style={
	circle, draw=black, fill=black, thick, inner sep=0pt, minimum size=4pt,
    outer sep=0pt},
    thick,
    ]
    \tikzmath{
	\sep=0.5;
    }
    \node (A) at (0,5) [colorstyle, label=above:\(F\)]{};
    \node (B) at ($ (A) + (1,-\sep)$) [colorstyle,
	label=above right:\(\fibrationSectionR{}\)]{};
    \node (bl) at ($ (B) - (0,\sep) $) [colorstyle]{};
    \node (dotsR) at ($ (bl) + (0,-\sep)$) [inner sep=0pt, outer sep=0pt,
	label=right:\(\vdots\)]{};
    \node (b1) at ($ (dotsR) + (0,-\sep) $) [colorstyle]{};
    \node (E) at ($ (b1) + (-1,-\sep) $) [colorstyle, label=below:\(E_n\)]{};
    \node (am) at ($ (E) + (-1,\sep) $) [colorstyle]{};
    \node (dotsL) at ($ (am) + (0,\sep) $) [inner sep=0pt, outer sep=0pt,
	label=left:\(\vdots\)]{};
    \node (a1) at ($ (dotsL) + (0,\sep) $) [colorstyle]{};
    \node (E-1) at ($ (a1) + (0,\sep) $) [colorstyle, label=above
	left:\(\fibrationSectionL{}\)]{};
    \draw (A) -- (B) -- (bl) -- ($ (dotsR) + (0,0.2)$) ++ (0,-0.4) -- (b1) --
        (E) -- (am) -- ($ (dotsL) - (0,0.2) $) ++ (0,0.4) -- node [left=15pt]
        {(a)} (a1) -- (E-1) -- (A);
    \node (D) at (0,3.75) {\(D =\)};

    \begin{scope}[shift={(6,4)}]
        \draw (-2,0) node {(b)};
	\node (a) at (0,0) [colorstyle]{};
	\node (b) at (0.5,0) [inner sep=0pt, outer sep=0pt] {\(\cdots\)};
	\node (c) at (1,0) [colorstyle]{};
	\node (d) at (2,0) [colorstyle, label=below:\(E_n\)]{};
	\node (e) at (3,0) [colorstyle]{};
	\node (f) at (3.5,0) [inner sep=0pt, outer sep=0pt] {\(\cdots\)};
	\node (g) at (4,0) [colorstyle]{};
        \draw node [left=5pt] {\(D_\infty = \)} (a) -- ($ (b) - (0.3,0) $) ++
            (0.6,0) -- (c) -- (d) -- (e) -- ($ (f) - (0.3,0) $) ++ (0.6,0) --
            (g);
        \draw[decorate, decoration={brace,raise=7pt}] (a) -- node [above=8pt]
            {\(m\)} (c);
        \draw[decorate, decoration={brace,raise=7pt}] (e) -- node [above=8pt]
            {\(k\)} (g);
    \end{scope}
    
\end{tikzpicture}
    \caption{
        (a) The dual intersection graph of the compactifying divisor \(D
	\subset X\). Vertices are labelled with the respective homology
	classes of the component symplectic spheres. The integers \(m\)
	and \(k\) correspond to the lengths of the continued fractions of
	\(\frac{p}{p-r}\) and \(\frac{p}{q}\) respectively, as in
	Lemma~\ref{lem:ZCF}. (b) The subgraph consisting of spheres
	introduced by resolving the singularities in the compactification
	process.
    }
    \label{fig:dualIntGraphD}
\end{figure}
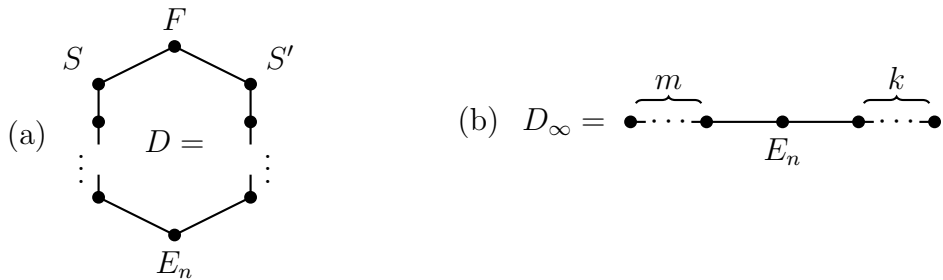
Removing the components \(\{\fibrationSectionL{}, F, \fibrationSectionR{}:= H
- E_0 - \sum_{j=1}^d \mathcal{E}_j\}\) yields a subgraph representing the part
of the toric boundary obtained by resolving the singularities in
Figure~\ref{fig:nonToricBlowDown}, which we will call \(D_\infty\).
\end{remark}

\subsection{Lagrangian spheres in \(B_{d,p,q}\)}
\label{sec:spheresInBdpq}

We classify the homology classes in \(H_2(B_{d,p,q};\Z) \cong
\Z^{d-1}\) that support Lagrangian spheres. The point of this is to show that,
in the compactification \(X\), any Lagrangian sphere \(L \subset B_{d,p,q}
\subset X\) is homologous to \(e - e'\) where \(e,e' \in H_2(X;\Z)\) are
classes represented by \(J\)-holomorphic \(-1\)-curves. This will be important
for our neck stretching analysis performed in Section~\ref{ch:neckStretch}.

The homotopy type of \(B_{d,p,q}\) is a \((p,q)\)-pinwheel wedged with
\(d-1\) spheres \cite[Lemma~7.11]{evansLTF}. Moreover, the standard
\(A_{d-1}\) configuration of Lagrangian spheres is a generating set of the
second homology:
\[
    H_2(B_{d,p,q}) = H_2(B_{d,p,q};\Z) = \Z\langle L_i : 1 \le i \le d-1
    \rangle,
\]
Therefore, the intersection form with respect to this basis is
\[
    Q := \begin{pmatrix} -2 & 1 & 0 & \cdots & \cdots & \cdots & 0 \\
	1 & -2 & 1 & 0 & \cdots & \cdots & 0 \\
	0 & 1 & -2 & 1 & \cdots & \cdots & 0 \\
	\vdots & \vdots & \ddots & \ddots & \ddots & \vdots & \vdots \\
	0 & 0 & \cdots & \cdots & 1 & -2 & 1 \\
	0 & 0 & \cdots & \cdots & 0 & 1 & -2
	\end{pmatrix}.
\]
The following argument is standard, and we include it only for completeness.
\begin{lemma}[]
    \label{lem:homologyClassOfSpheresBdpq}
    The set \(\mathcal{L}\) of homology classes represented by Lagrangian
\linebreak spheres in \(H_2(B_{d,p,q})\) is given by
\begin{equation}
    \label{eq:lagrHomology}
    \mathcal{L} = \{ \pm(L_i + L_{i+1} + \ldots + L_{i+k}) \mid i \ge 1,
    k \ge 0, i+k \le d-1\}.
\end{equation}
\end{lemma}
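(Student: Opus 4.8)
The plan is to establish the two inclusions of \eqref{eq:lagrHomology} separately, reducing everything to elementary lattice theory in $(\Z^{d-1},Q)$. The only piece of symplectic input needed is the self-intersection number of a Lagrangian sphere: if $L\subset B_{d,p,q}$ is a Lagrangian sphere, then Weinstein's Lagrangian neighbourhood theorem identifies a tubular neighbourhood of $L$ with a neighbourhood of the zero-section in $T^*S^2$, so $[L]\cdot[L]$ equals the Euler number of $T^*S^2$, namely $-2$. Hence the set $\mathcal{L}$ of classes represented by Lagrangian spheres is contained in the set $\mathcal{R}$ of $v\in H_2(B_{d,p,q};\Z)$ with $v^{T}Qv=-2$, and the proof splits into (i) showing $\mathcal{R}$ equals the right-hand side of \eqref{eq:lagrHomology}, and (ii) realising each class in that set by an honest Lagrangian sphere.

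For (i) I would write $v=\sum_{i=1}^{d-1}c_iL_i$, adopt the convention $c_0=c_d=0$, and use the telescoping identity
\[
    v^{T}Qv = -2\sum_{i=1}^{d-1}c_i^{2}+2\sum_{i=1}^{d-2}c_ic_{i+1} = -\sum_{i=0}^{d-1}(c_i-c_{i+1})^{2}.
\]
Then $v\in\mathcal{R}$ forces exactly two of the $d$ consecutive differences $c_i-c_{i+1}$ to equal $\pm1$ and the remaining $d-2$ to vanish. Since the sequence $c_0,\dots,c_d$ begins and ends at $0$, this means it is $0$ on an initial segment, constant equal to some $\varepsilon\in\{\pm1\}$ on a block of indices $i,i+1,\dots,i+k$ with $1\le i$ and $i+k\le d-1$, and $0$ again on a terminal segment; that is, $v=\varepsilon(L_i+\cdots+L_{i+k})$. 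The converse (that every such $v$ has square $-2$) is immediate, which identifies $\mathcal{R}$ with the right-hand side of \eqref{eq:lagrHomology}.

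For (ii) I would use that the image of a Lagrangian sphere under any symplectomorphism, and its orientation reversal, are again Lagrangian spheres. The standard spheres $L_1,\dots,L_{d-1}$ realise the classes with $k=0$. For the longer chains, recall that the Dehn twist $\tau_j$ about the standard sphere $L_j$ acts on $H_2$ by the reflection $x\mapsto x+(x\cdot L_j)L_j$ \cite{seidelLectures}; since $L_{j+1}\cdot L_j=1$ and $L_m\cdot L_j=0$ for $|m-j|\ge2$, applying $\tau_{i+k-1}$ to $L_{i+k}$ and then $\tau_{i+k-2},\dots,\tau_i$ in turn moves the homology class successively to $L_i+L_{i+1}+\cdots+L_{i+k}$. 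Thus $(\tau_i\circ\cdots\circ\tau_{i+k-1})(L_{i+k})$ is a Lagrangian sphere in that class, and reversing its orientation realises the negative class; together with (i) this proves the lemma. I do not expect a genuine obstacle: once the self-intersection $-2$ and the homological action of a Dehn twist are in hand, the argument is pure bookkeeping, the only care needed being the telescoping identity and the verification that the composite twist tracks the homology class as claimed.
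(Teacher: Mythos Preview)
Your proof is correct and complete. The overall architecture matches the paper's: establish $\mathcal{L}\subset\mathcal{R}:=\{v:v^TQv=-2\}$ via Weinstein, identify $\mathcal{R}$ explicitly, then realise every class in $\mathcal{R}$ by a Dehn-twisted standard sphere via Picard--Lefschetz. The difference is in how each step is executed.

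For the identification of $\mathcal{R}$, the paper passes through the $A_{d-1}$ root system: it builds an isometry $\varphi:(H_2,-Q)\to(E,g_E)$ with $E=\{x\in\R^d:\sum x_i=0\}$ by $\varphi(L_i)=e_i-e_{i+1}$, and then reads off $\mathcal{R}=\varphi^{-1}\{e_i-e_j:i\ne j\}$. Your telescoping identity $v^TQv=-\sum(c_i-c_{i+1})^2$ is the same change of variables done by hand, without naming the root system; it is more self-contained and arguably cleaner for this one statement. For the realisation step, the paper invokes transitivity of the Weyl group on the roots and the fact that $\varphi$ intertwines the Weyl reflections with the Picard--Lefschetz action, whereas you write down the explicit word $\tau_i\cdots\tau_{i+k-1}$ and track the class directly. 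Your version is more concrete and gives the word for free; the paper's version makes the underlying symmetry visible and would generalise more readily to other ADE configurations. Either route is fine here.
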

\begin{proof}
    Weinstein's Lagrangian tubular neighbourhood theorem
\cite{weinstein71Lagrangians} ensures that any Lagrangian sphere has
self-intersection \(-2\). So, our first task is to determine the set of
\(-2\)-classes in \(H_2(B_{d,p,q})\). Consider the vector space
\[
    E := \left\{x = (x_1,\ldots,x_{d}) \in \R^d \,\left|\, \sum_{i=1}^d x_i = 0
    \right.\right\}
\]
equipped with the restriction of the standard Euclidean inner product
\(g_E := \langle \cdot,\cdot \rangle|_E\). Consider the standard Euclidean
basis \(e_i\) of \(\R^d\) and define a map \(\varphi : H_2(B_{d,p,q}) \otimes
\R \to E\) by
\[
    \varphi(L_i) = e_i - e_{i+1}.
\]
This is a \(\Z\)-linear isometric isomorphism \((H_2(B_{d,p,q}) \otimes \R,
-Q) \cong (E,g_E)\). Consequently, it carries the set of \(-2\)-classes
bijectively onto
\[
    R := \{x \in E \mid g_E(x,x) = 2\},
\]
which is the set of vectors with exactly two non-zero entries: one of which is
\(1\), and the other \(-1\). In other words,
\[
    R = \{e_i - e_j \mid i \ne j\}.
\]
Observe that this is just the set of roots of the Lie algebra
\(\mathfrak{sl}_d\C\), see \cite[\S{}15.1]{fultonHarris} for example.
Therefore, the set of \(-2\)-classes is \(\varphi^{-1}(R)\), and so
\[
    \mathcal{L} \subset \varphi^{-1}(R).
\]

It remains to show that each homology class in \(\varphi^{-1}(R)\) supports a
Lagrangian sphere. First, recall that the Picard-Lefschetz formula states
that, for the 4-dimensional Dehn twist \(\tau_L\) associated to a Lagrangian
2-sphere \(L\), the action of \(\tau_L\) on homology is given by
\[
    (\tau_L)_\ast(A) = \begin{cases}
        A + (A \cdot L)L, &\text{ if }A \in H_2,\\
	A,		  &\text{ if }A \in H_k,\,k\ne2,
    \end{cases}
\]
see \cite{seidelLectures} for example.
Next, recall the action of the Weyl group \(W\) on \(R\) is generated by reflections
\(\sigma_\alpha\)
through the hyperplanes orthogonal to the roots \(\alpha \in R\). In
particular, the action is determined by its effect
on the set of simple roots:
\[
    \{\alpha_i := e_i - e_{i+1} = \varphi(L_i) \mid 1 \le i < d\}.
\]
The reflection \(\sigma_{\alpha_i}\) acts as
\[
    \sigma_{\alpha_i}(\alpha_{i+1}) = \alpha_i + \alpha_{i+1} =
    \sigma_{\alpha_{i+1}}(\alpha_i),
\]
and as the identity on non-adjacent roots \(\alpha_j\), \(|j-i| > 1\). Now,
observe that the map \(\varphi\) intertwines the action of the Weyl group on
\(R\) and that of \(\tau_L\) on \(H_2(B_{d,p,q})\). Therefore, as \(W\) acts
transitively on \(R\), so must \(\tau_L\) on \(\varphi^{-1}(R)\). In
particular, each class in \(\varphi^{-1}(R)\) supports a Lagrangian sphere,
and so we conclude that \(\mathcal{L} = \varphi^{-1}(R)\), which finishes the
proof.
\end{proof}

\begin{lemma}[]
    \label{lem:homologyCompactification}
    Under the embedding \(B_{d,p,q} \hookrightarrow X\), the
generators \(L_i\) of the second homology \(H_2(B_{d,p,q})\) are identified
with \(\mathcal{E}_i - \mathcal{E}_{i+1} \in H_2(X)\) (up to sign).
\end{lemma}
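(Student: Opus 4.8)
The plan is to argue purely homologically, reducing the statement to lattice theory. Write $\iota\colon B_{d,p,q}\hookrightarrow X$ for the inclusion. First I would observe that $\iota_*$ preserves intersection numbers (they are computed by local transverse counts, so $\iota_*\alpha\cdot_X\iota_*\beta=\alpha\cdot_B\beta$) and is injective: the Gram matrix $Q$ of the basis $\{L_i\}$ is minus the $A_{d-1}$ Cartan matrix, with $\det Q=(-1)^{d-1}d\ne 0$, so $\ker\iota_*\subseteq\ker Q=0$. Hence $\iota_*$ realises $(H_2(B_{d,p,q}),Q)$ as a rank-$(d-1)$ sublattice of $H_2(X)$ on which the intersection form is isometric to $Q$.

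Next I would pin down which sublattice this is. Since each $L_i\subset B_{d,p,q}$ is disjoint from the compactifying divisor $D=X\setminus B_{d,p,q}$, the class $\iota_*(L_i)$ is orthogonal to the class of every component of $D$. Working in the basis $\{H,\fibrationSectionL,E_i,\mathcal{E}_j\}$ of Lemma~\ref{lem:compactify} --- on which the form is diagonal, equal to $(1,-1,\dots,-1)$ --- and using the list of components of $D$ from Figure~\ref{fig:dualIntGraphD} (notably $\fibrationSectionL$, $E_n$, $F=H-\fibrationSectionL$, the resolution chains, and $\fibrationSectionR=H-E_0-\sum_j\mathcal{E}_j$), one checks that
\[
  [D]^{\perp}=\Lambda:=\Bigl\{\, \sum_{j=1}^{d} c_j\mathcal{E}_j \;:\; c_j\in\Z,\ \sum_{j=1}^{d} c_j=0 \,\Bigr\}.
\]
The mechanism is that orthogonality to $\fibrationSectionL$, $E_n$, $F$ and the resolution-chain classes --- which together span the unimodular sublattice $\langle H,\fibrationSectionL,E_0,\dots,E_n\rangle$ (these are all but one of the toric boundary classes of the toric blow-down of $X$, and deleting one boundary class never destroys spanning) --- forces a class into $\langle\mathcal{E}_1,\dots,\mathcal{E}_d\rangle$, and then orthogonality to $\fibrationSectionR$ imposes $\sum_j c_j=0$. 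On $\Lambda$ the intersection form is $(\sum c_j\mathcal{E}_j)\cdot(\sum c'_j\mathcal{E}_j)=-\sum_j c_jc'_j$, whose Gram matrix in the basis $\{\mathcal{E}_i-\mathcal{E}_{i+1}\}_{1\le i\le d-1}$ is again $Q$. Thus $\iota_*(H_2(B_{d,p,q}))$ and $\Lambda$ are lattices of the same rank $d-1$ and the same determinant $(-1)^{d-1}d$, so the isometric inclusion $\iota_*(H_2(B_{d,p,q}))\subseteq\Lambda$ must be an equality: a proper finite-index sublattice would scale the determinant by a square strictly larger than $1$.

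It remains to identify the chain. The classes $v_i:=\iota_*(L_i)$ now form a $\Z$-basis of $\Lambda$ with Gram matrix $Q$; in particular $v_i\cdot v_i=-2$, which inside $\Lambda$ forces $v_i=\pm(\mathcal{E}_{a_i}-\mathcal{E}_{b_i})$ for some $a_i\ne b_i$, while $v_i\cdot v_{i+1}=1$ and $v_i\cdot v_j=0$ for $|i-j|\ge 2$. Recording each $v_i$ as the edge $\{a_i,b_i\}$ on the vertex set $\{1,\dots,d\}$, these relations say precisely that consecutive edges meet in exactly one vertex while non-consecutive edges are disjoint; since the $d-1$ edges are distinct, span $\Lambda$, and lie on $d$ vertices, an easy induction (begin with $e_1=\{x_0,x_1\}$; disjointness from $e_1,\dots,e_{i-1}$ forces $e_{i+1}$ to attach a brand-new vertex at the free end $x_i$ of $e_i$) shows they form a single path $x_0-x_1-\cdots-x_{d-1}$ visiting every vertex. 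Relabelling the $\mathcal{E}_j$ so that $\mathcal{E}_{x_{i-1}}$ becomes $\mathcal{E}_i$ then gives $\iota_*(L_i)=\pm(\mathcal{E}_i-\mathcal{E}_{i+1})$, as claimed.

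The step I expect to be the main obstacle is the middle one, the identity $[D]^{\perp}=\Lambda$: it presupposes a complete accounting of the homology classes of all components of $D$, that is, a full unwinding of the symplectic-cut and blow-up bookkeeping of Section~\ref{sec:compactification}. Everything after that is elementary lattice theory. (If one wanted to fix the labelling of the $\mathcal{E}_j$ on the nose, rather than only up to the path produced in the last step, one would instead have to compute the geometric intersection numbers $L_i\cdot\mathcal{E}_j$ directly from the almost toric picture --- both curves pass through the node of the singular fibre over the $i$-th focus--focus value --- which is feasible but considerably less clean, so I would avoid it.)
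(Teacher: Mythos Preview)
Your argument is correct and genuinely different from the paper's. The paper works locally: it takes an almost-toric neighbourhood $N_i$ of the standard sphere $L_i$ (a small rectangle around two adjacent focus--focus values), observes that $H_2(N_i)$ is spanned by $F,\mathcal{E}_i,\mathcal{E}_{i+1}$, and then pins down the coefficients of $[L_i]$ by pairing against a piece of the section $\fibrationSectionL$ (which kills the $F$-coefficient), using $L_i^2=-2$, and using $\omega(L_i)=0$ together with $\omega(\mathcal{E}_i)=\omega(\mathcal{E}_{i+1})=l$. This is short and entirely geometric, and it fixes the labelling on the nose because the neighbourhood $N_i$ already knows which two exceptional classes it contains.

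Your route trades that local picture for global lattice theory: orthogonality to the whole divisor $D$, a discriminant/index argument to force $\iota_*(H_2(B_{d,p,q}))=\Lambda$, and then root-system combinatorics to recover the $A_{d-1}$ chain. What this buys you is conceptual cleanliness and independence from the almost-toric picture; what it costs is precisely the point you flag: you only recover the chain up to a permutation of the $\mathcal{E}_j$. Since the downstream use of the lemma (Corollary~\ref{cor:homologyClassOfSpheresXdpq}) is insensitive to this permutation, your version is adequate for the paper, but it is strictly weaker than the lemma as stated. Your identification of the bottleneck is also accurate: the equality $[D]^\perp=\Lambda$ does require knowing that the classes $\fibrationSectionL,F$ together with the $D_\infty$ components span $\langle H,\fibrationSectionL,E_0,\ldots,E_n\rangle$ rationally, which in turn rests on the general fact that the toric boundary of a smooth toric surface spans Picard with exactly two relations (so deleting one ray still spans over $\Q$). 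That is not hard, but it is the one place where you cannot avoid unwinding the blow-up bookkeeping.
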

\begin{proof}
    Zooming in to Figure~\ref{fig:symplecticCut}(b), one can see that each
sphere \(L_i\) has a neighbourhood \(N_i \subset X_{d,p,q}\) that looks like
that shown in Figure~\ref{fig:sphereNbhd}.
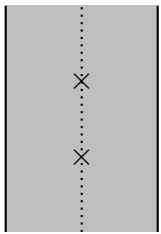
\begin{figure}[ht]\centering \begin{tikzpicture}[
    moment polytope,
    rotate=90
]
    \fill (0,1) -- (0,-1) -- (3,-1) -- (3,1) -- cycle;
    \draw (0,1) -- (3,1) (0,-1) -- (3,-1);
    \draw[dotted] (0,0) -- (1,0) node {\(\times\)} -- (2,0) node {\(\times\)}
	-- (3,0);
\end{tikzpicture}
    \caption{
	A neighbourhood of one of the standard Lagrangian spheres \(L_i
	\subset X\).
    } 
    \label{fig:sphereNbhd}
\end{figure}
The second homology of \(N_i\) is freely generated by \(F,\mathcal{E}_i,
\mathcal{E}_{i+1} \in H_2(X)\), so we must have
\[
    L_i = \lambda F + \mu_i \mathcal{E}_i + \mu_{i+1} \mathcal{E}_{i+1}.
\]
Note that the left edge of the diagram in Figure~\ref{fig:sphereNbhd} is part
of the \(\fibrationSectionL{}\) sphere, which is a non-zero class \(S_i\) in
\(H_2(N_i, \partial N_i)\) satisfying
\[
    F \cdot S_i = 1, \qquad\text{and}\qquad \mathcal{E}_j \cdot S_i = 0,
\]
for \(j = i,i+1\). Therefore, since \(L_i\) is disjoint from \(S_i\), we find
that \(\lambda = 0\). Now, \(L_i\) must be a \(-2\) class, and so
\[
    -2 = L_i^2 = -\mu_i^2 - \mu_{i+1}^2,
\]
which implies that \(|\mu_i| = 1 = |\mu_{i+1}|\). Finally, the Lagrangian
condition ensures that
\[
    0 = \omega(L_i) = \mu_i\omega(\mathcal{E}_i) +
    \mu_{i+1}\omega(\mathcal{E}_{i+1}) = l(\mu_i + \mu_{i+1}),
\]
which shows that the coefficients \(\mu_i\) and \(\mu_{i+1}\) must have
opposite sign. Hence, the result follows.
\end{proof}

Combining the above with Lemma~\ref{lem:homologyClassOfSpheresBdpq}, we have
proved the following:
\begin{corollary}[]
    \label{cor:homologyClassOfSpheresXdpq}
    Any Lagrangian sphere \(L \subset B_{d,p,q} \subset X\) has homology class
of the form
\[
    [L] = \mathcal{E}_i - \mathcal{E}_j,
\]
for some \(1 \le i \ne j \le d\).
\end{corollary}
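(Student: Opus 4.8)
The plan is simply to feed Lemma~\ref{lem:homologyClassOfSpheresBdpq} through the identification provided by Lemma~\ref{lem:homologyCompactification}. By Lemma~\ref{lem:homologyClassOfSpheresBdpq}, any Lagrangian sphere \(L \subset B_{d,p,q}\) represents a class
\(
    [L] = \pm(L_i + L_{i+1} + \cdots + L_{i+k}) \in H_2(B_{d,p,q})
\)
for suitable \(i \ge 1\) and \(k \ge 0\) with \(i+k \le d-1\). Pushing this forward along the inclusion \(B_{d,p,q} \hookrightarrow X\) and invoking Lemma~\ref{lem:homologyCompactification}, which identifies each generator \(L_i\) with \(\pm(\mathcal{E}_i - \mathcal{E}_{i+1})\), the class \([L] \in H_2(X)\) becomes \(\pm\) a telescoping sum of consecutive differences of the exceptional classes \(\mathcal{E}_j\).

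Before telescoping I would pin down the signs, which is the one point requiring a little care. The classes \(L_i\) form an \(A_{d-1}\) chain, so \(L_i \cdot L_{i+1} = 1\) and \(L_i \cdot L_j = 0\) for \(|i-j|>1\); on the other hand the \(\mathcal{E}_j\) are pairwise orthogonal with \(\mathcal{E}_j \cdot \mathcal{E}_j = -1\), so that \((\mathcal{E}_i - \mathcal{E}_{i+1}) \cdot (\mathcal{E}_{i+1} - \mathcal{E}_{i+2}) = 1\). Comparing these, the individual sign ambiguities in \(L_i \mapsto \pm(\mathcal{E}_i - \mathcal{E}_{i+1})\) are not independent of one another: after orienting the \(L_i\) compatibly along the chain we may (indeed must) take \(L_i = \mathcal{E}_i - \mathcal{E}_{i+1}\) in \(H_2(X)\) simultaneously for every \(1 \le i \le d-1\).

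With the signs fixed, the image of \(L_i + \cdots + L_{i+k}\) in \(H_2(X)\) is the telescoping sum
\[
    \sum_{t=0}^{k} (\mathcal{E}_{i+t} - \mathcal{E}_{i+t+1}) = \mathcal{E}_i - \mathcal{E}_{i+k+1}.
\]
Setting \(j := i+k+1\), we have \(1 \le i \ne j \le d\) (using \(i+k \le d-1\)), and hence \([L] = \pm(\mathcal{E}_i - \mathcal{E}_j)\), which is either \(\mathcal{E}_i - \mathcal{E}_j\) or \(\mathcal{E}_j - \mathcal{E}_i\) — in either case of the claimed form. I do not expect any genuine obstacle in this argument; it is an immediate corollary of the two lemmas once the compatibility of the sign conventions is observed, and that compatibility is forced by the intersection form.
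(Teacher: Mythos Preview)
Your proof is correct and follows exactly the approach the paper takes: it simply says ``Combining the above with Lemma~\ref{lem:homologyClassOfSpheresBdpq}, we have proved the following'' and states the corollary without further argument. Your observation that the sign ambiguities in Lemma~\ref{lem:homologyCompactification} are forced to be coherent by the intersection form (so that the sum telescopes) is a detail the paper leaves implicit; it is the right justification and fills a small gap in the exposition.
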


\subsection{Moduli spaces of fibre curves}
\label{sec:fibreModuliSpaces}

In this section, we will examine the \(J\)-holomorphic curves of \(X\) that
represent the homology class \(F = H-S\).
Consider the moduli space\footnote{The notation used here is adopted from
\cite{mcduffSalamonCurves}. We'll often suppress \(X\) in the notation
\(\mathcal{M}_{0,0}(X,F;J)\).} \(\mathcal{M}_{0,0}(X,F;J)\) of genus 0
\(J\)-holomorphic curves in the class \(F\).
The present goal is to prove that (after removing a single exotic stable
curve) the forgetful map
\begin{equation}
    \label{eq:lefFib}
    \overline{\mathcal{M}}_{0,1}(X,F;J) \to \overline{\mathcal{M}}_{0,0}(X,F;J)
\end{equation}
gives rise to a Lefschetz fibration for all almost complex structures \(J\) in
some suitable subset \(\mathcal{J}(\Delta)\) of the space of compatible
structures \(\mathcal{J}(X,\omega)\), defined as follows.
\begin{definition}
    Let \(\Delta\) be a symplectic divisor in \((X,\omega)\), and define
\[
    \mathcal{J}(\Delta) := \left\{ J \in \mathcal{J}(X,\omega) \, \left| \,
	\parbox{6.5cm}{each irreducible component of \(\Delta\) admits a
	\(J\)-holomorphic representative}\right.\right\}.
\]
\end{definition}

The reason we're interested in this space of almost complex structures is that
the Lagrangian spheres we consider live in the complement of a divisor in
\(X\). Under neck stretching, the almost complex structure only changes in a
small neighbourhood of the Lagrangian, and so, supposing the initial almost
complex structure \(J_0\) is a member of \(\mathcal{J}(\Delta)\), then \(J_t\)
will also be for all \(t \ge 0\).

Consider the divisor \(D'\) obtained by excising the single component
corresponding to the edge of the toric boundary with homology class \(E_n\).
That is, \(D'\) is the subgraph of the dual intersection graph of \(D\)
(Figure~\ref{fig:dualIntGraphD}(a)) consisting of every vertex except the bottom
one. We are primarily concerned with the case where \(\Delta = D'\) in the
above definition. We will show that, for each \(J \in \mathcal{J}(D')\), there
exists a \(J\)-holomorphic curve in the class \(E_n\). Note that this does not
necessarily mean that \(J \in \mathcal{J}(D)\) since the aforementioned curve
may not have the same image as the \(E_n\)-component\footnote{Although the
reader will lose little by ignoring this fact.} of \(D\).
Indeed, \emph{a priori} the Lagrangian sphere \(L\) may
intersect the \(E_n\)-component of \(D\), whereas
Corollary~\ref{cor:En-disjoins-L} shows that, for a sufficiently long neck
stretch \(J_t\), the unique \(J_t\)-holomorphic curve of class \(E_n\) is
disjoint from \(L\).

The existence of the \(J\)-holomorphic curve in the class \(E_n\) will then be
used to show that, for all \(J \in \mathcal{J}(D')\), the curves in the Gromov
compactification \(\overline{\mathcal{M}}_{0,0}(F;J)\) come in the following
types:
\begin{itemize}
    \item a smooth curve of class \(F\), called a \emph{smooth fibre curve};
    \item a nodal curve with exactly 2 components with homology classes
	\(\mathcal{E}_j, F - \mathcal{E}_j\), which will correspond to the
	Lefschetz critical fibres; and
    \item an \emph{exotic} nodal curve \(\infinityCurve{J}\) with \(n+2\)
	genus 0 components, all, except one (the component covering the
	unique \(J\)-holomorphic curve in the class \(E_n\)), of which cover
	those of \(D_\infty \cap D'\).
\end{itemize}
Moreover, the unique \(J\)-holomorphic curves in the classes
\(\fibrationSectionL{}\) and \(\fibrationSectionR{}\) are sections of the
fibration \eqref{eq:lefFib} --- see Corollary~\ref{cor:c00FmoduliDescription}
and Proposition~\ref{prop:lefFibXJ} for further details.

\subsubsection{Almost complex structures on symplectic divisors}
\label{subsub:acsOnDivisors}

That \(\mathcal{J}(D') \ne \emptyset\), follows from
straightforward extensions of results on the symplectic neighbourhood theorem.
See Appendix~\ref{app:acsOnDivisor} for the detailed construction. To apply
the results therein, we need to check that
the components of \(D'\) intersect symplectically orthogonally. However, this
follows from standard facts on toric geometry, see \cite[\S{}3.2]{evansLTF}
for example. Indeed, each intersection point between components of \(D\) is
modelled on a Delzant corner of the moment polytope of \(X\), which is fibred
symplectomorphic to \((\C^2,\omega_{\C^2})\) with its usual toric structure.
The components of \(D'\) correspond to the coordinate planes in \(\C^2\),
which intersect symplectically orthogonally. See
Figure~\ref{fig:divisorComponentsOrthogonal} for a cartoon picture of what
happens.
\begin{figure}[ht]\centering \begin{tikzpicture}[
    moment polytope,
    scale=1.5,
]
    \fill (0,1) -- (0,0) -- (1,-1) -- (1,1) -- cycle;
    \draw (0,1) -- node [left] {\(D_i\)} (0,0) -- node [below left] {\(D_j\)} (1,-1);

    \draw[->,decorate,
	decoration={snake,amplitude=.4mm,segment length=2mm,post length=1mm}]
	(2,0) -- (4,0)
	node [above,text width=3cm,align=center,midway] { fibred }
	node [below,text width=4cm,align=center,midway] { symplectomorphic };
    \begin{scope}[shift={(5,-0.5)}]
	\fill (0,1) -- (0,0) -- (1,0) -- node [right] {\(\cong \C^2\)} (1,1) -- cycle;
	\draw (0,1) -- (0,0) -- (1,0);
    \end{scope}
\end{tikzpicture}
    \caption{
	Intersecting components of the divisor \(D'\) do so symplectically
	orthogonally, since any Delzant corner of a moment polytope is
	integral affine isomorphic to the standard one. The fibred
	symplectomorphism sends the pieces \(D_i\) and \(D_j\) to the
	coordinate planes in \(\C^2\).
    }
    \label{fig:divisorComponentsOrthogonal} 
\end{figure}
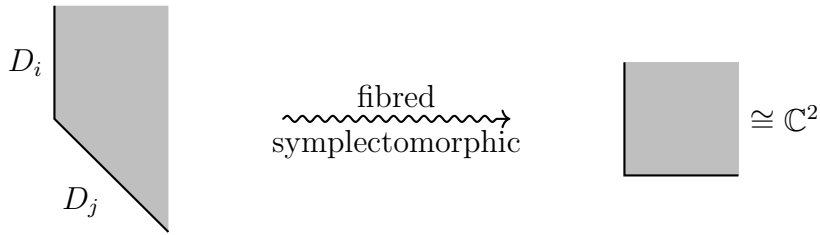

\subsubsection{Non-bubbling under variations of \(J\)}
\label{sec:nonBubbling}

Choose \(J \in \mathcal{J}(D')\). In this section we show that
the class \(E_n\) admits a \(J\)-holomorphic representative. To this end,
recall that the intersection pairing on \(H_2(X) = H_2(X;\Z)\) is
non-degenerate. Given any subset \(\mathcal{A} \subset H_2(X)\) we denote the
orthogonal complement (with respect to the intersection pairing) of the
submodule \(\Z\langle \mathcal{A} \rangle\) it generates by
\(\mathcal{A}^\perp\). By non-degeneracy, \(\dim \mathcal{A}^\perp = \codim
\mathcal{A}\). This first result is a basic application of McDuff's positivity
of intersections \cite{mcduffPosInts}.

\begin{lemma}[]
    \label{lem:containBoundary}
    Let \(A \in H_2(X)\) be any homology class represented by a nodal
\(J\)-holomorphic sphere, that is, there exists \(\mathcal{A} \subset
H_2(X)\) such that \(A = \sum_{A_\alpha \in \mathcal{A}}
A_\alpha\) where
each \(A_\alpha \in \mathcal{A}\) has a \(J\)-holomorphic representative. Let
\(C \subset D\) be a connected subgraph\footnote{In the sense of the dual
intersection graph of \(D\)} of \(D\) and suppose
that  \(A \cdot C_i = 0\) for each \(J\)-holomorphic component \(C_i\) of
\(C\), and that,
for some \(i\), there exists \(k_i \in \N\) such that \(k_i[C_i] \in
\mathcal{A}\). Then, for all \(i\), there exist positive integers \(k_i\)
such that \(k_i[C_i] \in \mathcal{A}\), that is, any nodal curve representing
\(A\) contains \(C\).
\end{lemma}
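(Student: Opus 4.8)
The plan is a one-step propagation argument powered by positivity of intersections, run inductively over the connected graph $C$. Since $C$ is connected, fix an enumeration $C_{i_0}, C_{i_1}, C_{i_2}, \dots$ of its components in which $C_{i_0}$ is the component furnished by the hypothesis and, for every $j \ge 1$, the component $C_{i_j}$ is adjacent in the dual intersection graph of $D$ to some $C_{i_{j'}}$ with $j' < j$ (list the vertices of a spanning tree of $C$ so that each has an earlier neighbour). Recall that an edge of this graph records a single transverse intersection point, so $[C_a]\cdot[C_b] = 1$ for adjacent components while non-adjacent components are disjoint. I would then prove by induction on $j$ that there is an integer $k_{i_j}\ge 1$ with $k_{i_j}[C_{i_j}] \in \mathcal{A}$, the case $j=0$ being the hypothesis.

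For the inductive step, suppose $j\ge 1$ and $k_{i_{j'}}[C_{i_{j'}}]\in\mathcal{A}$ for the earlier neighbour $C_{i_{j'}}$ of $C_{i_j}$. Pairing $A$ with $[C_{i_j}]$ and decomposing along $\mathcal{A}$,
\[
    0 \;=\; A\cdot[C_{i_j}] \;=\; \sum_{A_\alpha\in\mathcal{A}} A_\alpha\cdot[C_{i_j}],
\]
where the left-hand vanishing is the hypothesis $A\cdot C_{i_j}=0$. The summand $A_\alpha = k_{i_{j'}}[C_{i_{j'}}]$ contributes $k_{i_{j'}}[C_{i_{j'}}]\cdot[C_{i_j}] = k_{i_{j'}} > 0$, so some other $A_\beta\in\mathcal{A}$ satisfies $A_\beta\cdot[C_{i_j}] < 0$. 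Now $C_{i_j}$ is an embedded sphere carrying a $J$-holomorphic model, and $A_\beta$ is represented by an irreducible $J$-holomorphic curve (a component of the nodal representative of $A$, hence with connected domain and irreducible image); by McDuff's positivity of intersections \cite{mcduffPosInts} two $J$-holomorphic curves with distinct images meet non-negatively, so $A_\beta\cdot[C_{i_j}]<0$ forces the $A_\beta$-curve and $C_{i_j}$ to have the same image. The $A_\beta$-curve is then a branched cover of $C_{i_j}$, i.e. $A_\beta = k_{i_j}[C_{i_j}]$ with $k_{i_j}=\deg\ge 1$, so $k_{i_j}[C_{i_j}]\in\mathcal{A}$. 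Since $C$ is connected the induction exhausts $C$, which is exactly the statement that any nodal representative of $A$ contains $C$.

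The step I expect to be the crux is the implication ``$A_\beta\cdot[C_{i_j}]<0 \Rightarrow$ the $A_\beta$-curve covers $C_{i_j}$'': it rests on knowing that every class in $\mathcal{A}$ is carried by a genuinely irreducible curve, so that a shared image is a shared component and hence coincidence of images, and it is where positivity of intersections is really used. There is also one bookkeeping wrinkle: if the component $E_n$ lies in $C$, the displayed computation invokes $A\cdot[E_n]=0$, which is not literally part of the stated hypothesis but does hold in every situation where the lemma is applied (together with the fact, established elsewhere in this subsection, that $[E_n]$ admits a $J$-holomorphic representative), so the same argument propagates through $E_n$ as well.
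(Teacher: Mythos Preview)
Your argument is correct and is essentially the paper's own proof: both run a one-step propagation along the connected graph $C$, using $A\cdot C_{i_j}=0$ together with a strictly positive contribution from an already-found neighbour to force, via positivity of intersections, some remaining component $A_\beta$ to cover $C_{i_j}$. The only cosmetic differences are that the paper phrases the induction on the number of components and sums over \emph{all} previously found $k_i[C_i]$ (using that $D$ is a cycle so $C_{m+1}$ has at most two neighbours), whereas you order via a spanning tree and use a single earlier neighbour; your version is in fact marginally cleaner since it does not need the ``at most two neighbours'' observation. Your final paragraph about $E_n$ is unnecessary: in every application the subgraph $C$ is chosen inside $D'$ (hence all its components are $J$-holomorphic), so the hypothesis $A\cdot C_i=0$ is available for every $C_i$ you propagate through.
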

\begin{proof}
    We induct on the number, \(m\), of irreducible components of \(C\). The base
case \(m=1\) is trivial, so assume the result holds for some \(m > 0\). In
other words, \(C = \bigcup_{1 \le i \le m+1}(C_i)\) and, for \(1 \le i \le m\), there
exist positive integers \(k_i > 0\) such that \(k_i[C_i] \in \mathcal{A}\).
Note that, by construction, \(C_{m+1}\) has at most two neighbours in \(C\),
each of which it intersects exactly once positively. Therefore, we have that
\[
    C_{m+1} \cdot \sum_{i=1}^m k_i C_i > 0.
\]
Combining this with the assumption that \(A \cdot C_{m+1} = 0\), we obtain
\[
    \left( A - \sum_{i=1}^m k_i [C_i] \right) \cdot C_{m+1} < 0.
\]
Positivity of intersections then implies that one of the remaining components
\(A_\alpha \in \mathcal{A} \backslash \{k_i [C_i] \mid 1 \le i \le m\}\)
covers \(C_{m+1}\). Thus, there exists \(k_{m+1} > 0\) such that
\(k_{m+1} [C_{m+1}] \in \mathcal{A}\).
\end{proof}

\begin{lemma}[]
    \label{lem:enBubble}
    For any \(J \in \mathcal{J}(D')\), any stable curve of class \(E_n\) is
smooth: \(\overline{\mathcal{M}}_{0,0}(E_n;J) = \mathcal{M}_{0,0}(E_n;J)\).
That is, a \(J\)-holomorphic curve of class \(E_n\) cannot bubble under
variations of \(J \in \mathcal{J}(D')\).
\end{lemma}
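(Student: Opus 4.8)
The plan is to argue by contradiction using positivity of intersections together with the homological rigidity provided by Lemma~\ref{lem:containBoundary}. Suppose some stable curve in \(\overline{\mathcal{M}}_{0,0}(E_n;J)\) is nodal, so that \(E_n = \sum_{A_\alpha \in \mathcal{A}} A_\alpha\) with each \(A_\alpha\) carrying a \(J\)-holomorphic representative and \(|\mathcal{A}| \ge 2\). First I would record the intersection-theoretic data: \(E_n\) is an exceptional class, so \(E_n^2 = -1\), \(E_n \cdot K_X = -1\), and \(E_n\) has positive \(\omega\)-area bounded by \(\lambda < \omega(H - S)\) from Lemma~\ref{lem:compactify}(4). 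Since \(J \in \mathcal{J}(D')\), every component of \(D'\) — in particular each sphere in the chain \(D_\infty \cap D'\), the sections \(S\), \(F\), and \(S'\), and the other components — has a \(J\)-holomorphic representative; by positivity of intersections the classes \(A_\alpha\) must pair non-negatively with all of these except when a component \(A_\alpha\) actually covers one of them.

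Next I would localise where the bubbling can occur. Because the \(E_n\)-component of \(D\) is the unique edge of the toric boundary \(D\) adjacent to both \(S'\)-side and \(E_n\)-side chains, its neighbours in \(D\) intersect it once transversally; on the other hand \(E_n \cdot D_i = 0\) for the components \(D_i\) of \(D'\) that are not neighbours of \(E_n\) in \(D\). The key step is then to apply Lemma~\ref{lem:containBoundary}: if some multiple of a component \(C_i\) of a connected subgraph \(C \subset D\) disjoint (homologically) from \(E_n\) appeared in \(\mathcal{A}\), the whole subgraph \(C\) would have to appear, and summing the areas of all those boundary spheres would already exceed \(\omega(E_n)\), since \(\omega(E_n) < \lambda < \omega(H-S) = \omega(F) - \text{(something positive)}\) and each of the boundary spheres carries definite positive area — contradiction once the chain has length at least one. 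This forces \(\mathcal{A}\) to consist only of classes that pair non-negatively with every component of \(D'\) and whose total area is at most \(\omega(E_n)\); a short case analysis, using that \(H_2(X)\) is generated by \(H, S, E_i, \mathcal{E}_j\) and that each \(A_\alpha\) is represented by a (possibly multiply-covered) sphere hence satisfies the adjunction inequality \(A_\alpha^2 \ge -k_\alpha^2\) and \(A_\alpha \cdot K_X \ge -k_\alpha\) for its covering multiplicity, rules out any non-trivial splitting of the exceptional class \(E_n\).

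I expect the main obstacle to be the final case analysis: ensuring that no configuration of low-area \(J\)-holomorphic spheres — for instance a multiple cover of some \(\mathcal{E}_j\) together with a class like \(E_n - \mathcal{E}_j\), or a chain of small exceptional classes — can sum to \(E_n\). The clean way to dispatch this is to note that any \(J\)-holomorphic sphere class \(A_\alpha\) of positive area with \(A_\alpha \cdot E_n \le 0\) and \(A_\alpha\) disjoint from the configuration either lies in the span of the boundary classes (excluded by the area/Lemma~\ref{lem:containBoundary} argument above) or equals a positive multiple of an exceptional class whose area already exceeds what is available; the strict area inequality \(\omega(E_n) < \lambda\) is exactly what makes this quantitative argument close. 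Once that is in place, \(\mathcal{A}\) cannot contain two or more classes, so the stable curve is irreducible and smooth, i.e. \(\overline{\mathcal{M}}_{0,0}(E_n;J) = \mathcal{M}_{0,0}(E_n;J)\).
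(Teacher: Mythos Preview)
Your overall strategy---use Lemma~\ref{lem:containBoundary} together with positivity of intersections and area bounds---matches the paper's, and you correctly identify the contradiction that arises when some component of \(\mathcal{A}\) covers a piece of the divisor chain \(C\) orthogonal to \(E_n\) (the paper takes \(C\) to be \(D\) minus \(E_n\) and its two neighbours, and uses \(\omega(F) > \omega(E_n)\) since \(F \in C\)). So the first half of your argument is essentially the paper's.

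The gap is in the ``final case analysis,'' which you yourself flag as the main obstacle. Your proposed attack via adjunction inequalities and generic area bounds does not close it, and the paper does something much cleaner. Once no \(A_\alpha\) covers a component of \(C\), positivity of intersections forces every \(A_\alpha\) to lie in the orthogonal complement \(\mathcal{C}^\perp \subset H_2(X)\). The paper then \emph{computes this complement explicitly}: \(\mathcal{C}^\perp = \mathbb{Z}\langle E_n,\ E^+ - E^-,\ \mathcal{E}_j - \mathcal{E}_{j+1}\rangle\), where \(E^\pm\) are the two neighbours of \(E_n\). The decisive point, which you are missing, is that the compactification in Lemma~\ref{lem:compactify} was arranged so that \(\omega(E^+) = \omega(E^-)\) and all \(\omega(\mathcal{E}_j)\) are equal; hence every class in \(\mathcal{C}^\perp\) has \(\omega\)-area an \emph{integer multiple} of \(\omega(E_n)\). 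Since the \(A_\alpha\) have positive area summing to \(\omega(E_n)\), there can be only one of them. This linear-algebra-plus-area argument dispatches all the awkward configurations (like \(\mathcal{E}_j + (E_n - \mathcal{E}_j)\)) at once, because \(\mathcal{E}_j\) intersects \(S' \in C\) and so is not even in \(\mathcal{C}^\perp\).

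In short: replace your case analysis with the computation of \(\mathcal{C}^\perp\) and the observation that \(\omega\) restricted to \(\mathcal{C}^\perp\) takes values in \(\omega(E_n)\cdot\mathbb{Z}\). (Also, a small slip: \(F = H - S\), so \(\omega(H - S) = \omega(F)\), not \(\omega(F)\) minus something.)
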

\begin{proof}
    Consider the connected subgraph \(C \subset D\) consisting of \(D\) minus
\(E_n\) and its neighbours. Said another way, \(C\) is a the maximal subgraph
none of whose components intersect \(E_n\). Then, \(C\) consists of all the
vertices except the bottom three in Figure~\ref{fig:dualIntGraphD}. Denote the
set of homology classes of the components of \(C\) as \(\mathcal{C}\).

Suppose that \(\mathbf{u} = (u_\alpha) \in
\overline{\mathcal{M}}_{0,0}(E_n;J)\) is a stable \(J\)-holomorphic curve in
the class \(E_n\), and that, for some components \(u_\alpha\) of
\(\mathbf{u}\) and \(C_i\) of \(C\), we have \(A_\alpha := [u_\alpha] = k_i
[C_i]\). Since \(F \in \mathcal{C}\), and, for all \(c \in \mathcal{C}\),
\(E_n \cdot c = 0\), we may apply Lemma~\ref{lem:containBoundary} to deduce
that \(A_\beta = k_\beta F\) for some component \(u_\beta\) of \(\mathbf{u}\),
which contradicts \(\omega(F) > \omega(E_n) \ge \omega(A_\beta)\). Therefore,
for all \(\alpha\), \(c \in \mathcal{C}\), and \(k \in \N\), we have
\(A_\alpha \ne kc\). Then, the fact that \(E_n \in \mathcal{C}^\perp\)
combined with positivity of intersections, implies that \(A_\alpha \in
\mathcal{C}^\perp\) for all \(\alpha\). Indeed, \(0 = E_n \cdot c =
\sum_\alpha A_\alpha\cdot c\) and \(A_\alpha \cdot c \ge 0\).

    A simple calculation shows that \(\mathcal{C}\) is a rank \(n+2\)
linearly independent set, and so, since \(\dim H_2(X) = n+d+3\), to
explicitly describe \(A_\alpha\) it suffices to find a rank \(d + 1\) linearly
independent set in \(\mathcal{C}^\perp\). Let \(E^-,E^+\) denote the
neighbours of \(E_n \in D'\) and note that \(E^+ - E^- \in
\mathcal{C}^\perp\), since they both intersect \(E_n\) once positively. Then,
it is easily checked that
\[
    \mathcal{C}^\perp = \Z\langle E_n, E^+ - E^-,
	\mathcal{E}_j - \mathcal{E}_{j+1} : 1 \le j < d\rangle.
\]
Recall that during the compactification process of Lemma~\ref{lem:compactify},
the curves \(E^\pm\) were introduced through symplectic cutting the toric
boundary during the resolution of singularities procedure. Since we have choice over
the affine lengths of these cuts, they can be made equal. That is,
\(\omega(E^+) = \omega(E^-)\). Furthermore, by Lemma~\ref{lem:compactify},
\(\omega(\mathcal{E}_j) = l\) does not depend on \(j\), so we find that, for
some integers \(\lambda_n,\lambda_\pm,\mu_j \in \Z\),
\[
    \omega(A_\alpha) = \omega\left(\lambda_nE_n + \lambda_\pm(E^+ - E^-) +
    \sum_{j=1}^{d-1} \mu_j(\mathcal{E}_j - \mathcal{E}_{j+1})\right) =
    \lambda_n\omega(E_n).
\]
That is, the \(\omega\)-area of \(u_\alpha\) is an integer multiple of
\(\omega(E_n)\), and since
\[
    \sum_\alpha \omega(A_\alpha) = \omega(E_n)
\]
this is only possible if it is in fact equal to \(\omega(E_n)\). Since
non-constant \(J\)-holomorphic curves have positive area, this implies that
\(A_\alpha = E_n\) and hence, \(\mathbf{u} = u_\alpha\) is a smooth curve.
\end{proof}

\begin{corollary}[]
    \label{cor:EnAlwaysJhol}
    For any \(J \in \mathcal{J}(D')\), the class \(E_n\) admits a
\(J\)-holomorphic representative.
\end{corollary}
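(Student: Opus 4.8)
The plan is to run a continuity/connectedness argument in the almost complex structure, with Lemma~\ref{lem:enBubble} doing the work of controlling the limit. Set
\[
    \mathcal{U} := \{ J \in \mathcal{J}(D') \mid \mathcal{M}_{0,0}(E_n;J) \neq \emptyset \}.
\]
I would show that \(\mathcal{U}\) is non-empty, open, and closed in \(\mathcal{J}(D')\); since \(\mathcal{J}(D')\) is path-connected (cf.\ Appendix~\ref{app:acsOnDivisor}), this forces \(\mathcal{U} = \mathcal{J}(D')\), which is exactly the claim.

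For non-emptiness, note that \(\mathcal{J}(D) \subseteq \mathcal{J}(D')\) and that the integrable complex structure underlying the toric structure on \(X\) lies in \(\mathcal{J}(D)\) and makes the component of the toric boundary \(D\) in the class \(E_n\) a holomorphic sphere; hence \(\mathcal{U} \supseteq \mathcal{J}(D) \neq \emptyset\). For closedness, take \(J_k \to J_\infty\) in \(\mathcal{J}(D')\) together with curves \(u_k \in \mathcal{M}_{0,0}(E_n;J_k)\). Because \(\omega(E_n)\) is fixed, Gromov's compactness theorem \cite{mcduffSalamonCurves} produces a subsequence converging to a stable \(J_\infty\)-holomorphic curve in \(\overline{\mathcal{M}}_{0,0}(E_n;J_\infty)\); by Lemma~\ref{lem:enBubble} this limit is already a smooth curve, i.e.\ an element of \(\mathcal{M}_{0,0}(E_n;J_\infty)\), so \(J_\infty \in \mathcal{U}\).

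For openness I would invoke automatic transversality. If \(J \in \mathcal{U}\) and \(u\) represents \(E_n\), then \(u\) is somewhere injective because \(E_n\) is a primitive class (so it cannot be a non-trivial multiple cover); the adjunction formula, with \(E_n^2 = -1\) and \(c_1(E_n) = 1\), then forces \(u\) to be embedded with normal bundle \(\mathcal{O}(-1)\). Embedded \(J\)-holomorphic spheres in a symplectic \(4\)-manifold with self-intersection \(\geq -1\) are Fredholm regular \cite{mcduffSalamonCurves}, so the implicit function theorem yields, for every \(J'\) sufficiently \(C^\infty\)-close to \(J\) (in particular for \(J'\) in a neighbourhood within \(\mathcal{J}(D')\)), a nearby \(J'\)-holomorphic sphere in the class \(E_n\). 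Hence \(\mathcal{U}\) is open, and the connectedness argument concludes the proof.

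The substantive geometric input — ruling out bubbling — has already been carried out in Lemma~\ref{lem:enBubble}, so I do not expect a genuine obstacle in this corollary. The only steps needing care are the external ingredients: the path-connectedness of \(\mathcal{J}(D')\), and the observation (via adjunction) that every representative of \(E_n\) is embedded, which is what makes the automatic transversality statement apply verbatim.
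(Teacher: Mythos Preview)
Your proposal is correct and follows essentially the same approach as the paper: define the subset of \(\mathcal{J}(D')\) for which \(E_n\) is represented, show it is non-empty via \(\mathcal{J}(D)\), open via automatic transversality (the paper cites \cite{hoferLizanSikorav97genericity} and uses \(c_1(E_n)>0\), which amounts to the same criterion as your adjunction/embedded argument), and closed via Gromov compactness combined with Lemma~\ref{lem:enBubble}, then conclude by connectedness of \(\mathcal{J}(D')\).
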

\begin{proof}
    Let \(\mathcal{J}(D',E_n)\) denote the subset of \(\mathcal{J}(D')\) of
almost complex structures that admit a \(J\)-holomorphic representative of
\(E_n\). Section~\ref{subsub:acsOnDivisors} says that \(\mathcal{J}(D) \subset
\mathcal{J}(D',E_n)\) is non-empty, and so, since \(c_1(E_n) > 0\), we may
apply automatic transversality \cite{hoferLizanSikorav97genericity} to show
that \(\mathcal{J}(D',E_n)\) is open in \(\mathcal{J}(D')\). Moreover, by the
non-bubbling result of Lemma~\ref{lem:enBubble}, \(\mathcal{J}(D',E_n)\) is
also a closed subset. Indeed, pick a sequence \(J_\nu \in
\mathcal{J}(D',E_n)\) converging to \(J \in \mathcal{J}(D')\) and a
corresponding sequence of curves \(u_\nu \in \mathcal{M}_{0,0}(E_n;J_\nu)\).
Gromov compactness ensures that there is a convergent subsequence \(u_\nu \to
\mathbf{u}\), for some \(\mathbf{u} \in \overline{\mathcal{M}}_{0,0}(E_n;J)\).
However, Lemma~\ref{lem:enBubble} states that \(\mathbf{u}\) must actually be
a smooth curve, and so we find that \(J \in \mathcal{J}(D',E_n)\). Thus, since
\(\mathcal{J}(D')\) is connected, we must have that \(\mathcal{J}(D',E_n) =
\mathcal{J}(D')\).
\end{proof}

\subsubsection{The universal \(J\)-holomorphic curve}
\label{sec:universalCurve}

For \(J \in \mathcal{J}(D')\), let \(E_J\) be the image of the unique
\(J\)-holomorphic curve in the homology class \(E_n\), and let \(\DinftyJ{J}\)
be the divisor consisting of \((D_\infty \cap D') \cup E_J\). See
Figure~\ref{fig:DJ} for a sketch of the curve configuration in a neighbourhood
of \(E_J\). Define \(\XJ{J} := X \backslash \DinftyJ{J}\).
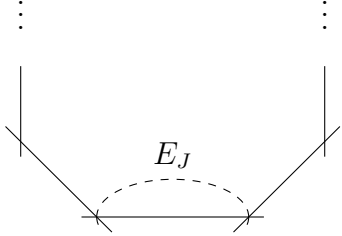
\begin{figure}\centering \begin{tikzpicture}
    \draw (-2,2) node [above=10pt] {\(\vdots\)};
    \draw (2,2) node [above=10pt] {\(\vdots\)};
    \draw (-2,0.8) -- (-2,2);
    \draw (-2.2,1.2) -- (-0.8,-0.2);
    \draw (-1.2,0) -- (1.2,0);
    \draw (2.2,1.2) -- (0.8,-0.2);
    \draw (2,0.8) -- (2,2);
    \draw[dashed] (1,0) arc [x radius=1, y radius=0.5, start angle=0, end
	angle=180] node [above, midway] {\(E_J\)};
    \draw[dashed] (1,0) arc [x radius=1, y radius=0.5, start angle=0, end
	angle=-20] ;
    \draw[dashed] (-1,0) arc [x radius=1, y radius=0.5, start angle=180, end angle=200];
\end{tikzpicture}
    \caption{
	Part of the divisors \(D_\infty\) and \(\DinftyJ{J}\). The only
	difference between the two is the central component: \(\DinftyJ{J}\)
	contains the unique \(J\)-holomorphic curve \(E_J\) in the class
	\(E_n\), whereas the \(E_n\)-component of \(D_\infty\) is part of the
	toric boundary divisor of \(X\).
    }
    \label{fig:DJ}
\end{figure}

Following Wendl \cite[\S{}7.3.3]{wendl2018lowDimCurves}, we define the
\emph{universal \(J\)-holomorphic curve} to be the forgetful map
\[
    \pi_J : \overline{\mathcal{M}}_{0,1}(\XJ{J},F;J) \to
    \overline{\mathcal{M}}_{0,0}(\XJ{J},F;J).
\]
Our goal is to prove that these moduli spaces can be endowed with smooth
structures that realise \(\pi_J\) as a smooth Lefschetz fibration. The only
notable difference between our situation and that in
\cite{wendl2018lowDimCurves} is that the target manifold \(\XJ{J}\) is not closed.
However, due to the specifics of our situation, we can work around this and
show that the required results of \cite[\S{}7.3.4]{wendl2018lowDimCurves} hold
here.

We prove the following results first under a mild genericity condition \(J \in
\mathcal{J}_\mathrm{reg}(D')\) explained in
Definition~\ref{def:genericityModD} below, and then extend them to
all \(J \in \mathcal{J}(D')\) by an automatic transversality and non-bubbling
argument as in Corollary~\ref{cor:EnAlwaysJhol}.
\begin{remark}
    The reason we invoke genericity in the first place is that it gives us a
convenient argument to deduce the structure of
\(\overline{\mathcal{M}}_{0,0}(\XJ{J},F;J)\). One may wonder why we don't just
assume genericity henceforth and skip the automatic transversality argument.
The reason is that in chapter~\ref{ch:neckStretch} we will have to modify our
almost complex structures a few times, and knowing the results of this section
apply to all \(J \in \mathcal{J}(D')\) makes those arguments more
straightforward.
\end{remark}

\begin{lemma}[\emph{c.f.}~Lemma~7.43 of \cite{wendl2018lowDimCurves}]
    \label{lem:mBar00HomeoToC}
    The moduli space \(\overline{\mathcal{M}}_{0,0}(\XJ{J},F;J)\) is homeomorphic
to the complex plane \(\C\).
\end{lemma}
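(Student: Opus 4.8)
The plan is to establish the homeomorphism $\overline{\mathcal{M}}_{0,0}(\XJ{J},F;J) \cong \C$ by a two-stage argument: first analyse the moduli space for generic $J \in \mathcal{J}_\mathrm{reg}(D')$, and then remove the genericity hypothesis by an automatic transversality and non-bubbling argument exactly as in Corollary~\ref{cor:EnAlwaysJhol}. In the generic case, the first step is to pin down the structure of the uncompactified moduli space $\mathcal{M}_{0,0}(\XJ{J},F;J)$. Since $F^2 = 0$ and $c_1(F) = 2$ (which one computes from $F = H - \fibrationSectionL{}$), the expected dimension is $2$, and automatic transversality \cite{hoferLizanSikorav97genericity} (applicable since $F$ is represented by embedded spheres meeting the relevant curves positively) shows every smooth $F$-curve is regular and the moduli space is a smooth $2$-manifold; positivity of intersections and the adjunction formula show distinct $F$-curves are disjoint, so they foliate an open subset of $\XJ{J}$.

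The heart of the matter is identifying the nodal degenerations. The second step is to show, using $\omega(F) = 2l$, Lemma~\ref{lem:containBoundary}, and a homological bookkeeping argument in the spirit of Lemma~\ref{lem:enBubble}, that a stable nodal $F$-curve in $\XJ{J}$ is either a transverse two-component curve of classes $\mathcal{E}_j, F - \mathcal{E}_j$ for some $j$, or the exotic curve $\infinityCurve{J}$ built from the components of $D_\infty \cap D'$ together with $E_J$; but the exotic curve lies in the compactifying divisor $\DinftyJ{J}$, hence is \emph{not} in $\overline{\mathcal{M}}_{0,0}(\XJ{J},F;J)$ once we pass to the open manifold $\XJ{J}$. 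So in $\XJ{J}$ the only boundary strata of the moduli space are the $d$ nodal curves of type $\mathcal{E}_j \cup (F - \mathcal{E}_j)$. The third step is to assemble this into the topological statement: the moduli space is a connected $2$-manifold (connectedness follows because the $F$-curves, together with the nodal ones, cover all of $\XJ{J}$, which is connected, and the map $\XJ{J} \to \overline{\mathcal{M}}_{0,0}$ sending a point to the curve through it is continuous, open, and has connected fibres), it is non-compact (since the exotic curve has been removed, there is no limit for sequences of $F$-curves escaping towards $\DinftyJ{J}$), simply connected, and has exactly $d$ special points. The classification of non-compact surfaces, or more directly the fact that it is the base of a genus-$0$ Lefschetz fibration with $d$ critical values and connected total space $\XJ{J}$, forces it to be $\C$.

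For the extension to arbitrary $J \in \mathcal{J}(D')$, I would mirror Corollary~\ref{cor:EnAlwaysJhol}: the set of $J \in \mathcal{J}(D')$ for which the conclusion holds is open by automatic transversality (all relevant curves have $c_1 > 0$), and closed by Gromov compactness combined with the non-bubbling constraints forced by $\omega(F) = 2l$ and the disjointness of $F$-curves from $\DinftyJ{J}$ --- any limiting stable curve is again either smooth or one of the admissible nodal types, so no new degenerations appear; since $\mathcal{J}(D')$ is connected, the conclusion holds throughout.

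I expect the main obstacle to be the non-compactness of $\XJ{J}$: the cited results of Wendl \cite[\S{}7.3.3--7.3.4]{wendl2018lowDimCurves} are stated for closed target manifolds, so one must check that no $F$-curves or their bubbled limits run off to infinity in $\XJ{J}$, i.e. that every stable $F$-curve stays in a fixed compact subset and that the Gromov limit of $F$-curves cannot involve components escaping into or concentrating near $\DinftyJ{J}$. This is where the area identity $\omega(F) = 2l$, the fact that $F \cdot E_J = 1$ forces $F$-curves to meet $E_J$ in a single point (so they do not limit onto $E_J$ as a component), and the intersection-theoretic rigidity of $\DinftyJ{J}$ all have to be combined carefully; once that compactness-at-infinity is in hand, the argument reduces to Wendl's.
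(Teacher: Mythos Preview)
Your overall strategy---constrain the nodal degenerations, then identify the moduli space topologically---is sound and parallels the paper's logic. But there are two substantive divergences from the paper's proof, and one of them is a gap.

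First, the constraint on nodal types (your ``second step'') is exactly the content of Lemma~\ref{lem:constrainGenericNodal}, which the paper proves \emph{before} the present lemma using Fredholm regularity from the genericity hypothesis: every component passes through the perturbed region, so $c_1 > 0$ on each, forcing exactly two index-$0$ components; adjunction then pins down the classes as $(\mathcal{E}_j, F-\mathcal{E}_j)$. Your proposed route via $\omega(F)=2l$ and Lemma~\ref{lem:containBoundary} may work but is not how the paper does it, and you should simply cite Lemma~\ref{lem:constrainGenericNodal}.

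Second, and more importantly, your ``third step'' has a gap: you assert the moduli space is simply connected without justification, and the alternative (``base of a genus-$0$ Lefschetz fibration with connected total space'') does not determine the base---a trivial $S^2$-bundle over any surface would satisfy this. The paper sidesteps all of this with a direct trick you are missing: the divisor component in class $\fibrationSectionL{}$ is a $J$-holomorphic sphere with $\fibrationSectionL{}\cdot F = 1$, so it meets every $F$-curve exactly once. After showing the evaluation map $\mathrm{ev}:\overline{\mathcal{M}}_{0,1}(\XJ{J},F;J)\to \XJ{J}$ is a homeomorphism (bijective by the foliation property, proper, target locally compact Hausdorff), composing a parametrisation $v:\C\to \fibrationSectionL{}\setminus\DinftyJ{J}$ with $\pi_J\circ\mathrm{ev}^{-1}$ gives an explicit homeomorphism $\C\to\overline{\mathcal{M}}_{0,0}(\XJ{J},F;J)$. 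This is much cleaner than any surface-classification argument and is the intended proof.

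A minor correction: $F\cdot E_n = (H-\fibrationSectionL{})\cdot E_n = 0$, not $1$; the $F$-curves are disjoint from $E_J$, which is part of why removing $\DinftyJ{J}$ cleanly excises a single point from the compactified moduli space.
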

\begin{lemma}[\emph{c.f.}~Lemma~7.45 of \cite{wendl2018lowDimCurves}]
    \label{lem:universalCurveLefFib}
    The moduli spaces \(\overline{\mathcal{M}}_{0,1}(\XJ{J},F;J)\) and
\(\overline{\mathcal{M}}_{0,0}(\XJ{J},F;J)\) admit smooth structures, making them
open manifolds of dimension 4 and 2 respectively, such that \(\pi_J\) is a
Lefschetz fibration with genus zero fibres and exactly one critical point in
each singular fibre.
\end{lemma}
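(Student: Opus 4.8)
The plan is to follow \cite[\S{}7.3.3--7.3.4]{wendl2018lowDimCurves} closely, the only genuinely new point being to keep the excised divisor \(\DinftyJ{J}\) from interfering; as in Corollary~\ref{cor:EnAlwaysJhol} I would first argue under the genericity hypothesis \(J \in \mathcal{J}_\mathrm{reg}(D')\) of Definition~\ref{def:genericityModD} and then remove it. Away from the nodal fibres this is immediate: by the adjunction formula every somewhere-injective \(F\)-curve is an embedded sphere (here \(F^2 = 0\) and \(c_1(F) = 2\), as forced by the dimensions in the statement), and its normal bundle has degree \(F^2 = 0 \ge -1\), so automatic transversality \cite{hoferLizanSikorav97genericity} makes it Fredholm regular for \emph{every} \(J \in \mathcal{J}(D')\). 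Consequently \(\mathcal{M}_{0,0}(\XJ{J},F;J)\) and \(\mathcal{M}_{0,1}(\XJ{J},F;J)\) are smooth manifolds of dimensions \(2\) and \(4\), and since each fibre is an embedded \(\CP^1\), the restriction of \(\pi_J\) to the smooth locus is a smooth \(S^2\)-bundle. (This is exactly the step for which genericity is invoked in \cite{wendl2018lowDimCurves}; here it is automatic, which is why, as the preceding remark says, genericity is only a convenience for the global picture.)

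Next I would analyse a neighbourhood of a nodal curve. By the classification of stable \(F\)-curves carried out in Section~\ref{sec:fibreModuliSpaces}, once \(\DinftyJ{J}\) (and hence the exotic curve \(\infinityCurve{J}\)) is removed, the only nodal curves in \(\overline{\mathcal{M}}_{0,0}(\XJ{J},F;J)\) are the \(d\) configurations \(C_j = u_j \cup v_j\) with \([u_j] = \mathcal{E}_j\) and \([v_j] = F - \mathcal{E}_j\). A short intersection computation (using \(c_1 = 1\) on each piece and adjunction) shows both components are regular embedded \(-1\)-spheres meeting transversally and positively at a single point, and positivity of intersections forces exactly one such configuration for each \(j\), so the \([C_j]\) are \(d\) distinct points of the moduli space. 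The standard gluing theorem for a transverse node (\cite[\S{}7.3.4]{wendl2018lowDimCurves}, built on the gluing analysis of \cite{mcduffSalamonCurves}) then supplies, near each \(C_j\), a smooth chart on \(\overline{\mathcal{M}}_{0,0}(\XJ{J},F;J)\) in which the gluing parameter is a holomorphic coordinate centred at \([C_j]\), together with a compatible chart on \(\overline{\mathcal{M}}_{0,1}(\XJ{J},F;J)\) in which \(\pi_J\) is the model map \((z,w)\mapsto zw\); in particular \([C_j]\) is a Lefschetz-critical value carrying exactly one critical point, the node of \(C_j\).

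To globalise, I would patch these local charts with the \(S^2\)-bundle charts of the first step: they agree on overlaps because in each the base coordinate is, up to a biholomorphism, the value of a fixed evaluation or area functional, so the transition maps are smooth. Invoking Lemma~\ref{lem:mBar00HomeoToC}, which identifies the underlying topological space of \(\overline{\mathcal{M}}_{0,0}(\XJ{J},F;J)\) with \(\C\), this produces a smooth structure realising \(\overline{\mathcal{M}}_{0,0}(\XJ{J},F;J)\) as a smooth open \(2\)-manifold diffeomorphic to \(\C\) with critical values \([C_1],\dots,[C_d]\), and a compatible smooth structure on \(\overline{\mathcal{M}}_{0,1}(\XJ{J},F;J)\) making it an open \(4\)-manifold for which \(\pi_J\) is proper (all fibres are compact), has genus-zero regular fibres, and has exactly one critical point in each of the \(d\) singular fibres --- that is, a Lefschetz fibration. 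Finally, the genericity hypothesis is removed exactly as in Corollary~\ref{cor:EnAlwaysJhol}: automatic transversality keeps the \(F\)-curves, the \(\mathcal{E}_j\) and the \(F - \mathcal{E}_j\) Fredholm regular for all \(J \in \mathcal{J}(D')\), while the non-bubbling results of Section~\ref{sec:fibreModuliSpaces} confine the Gromov limits of \(F\)-curves to the three listed types as \(J\) ranges over the connected set \(\mathcal{J}(D')\), so no new strata or degenerations appear.

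The step I expect to be delicate is the gluing: one needs to know that every \(F\)-curve, and every curve in its Gromov closure, stays inside a fixed compact subset of the open manifold \(\XJ{J}\), so that Wendl's construction (written for closed targets) applies without change. This holds because such a curve cannot have a component supported in \(\DinftyJ{J}\) --- precisely the homological and \(\omega\)-area estimates already used in Section~\ref{sec:fibreModuliSpaces} to classify the limiting configurations rule this out --- so all the relevant curves and glued families remain in a compact region and the analysis goes through verbatim. A secondary point to verify is that the gluing chart is genuinely holomorphic, so that \(\overline{\mathcal{M}}_{0,0}(\XJ{J},F;J)\) is smooth, and not merely a topological manifold, near each \([C_j]\); this is where the transversality of the node and the regularity of both components are used.
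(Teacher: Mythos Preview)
Your proposal is correct and follows essentially the same approach as the paper: implicit function theorem/automatic transversality for the smooth locus, Wendl's gluing argument at the transverse nodes, and an observation that the non-compactness of \(\XJ{J}\) does not interfere. The paper's own proof is much terser --- it simply cites \cite[\S{}3]{mcduffSalamonCurves} and \cite[Lemma~7.45]{wendl2018lowDimCurves} and remarks that ``these arguments are inherently local, so they also apply to our situation of the non-compact target manifold \(\XJ{J}\)'' --- whereas you spell out the compact-containment justification and the removal of genericity (which the paper defers to Lemma~\ref{lem:constrainNodal} and Corollary~\ref{cor:c00FmoduliDescription}); but the content is the same.
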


\begin{definition}
    \label{def:genericityModD}
    Fix \(J_0 \in \mathcal{J}(D')\) and denote \(D_{J_0} := D' \cup E_{J_0}\).
Let \(U_{J_0} = X \backslash D_{J_0} \subset \XJ{J_0}\) and consider the
subset of almost complex structures \linebreak \(\mathcal{J}_\mathrm{reg}(U_{J_0},J_0)
\subset \mathcal{J}(D')\) defined to be those that satisfy
\[
    J_{D_{J_0}} = J|_{X \backslash U_{J_0}} = J_0|_{X \backslash U_{J_0}} =
    J_0|_{D_{J_0}},
\]
and every \(J\)-holomorphic curve that maps an injective point\footnote{An
    injective point of a curve \(u : \Sigma \to X\) is one where
\(u^{-1}(u(z)) = \{z\}\) and \(\ud u(z)\) is injective.} into
\(U_{J_0}\) is Fredholm regular.\footnote{A curve \(u\) is Fredholm regular when the
    linearisation of the Cauchy-Riemann operator at the curve is surjective.
    Said another way, this means the moduli space near \(u\) is cut out
transversely and is thus a smooth manifold.} This is a Baire subset\footnote{A Baire
subset is one which contains a countable intersection of open dense sets.}
\cite[Theorem~4.8]{wendl2014curvesLectures} (see also
\cite[Theorem~A.4]{wendl2020Contact3folds} and Remark~3.2.3 of
\cite{mcduffSalamonCurves}) and so we can choose a ``small perturbation'' of
\(J_0\) that lives in \(\mathcal{J}_\mathrm{reg}(U_{J_0},J_0)\). We shall
abuse notation and denote this set by \(\mathcal{J}_\mathrm{reg}(D')\) since
the choice of \(J_0\) doesn't really matter.
\end{definition}

\begin{remark}
    One should think of the set \(U_{J_0}\) above as the set in which almost
complex structures in \(\mathcal{J}_\mathrm{reg}(D')\) are allowed to vary.
\end{remark}

We first prove a result that restricts the form of the non-smooth stable
curves in \(\overline{\mathcal{M}}_{0,0}(\XJ{J},F;J)\). 
\begin{lemma}[]
    \label{lem:constrainGenericNodal}
    For every \(J \in \mathcal{J}_\mathrm{reg}(D')\), every non-smooth stable
curve \linebreak \(\mathbf{u} \in \overline{\mathcal{M}}_{0,0}(\XJ{J},F;J)\) is a nodal
curve with exactly two transversely intersecting components \(\mathbf{u} =
(u_1,u_2)\) satisfying \(([u_1],[u_2]) = (\mathcal{E}_j,F-\mathcal{E}_j)\) for
some \({1 \le j \le d}\). In particular, there are no multiple covers.
\end{lemma}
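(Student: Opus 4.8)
The plan is to combine positivity of intersections against the \(J\)-holomorphic spheres we already control --- the components of \(D'\) and the curve \(E_J\) --- with the genericity of Definition~\ref{def:genericityModD} (which forces Fredholm regularity, hence \(c_1\ge 1\), of every component once we know its image avoids \(D_{J_0}\)) and a short computation in \(H_2(X)\). Write \(\mathbf u=(u_\alpha)\), let \(B_\alpha\) be the distinct classes of the underlying simple curves of its components, occurring with total multiplicities \(m_\alpha\ge1\), so that \(\sum_\alpha m_\alpha B_\alpha=F\).

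The first --- and decisive --- step is to check that no underlying simple curve has image contained in \(D_{J_0}=D'\cup E_{J_0}\). Since \(\mathbf u\) maps into \(\XJ{J}=X\setminus\DinftyJ{J}\), no component covers a component of \(\DinftyJ{J}\). Among the remaining components of \(D_{J_0}\), each of the spheres \(\fibrationSectionL\) and \(\fibrationSectionR\) meets, in the cycle \(D\), the last sphere of a resolution chain (these chains are nonempty, as the continued fractions of \(\tfrac{p}{q}\) and \(\tfrac{p}{p-r}\) have positive length), and that chain sphere is a component of \(D_\infty\cap D'\subseteq\DinftyJ{J}\); hence neither of \(\fibrationSectionL,\fibrationSectionR\) is contained in \(\XJ{J}\), so neither can be covered. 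Finally, if some component covered the \(F\)-component of \(D\) it would carry a class \(kF\) with \(k\ge1\), and since \(\omega(F)=2l\) is the total \(\omega\)-area of \(\mathbf u\) this forces \(k=1\) and no other component, making \(\mathbf u\) smooth --- a contradiction. (The \(c_1\)-count below also precludes ghost components, since deleting one would split the domain into at least three subtrees, each carrying a class of positive first Chern number.)

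Consequently every underlying simple curve of \(\mathbf u\) has an injective point in \(U_{J_0}\), is Fredholm regular, and so satisfies \(c_1(B_\alpha)\ge1\) (its moduli space has dimension \(2c_1(B_\alpha)-2\ge0\)). Since \(\sum_\alpha m_\alpha c_1(B_\alpha)=c_1(F)=2\), each \(c_1(B_\alpha)\ge1\), \(F\) is primitive, and \(\mathbf u\) is non-smooth, there must be exactly two components \(u_1,u_2\), each simple of multiplicity one with \(c_1(B_i)=1\); in particular there are no multiple covers, and \(B_1+B_2=F\). Each \(u_i\) is disjoint from \(\DinftyJ{J}\), so by positivity of intersections \(B_i\) is orthogonal to the classes of all its components; since the resolution spheres and \(E_n\) are disjoint from the non-toric exceptional spheres, those classes lie in \(\Z\langle H,\fibrationSectionL,E_0,\dots,E_n\rangle\) and form a linearly independent chain of \(n+2\) spheres, so (as \(F,\mathcal E_1,\dots,\mathcal E_d\) are \(d+1\) independent classes in the orthogonal complement, together extending to a \(\Z\)-basis of \(H_2(X)\)) that complement is exactly \(\Z\langle F,\mathcal E_1,\dots,\mathcal E_d\rangle\). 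Write \(B_i=x_iF+\sum_j y_{ij}\mathcal E_j\). Positivity of intersections of \(u_i\) with the \(J\)-holomorphic spheres \(\fibrationSectionL,\fibrationSectionR\) (not covered by \(u_i\), by the first step) gives \(0\le B_i\cdot\fibrationSectionL=x_i\) and \(0\le B_i\cdot\fibrationSectionR=c_1(B_i)-x_i=1-x_i\), so \(x_i\in\{0,1\}\); since \(x_1+x_2=1\), say \(x_1=0\) and \(x_2=1\). Then \(\sum_j y_{1j}=c_1(B_1)=1\), while the adjunction inequality for the simple sphere \(u_1\) gives \(-\sum_j y_{1j}^2=B_1^2\ge c_1(B_1)-2=-1\), i.e.\ \(\sum_j y_{1j}^2\le1\), which with integrality forces exactly one \(y_{1j}\) to equal \(1\) and the rest to vanish. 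Hence \(B_1=\mathcal E_j\) and \(B_2=F-\mathcal E_j\) for some \(1\le j\le d\); and since \(B_1\cdot B_2=\mathcal E_j\cdot(F-\mathcal E_j)=1\) and both classes have vanishing adjunction defect, \(u_1\) and \(u_2\) are embedded spheres meeting transversely in a single point, as claimed.

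The main obstacle is the first step: ruling out that a component covers \(\fibrationSectionL\), \(\fibrationSectionR\), or the \(F\)-sphere, since only after this does the genericity condition apply to \emph{every} component and unlock the \(c_1\)- and intersection-number bookkeeping. The one genuine computation is the identification of the orthogonal complement as \(\Z\langle F,\mathcal E_1,\dots,\mathcal E_d\rangle\), where the combinatorics of the compactification of Lemma~\ref{lem:compactify} (cf.\ also Lemma~\ref{lem:enBubble}) enter; everything else is routine positivity of intersections and linear algebra.
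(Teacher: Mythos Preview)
Your proof is correct and follows essentially the same strategy as the paper: show that every component passes through \(U_{J_0}\) (hence is Fredholm regular with \(c_1\ge1\)), use \(c_1(F)=2\) together with the orthogonal complement \(\Z\langle F,\mathcal E_1,\dots,\mathcal E_d\rangle\) to reduce to two simple components, and finish with adjunction. The differences are cosmetic: where the paper cites Wendl for the two-embedded-components conclusion and then solves \(A_i^2=-1,\ c_1(A_i)=1\) directly, you argue the component count by hand (including the ghost case) and pin down the \(F\)-coefficient via positivity of intersections with the sections \(\fibrationSectionL,\fibrationSectionR\) before invoking the adjunction inequality --- a pleasant alternative, but not a different route.
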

\begin{proof}
Let \(A_\alpha\) be
the homology class of an irreducible non-constant component \(u_\alpha\) of
\(\mathbf{u} = (u_\alpha)_\alpha\). Since \(u_\alpha\) is disjoint from
\(\DinftyJ{J_0}\), positivity of intersections implies that \(A_\alpha \cdot
D_\beta = 0\) for each irreducible component \(D_\beta\) of
\(\DinftyJ{J_0}\). We write this condition as \(A_\alpha \in
\DinftyJ{J_0}^\perp\). Since the components of \(\DinftyJ{J_0}\) form a rank
\(n+2\) linearly independent set, and \(\dim H_2(X) = n+d+3\), we find that
\(\DinftyJ{J_0}^\perp = \Z\langle F, \mathcal{E}_j : 1 \le j \le d\rangle\).
Thus we may write, for some integers \(\lambda,\mu_j \in \Z\),
\begin{equation}
    \label{eq:homologyClassOfStableFCurve}
    A_\alpha = \lambda F + \sum_{j=1}^d \mu_j \mathcal{E}_j.
\end{equation}

We claim that \(c_1(A_\alpha) > 0\) for all non-constant components of
\(\mathbf{u}\). Indeed, this will follow from the Fredholm regularity
condition of \(J \in \mathcal{J}_\mathrm{reg}(D')\), provided that each
non-constant component passes through \(U_{J_0}\). Now, by the open mapping
theorem, a \(J\)-holomorphic map \(u_\alpha : S^2 \to D_{J_0}\) is either
constant, or covers a component of \(D_{J_0}\). However, since \(A_\alpha \in
\DinftyJ{J_0}^\perp\), the latter case would imply that \(u_\alpha\) covers
the \(F\)-component of \(D_{J_0}\), which is only possible if \(\mathbf{u}\)
is smooth. Therefore, in the non-smooth case, we conclude that every component
\(u_\alpha\) passes through \(U_{J_0}\). However, \emph{a priori},
\(u_\alpha\) may not be simple, so we pass to its underlying simple curve, say
\(u_\alpha'\). This satisfies the condition that it maps an injective point
into \(U_{J_0}\), and so, it is Fredholm regular, implying that
\(c_1(u_\alpha') > 0\), and thus \(c_1(u_\alpha) = c_1(A_\alpha) > 0\).

Combining the above with the arguments of Proposition~4.8, Lemma~4.12, and
\S{}4.3 of \cite{wendl2018lowDimCurves}, we deduce that \(\mathbf{u}\) is
a nodal curve with exactly two embedded components of index 0 intersecting in
a single node. Let us relabel the components as \(\mathbf{u} = (u_1,u_2)\).
Since the index of a curve is given by
\begin{equation}
    \label{eq:closedIndexFormula}
    \ind(u) = 2c_1([u]) - 2 \ge 0,
\end{equation}
we find that \(c_1(A_i) = 1\) for both components \((u_1,u_2)\) of
\(\mathbf{u}\). As \(u_i\) is embedded, the adjunction formula
\cite[Theorem~2.6.4]{mcduffSalamonCurves} says
\[
    c_1(A_i) = A_i^2 + 2,
\]
implying that \(A_i^2 = -1\) and so \(\sum_j \mu_j^2 = 1\), from which we
deduce that exactly one \(\mu_j\) is non-zero, and furthermore, this
coefficient is \(\pm1\). The equation
\[
    2\lambda + \mu_j = c_1(A_i) = 1
\]
ensures that either \(A_i = \mathcal{E}_j\), or, \(A_i = F - \mathcal{E}_j\).
Finally, the condition
\[
    \sum_{i=1}^2 A_i = F
\]
implies that the other component \(u_{i+1}\) of \(\mathbf{u}\) satisfies
\([u_{i+1}] = A_{i+1} = F - A_i\), from which the result follows.
\end{proof}

\begin{remark}
    We call the types of nodal curves arising from
Lemma~\ref{lem:constrainGenericNodal} \emph{curves of type} \((\mathcal{E}_j,
F-\mathcal{E}_j)\).
\end{remark}

\begin{proof}[Proof of Lemma~\ref{lem:mBar00HomeoToC}]
    A topological manifold structure on
\(\overline{\mathcal{M}}_{0,0}(\XJ{J},F;J)\) is constructed via a gluing
argument, such as that in the proof of Lemma~7.43 in
\cite{wendl2018lowDimCurves}. Our application of gluing is valid since 
Lemma~\ref{lem:constrainGenericNodal} showed that the only non-smooth
stable curves are nodal curves with exactly two transversely intersecting
components.

It remains to show that
\(\overline{\mathcal{M}}_{0,0}(\XJ{J},F;J)\) is homeomorphic to \(\C\). To this
end, recall the evaluation map
\[
    \mathrm{ev} :
    \overline{\mathcal{M}}_{0,1}(\XJ{J},F;J) \to \XJ{J} :
    \mathrm{ev}([(\mathbf{u},x)]) = \mathbf{u}(x).
\]
In our case, \(\mathrm{ev}\) is bijective, which follows by a standard
foliation and compactness argument (for example, combine
\cite[Proposition~2.53]{wendl2018lowDimCurves}, with an argument such as the
end of the proof of Theorem~1.16 of \cite{wendl2020Contact3folds}). Moreover,
since it's a proper map,\footnote{Preimages of compact sets are compact.} and
\(\XJ{J}\) is locally compact and Hausdorff, it follows that \(\mathrm{ev}\)
is a homeomorphism onto \(\XJ{J}\). Now fix a parametrisation \(v : S^2 \to
X\) of the divisor component of class \(\fibrationSectionL{}\), and remove the
unique point\footnote{This is the point \(\fibrationSectionL\cap D_\infty\).}
from the domain that maps to \(\DinftyJ{J}\). Then, since
\(\fibrationSectionL{} \cdot F = 1\), the composition \(\pi_J \circ
\mathrm{ev}^{-1} \circ v : \C \to \overline{\mathcal{M}}_{0,0}(\XJ{J},F;J)\)
is the required homeomorphism.
\end{proof}

\begin{proof}[Proof of Lemma~\ref{lem:universalCurveLefFib}]
    The smooth structures on the moduli spaces are constructed through the
usual implicit function theorem argument \cite[\S{}3]{mcduffSalamonCurves} on
the set of Fredholm regular curves, combined with the gluing argument at the
nodes given in \cite[Lemma~7.45]{wendl2018lowDimCurves}. The same result
there also shows that \(\pi_J\) also has a Lefschetz fibration structure at
the nodal points. Since these arguments are inherently local, they also apply
to our situation of the non-compact target manifold \(\XJ{J}\).
\end{proof}

\begin{corollary}[]
    \label{cor:eulerCharM01Bar}
    The moduli space \(\overline{\mathcal{M}}_{0,1}(\XJ{J},F;J)\) is homeomorphic
to\linebreak \(Y \# k\CP^2\), where \(Y\) is a ruled surface over
\(\overline{\mathcal{M}}_{0,0}(\XJ{J},F;J) \cong \C\), and \(k \ge 0\) is the
number of nodal fibres. In particular, the Euler characteristic satisfies
\begin{equation}
    \chi(\overline{\mathcal{M}}_{0,1}(\XJ{J},F;J)) = k + 2.
\end{equation}
\end{corollary}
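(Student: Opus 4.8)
The plan is to identify $\overline{\mathcal{M}}_{0,1}(\XJ{J},F;J)$ as a smooth $4$-manifold fibred over $\overline{\mathcal{M}}_{0,0}(\XJ{J},F;J)\cong\C$ with generic fibre an $F$-curve (a sphere) and exactly $k$ singular fibres, each a nodal union of two spheres meeting transversely at one point (by Lemma~\ref{lem:constrainGenericNodal} in the generic case, and by Lemmas~\ref{lem:mBar00HomeoToC} and~\ref{lem:universalCurveLefFib} in general). The Lefschetz fibration structure from Lemma~\ref{lem:universalCurveLefFib} tells us that away from the $k$ singular fibres the map $\pi_J$ is a smooth $S^2$-bundle over an open subset of $\C$, hence trivial, so the total space retracts onto a trivial $S^2$-bundle over $\C$ with $k$ small discs modified.

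First I would recall the standard fact that a Lefschetz fibration over a disc (or over $\C$) with $k$ critical points, each contributing a single nodal fibre, is obtained from the trivial bundle (a ruled surface $Y$ over the base) by performing $k$ operations each of which replaces a trivial $S^2\times D^2$ piece by the plumbing/blow-up model near a node; topologically each such local modification is a connect sum with a $\CP^2$ (equivalently, a fibrewise blow-up), which changes the Euler characteristic by $+1$. This gives the identification $\overline{\mathcal{M}}_{0,1}(\XJ{J},F;J) \cong Y \# k\CP^2$ with $Y$ a ruled surface over $\C$. Alternatively, and more cleanly for the Euler characteristic count, I would compute $\chi$ directly by the multiplicativity of Euler characteristic over the fibration: the base $\C$ is contractible with $\chi=1$; a regular fibre is $S^2$ with $\chi=2$; a nodal fibre (two spheres meeting at a point) has $\chi = 2+2-1 = 3$. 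Stratifying the base into the $k$ critical values and its complement, and using $\chi(\text{total}) = \chi(\text{regular part}) + \sum_{\text{crit}} \chi(\text{nodal fibre})$, we get
\[
    \chi\big(\overline{\mathcal{M}}_{0,1}(\XJ{J},F;J)\big) = 2\cdot\chi(\C \setminus \{k \text{ points}\}) + 3k = 2(1-k) + 3k = k+2.
\]
This matches $\chi(Y \# k\CP^2) = \chi(Y) + k\,(\chi(\CP^2)-2) = 0 + k\cdot 1$... wait, rather $\chi(Y) = \chi(S^2)\chi(\C) = 2$ and each connect sum with $\CP^2$ adds $\chi(\CP^2)-2 = 1$, giving $2 + k$, consistent.

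The main obstacle is the non-compactness of the base: $\overline{\mathcal{M}}_{0,0}(\XJ{J},F;J) \cong \C$ is open, so I must be careful that the Euler-characteristic additivity and the ruled-surface identification still apply. For the $\chi$ computation this is fine provided the stratification into finitely many strata is valid, which it is since there are only $k < \infty$ nodal fibres (by Lemma~\ref{lem:constrainGenericNodal} there is one nodal fibre per class $\mathcal{E}_j$, $1\le j\le d$, so in fact $k \le d$); Euler characteristic with compact supports is additive over such stratifications and agrees with ordinary $\chi$ here since all spaces involved are finite CW complexes up to homotopy. For the identification as $Y \# k\CP^2$, since the fibration is trivial outside a compact set of the base, it suffices to work over a large disc $D \subset \C$ containing all $k$ critical values and note the restriction over $\C \setminus D$ is a trivial half-open-cylinder-times-$S^2$ which glues back without affecting the diffeomorphism type of the interesting part; this is exactly the content of the cited Lemma~7.45 of \cite{wendl2018lowDimCurves}, whose arguments are local and hence unaffected by non-compactness as already noted in the proof of Lemma~\ref{lem:universalCurveLefFib}. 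The genericity hypothesis $J \in \mathcal{J}_\mathrm{reg}(D')$ can finally be removed by the automatic transversality and non-bubbling argument invoked in the proof of Lemma~\ref{lem:universalCurveLefFib}, since the Euler characteristic is a deformation invariant.
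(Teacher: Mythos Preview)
Your proposal is correct and follows essentially the same approach as the paper: the paper cites Wendl's Corollary~7.46 for the homeomorphism $\overline{\mathcal{M}}_{0,1}(\XJ{J},F;J)\cong Y\# k\CP^2$ and then computes $\chi(Y)=\chi(S^2)\chi(\C)=2$ with each $\CP^2$ summand adding $1$, exactly as in your second computation. Your additional stratification argument (regular fibres contribute $2(1-k)$, nodal fibres contribute $3k$) is a correct independent check that the paper does not include, and your discussion of non-compactness and removing the genericity hypothesis is more explicit than the paper's treatment but not needed beyond what is already established in the surrounding lemmas.
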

\begin{proof}
    The homeomorphism claim is Corollary~7.46 of \cite{wendl2018lowDimCurves},
and the Euler characteristic formula then follows from a calculation using
elementary properties of \(\chi\). Specifically, \(Y\) is fibred by 2-spheres
over \(\C\), and so
\[
    \chi(Y) = \chi(S^2)\chi(\C) = 2,
\]
and connect summing with \(\CP^2\) increases \(\chi\) by
\(1\).
\end{proof}

\begin{corollary}[]
    \label{cor:genericNodalCurves}
    For every \(J \in \mathcal{J}_\mathrm{reg}(D')\), there are exactly \(d\)
non-smooth stable curves in \(\overline{\mathcal{M}}_{0,0}(\XJ{J},F;J)\) given
exactly by the curves of type \((\mathcal{E}_j, F-\mathcal{E}_j)\) for all \(1
\le j \le d\).
\end{corollary}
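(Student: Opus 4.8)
The plan is to pin down the exact number of nodal fibres of $\pi_J$ by an Euler characteristic count, and then to show that no two of them can have the same type, using automatic transversality together with positivity of intersections.

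By Lemma~\ref{lem:universalCurveLefFib}, $\pi_J$ is a Lefschetz fibration with some number $k \ge 0$ of singular fibres, each of which is a nodal fibre of type $(\mathcal{E}_j, F - \mathcal{E}_j)$ by Lemma~\ref{lem:constrainGenericNodal}; so $k$ is precisely the number of non-smooth stable curves in $\overline{\mathcal{M}}_{0,0}(\XJ{J},F;J)$. From the proof of Lemma~\ref{lem:mBar00HomeoToC}, the evaluation map identifies $\overline{\mathcal{M}}_{0,1}(\XJ{J},F;J)$ homeomorphically with $\XJ{J}$, so Corollary~\ref{cor:eulerCharM01Bar} yields $\chi(\XJ{J}) = k + 2$. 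I would then compute $\chi(\XJ{J})$ independently. Since $X$ is a blow-up of $\CP^2$ it is simply connected, and as $\dim H_2(X) = n + d + 3$ we get $\chi(X) = n + d + 5$. The divisor $\DinftyJ{J} = (D_\infty \cap D') \cup E_J$ is, topologically, a linear chain of $n + 2$ embedded spheres: $D_\infty \cap D'$ consists of two chains of lengths $m$ and $k$ in the notation of Figure~\ref{fig:dualIntGraphD}, and these are rejoined through the single embedded sphere $E_J$, which meets each of its homological neighbours $E^{\pm} \subset D'$ exactly once and transversely and is disjoint from the remaining components, both facts following from positivity of intersections. Hence $\chi(\DinftyJ{J}) = (n+2) + 1 = n + 3$, and since $\XJ{J}$ is an oriented $4$-manifold a standard Euler characteristic computation gives $\chi(\XJ{J}) = \chi(X) - \chi(\DinftyJ{J}) = d + 2$. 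Comparing the two expressions gives $k = d$.

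Next I would show that for each $j$ there is at most one nodal fibre of type $(\mathcal{E}_j, F - \mathcal{E}_j)$. Both $\mathcal{E}_j$ and $F - \mathcal{E}_j$ are classes with square $-1$ and first Chern number $1$, so by the adjunction formula any somewhere injective $J$-holomorphic sphere in either class is embedded with index $0$, hence Fredholm regular by automatic transversality \cite{hoferLizanSikorav97genericity}; moreover, two distinct irreducible $J$-holomorphic curves in the same one of these classes would have non-negative mutual intersection number by positivity of intersections, contradicting $\mathcal{E}_j^2 = (F - \mathcal{E}_j)^2 = -1$. Therefore each of $\mathcal{E}_j$ and $F - \mathcal{E}_j$ has a unique $J$-holomorphic representative, so a nodal curve of type $(\mathcal{E}_j, F - \mathcal{E}_j)$ is determined by $j$ alone.

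Assembling the pieces: by Lemma~\ref{lem:constrainGenericNodal} every non-smooth stable curve in $\overline{\mathcal{M}}_{0,0}(\XJ{J},F;J)$ is of type $(\mathcal{E}_j, F - \mathcal{E}_j)$ for some $1 \le j \le d$; by the previous paragraph distinct nodal curves have distinct types, so there are at most $d$ of them; and the Euler characteristic count forces exactly $d$. Pigeonhole over the $d$ possible values of $j$ then gives precisely one nodal curve of each type $(\mathcal{E}_j, F - \mathcal{E}_j)$, as claimed. I expect the main obstacle to be the topological bookkeeping behind the identification of $\DinftyJ{J}$ as a chain of $n + 2$ spheres — in particular checking that adjoining $E_J$ glues the two components of $D_\infty \cap D'$ into a single path rather than producing something degenerate — together with keeping the Euler characteristic computation of the non-compact manifold $\XJ{J}$ rigorous.
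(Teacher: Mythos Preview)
Your proposal is correct and follows essentially the same approach as the paper's proof: both use the evaluation homeomorphism combined with Corollary~\ref{cor:eulerCharM01Bar} to equate the number of nodal fibres plus two with $\chi(\XJ{J})$, then compute $\chi(\XJ{J}) = d+2$ and finish by the uniqueness of each nodal type plus pigeonhole. The only difference is that the paper simply asserts $\chi(\XJ{J}) = d+2$, whereas you spell out the computation via $\chi(X) - \chi(\DinftyJ{J})$; your topological bookkeeping for $\DinftyJ{J}$ as a linear chain of $n+2$ spheres is accurate and the subtraction is justified since $\DinftyJ{J}$ has a tubular neighbourhood with $3$-manifold boundary.
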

\begin{proof}
    Since the evaluation map \(\mathrm{ev} :
\overline{\mathcal{M}}_{0,1}(\XJ{J},F;J) \to \XJ{J}\) is a homeomorphism, and
by Corollary~\ref{cor:eulerCharM01Bar}, we have that
\[
    \#\text{nodal curves} + 2 = \chi(\overline{\mathcal{M}}_{0,1}(\XJ{J},F;J)) =
    \chi(\XJ{J}) = d+2,
\]
and so there must be exactly \(d\) nodal curves in \(\XJ{J}\), all of which are of
type \((\mathcal{E}_j, F-\mathcal{E}_j)\) by
Lemma~\ref{lem:constrainGenericNodal}. Since a nodal curve of this type is
unique, that is, for each \(1 \le j \le d\), there is at most one nodal curve
of type \((\mathcal{E}_j, F-\mathcal{E}_j)\), then there is exactly one for
each integer \(j\).
\end{proof}

It is at this moment that we drop the genericity condition. The next
result is a generalisation of Lemma~\ref{lem:constrainGenericNodal}, and it
will be used to show that all of the previous results that relied on
genericity continue to hold.
\begin{lemma}[]
    \label{lem:constrainNodal}
    For every \(J \in \mathcal{J}(D')\), a non-smooth stable curve in
\linebreak
\(\overline{\mathcal{M}}_{0,0}(\XJ{J},F;J)\) must be of type \((\mathcal{E}_j,
F-\mathcal{E}_j)\) for some \(1 \le j \le d\).
\end{lemma}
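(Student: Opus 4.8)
The plan is to reproduce the proof of Lemma~\ref{lem:constrainGenericNodal} almost word for word, the only difference being the single place where the genericity hypothesis $J \in \mathcal{J}_\mathrm{reg}(D')$ was used: namely, to conclude that every non-constant component $u_\alpha$ of a non-smooth stable curve $\mathbf{u} = (u_\alpha) \in \overline{\mathcal{M}}_{0,0}(\XJ{J},F;J)$ has $c_1([u_\alpha]) > 0$. I will supply a replacement for this ingredient valid for all $J \in \mathcal{J}(D')$. As in the earlier proof, each component $u_\alpha$ has image in $\XJ{J} = X \setminus \DinftyJ{J}$, so positivity of intersections forces $[u_\alpha] \in \DinftyJ{J}^\perp$, and the same linear-algebra computation gives $\DinftyJ{J}^\perp = \Z\langle F, \mathcal{E}_1, \ldots, \mathcal{E}_d\rangle$. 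The key observation is that on this sublattice symplectic area and first Chern number are proportional: since $\omega(F) = 2l$ and $\omega(\mathcal{E}_j) = l$ by Lemma~\ref{lem:compactify}, while $c_1(F) = 2$ and $c_1(\mathcal{E}_j) = 1$ (the latter because $\mathcal{E}_j$ and $S$ are exceptional spheres and $F = H - S$), any $A \in \DinftyJ{J}^\perp$ satisfies $\omega(A) = l\,c_1(A)$. As a non-constant $J$-holomorphic curve has positive area and $l > 0$, this yields $c_1([u_\alpha]) \ge 1$ — exactly what the original proof extracted from Fredholm regularity.

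From here I would finish as follows. Summing over components gives $\sum_\alpha c_1([u_\alpha]) = c_1(F) = 2$, so $\mathbf{u}$ has at most two non-constant components. In the dual tree of a genus-zero stable curve with no marked points every constant component has valence at least three, so a degree count ($\sum \deg = 2(\#\text{vertices} - 1)$) shows that such a curve with at most two non-constant components has no constant components, and that a non-smooth curve cannot consist of a single non-constant component. Hence $\mathbf{u} = (u_1,u_2)$, two non-constant components meeting at exactly one node, with $[u_1] + [u_2] = F$ and, by the area--Chern identity, $c_1([u_1]) = c_1([u_2]) = 1$. Since $c_1$ is multiplicative under covers, each $u_i$ is simple, so the adjunction inequality applies: writing $[u_i] = \lambda_i F + \sum_j \mu_{i,j}\mathcal{E}_j$ we get $[u_i]^2 = -\sum_j \mu_{i,j}^2 \ge -1$, while positivity of intersections between $u_1$ and $u_2$ gives $[u_1]\cdot[u_2] = \sum_j \mu_{1,j}^2 \ge 1$. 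Therefore $\sum_j \mu_{1,j}^2 = 1$, so exactly one $\mu_{1,j}$ equals $\pm 1$ and the rest vanish; solving $2\lambda_1 + \sum_j \mu_{1,j} = c_1([u_1]) = 1$ forces $[u_1] \in \{\mathcal{E}_j, F - \mathcal{E}_j\}$, whence $\mathbf{u}$ is of type $(\mathcal{E}_j, F-\mathcal{E}_j)$; the equality $[u_i]^2 = -1$ with $c_1([u_i]) = 1$ makes adjunction sharp, so each $u_i$ is embedded and meets the other transversally at one point. (Alternatively, once $c_1([u_\alpha]) \ge 1$ is in hand one can invoke Proposition~4.8, Lemma~4.12 and \S{}4.3 of \cite{wendl2018lowDimCurves} verbatim as in the proof of Lemma~\ref{lem:constrainGenericNodal}, those structural results being intersection-theoretic and independent of the choice of $J$; the tree argument above is included only to keep the proof self-contained.)

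I expect no serious obstacle. The only content is the area--Chern-class proportionality on $\DinftyJ{J}^\perp$, which is what removes the need for Fredholm regularity; after that everything is standard intersection theory — positivity of intersections and the adjunction inequality, both valid for an arbitrary $\omega$-compatible $J$ and any somewhere-injective curve. The point requiring a little care is the combinatorial exclusion of ghost (constant) bubbles, which is handled by the degree count in the dual tree.
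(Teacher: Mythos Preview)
Your proof is correct and takes a genuinely different route from the paper's. The paper does not rerun the argument of Lemma~\ref{lem:constrainGenericNodal}; instead it first proves, for every \(J \in \mathcal{J}(D')\), that the classes \(\mathcal{E}_j\) and \(F-\mathcal{E}_j\) admit \(J\)-holomorphic representatives (via an automatic-transversality plus non-bubbling argument in the style of Corollary~\ref{cor:EnAlwaysJhol}, using \((\DinftyJ{J}\cup\fibrationSectionL)^\perp=\Z\langle\mathcal{E}_k\rangle\) and \((\DinftyJ{J}\cup\fibrationSectionR)^\perp\) to rule out degeneration). It then intersects any putative non-smooth stable \(F\)-curve against these \(J\)-holomorphic \(\mathcal{E}_j\) and \(F-\mathcal{E}_j\) curves: if it shares no component with them, every component lies in \(\bigl(\DinftyJ{J}\cup\bigcup_j\mathcal{E}_j\bigr)^\perp=\Z\langle F\rangle\), forcing the curve to be smooth.

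Your approach is more direct and more self-contained: the observation that \(\omega = l\cdot c_1\) on \(\DinftyJ{J}^\perp\) yields \(c_1([u_\alpha])\ge 1\) for free, with no need to first produce auxiliary \(J\)-holomorphic curves. The paper's detour, on the other hand, establishes as a byproduct that \(\mathcal{E}_j\) and \(F-\mathcal{E}_j\) are \(J\)-holomorphic for every \(J\in\mathcal{J}(D')\), a fact used again in Corollary~\ref{cor:c00FmoduliDescription}; with your argument that existence would have to be supplied separately (e.g.\ via the Euler-characteristic count of Corollary~\ref{cor:genericNodalCurves}, whose proof you have shown carries over verbatim once \(c_1\ge 1\) is in hand).
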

\begin{proof}
    The argument is similar to that of
Corollary~\ref{cor:EnAlwaysJhol}. We will show that the set of almost complex
structures \(\mathcal{J}(D',A)\) that support a \(J\)-holomorphic curve in the
class \(A \in H_2(X)\), where \(A\) is one of the classes \(\mathcal{E}_j,
F-\mathcal{E}_j\), is equal to the whole space
\(\mathcal{J}(D')\) by automatic transversality and non-bubbling results. The
fact that \(\mathcal{J}(D',A)\) is non-empty follows immediately from
Corollary~\ref{cor:genericNodalCurves}, so we need only prove the non-bubbling
results.

Suppose that \(J \in \mathcal{J}(D')\), assume that \(A = \mathcal{E}_j\),
and let \(\mathbf{u} \in \overline{\mathcal{M}}_{0,0}(\mathcal{E}_j;J)\) be a
stable curve of class \(\mathcal{E}_j\). Choose a non-constant component
\(u_\alpha\) of \(\mathbf{u}\) and note, as in the proof of
Lemma~\ref{lem:enBubble}, that Lemma~\ref{lem:containBoundary} ensures that
\[
    A_\alpha = [u_\alpha] \in (\DinftyJ{J} \cup \fibrationSectionL{})^\perp.
\]
Since, \(D_\infty \cup \fibrationSectionL{}\) forms a rank \(n+3\) linearly
independent set in \(H_2(X)\), we find that \((\DinftyJ{J} \cup
\fibrationSectionL{})^\perp = \Z\langle \mathcal{E}_k : 1 \le k \le
d\rangle\), and so, for some integers \(\mu_k \in \Z\)
\[
    A_\alpha = \sum_{k=1}^d \mu_k \mathcal{E}_k.
\]
Recall that the areas of the classes \(\mathcal{E}_k\) are all equal to \(l\).
Thus, we have that \(\omega(A_\alpha)\) is an integer multiple of \(l\):
\[
    \omega(A_\alpha) = l\sum_{k=1}^d \mu_k.
\]
However, this is only possible if \(\omega(A_\alpha) = l\), and so we conclude
that \(\mathbf{u} = u_\alpha\) is actually a smooth curve.

Repeating the above argument with \(\mathcal{E}_j\) replaced with
\(F-\mathcal{E}_j\) and \(\fibrationSectionL{}\) replaced with
\(\fibrationSectionR\) shows that \(F-\mathcal{E}_j\) curves also don't undergo
bubbling. Hence, by the argument of Corollary~\ref{cor:EnAlwaysJhol},
\(\mathcal{J}(D',A) = \mathcal{J}(D')\). Summarising, we have shown that the
classes \(\mathcal{E}_j, F-\mathcal{E}_j\) are \(J\)-holomorphic for all \(J
\in \mathcal{J}(D')\). It remains to show that any non-smooth stable curve must
be of type \((\mathcal{E}_j, F-\mathcal{E}_j)\).

Suppose that we have a stable curve \(\mathbf{u} \in
\overline{\mathcal{M}}_{0,0}(\XJ{J},F;J)\) that is not of type \((\mathcal{E}_j,
F-\mathcal{E}_j)\) for all \(1 \le j \le d\). Then, since the classes
\(\mathcal{E}_j\) and \(F-\mathcal{E}_j\) are \(J\)-holomorphic, a simple
positivity of intersections argument, akin to Lemma~\ref{lem:containBoundary},
implies that \(\mathbf{u}\) has no components in common with \(\mathcal{E}_j\)
and \(F-\mathcal{E}_j\). In particular, for any component \(u_\alpha\) of
\(\mathbf{u}\), we have that
\[
    A_\alpha \in \left(\DinftyJ{J} \cup \bigcup_{j=1}^d \mathcal{E}_j
    \right)^\perp = \Z\langle F \rangle,
\]
and so \(\mathbf{u}\) must actually be a smooth curve. This completes the
proof.
\end{proof}

\begin{corollary}[]
    \label{cor:c00FmoduliDescription}
    For every \(J \in \mathcal{J}(D')\) the compactifying curves
in\linebreak \(\overline{\mathcal{M}}_{0,0}(X,F;J)\), that is, the elements of
\(\overline{\mathcal{M}}_{0,0}(X,F;J) \backslash \mathcal{M}_{0,0}(X,F;J)\),
are given exactly by:
\begin{itemize}
    \item for each \(1 \le j \le d\), a nodal curve of type \((\mathcal{E}_j,
	F-\mathcal{E}_j)\), and
    \item a stable curve \(\infinityCurve{J}\) whose components cover those of
	\(\DinftyJ{J}\). We call this the \emph{exotic curve} in
	\(\overline{\mathcal{M}}_{0,0}(X,F;J)\).
\end{itemize}
In particular, \(\overline{\mathcal{M}}_{0,0}(X,F;J)\) is homeomorphic to
\(S^2\).
\end{corollary}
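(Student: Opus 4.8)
The plan is to compare the moduli space over $\XJ{J}$ (already understood by Lemma~\ref{lem:universalCurveLefFib}, Corollary~\ref{cor:eulerCharM01Bar}, Corollary~\ref{cor:genericNodalCurves} and Lemma~\ref{lem:constrainNodal}) with the moduli space over the closed manifold $X$, the only difference being curves that meet the excised divisor $\DinftyJ{J}$. First I would observe that $\overline{\mathcal{M}}_{0,0}(\XJ{J},F;J)$ sits inside $\overline{\mathcal{M}}_{0,0}(X,F;J)$ as an open subset (any stable $F$-curve disjoint from $\DinftyJ{J}$ is in particular a stable $F$-curve in $X$), and that Lemma~\ref{lem:constrainNodal} together with Lemma~\ref{lem:mBar00HomeoToC} identifies this open subset with $\C$, with its $d$ nodal points being exactly the curves of type $(\mathcal{E}_j, F-\mathcal{E}_j)$. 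So the entire content of the corollary is the claim that $\overline{\mathcal{M}}_{0,0}(X,F;J) \setminus \overline{\mathcal{M}}_{0,0}(\XJ{J},F;J)$ consists of a single point, namely the exotic curve $\infinityCurve{J}$, and that adding this point compactifies $\C$ to $S^2$.

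Next I would analyse an arbitrary stable $F$-curve $\mathbf{u} = (u_\alpha)_\alpha$ in $X$ that is \emph{not} disjoint from $\DinftyJ{J}$; equivalently, by positivity of intersections and the open mapping theorem, at least one component $u_\alpha$ covers a component $D_\beta$ of $\DinftyJ{J}$. Here I would use the homological bookkeeping already employed in the proofs of Lemma~\ref{lem:enBubble} and Lemma~\ref{lem:constrainNodal}: the classes $\mathcal{E}_j$, $F-\mathcal{E}_j$, the components of $\DinftyJ{J}$, and the sections $\fibrationSectionL{}$, $\fibrationSectionR{}$ are all $J$-holomorphic (by Corollary~\ref{cor:EnAlwaysJhol} and Lemma~\ref{lem:constrainNodal}), and $\omega(F) = 2l$, $\omega(\mathcal{E}_j) = l$, $\omega(E^+) = \omega(E^-)$, together with the area bound $\omega(F) > \omega(E_n)$ from Lemma~\ref{lem:compactify}. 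The intersection numbers $F \cdot D_\beta$ are determined by the dual graph in Figure~\ref{fig:dualIntGraphD}: $F$ meets only $\fibrationSectionL{}$ and $\fibrationSectionR{}$ (each once) among the components of $D$, so $F \cdot D_\beta = 0$ for every component $D_\beta$ of $D_\infty \cap D'$ and for $E_J$. Running the argument of Lemma~\ref{lem:containBoundary}/\ref{lem:enBubble}, any component of $\mathbf{u}$ meeting $\DinftyJ{J}$ forces $\mathbf{u}$ to contain the entire chain $\DinftyJ{J}$ (with multiplicity one, by the area identities: the total area is $\omega(F) = 2l$, which equals the sum of areas of the components of $\DinftyJ{J}$ since that chain connects the two $\fibrationSectionL{}$/$\fibrationSectionR{}$-attaching points and $\omega(E_J) = \omega(E_n)$ forces the multiplicities to be exactly one). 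A residual-class and genus-zero connectivity check then pins $\mathbf{u}$ down completely: there is no room for extra components, so $\mathbf{u} = \infinityCurve{J}$, and it is unique.

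Finally, for the topological conclusion, I would argue as in Lemma~\ref{lem:mBar00HomeoToC} that $\overline{\mathcal{M}}_{0,0}(X,F;J)$ is a compact, connected topological $2$-manifold: the smooth/nodal locus is a manifold by the gluing arguments of \cite[\S{}7.3]{wendl2018lowDimCurves} (valid here since every non-smooth stable curve — both the type $(\mathcal{E}_j,F-\mathcal{E}_j)$ curves and the exotic curve $\infinityCurve{J}$ — has well-understood local structure, the exotic one being rigid), and compactness is Gromov compactness in the closed manifold $X$; it is connected because it contains $\C$ as a dense open subset. A compact connected surface containing a dense copy of $\C$ with one additional point is $S^2$, which gives the final assertion. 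The main obstacle I anticipate is the step controlling the exotic curve: showing not merely that a curve meeting $\DinftyJ{J}$ must contain the whole chain, but that it \emph{equals} the chain with all multiplicities one and no spurious bubble components — this requires carefully combining the area identities with the constraint that the arithmetic genus of $\mathbf{u}$ is zero and that the chain of spheres already has the correct dual graph to be a degeneration of a rational $F$-curve. Establishing that $\infinityCurve{J}$ is a single point of the moduli space (rather than a positive-dimensional stratum) also needs the rigidity of that configuration, but this is forced by the linear-independence and area computations that already appeared in the proofs of Lemma~\ref{lem:enBubble} and Lemma~\ref{lem:constrainNodal}.
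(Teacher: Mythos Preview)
Your overall architecture---identifying $\overline{\mathcal{M}}_{0,0}(\XJ{J},F;J)\cong\C$ as an open subset of $\overline{\mathcal{M}}_{0,0}(X,F;J)$ and then analysing the complement---is the same as the paper's. The treatment of the exotic curve, however, diverges, and your route has two weak points.

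The paper does not use Lemma~\ref{lem:containBoundary} or area identities to pin down $\infinityCurve{J}$. Instead it uses the foliation: since $\mathrm{ev}:\overline{\mathcal{M}}_{0,1}(\XJ{J},F;J)\to\XJ{J}$ is a homeomorphism, $\XJ{J}$ is foliated by stable $F$-curves, and any stable $F$-curve $\mathbf{u}$ in $X$ not belonging to this foliation must, by $F^2=0$ and positivity of intersections, be disjoint from every leaf---hence $\im\mathbf{u}\subset\DinftyJ{J}$. Unique continuation then forces each non-constant component of $\mathbf{u}$ to cover a component of $\DinftyJ{J}$. Note that the corollary only asserts that the components \emph{cover} those of $\DinftyJ{J}$; it does not claim multiplicity one, and the paper never establishes your area identity ``$\omega(F)=2l$ equals the sum of the component areas of $\DinftyJ{J}$''. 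That identity is not proved anywhere and there is no reason to expect the multiplicities in $\infinityCurve{J}$ to all be one, so the step where you ``force the multiplicities to be exactly one'' is unjustified as written.

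Your proposed $S^2$ argument also has a gap: you invoke the gluing results of \cite[\S7.3]{wendl2018lowDimCurves} to get a manifold chart near $\infinityCurve{J}$, but those results are for a single transverse node, whereas $\infinityCurve{J}$ has $n+2$ components. The paper sidesteps any local analysis at $\infinityCurve{J}$: once one knows that $\overline{\mathcal{M}}_{0,0}(X,F;J)\setminus\overline{\mathcal{M}}_{0,0}(\XJ{J},F;J)$ is a single point, Gromov compactness exhibits $\overline{\mathcal{M}}_{0,0}(X,F;J)$ directly as the one-point compactification of $\C$, hence $S^2$.
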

\begin{proof}
    To prove that the curves of type \((\mathcal{E}_j, F-\mathcal{E}_j)\) do
indeed exist, we wish to repeat the argument of
Corollary~\ref{cor:genericNodalCurves}. We achieve this by noting that
Lemmata~\ref{lem:mBar00HomeoToC} and \ref{lem:universalCurveLefFib} continue
to hold in the non-generic case, since the only way they depend on
genericity is to prove that non-smooth stable curves consist only of nodal
curves with exactly two components intersecting transversely, but
Lemma~\ref{lem:constrainNodal} shows that this continues to hold in the
non-generic case. Hence, the result of Corollary~\ref{cor:genericNodalCurves}
continues to hold.

To see that the stable curve \(\infinityCurve{J} \in
\overline{\mathcal{M}}_{0,0}(X,F;J)\) exists, we use the fact that
\(\mathrm{ev} : \overline{\mathcal{M}}_{0,1}(\XJ{J},F;J) \to \XJ{J}\) is a
homeomorphism to pick a sequence of smooth curves \(u_\nu \in
\mathcal{M}_{0,0}(X,F;J)\) passing through points \(x_\nu \in X\) converging
to \(x \in \DinftyJ{J}\). Gromov compactness allows us to extract a convergent
subsequence \(u_\nu \to \infinityCurve{J}\). Since \(F^2 = 0\), the image of
\(\infinityCurve{J}\) must be disjoint from all the curves in
\(\overline{\mathcal{M}}_{0,1}(\XJ{J},F;J)\), so it follows that
\(\im\infinityCurve{J} \subset \DinftyJ{J}\). Unique continuation
\cite[\S{}2.3]{mcduffSalamonCurves} of \(J\)-holomorphic curves implies that
each of the non-constant components of \(\infinityCurve{J}\) have the same
image as a component of \(\DinftyJ{J}\). Hence, \(\infinityCurve{J}\) covers
\(\DinftyJ{J}\).

Finally, the Gromov topology on \(\overline{\mathcal{M}}_{0,0}(X,F;J)\)
realises it as the one point compactification of
\(\overline{\mathcal{M}}_{0,0}(\XJ{J},F;J) \cong \C\). Whence we obtain that
\(\overline{\mathcal{M}}_{0,0}(X,F;J)\) is homeomorphic to \(S^2\).
\end{proof}

\subsection{The Lefschetz fibrations \(\pi_J : \XJ{J} \to \C\)}
\label{sec:compactLefFib}

The results of section~\ref{sec:universalCurve} allow us to construct
Lefschetz fibrations on \(\XJ{J} = X \backslash \DinftyJ{J}\) for all \(J \in
\mathcal{J}(D')\). This section proves the following proposition.

\begin{proposition}[]
    \label{prop:lefFibXJ}
    For every \(J \in \mathcal{J}(D')\), there exists a Lefschetz
fibration \(\pi_J : \XJ{J} \to \C\) which has smooth fibres in the class \(F\),
and exactly \(d\) nodal fibres of type \((\mathcal{E}_j, F - \mathcal{E}_j)\).
\end{proposition}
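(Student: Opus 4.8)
The plan is to assemble this proposition from the structural results already established for the moduli spaces of $F$-curves; the only genuinely new ingredient is the passage from an abstractly constructed Lefschetz fibration on a moduli space to an honest Lefschetz fibration on $\XJ{J}$ itself. By Lemma~\ref{lem:universalCurveLefFib}, which (together with Lemmata~\ref{lem:mBar00HomeoToC} and \ref{lem:constrainNodal}) holds for every $J \in \mathcal{J}(D')$, the forgetful map
\[
    \pi_J : \overline{\mathcal{M}}_{0,1}(\XJ{J},F;J) \to \overline{\mathcal{M}}_{0,0}(\XJ{J},F;J)
\]
already carries smooth structures realising it as a Lefschetz fibration with genus zero fibres and a single critical point in each singular fibre, and by Lemma~\ref{lem:mBar00HomeoToC} its base is homeomorphic to $\C$. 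It therefore remains to (i) transport this fibration across the evaluation map onto $\XJ{J}$, (ii) identify the base with $\C$ as a smooth surface, and (iii) pin down the singular fibres.

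For (i) I would upgrade the evaluation map $\mathrm{ev} : \overline{\mathcal{M}}_{0,1}(\XJ{J},F;J) \to \XJ{J}$, $\mathrm{ev}([(\mathbf{u},x)]) = \mathbf{u}(x)$ --- shown to be a homeomorphism in the proof of Lemma~\ref{lem:mBar00HomeoToC} --- to a diffeomorphism for the intrinsic smooth structure of $\XJ{J} \subset X$. The map is manifestly smooth and bijective. The leaves of $\pi_J$, namely the smooth $F$-curves together with the $d$ nodal curves of type $(\mathcal{E}_j, F - \mathcal{E}_j)$ supplied by Corollary~\ref{cor:c00FmoduliDescription}, have no common irreducible components, hence are pairwise disjoint (their pairwise homological intersection is $F \cdot F = 0$); by surjectivity of $\mathrm{ev}$ they cover $\XJ{J}$, so away from the $d$ node points they form an honest foliation by embedded curves (embeddedness of the smooth $F$-curves is the adjunction formula, that of the nodal components is Lemma~\ref{lem:constrainNodal}), and there $\mathrm{ev}$ has injective differential both along and transverse to the leaves, hence is a local diffeomorphism. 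At a node the Lefschetz normal form $\{xy = \varepsilon\}$ of \cite[\S{}7.3.4]{wendl2018lowDimCurves} identifies a neighbourhood in $\overline{\mathcal{M}}_{0,1}(\XJ{J},F;J)$ with a neighbourhood of the node in $\XJ{J}$ compatibly with $\mathrm{ev}$, so $\mathrm{ev}$ is a local diffeomorphism throughout, and, being bijective, is a diffeomorphism. We then set $\pi_J := \pi_J \circ \mathrm{ev}^{-1} : \XJ{J} \to \overline{\mathcal{M}}_{0,0}(\XJ{J},F;J)$. For (ii) it suffices to observe that $\overline{\mathcal{M}}_{0,0}(\XJ{J},F;J)$, being a smooth surface homeomorphic to $\R^2$ by Lemma~\ref{lem:mBar00HomeoToC}, is diffeomorphic to $\C$ (equivalently, one pulls back an explicit chart along the section $\pi_J \circ \mathrm{ev}^{-1} \circ v$ built from a parametrisation $v$ of the $\fibrationSectionL$-component, exactly as in the proof of Lemma~\ref{lem:mBar00HomeoToC}). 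The smooth fibres of $\pi_J$ are then precisely the smooth $F$-curves, which represent $F$ and, being $J$-holomorphic, are symplectic submanifolds; and $\pi_J$ is proper since compact subsets of the base stay bounded away from the exotic curve $\infinityCurve{J}$.

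For (iii), the singular fibres of $\pi_J$ are the images of the non-smooth stable curves in $\overline{\mathcal{M}}_{0,0}(\XJ{J},F;J)$. By Corollary~\ref{cor:c00FmoduliDescription} --- whose count originates in Corollary~\ref{cor:genericNodalCurves} for generic $J$ and is extended to all $J \in \mathcal{J}(D')$ there --- these are exactly the $d$ nodal curves of type $(\mathcal{E}_j, F - \mathcal{E}_j)$, $1 \le j \le d$: the only other compactifying curve of $\overline{\mathcal{M}}_{0,0}(X,F;J)$ is the exotic curve $\infinityCurve{J}$, whose image lies in $\DinftyJ{J}$ and is therefore deleted when passing from $X$ to $\XJ{J}$. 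Each such nodal curve has a single node, which by Lemma~\ref{lem:universalCurveLefFib} is the unique Lefschetz critical point in the corresponding fibre, and distinct values of $j$ give distinct points of the base $\cong \C$, so the $d$ critical values are distinct. This yields exactly $d$ nodal fibres of the asserted type, completing the proof.

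I expect the main obstacle to be step (i): verifying that the evaluation map descends to the intrinsic smooth structure of $\XJ{J} \subset X$ --- that is, upgrading ``homeomorphism'' to ``diffeomorphism'', including transversality of $\mathrm{ev}$ at the $d$ nodes --- so that the Lefschetz structure genuinely lives on $\XJ{J}$ rather than only on an abstract moduli space. Everything else is a matter of transcribing the local constructions of \cite{wendl2018lowDimCurves} and quoting the earlier lemmas; the non-compactness of $\XJ{J}$ is harmless, since those constructions are local and the only curve contained in the deleted divisor $\DinftyJ{J}$ is $\infinityCurve{J}$.
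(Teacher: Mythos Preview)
Your overall strategy matches the paper's --- assemble the fibration from the moduli-space description and then identify the base with $\C$ via the $\fibrationSectionL$-section --- and your treatment of steps (ii) and (iii) is essentially what the paper does. The divergence is in step (i), and it is exactly the point you flag as the main obstacle.

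The paper deliberately does \emph{not} try to upgrade $\mathrm{ev}$ to a diffeomorphism between the gluing-induced smooth structure on $\overline{\mathcal{M}}_{0,1}(\XJ{J},F;J)$ and the intrinsic smooth structure on $\XJ{J}$. In fact, the paper remarks (after Lemma~\ref{lem:c00smoothAtlas}) that smoothness of gluing maps is subtle in general and that the gluing-induced smooth structure on the base need not be compatible with other natural ones. Your argument at the nodes appeals to the Lefschetz normal form of \cite[\S{}7.3.4]{wendl2018lowDimCurves} as identifying a neighbourhood in $\overline{\mathcal{M}}_{0,1}$ with a neighbourhood in $\XJ{J}$ ``compatibly with $\mathrm{ev}$'', but that is precisely what would need justification: Wendl's local model (stated in the paper as Lemma~\ref{lem:lefCoords}) gives a complex chart on $\XJ{J}$ in which the leaves of the foliation look like fibres of $z_1z_2$, not a statement about the gluing smooth structure on $\overline{\mathcal{M}}_{0,1}$ matching the intrinsic one on $\XJ{J}$ via $\mathrm{ev}$.

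The paper sidesteps this by abandoning the gluing smooth structures altogether after Lemma~\ref{lem:universalCurveLefFib}. It works directly with the composition $\pi_J\circ\mathrm{ev}^{-1}:\XJ{J}\to\overline{\mathcal{M}}_{0,0}(\XJ{J},F;J)$ as a set-theoretic map and \emph{defines} a new smooth structure on the base (Lemma~\ref{lem:c00smoothAtlas}) by declaring small transverse discs in $\XJ{J}$ to be charts; this makes $\pi_J$ smooth away from the nodes by construction. At the nodes it then invokes Lemma~\ref{lem:lefCoords} to obtain a chart $\psi$ on $\XJ{J}$ and a compatible chart $\phi$ on the base with $\pi_J=\phi\circ\pi_0\circ\psi$, giving the Lefschetz singularity without ever comparing smooth structures via $\mathrm{ev}$. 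This buys a cleaner argument that avoids the gluing-smoothness question entirely; your approach is not wrong in spirit, but the nodal step as written is a gap unless you either supply the missing compatibility or reorganise along the paper's lines.
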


This almost immediately follows from \S{}\ref{sec:universalCurve}, however
there are a few smoothness technicalities to check. The aim is to show that
the map
\[
    \XJ{J} \to \overline{\mathcal{M}}_{0,0}(\XJ{J},F;J) : x \mapsto \text{the curve
    passing through }x
\]
is a smooth Lefschetz fibration. Precisely, this map is equal to \(\pi_J \circ
\mathrm{ev}^{-1}\), where \(\pi_J : \overline{\mathcal{M}}_{0,1}(\XJ{J},F;J) \to
\overline{\mathcal{M}}_{0,0}(\XJ{J},F;J)\) is the universal curve from the
previous section. We abuse notation and denote this composition by \(\pi_J\)
also, as we won't make further reference to the universal curve. A detailed
proof of the following can be found in Lemma~6.29 of \cite{LvHMWII}. We
recall it here as our subsequent arguments crucially depend on it.
\begin{lemma}[]
    \label{lem:c00smoothAtlas}
    There exists a unique smooth structure on
\(\overline{\mathcal{M}}_{0,0}(\XJ{J},F;J)\) such that the map \(\pi_J : \XJ{J} \to
\overline{\mathcal{M}}_{0,0}(\XJ{J},F;J)\) is smooth except perhaps at images in
\(\XJ{J}\) of the nodes of nodal curves in
\(\overline{\mathcal{M}}_{0,0}(\XJ{J},F;J)\).
\end{lemma}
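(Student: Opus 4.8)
The plan is to build the smooth structure on $\overline{\mathcal{M}}_{0,0}(\XJ{J},F;J)$ in two pieces — over the locus of smooth fibre curves and over the $d$ nodal points — and then verify the transition maps are smooth where the two pieces overlap. Over the open subset $\mathcal{M}_{0,0}(\XJ{J},F;J) \subset \overline{\mathcal{M}}_{0,0}(\XJ{J},F;J)$ of smooth (unnodal) curves, every such curve $u$ satisfies $c_1([u]) = c_1(F) = 2$, so by the index formula \eqref{eq:closedIndexFormula} it has index $2$; moreover $F \cdot F = 0$ together with positivity of intersections and automatic transversality \cite{hoferLizanSikorav97genericity} shows each smooth $F$-curve is Fredholm regular and embedded. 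Hence the usual implicit function theorem argument \cite[\S{}3]{mcduffSalamonCurves} endows $\mathcal{M}_{0,0}(\XJ{J},F;J)$ with a smooth $2$-dimensional manifold structure, and with respect to it the map $\pi_J$ restricted to the preimage of this locus — which is precisely the open submanifold of $\XJ{J}$ foliated by the regular fibres — is a smooth submersion. This uses the fact that the foliation of the complement of the nodal fibres by smooth $F$-curves varies smoothly, a standard consequence of the implicit function theorem applied fibrewise.

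Near each of the $d$ nodal points of $\overline{\mathcal{M}}_{0,0}(\XJ{J},F;J)$ — which by Lemma~\ref{lem:constrainNodal} are all of type $(\mathcal{E}_j, F - \mathcal{E}_j)$ with both components embedded, index $0$, meeting transversely in a single node — we import the local model from the gluing analysis. The gluing construction of \cite[Lemma~7.45]{wendl2018lowDimCurves} produces a homeomorphism from a neighbourhood of the nodal curve in $\overline{\mathcal{M}}_{0,0}$ to a disc in $\C$ (the gluing parameter), and realises $\pi_J$ near the corresponding critical point of $\XJ{J}$ in the standard Lefschetz normal form $(z_1,z_2)\mapsto z_1 z_2$. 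Declaring the gluing parameter to be a holomorphic coordinate on $\overline{\mathcal{M}}_{0,0}$ near the nodal point gives a smooth chart there; since these arguments are purely local in $\XJ{J}$ near the node, the non-compactness of $\XJ{J}$ is irrelevant, exactly as remarked after Lemma~\ref{lem:universalCurveLefFib}. This is where $\pi_J$ is allowed to fail smoothness: at the image in $\XJ{J}$ of the node itself the map is modelled on $z_1 z_2$, which is smooth as a map to $\C$, but we only \emph{assert} smoothness away from these node-images because the gluing chart identifications are only known to be smoothly compatible with the IFT charts on the overlap, not at the node.

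The remaining point — and the one requiring the most care — is uniqueness, i.e.\ that the two families of charts are smoothly compatible on their overlap and that the resulting atlas is the unique one making $\pi_J$ smooth off the node-images. On the overlap, a chart point corresponds simultaneously to a smooth $F$-curve near the nodal curve and to a nonzero gluing parameter; smooth compatibility amounts to showing the gluing map depends smoothly on the target curve in the IFT coordinates, which is the content of the quantitative gluing estimates underlying \cite[Lemma~7.45]{wendl2018lowDimCurves} (and spelled out in \cite[Lemma~6.29]{LvHMWII}). For uniqueness, note that $\overline{\mathcal{M}}_{0,0}(\XJ{J},F;J)$ is homeomorphic to $\C$ by Lemma~\ref{lem:mBar00HomeoToC}, and on the dense open set of smooth curves the requirement that $\pi_J$ be a smooth submersion pins down the smooth structure there; a smooth structure on a surface is determined by its restriction to the complement of finitely many points together with the requirement of compatibility at those points coming from the local Lefschetz model, so any two such structures agree. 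I expect the smooth-compatibility-on-the-overlap step to be the genuine obstacle; everything else is assembling results already in place (Lemmata~\ref{lem:constrainNodal}, \ref{lem:mBar00HomeoToC}, \ref{lem:universalCurveLefFib}) or invoking the cited gluing analysis, which we do not reproduce here.
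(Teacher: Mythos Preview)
Your proposal is workable but takes a genuinely different route from the paper. You build the smooth structure in two pieces --- IFT charts over the smooth locus and gluing-parameter charts near the nodal points --- and then must prove smooth compatibility on the overlap, which you correctly flag as the delicate step.

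The paper's approach is more elementary and sidesteps that obstacle entirely. Rather than use gluing at all, it constructs \emph{all} charts uniformly as follows: for any point $p \in \XJ{J}$ that is not the image of a node, pick a small embedded 2-disc $D_p \subset \XJ{J}$ through $p$ transverse to the curve containing $p$; nearby curves in $\overline{\mathcal{M}}_{0,0}(\XJ{J},F;J)$ each intersect $D_p$ exactly once transversely, so $D_p$ itself serves as a chart for a neighbourhood of that curve. This works even for nodal curves --- just place $p$ on one of the two components away from the node --- so no separate gluing chart is ever introduced. Transition maps between two such charts are smooth because the foliation by $F$-curves is smooth away from the node-images. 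Smoothness of $\pi_J$ off the node-images and uniqueness of the atlas are then immediate by construction, since $\pi_J|_{D_p}$ \emph{is} the chart.

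What your approach buys is a direct link to the Lefschetz-model charts near the node, which the paper establishes separately (Lemma~\ref{lem:lefCoords} and the corollary following it). What the paper's approach buys is that it never needs smoothness of gluing --- a point the remark immediately after the lemma explicitly cautions is subtle in general.
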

\begin{proof}
    We construct a smooth atlas on \(\overline{\mathcal{M}}_{0,0}(\XJ{J},F;J)\)
making use of the smoothness and embeddedness properties of the curves in this
moduli space. Choose a point \(p \in \XJ{J}\) that is \emph{not} the image of a
node, and fix an embedded open 2-disc \(D_p\) passing through \(p\) that is
transverse to the tangent space of the curve \(u_p \in
\overline{\mathcal{M}}_{0,0}(\XJ{J},F;J)\) passing through \(p\). After possibly
shrinking \(D_p\), we may assume that nearby curves in some open
neighbourhood \(U \subset \overline{\mathcal{M}}_{0,0}(\XJ{J},F;J)\) of \(u_p\)
intersect \(D_p\) exactly once, transversely. Therefore, we have a
homeomorphism \(U \to D_p\). We consider \(U \to D_p\) to be a smooth chart by
smoothly identifying \(D_p\) with the open unit disc in \(\C\). We take the
atlas to be the maximal one containing the collection of all such charts, all
of which are smoothly compatible since the foliation
\(\overline{\mathcal{M}}_{0,0}(\XJ{J},F;J)\) is smooth. The claims about
smoothness of \(\pi_J\) and uniqueness of the corresponding atlas follow by
construction.
\end{proof}

\begin{remark}
    As in Lemma~\ref{lem:mBar00HomeoToC},
\(\overline{\mathcal{M}}_{0,0}(\XJ{J},F;J)\) equipped with the smooth structure
of Lemma~\ref{lem:c00smoothAtlas} is diffeomorphic to the complex plane
\(\C\). Indeed, as a 2-dimensional topological manifold, there is a unique
smooth structure up to diffeomorphism. However, it is not clear that these
smooth structures are \emph{compatible} with one another. Indeed, smoothness
of gluing maps is a subtle question in general
\cite[Remark~2.40]{wendl2018lowDimCurves}
\cite[Remark~6.30]{abouzaidMcLeanSmithKuranishi}, and one should be careful
when discussing smooth structures on spaces of stable \(J\)-holomorphic
curves.
\end{remark}

Next, we want to show that \(\pi_J\) is smooth over the nodal points. Firstly,
we note a result sketched in the appendix of \cite{wendl2018lowDimCurves},
which explains how moduli spaces of square zero spheres that
degenerate to a single curve with exactly one transverse node ``look like''
Lefschetz singular fibres.
\begin{lemma}[Corollary A.3 of \cite{wendl2018lowDimCurves}]
    \label{lem:lefCoords}
    Each nodal point in \(\XJ{J}\) admits a complex coordinate chart in which the
leaves of the foliation \(\overline{\mathcal{M}}_{0,0}(\XJ{J},F;J)\) are
identified with the fibres of the map \(\pi_0 : \C^2 \to \C : \pi_0(z_1,z_2) =
z_1z_2\).
\end{lemma}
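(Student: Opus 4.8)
The plan is to treat Lemma~\ref{lem:lefCoords} as a purely local statement about the foliation near a node, and to reduce it to the normal form supplied by Corollary~A.3 of \cite{wendl2018lowDimCurves}. That result says that whenever a \(J\)-holomorphic foliation by spheres of square zero has a single degenerate leaf consisting of two embedded components meeting in exactly one transverse point, there is a complex chart near that point in which the leaves become the level sets of \((z_1,z_2)\mapsto z_1z_2\). So the substance of the argument is to check that our situation satisfies these hypotheses and that nothing is lost by working on the open manifold \(\XJ{J}\).

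First I would assemble the local input from earlier in the section. By Lemma~\ref{lem:constrainNodal} every non-smooth stable curve in \(\overline{\mathcal{M}}_{0,0}(\XJ{J},F;J)\) is of type \((\mathcal{E}_j,F-\mathcal{E}_j)\), hence has exactly two irreducible components; the proof of Lemma~\ref{lem:constrainGenericNodal} shows via the adjunction formula that both components are embedded and, by positivity of intersections \cite{mcduffPosInts} together with \(\mathcal{E}_j\cdot(F-\mathcal{E}_j)=1\) (using \(F\cdot\mathcal{E}_j=0\) and \(\mathcal{E}_j^2=-1\)), that they meet in a single transverse point --- the node. That the remaining (smooth) leaves sweep out a smooth Lefschetz fibration away from the node is Lemma~\ref{lem:universalCurveLefFib}, equivalently the smooth atlas of Lemma~\ref{lem:c00smoothAtlas}. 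Feeding these facts into the appendix argument of \cite{wendl2018lowDimCurves}, one fixes a small ball \(V\) around the node, chooses a chart centred at the node in which \(J\) agrees with the standard complex structure at that point, straightens the two embedded \(J\)-holomorphic components to the axes \(\{z_2=0\}\) and \(\{z_1=0\}\) using the similarity principle, and then appeals to the implicit function theorem --- crucially using that the leaf space is a smooth \(1\)-manifold --- to identify the nearby leaves with the hyperbolae \(\{z_1z_2=\mathrm{const}\}\) after a final coordinate change.

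The one point that genuinely needs attention is that \(\XJ{J}=X\setminus\DinftyJ{J}\) is non-compact, whereas the cited corollary is stated for closed targets. This is harmless: each node lies well inside \(\XJ{J}\) (indeed in \(X\) with a neighbourhood of \(\DinftyJ{J}\) removed), so the whole of Wendl's analysis can be run inside a relatively compact ball around the node, and the proofs of Lemmata~\ref{lem:universalCurveLefFib} and \ref{lem:c00smoothAtlas} have already observed that their arguments are local and survive the passage to the open manifold. I expect this bookkeeping --- verifying that the cited local model really is local, and that our nodes are honestly transverse with embedded branches --- to be the main obstacle; the analytic core of the normal form is already packaged in \cite[Appendix~A]{wendl2018lowDimCurves}, so no new estimates should be required.
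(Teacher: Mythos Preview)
Your proposal is correct and matches the paper's approach: the paper does not give its own proof of this lemma but simply cites it as Corollary~A.3 of \cite{wendl2018lowDimCurves}, having already established the required hypotheses (two embedded components meeting transversely in one node, via Lemma~\ref{lem:constrainNodal} and the adjunction/positivity argument). Your write-up is just a more explicit verification that Wendl's local model applies, including the observation that non-compactness of \(\XJ{J}\) is irrelevant because the argument is local near the node; this is exactly the right content and no more is needed.
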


\begin{corollary}[]
    The map \(\pi_J : \XJ{J} \to \overline{\mathcal{M}}_{0,0}(\XJ{J},F;J)\) is a
smooth Lefschetz fibration. The nodal fibres are given exactly by the \(d\)
nodal curves of type \((\mathcal{E}_j,F-\mathcal{E}_j)\).
\end{corollary}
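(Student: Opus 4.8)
The plan is to assemble the results of this section. By Lemma~\ref{lem:c00smoothAtlas} there is a smooth structure on \(\overline{\mathcal{M}}_{0,0}(\XJ{J},F;J)\) making \(\pi_J : \XJ{J} \to \overline{\mathcal{M}}_{0,0}(\XJ{J},F;J)\) smooth away from the images of the nodes of nodal curves, and by the remark following that lemma (together with Lemma~\ref{lem:mBar00HomeoToC}) the base is diffeomorphic to \(\C\), so in particular it is a smooth \(2\)-manifold and \(\XJ{J}\) is a smooth \(4\)-manifold. Thus the only thing to verify is that \(\pi_J\) is smooth at the nodal points and has the correct local normal form there, and then to identify the fibres.

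For the behaviour at a node, I would invoke Lemma~\ref{lem:lefCoords}: around each nodal point of \(\XJ{J}\) there is a complex coordinate chart in which the leaves of the foliation \(\overline{\mathcal{M}}_{0,0}(\XJ{J},F;J)\) coincide with the fibres of \(\pi_0 : \C^2 \to \C\), \(\pi_0(z_1,z_2) = z_1z_2\). This chart induces a coordinate on a (punctured) neighbourhood of the corresponding point of the base; comparing it with the charts of Lemma~\ref{lem:c00smoothAtlas} constructed from transverse discs through nearby non-nodal points, one checks the transition maps are smooth, so the smooth structure of Lemma~\ref{lem:c00smoothAtlas} extends across the node and in these coordinates \(\pi_J\) is literally \((z_1,z_2) \mapsto z_1z_2\). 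This is exactly the definition of a Lefschetz critical point, so \(\pi_J\) is a smooth Lefschetz fibration. (This overlap computation is the content of Corollary~A.3 of \cite{wendl2018lowDimCurves}, cited as Lemma~\ref{lem:lefCoords}, so it need not be redone.)

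Finally, for the identification of the fibres: a regular fibre is by definition a smooth \(J\)-holomorphic curve of class \(F\), and by Lemma~\ref{lem:constrainNodal} the only non-smooth stable curves in \(\overline{\mathcal{M}}_{0,0}(\XJ{J},F;J)\) are the nodal curves of type \((\mathcal{E}_j, F-\mathcal{E}_j)\); by Corollary~\ref{cor:genericNodalCurves} (whose conclusion, as observed in the proof of Corollary~\ref{cor:c00FmoduliDescription}, holds without the genericity hypothesis once Lemma~\ref{lem:constrainNodal} is available) there is exactly one such curve for each \(1 \le j \le d\). These are precisely the \(d\) singular fibres of \(\pi_J\), and each contains a single critical point by Lemma~\ref{lem:universalCurveLefFib}, completing the proof.

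The only genuine obstacle is the smooth-compatibility check at the nodes; everything else is bookkeeping of already established results. Since that check is exactly Corollary~A.3 of \cite{wendl2018lowDimCurves}, the argument is short.
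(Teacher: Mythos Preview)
Your proposal is correct and follows the same route as the paper: smoothness away from nodes via Lemma~\ref{lem:c00smoothAtlas}, the local model at nodes via Lemma~\ref{lem:lefCoords}, and the identification of the singular fibres via Lemma~\ref{lem:constrainNodal} and Corollary~\ref{cor:c00FmoduliDescription}. The only point where the paper is more explicit is the compatibility check at a node: rather than appealing to Corollary~A.3 for this, the paper picks a local section \(s : V \to \C^2\) of \(\pi_0\), observes that \(\psi^{-1}\circ s\) is precisely a transverse disc of the kind used to build the atlas in Lemma~\ref{lem:c00smoothAtlas}, and hence \(\phi := \pi_J\circ\psi^{-1}\circ s\) is literally one of those charts, giving the factorisation \(\pi_J = \phi\circ\pi_0\circ\psi\) directly---this is the ``overlap computation'' you allude to, and it is not contained in Corollary~A.3 itself but is a one-line consequence of how the atlas was defined.
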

\begin{proof}
    Let \(V \subset \C\) be an open neighbourhood of \(0 \in \C\) and choose a
local section \(s : V \to \C^2\) of \(\pi_0\). Note that \(\im s\) is
necessarily disjoint from the unique nodal point \(0 \in \C^2\). Denote one of
the coordinate charts given by Lemma~\ref{lem:lefCoords} by \(\psi : U \to
\C^2\). The fact that \(s\) is a section and \(\psi\) maps curves in
\(\overline{\mathcal{M}}_{0,0}(\XJ{J},F;J)\) to fibres of \(\pi_0\) ensures that
the composition \({\phi := \pi_J \circ \psi^{-1} \circ s : V \to
\overline{\mathcal{M}}_{0,0}(\XJ{J},F;J)}\) is one of the charts of the atlas
constructed in Lemma~\ref{lem:c00smoothAtlas}. Again, as \(\psi\) maps curves
to fibres, we have that
\begin{equation}
    \label{eq:piJlocalDescription}
    \pi_J = \phi \circ \pi_0 \circ \psi,
\end{equation}
that is, the following diagram commutes:
\begin{equation}
\label{eq:commutative}
\begin{tikzcd}
    U \arrow[r,"\psi"] \arrow[d,"\pi_J"] & \C^2 \arrow[d,"\pi_0"] \\
    \overline{\mathcal{M}}_{0,0}(\XJ{J},F;J) & V \arrow[l,"\phi"]
\end{tikzcd}.
\end{equation}
Since the all the maps on the right-hand-side of
\eqref{eq:piJlocalDescription} are smooth everywhere, we must
have that \(\pi_J\) is smooth at the node. Moreover, by the commutativity of
\eqref{eq:commutative}, the local charts \(\psi\) and \(\phi\) give \(\pi_J\)
the structure of a Lefschetz singularity at the node. The claim about the
nodal fibres follows directly from Corollary~\ref{cor:c00FmoduliDescription}.
\end{proof}

\begin{proof}[Proof of Proposition~\ref{prop:lefFibXJ}]
    All that remains to do is identify the moduli\linebreak spaces
\(\overline{\mathcal{M}}_{0,0}(\XJ{J},F;J)\) with \(\C\) for all \(J \in
\mathcal{J}(D')\). In fact, we consider the subset \(\mathcal{J}' \subset
\mathcal{J}(D')\) of almost complex structures that are fixed along \(T\fibrationSectionL{}
\subset TX|_{\fibrationSectionL{}}\). Then, similarly to the proof of
Lemma~\ref{lem:constrainGenericNodal}, we can fix a parametrisation \(v : S^2
\to X\) of the \(\fibrationSectionL{}\) curve that is \(J\)-holomorphic for all \(J \in
\mathcal{J}'\) and such that \(v(0)\) is in the \(F\)-component of \(D\) and
\(v(\infty) = S \cap \DinftyJ{J}\). The condition \(\fibrationSectionL{} \cdot F = 1\)
ensures that the composition \(\pi_J \circ v\) is a homeomorphism, and
moreover, it is a diffeomorphism by construction of the smooth structure on
\(\overline{\mathcal{M}}_{0,0}(\XJ{J},F;J)\) since the transverse disk \(D_p\) can
be taken to be \(v(S^2\backslash \{\infty\})\). This completes the proof.
\end{proof}

\begin{remark}
    \label{rem:lefFibMarkedPoints}
    \begin{enumerate}
	\item One can visualise the map \(\pi_J : \XJ{J} \to \C\) as a Riemann
sphere with \(d+2\) marked points. Indeed, for each \(J \in \mathcal{J}(D')\),
denote that curves of type \((\mathcal{E}_j, F-\mathcal{E}_j)\) by
\(\mathbf{u}_j^J\). Then we define a marked surface by
\[
    (S^2, (0,\infty, v^{-1}(\mathbf{u}_1^J), \ldots, v^{-1}(\mathbf{u}_d^J))).
\]
As \(J\) varies, the \(d\) marked points corresponding to Lefschetz singular
fibres move around, whilst the points \(0\) and \(\infty\) remain fixed.
	\item The domain \(\XJ{J}\) of \(\pi_J\) depends on \(J\). However, as we
shall see later, the Lagrangian isotopy from \(L\) to a matching cycle of
\(\pi_J\) occurs away from the curve \(\infinityCurve{J}\). Therefore, by
excising a suitable tubular neighbourhood of \(\infinityCurve{J}\), we can fix
a subset of \(X\) on which all of the maps \(\pi_J\) are
defined (see the discussion preceding
Corollary~\ref{cor:isotopyToMatchingCycle}.
    \end{enumerate}
\end{remark}

\section{Constructing a foliation adapted to a Lagrangian sphere}
\label{ch:neckStretch}
In this section we apply the neck stretching technique to produce a foliation
on \(T^*S^2\) by \(J\)-holomorphic cylinders with the property that those
cylinders intersecting the zero section do so along smooth circles. It is in
this sense that the fibration is adapted to a Lagrangian sphere (since any
Lagrangian corresponds to the zero section of its Weinstein neighbourhood).

First, we recall some further details on the neck stretching setup. We then
construct an almost complex structure \(\JTS\) on \(T^*S^2\) that is suitable
for neck stretching, and satisfies the property that it is
\emph{anti-invariant} under fibre-wise multiplication by \(-1\). This is the
property that powers the intersection along circles argument.

The intersection theory for punctured \(J\)-holomorphic curves is reviewed in
Section~\ref{sub:intersection-theory}, as well as explaining how this works in
the case of Morse-Bott degenerate asymptotics. This theory is then used in
Section~\ref{sub:cotangentPlanes} to prove some basic facts about foliations
of \(T^*S^2\) by \(J\)-holomorphic planes constructed by Hind
\cite{hind2004Spheres}.

The bulk of the section takes places in Section~\ref{sec:buildingAnalysis}
where we apply intersection theory to analyse the holomorphic buildings that
arise as limits of curves under neck stretching. Finally,
Section~\ref{sec:foliationConstruction} uses the limit analysis to construct a
neck stretching sequence \(J_k\) such that all the curves in
\(\overline{\mathcal{M}}_{0,0}(F;J_k)\) that pass through the Weinstein
neighbourhood of \(L\) converge to their respective holomorphic buildings as
\(k \to \infty\).

\subsection{Neck-stretching, holomorphic buildings, and SFT compactness}
\label{sec:neck-stretch}

We briefly recall the neck stretching construction of
\cite{cieliebakMohnkeSFTCompactness}, specialised to the case of stretching
about a Lagrangian submanifold.
Consider a Lagrangian \(L\) of a closed symplectic manifold \((X,\omega)\) and
equip \(L\) with a Riemannian metric. This allows us to define the sphere
bundle of covectors of a fixed length:
\[
    T_R^*L := \{ v \in T^*L \mid |v| = R\}.
\]
Along with the restriction of the canonical 1-form \(\lambda_\mathrm{can}\), 
the pair \((T_R^*L,\lambda_\mathrm{can})\) is an example of a contact
manifold. To condense the notation, we set \((M,\lambda) := (T_R^*L,
\lambda_\mathrm{can})\). A contact-type hypersurface in a symplectic manifold
admits a tubular neighbourhood symplectomorphic to \(((-\epsilon,\epsilon)
\times M, \ud(e^r\lambda))\), which we call the \emph{neck region}, or simply
the \emph{neck}.

The symplectisation \((\R \times M, \ud(e^r\lambda))\), carries a
special class of almost complex structures suitable for SFT. Consider the
vector field \(\partial_r\) given by the \(\R\)-coordinate, and define the
\emph{Reeb vector field} \(R_\lambda\) by
\[
    \lambda(R_\lambda) \equiv 1, \text{ and }
    \iota_{R_\lambda}\ud\lambda \equiv 0.
\]
\begin{definition}
    We say that an almost complex structure \(J\) on a symplectisation \(\R
\times M\) is \emph{cylindrical} when
\begin{enumerate}
    \item \(J\partial_r = R_\lambda\),
    \item \(J\) is invariant under \(\R\)-translation, and
    \item the restriction \(J|_\xi\) of \(J\) to the contact structure \(\xi =
	\ker\lambda\) is a compatible almost complex structure on the
	symplectic vector bundle \((\xi,\ud\lambda) \to M\).
\end{enumerate}
\end{definition}

Since the neck region is symplectomorphic to part of a symplectisation, we can
consider the class of almost complex structures \(J\) on \(X\) that restrict
to a cylindrical almost complex structure \(J_M\) on the neck
\([-\epsilon,\epsilon] \times M\). These are sometimes called almost complex
structures that are \emph{adapted} to the hypersurface \(\{0\} \times M
\subset X\), but we will often abuse language and call them cylindrical. The
process of neck stretching is to start with one of these adapted almost
complex structures and replace the neck with larger and larger portions
\([-t-\epsilon,\epsilon] \times M\) of \(\R \times M\) along with an almost
complex structure that is cylindrical along this longer neck. More precisely,
following \cite[\S{}2.7]{cieliebakMohnkeSFTCompactness} we excise the neck
region to obtain a compact manifold\footnote{In the notation of
    \cite{cieliebakMohnkeSFTCompactness} this is \(\bar{X}_0\).} \(Y := X
\backslash (-\epsilon,\epsilon) \times M\) with boundary \(\partial Y = M_-
\sqcup M_+\), where \(M_\pm = \{\pm\epsilon\} \times M \subset X\). Form the
\emph{stretched} manifold \(X_t\) by
\[
    X_t := Y \cup_{M_-\sqcup M_+} [-t-\epsilon,\epsilon]\times M,
\]
which is diffeomorphic to \(X\). Define the almost complex structure \(J_t\)
on \(X_t\) by
\begin{equation}
    \label{eq:neckStretch}
    J_t = \begin{cases}
	J, &\text{ on }Y\\
	J_M, &\text{ on } [-t-\epsilon,\epsilon] \times M.
    \end{cases}
\end{equation}
The family of almost complex structures \(J_t\) is called a \emph{neck
stretch} of \(J\).

A neck stretch \(J_t\) has no well-defined limit on \(X\) as \(t \to \infty\).
However, due to the \(\R\)-invariant nature on the neck, \(J_t\) can be seen to
converge (on compact subsets) to almost complex structures on either the
symplectisation \(\R \times M\), or the \emph{completed} symplectic manifold
\[
    \hat{Y} = Y \cup_{M_-} [-\epsilon,\infty) \times M \cup_{M_+}
    (-\infty,\epsilon] \times M.
\]
Taking a monotonic sequence \(t_k \to \infty\) and the corresponding neck
stretch \(J_k := J_{t_k}\), we consider sequences \(f_k : \Sigma \to X_k\) of
\(J_k\)-holomorphic curves with uniformly bounded energy.\footnote{The
    definition of energy is a little technical in general
    \cite[\S{}6.1]{behwzSFTCompactness}. However, in our case, as we are neck
    stretching around a contact-type hypersurface in a closed manifold, the
    energy bound will be automatically satisfied by considering sequences of
homologous curves.} To make sense of a limit of \(f_k\) as \(k\to\infty\) we
need to recall the notion of a \emph{holomorphic building}. Essentially, this
is a map \(F : \Sigma^* \to X^*\) from a (potentially disconnected and
punctured) Riemann surface \(\Sigma^* = \bigsqcup_{\nu=0}^N \Sigma^{(\nu)}\)
to the manifold
\[
    X^* = \hat{Y} \sqcup \bigsqcup_{\nu=1}^N \R \times M,
\]
where the restriction \(F^{(\nu)} = F|_{\Sigma^{(\nu)}}\) to each
\emph{level} maps into
\[
    X^{(\nu)} = \begin{cases}
        \hat{Y}, &\text{ if }\nu=0,N+1\\
	\R \times M, &\text{ if }\nu=1,\ldots,N.
    \end{cases}
\]
The limits of the neck stretch sequence \(J_k\) fit together into an almost
complex structure \(J^*\) on \(X^*\), with respect to which \(F\) is
\(J^*\)-holomorphic.

Each level \(F^{(\nu)} : \Sigma^{(\nu)} \to X^{(\nu)}\) is a finite-energy
punctured curve, with each puncture asymptotic to a Reeb orbit in
\((M,\lambda)\). The set of punctures \(\Gamma\) is partitioned into positive
and negative sets \(\Gamma^\pm\) determined by whether the asymptote
corresponding to a puncture lives in either the convex or concave ideal
boundary of \(X^{(\nu)}\) respectively. Crucially, the asymptotics in
adjoining levels of a \(J^*\)-holomorphic building agree. That is, the \(\nu\)
level connects to the \((\nu+1)\) level, and the set of positive asymptotics
of \(F^{(\nu)}\) \emph{is equal to} the set of negative asymptotics of
\(F^{(\nu+1)}\). Amongst other things, this means we can glue \(\Sigma^*\)
together into a compact Riemann surface \(\bar{\Sigma}\) and \(F\)
correspondingly glues together into a continuous map \(\bar{F} : \bar{\Sigma}
\to X\), whence we obtain the homology class \([\bar{F}] \in H_2(X)\) of the
building.\footnote{Sometimes we will drop the bar notation and just write
\([F]\) when no confusion is possible.}

A \emph{stable} \(J^*\)-holomorphic building is one where none of the levels
\(F^{(\nu)}\) restrict to constant maps on any sphere component with fewer
than 3 nodal points or punctures, and moreover, that no level \(F^{(\nu)}\) is
comprised solely of a union of trivial cylinders over Reeb orbits. We will
produce stable \(J^*\)-holomorphic buildings by using the SFT compactness
theorem \cite[Theorem~10.3]{behwzSFTCompactness}
\cite[Theorem~1.1]{cieliebakMohnkeSFTCompactness}:
\begin{theorem}[SFT compactness]
    Given a neck stretching family \(J_t\) and a sequence of
\(J_k\)-holomorphic curves \(f_k : \Sigma \to X_k\) with uniformly bounded
energy, there exists a subsequence converging to a stable \(J^*\)-holomorphic
building \(F\).
\end{theorem}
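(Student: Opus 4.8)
The plan is to invoke the general SFT compactness theorem: the statement as phrased is precisely the special case of \cite[Theorem~10.3]{behwzSFTCompactness} (equivalently \cite[Theorem~1.1]{cieliebakMohnkeSFTCompactness}) in which one neck-stretches around a single contact-type hypersurface rather than a disjoint collection of them, so the work consists only of checking that the geometric data assembled above fit the hypotheses of those references. In particular there is essentially nothing new to prove here; the genuine analysis --- the a priori bubbling-off and breaking estimates on which SFT compactness rests --- is exactly what \cite{behwzSFTCompactness} and \cite{cieliebakMohnkeSFTCompactness} carry out.

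Concretely, I would verify three points. First, that $(X,\omega)$ is closed and $\{0\}\times M = T_R^*L$ is a hypersurface of contact type: the Liouville field is the fibrewise radial vector field, the induced contact form is $\lambda = \lambda_{\mathrm{can}}|_{T_R^*L}$, and its Reeb flow is the (reparametrised) cogeodesic flow of the chosen metric on $L$. Second, that this Reeb flow is nondegenerate or Morse-Bott, as required by the compactness theorem: taking the round metric on $L = S^2$ makes all cogeodesics closed of a common period, so the closed Reeb orbits occur in Morse-Bott families, and this is the situation covered by the Morse-Bott versions of the theorems in \cite{behwzSFTCompactness} and \cite{cieliebakMohnkeSFTCompactness}. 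Third, that the family $J_t$ is admissible: by construction it equals a fixed $J$ off the neck and is cylindrical (in the sense defined above) on the lengthening neck $[-t-\epsilon,\epsilon]\times M$, which is precisely the class of almost complex structures for which those theorems produce limits.

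Granting these, the cited theorem applied to the subsequence $J_k = J_{t_k}$ and the curves $f_k$ (which by hypothesis have uniformly bounded Hofer energy) yields a further subsequence converging in the Gromov-Hofer sense: the domains degenerate to a nodal, possibly multi-level punctured surface, and the maps converge in $C^\infty_{\mathrm{loc}}$ after the appropriate $\R$-translations on the neck levels, to a building $F$ mapping into $X^* = \hat{Y} \sqcup \bigsqcup_{\nu=1}^N (\R\times M)$, holomorphic for the limiting $J^*$, with matching asymptotic Reeb orbits between adjacent levels. The limit may be taken stable: any component on which some level restricts to a constant carrying fewer than three special points is absorbed by the standard stabilisation of the domain, and, by the normalisation built into the statement, no level consists solely of trivial cylinders over Reeb orbits.

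I do not expect a genuine obstacle. The single point that requires a moment's care is that the Reeb flow on $T_R^*S^2$ is Morse-Bott rather than nondegenerate, so one must cite the Morse-Bott formulation of SFT compactness; both \cite{behwzSFTCompactness} and \cite{cieliebakMohnkeSFTCompactness} provide it. (In the applications that follow, the uniform energy hypothesis assumed here will itself be automatic, since the curves $f_k$ will all be homologous, whence $\int_\Sigma f_k^*\omega$ equals the fixed topological quantity $\langle[\omega],F\rangle$, which controls the Hofer energy up to the geometric constants of the neck.)
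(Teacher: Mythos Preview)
Your proposal is correct and matches the paper's approach exactly: the paper does not prove this theorem at all but simply cites \cite[Theorem~10.3]{behwzSFTCompactness} and \cite[Theorem~1.1]{cieliebakMohnkeSFTCompactness}, treating the statement as a black-box input. Your verification that the hypotheses (contact-type hypersurface, Morse--Bott Reeb flow for the round metric, admissible $J_t$) are satisfied is accurate and indeed more explicit than what the paper itself records.
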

The nature of the convergence is technical (see Definition~2.7 of
\cite{cieliebakMohnkeSFTCompactness} for it in full) and we note just a couple
of its properties. For each level \(\nu\), up to reparametrisation, there
exists a sequence of translations of the maps \(f_k\) in the neck
region so that they converge to the level \(F^{(\nu)}\) in the \(\Cinfloc\)
topology. Additionally, the convergence ensures that of the homology classes:
\[
    [f_k] \to [\bar{F}] \in H_2(X).
\]
In particular, if \([f_k] = A\) is independent of \(k\), then \([\bar{F}] =
A\).

In our case \(M=T_R^*L\) is a separating hypersurface in
\(X\), meaning that \(Y = X \backslash (-\epsilon,\epsilon) \times M\) has two
connected components\footnote{The sign of \(Y_\pm\) corresponds to
that of the boundary \(\partial Y_\pm = \{\pm\epsilon\} \times M\).} \(Y_\pm\).
This means that \(X^{(0)} = \hat{Y}\) decomposes into two components
\(X_\pm^{(0)}\), which are the completions of \(Y_\pm\). The \(0\)-level of a
holomorphic building then naturally splits into two components called the
\emph{top} and \emph{bottom} levels: \(\Sigma^{(0)} = \Sigma_+^{(0)} \sqcup
\Sigma_-^{(0)}\) and \(F_\pm^{(0)} := F|_{\Sigma_\pm^{(0)}}\) maps into
\(X_\pm^{(0)}\). As the completions of the symplectic manifolds \(Y_\pm\),
\(X_\pm^{(0)}\) come equipped with symplectic forms \(\omega^\pm\) satisfying:
\[
    (\toplevel{X},\omega^+) \cong (X \backslash L,\omega) \text{ and }
    (\bottomlevel{X},\omega^-) \cong (T^*L,\ud\lambda_\mathrm{can}).
\]
Moreover, the stretched manifolds \(X_t\) can be equipped with natural
symplectic structures \cite[Example~2.4]{cieliebakMohnkeSFTCompactness}:
\[
    \omega_t = \begin{cases}
	\omega, &\text{ on }Y_+,\\
	\ud(e^r\lambda), &\text{ on } [-t-\epsilon,\epsilon] \times M,\\
	e^{-t}\omega, &\text{ on } Y_-,
    \end{cases}
\]
which converge (on compact subsets and after rescaling by \(e^t\) in the case
of \(\bottomlevel{X}\)) to the symplectic forms \(\omega\) on
\(\toplevel{X} \cong X \backslash L\) and \(\ud\lambda_\mathrm{can}\) on
\(\bottomlevel{X} \cong T^*L\).

Returning to the scenario of a Lagrangian sphere \(L\) in the
compactification \(X = X_{d,p,q}\), since \(L\) is
disjoint from the divisor \(D' \subset X\), a cylindrical almost complex
structure \(J\) defined on the Weinstein neighbourhood\footnote{Up to
    rescaling by the Liouville flow on the neck, we can assume that \(R=1\).}
\(T_{\le 1}^*L \subset X\) of \(L\), can be extended to an element of
\(\mathcal{J}(D')\). Indeed, choose a tubular neighbourhood \(N\) of \(D'\)
disjoint from \(T_{\le 1}^*L\) and define \(J\) on \(T_{\le 1}^*L \sqcup N\)
and then extend it arbitrarily to the whole of \(X\). Since neck stretching only
alters almost complex structures in the neck region, we have that \(J_t \in
\mathcal{J}(D')\) for all \(t \ge 0\). Therefore, the results of
Section~\ref{sec:fibreModuliSpaces} apply to the neck stretch \(J_t\).

\begin{remark}
    The neck stretching setup allows us to think about points in the Weinstein
neighbourhood \(x \in Y_- = T_{\le 1}^*L\) as either
living in \(X\), or in the completion \(\bottomlevel{X} \cong T^*L\). This is
useful for phrasing statements like: given a \(J^{(0)}\)-holomorphic curve
\(F^{(0)}\), that forms part of a \(J^*\)-holomorphic building \(F\), passing
through \(x \in \bottomlevel{X}\), there exists a sequence of \emph{closed}
\(J_k\)-holomorphic curves \(f_k^x\) passing through \(x\) that converges to
\(F\). This phrasing will be used repeatedly in Section~\ref{ch:isotopy}.
\end{remark}

\subsection{An almost complex structure adapted to a Lagrangian sphere}
\label{sub:complex-structures}

In this section we construct a family of \(SO(3)\)-invariant,
\(\omega_\mathrm{can}\)-compatible almost complex structures on \(T^*S^2\)
that can be taken to be cylindrical on the complement of an arbitrarily small
neighbourhood of the zero section. The construction is similar to
Lemma~4.12 of \cite{seidel00graded}, however we make it explicit here to
ensure the anti-invariance under fibre-wise multiplication by \(-1\) property.

First we recall a result taken from Seidel's thesis \cite{seidelThesis}. Let
$\pi : \C^3 \to \C$ be the map $\pi(z) = z_1^2 + z_2^2 + z_3^2$ and denote by
$E$ the pullback of $\pi$ via the inclusion $\R \hookrightarrow \C$. Equip $E$
with the restriction of the standard K\"ahler structure
$(\Omega=\omega_{\C^3},i)$ on $\C^3$.

\begin{lemma}[{\cite[Lemma 18.1]{seidelThesis}}] 
    \label{lem:seidelIsomorphism}
    The restriction of $(E,\Omega)$ to $(0,1]$ is isomorphic to the trivial
symplectic fibre bundle $(0,1] \times (T^*S^2,\omega_{\mathrm{can}})$. An
explicit isomorphism is given by:
\[
    \Psi: z = x + \mathrm{i}y \mapsto (\pi(z),|x|^{-1}x, -|x|y).
\]
\end{lemma}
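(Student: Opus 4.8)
The plan is to run a direct computation in the real coordinates $z = x + \mathrm{i}y$ with $x,y \in \R^3$. Since $\pi(x+\mathrm{i}y) = z_1^2+z_2^2+z_3^2 = (|x|^2-|y|^2) + 2\mathrm{i}\langle x,y\rangle$, the locus $E = \pi^{-1}(\R)$ is cut out by $\langle x,y\rangle = 0$, and $E|_{(0,1]} = \{(x,y) : \langle x,y\rangle = 0,\ 0 < |x|^2-|y|^2 \le 1\}$, on which $x \ne 0$ everywhere. As $\ud\pi = 2\sum_j z_j\,\ud z_j$ vanishes only at the origin, which does not lie in $\pi^{-1}((0,1])$, both $E|_{(0,1]}$ and each fibre $E_t := \pi^{-1}(t)$ (for $t\in(0,1]$) are smooth, and $E_t$ is a complex affine quadric, hence a symplectic submanifold of $(\C^3,\Omega)$. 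I will use the model $T^*S^2 = \{(u,v) \in \R^3\times\R^3 : |u|=1,\ \langle u,v\rangle = 0\}$ with canonical $1$-form $\lambda_{\mathrm{can}} = \sum_j v_j\,\ud u_j$.

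First I would check that $\Psi$ is a fibre-preserving diffeomorphism onto $(0,1]\times T^*S^2$. Writing $u = |x|^{-1}x$ and $v = -|x|y$, one has $|u|=1$ and $\langle u,v\rangle = -\langle x,y\rangle = 0$ on $E$, so $\Psi$ lands in $(0,1]\times T^*S^2$, and it is fibre-preserving over $(0,1]$ because its first component is $\pi$. To build the inverse, given $(t,u,v)$ I would set $\rho := \bigl(\frac{1}{2}(t+\sqrt{t^2+4|v|^2})\bigr)^{1/2}$ --- the positive square root of the unique positive root $s$ of $s^2 - ts - |v|^2 = 0$ --- which is smooth and strictly positive on $(0,1]\times\R^3$; then $x := \rho u$, $y := -\rho^{-1}v$ lie in $E|_{(0,1]}$ (one checks $\langle x,y\rangle = 0$ and $|x|^2-|y|^2 = \rho^2 - \rho^{-2}|v|^2 = t$ directly) and give a smooth two-sided inverse to $\Psi$.

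The heart of the argument is the symplectic comparison, which is short. Using $\ud(|x|^{-1}x_j) = |x|^{-1}\ud x_j - |x|^{-2}x_j\,\ud|x|$, I would compute
\[
    \Psi^*\lambda_{\mathrm{can}} = \sum_j(-|x|y_j)\,\ud\bigl(|x|^{-1}x_j\bigr) = -\sum_j y_j\,\ud x_j + |x|^{-1}\Bigl(\sum_j x_jy_j\Bigr)\ud|x|,
\]
and then observe that on $E$ the term $\sum_j x_jy_j = \langle x,y\rangle$ vanishes identically, so $\Psi^*\lambda_{\mathrm{can}} = -\sum_j y_j\,\ud x_j$ on $E|_{(0,1]}$. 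Since $-\sum_j y_j\,\ud x_j$ is a primitive on $\C^3$ of $\Omega = \omega_{\C^3} = \sum_j \ud x_j \wedge \ud y_j$, applying $\ud$ yields $\Psi^*\omega_{\mathrm{can}} = \Omega|_{E|_{(0,1]}}$, whence $\Psi$ restricts on each fibre $E_t$ to a symplectomorphism onto $\{t\}\times(T^*S^2,\omega_{\mathrm{can}})$. Combined with the previous paragraph, this is precisely the claimed isomorphism of symplectic fibre bundles; in fact the computation gives the sharper statement that $\Psi$ pulls $\lambda_{\mathrm{can}}$ back to the restriction of the primitive $-\sum_j y_j\,\ud x_j$ of $\Omega$.

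I do not expect a genuine obstacle here. The only bookkeeping needing care is (i) the sign conventions for $\lambda_{\mathrm{can}}$, $\omega_{\mathrm{can}} = \ud\lambda_{\mathrm{can}}$, and $\Omega = \frac{\mathrm{i}}{2}\sum_j\ud z_j\wedge\ud\bar z_j$; a mismatch would be absorbed either by the sign already present in the third component $-|x|y$ of $\Psi$ or by taking $\lambda_{\mathrm{can}} = -\sum_j v_j\,\ud u_j$; and (ii) confirming that this concrete model of $(T^*S^2,\omega_{\mathrm{can}})$ agrees with the convention used elsewhere in the paper. The mechanism making the lemma work is simply that the only term obstructing $\Psi^*\lambda_{\mathrm{can}} = -\sum_j y_j\,\ud x_j$ is $|x|^{-1}\langle x,y\rangle\,\ud|x|$, which is annihilated by the defining equation of $E$.
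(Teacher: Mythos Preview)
Your computation is correct. The paper does not actually supply a proof of this lemma: it is quoted as \cite[Lemma~18.1]{seidelThesis} and left unproved. The paper does, however, record in the subsequent remark exactly the sharper statement you derive, namely that $\Psi$ is an \emph{exact} symplectomorphism on each fibre, pulling back $p_i\,\ud q_i$ to $-y_i\,\ud x_i$; your computation $\Psi^*\lambda_{\mathrm{can}} = -\sum_j y_j\,\ud x_j$ on $E$ is precisely this, and your sign conventions match the paper's (it uses $\omega_{\mathrm{can}} = \sum_i \ud p_i \wedge \ud q_i$ and the model $T^*S^2 = \{(q,p) : |q|=1,\ \langle q,p\rangle = 0\}$, identical to your $(u,v)$). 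So there is nothing to compare: you have supplied the proof the paper omits, and your caveats about conventions are resolved by the paper's own choices.
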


Here we have employed the usual embedding $T^*S^2 = \{ (q,p) \in \R^3 \times
\R^3 \mid |q| = 1, \langle q,p \rangle = 0 \}$, and the fact that
\(\omega_\mathrm{can} = \sum_{i=1}^3 \ud p_i\wedge\ud q_i\). Under this
identification, fibre-wise multiplication by \(-1\) \((q,p) \mapsto (q,-p)\) is
identified with complex conjugation. 
\begin{remark}
    In fact, $\Psi$ restricts to an \emph{exact} symplectomorphism $\Psi_s :
(E_s,-y_i\ud x_i) \to (\{s\} \times T^*S^2, p_i\ud q_i)$ on each fibre $E_s =
\pi^{-1}(s)$. This is critical for our situation since cylindrical
almost complex structures depend on the Liouville structures present. Note
also that $\Psi$ is equivariant with respect to the diagonal Hamiltonian
$SO(3)$-action on $\R^3 \times \R^3 = \C^3$.
\end{remark}

The almost complex structure induced by the standard one \(i\) is not
cylindrical with respect to the contact-type hypersurface \(T_1^*S^2\),
therefore, we explicitly construct one that is.
\begin{proposition}[]
    \label{prop:T*S2-acs}
    There exists a compatible almost complex structure $\JTS$ on
$T^*S^2$ that is \emph{anti-invariant} under the anti-symplectomorphism
$\varphi$ induced by complex conjugation on $E_s$, and is cylindrical away
from the zero-section. Moreover, $\JTS$ can be made cylindrical outside of an
arbitrarily small neighbourhood of $0_{S^2}$.
\end{proposition}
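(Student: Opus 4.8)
The plan is to assemble $\JTS$ from two pieces: a cylindrical almost complex structure on the symplectisation end of $T^*S^2$, and the integrable structure pulled back by $\Psi$ near the zero section, gluing them \emph{inside} the space of $\omega_{\mathrm{can}}$-compatible structures so that the $SO(3)$-symmetry and the anti-invariance under $\varphi$ both survive. To set up coordinates, note that away from $0_{S^2}$ the function $\rho := \log|p|$ identifies $(T^*S^2\setminus 0_{S^2},\omega_{\mathrm{can}})$ with (the positive part of) the symplectisation $(\R\times T_1^*S^2,\,\ud(e^\rho\lambda))$, where $\lambda$ is the restriction of $\lambda_{\mathrm{can}}=\sum p_i\,\ud q_i$ to the unit cotangent bundle $T_1^*S^2$ and $e^\rho=|p|$; the Reeb field $R_\lambda$ generates the cogeodesic flow. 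The diagonal $SO(3)$-action commutes with $\varphi$, both fix $|p|$ hence act trivially on the $\R$-factor, $SO(3)$ preserves $\lambda$, and $\varphi^*\lambda_{\mathrm{can}}=-\lambda_{\mathrm{can}}$, so $\varphi$ preserves $\xi:=\ker\lambda$, sends $\ud\lambda\mapsto-\ud\lambda$ and $R_\lambda\mapsto-R_\lambda$.

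For the cylindrical piece, observe that since $SO(3)$ acts simply transitively on $T_1^*S^2$, an $SO(3)$-invariant $\ud\lambda$-compatible complex structure on $\xi$ is the same datum as a compatible linear complex structure on the single fibre $\xi_e$; such structures form a non-empty contractible space (a copy of the hyperbolic plane) on which $\varphi$ acts by the isometric involution $J\mapsto -\ud\varphi\circ J\circ \ud\varphi^{-1}$ --- this is again $\ud\lambda$-compatible because $\varphi^*\ud\lambda=-\ud\lambda$, and again $SO(3)$-invariant because $\varphi$ commutes with $SO(3)$. The geodesic midpoint of any $J'$ and its image under this involution is then an $SO(3)$-invariant, $\varphi$-anti-invariant, $\ud\lambda$-compatible structure $J_\xi$ on $\xi$. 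Extend $J_\xi$ by $\R$-invariance and the condition $J_{\mathrm{cyl}}\partial_\rho = R_\lambda$ to a cylindrical structure $J_{\mathrm{cyl}}$ on $\R\times T_1^*S^2$; it is $SO(3)$-invariant, and $\varphi$-anti-invariant because on $\partial_\rho$ both $\ud\varphi\circ J_{\mathrm{cyl}}$ and $-J_{\mathrm{cyl}}\circ\ud\varphi$ return $-R_\lambda$, while on $\xi$ anti-invariance is the defining property of $J_\xi$.

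Near the zero section we use the integrable model $J_0 := (\Psi_1)_*\big(i|_{E_1}\big)$ on $T^*S^2$ supplied by Lemma~\ref{lem:seidelIsomorphism}: it is $\omega_{\mathrm{can}}$-compatible (the restriction of a Kähler structure, transported by a symplectomorphism), $SO(3)$-invariant (since $\Psi$ is $SO(3)$-equivariant and $SO(3)\subset GL_3(\R)$ commutes with $i$ on $\C^3$), and $\varphi$-anti-invariant (since $\varphi$ corresponds under $\Psi$ to complex conjugation, which anti-commutes with $i$). Now interpolate: fix $0<\delta<\epsilon$ small and a cutoff $\chi=\chi(|p|)$ with $\chi\equiv0$ for $|p|\le\delta$ and $\chi\equiv1$ for $|p|\ge\epsilon$. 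On the annulus $\{\delta\le|p|\le\epsilon\}$ both $J_0$ and $J_{\mathrm{cyl}}$ are defined, and the associated Riemannian metrics $g_0:=\omega_{\mathrm{can}}(\cdot,J_0\cdot)$, $g_1:=\omega_{\mathrm{can}}(\cdot,J_{\mathrm{cyl}}\cdot)$ are both $SO(3)$-invariant, and genuinely $\varphi$-invariant: because $\varphi$ is anti-symplectic and each $J_i$ is $\varphi$-anti-invariant, each $g_i$ pulls back to itself. The convex combination $g_\chi:=(1-\chi)g_0+\chi g_1$ is again such a metric, and the canonical polar-decomposition construction $(\omega_{\mathrm{can}},g_\chi)\mapsto J_\chi := (AA^{*})^{-1/2}A$, where $\omega_{\mathrm{can}}(\cdot,\cdot)=g_\chi(A\cdot,\cdot)$ and adjoints are taken with respect to $g_\chi$, returns an $\omega_{\mathrm{can}}$-compatible structure $J_\chi$ that depends functorially on the pair $(\omega_{\mathrm{can}},g_\chi)$, hence is $SO(3)$-invariant and satisfies $\varphi^*J_\chi=-J_\chi$, and which reduces to $J_0$ where $\chi\equiv0$ and to $J_{\mathrm{cyl}}$ where $\chi\equiv1$. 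Declaring $\JTS$ to equal $J_0$ on $\{|p|\le\delta\}$, $J_\chi$ on the annulus, and $J_{\mathrm{cyl}}$ on $\{|p|\ge\epsilon\}$ yields the required almost complex structure, and since $\epsilon>0$ was arbitrary it is cylindrical outside an arbitrarily small neighbourhood of $0_{S^2}$.

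The main obstacle is precisely this last gluing: one must move from a cylindrical $J$ at infinity to the (non-cylindrical) structure $J_0$ near the zero section without leaving the class of $\omega_{\mathrm{can}}$-compatible structures and without breaking either the $SO(3)$-symmetry or the $\varphi$-anti-invariance, and naïve convex combination of the $J$'s fails the first requirement. Passing to the associated Riemannian metrics, interpolating those linearly, and returning via the polar-decomposition recipe is the device that enforces compatibility, $SO(3)$-invariance, and $\varphi$-anti-invariance simultaneously; everything else in the argument is formal.
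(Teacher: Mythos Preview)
Your argument is correct, but the paper takes a cleaner route that avoids both the abstract construction of $J_{\mathrm{cyl}}$ and the polar-decomposition interpolation. The key observation you miss is that the \emph{entire} family $\Psi_s : E_s \to T^*S^2$ for $s\in[0,1]$ already delivers a one-parameter family of compatible, $\varphi$-anti-invariant almost complex structures $J_s := (\Psi_s)_*(i|_{E_s})$, and the singular fibre $E_0$ is a complex cone, so $J_0$ is automatically cylindrical on $T^*S^2\setminus 0_{S^2}$. The paper then simply sets $\JTS(q,p) := J_{\rho(|p|)}(q,p)$ for a cutoff $\rho$ interpolating between $1$ (near $0_{S^2}$) and $0$ (far away). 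Because each $J_s$ is already $\omega_{\mathrm{can}}$-compatible and $\varphi$-anti-invariant, and $|p|$ is $\varphi$-invariant, all the desired properties are immediate---no metric interpolation or polar decomposition is needed.

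Your approach trades this geometric shortcut for general-purpose machinery, which is fine and perhaps more transparent if one hasn't internalised why the cone fibre gives a cylindrical structure. One small remark: since $\xi$ is a rank-$2$ symplectic bundle, the space of $\ud\lambda$-compatible structures on each fibre is a single point, not a hyperbolic plane, so your midpoint construction is vacuous (the unique $J_\xi$ is automatically $\varphi$-anti-invariant). This does not affect correctness.
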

\begin{proof}
    Consider the manifold $W := ([0,1] \times T^*S^2) \backslash (\{0\}
\times 0_{S^2})$. Observe that $\Psi$ extends to a isomorphism
$E|_{[0,1]} \backslash \{0\} \to W$ that is exact on the fibres. Pushing forward via
$\Psi_s$ yields a family of almost complex structures $J_s$ on $T^*S^2$,
defined only away from the zero-section in the case of $J_0$. Each of these is
anti-invariant under $\varphi$ and $J_0$ is cylindrical on
$(T^*S^2 \backslash 0_{S^2}, \lambda_{\mathrm{can}}) = (\R \times T^*_1 S^2,
e^r\lambda_{\mathrm{can}}|_{T^*_1 S^2})$.

Choose a smooth function $\rho : \R \to \R$ that satisfies:
\[
\begin{aligned}
	\rho|_{(-\infty,\frac{1}{4}]} &= 1 \\
	\rho|_{[\frac{3}{4},\infty)} &= 0 \\
	\rho'|_{(\frac{1}{4}, \frac{3}{4})} &< 0.
\end{aligned}
\]
Then $\JTS(q,p) := J_{\rho(|p|)}(q,p)$ is the desired almost complex structure.
By altering $\rho$ the last claim of the result follows.
\end{proof}

\subsection{Intersection theory of punctured curves}
\label{sub:intersection-theory}

Intersection theory for punctured \(J\)-holomorphic curves was first laid out
by Siefring in \cite{siefring11intersection}. In that paper, Siefring assumes
a non-degeneracy of the contact form. However, as we
explain in this section, some of Siefring's results apply more widely (with
few alterations) to the Morse-Bott degenerate case considered in
this work.

\subsubsection{Asymptotic constraints}
\label{sec:asymptoticConstraints}

Morse-Bott degenerate Reeb orbits come in families of orbifolds of orbits
sharing the same period \(T\). We consider the especially simple
case of \((M,\lambda) = (T_1^*S^2,\lambda_\mathrm{can})\), which has the
property that every simple\footnote{A simple Reeb orbit is one with minimal
period.} Reeb orbit has the same period, and \(M \cong \RP^3\) is smoothly
foliated by them. This implies that the quotient of \(M\) by the \(S^1\)
action of the Reeb flow is a smooth 2-dimensional manifold (diffeomorphic to
\(S^2\)).

Associated to any Reeb orbit \(\gamma\) is a differential operator called an
\emph{asymptotic operator} \(\mathbf{A}_\gamma\). Somewhat counter intuitively,
knowing the precise definition will not be that useful for what
follows.\footnote{However, we note that in a unitary trivialisation of the
    bundle \(\gamma^*\xi\), one can always write \[ \mathbf{A}_\gamma =
-J_0\partial_t - S(t), \] where \(S(t)\) is a loop of symmetric matrices and
\(J_0\) is the standard complex structure on \(\C^n\).} The reader may consult
\cite[\S{}3]{wendlSFT} for the precise definition along with
justifications for the statements we make here. What is
more useful for us is a basic understanding of the spectrum
\(\sigma(\mathbf{A}_\gamma)\): it is a discrete set composed
exclusively of real eigenvalues. In dimension 4 (as we consider
here) this property furnishes the definition of a
certain winding number function defined on \(\sigma(\mathbf{A}_\gamma)\) ---
see Equation~\eqref{eq:extremalWindingNumbers}. 

A non-degenerate asymptotic operator satisfies \(0 \notin
\sigma(\mathbf{A}_\gamma)\), whereas the Morse-Bott operators we handle here
have \(\dim\ker(\mathbf{A}_\gamma) = 2\), which leads to problems in defining
the intersection number and virtual dimensions of moduli spaces of curves. The
root of the problem is that the conventional definition of the Conley-Zehnder
index \(\CZindex{}\) via the spectral flow \cite{conleyZehnder83indexTheory}
is undefined. This is resolved by perturbing the asymptotic operators by a
small number \(\epsilon \in \R \backslash \{0\}\) to make them non-degenerate
and then computing the Conley-Zehnder index of the perturbed operator. The
sign of \(\epsilon\) corresponds\footnote{The relationship between the sign of
    the perturbation and the geometric idea of whether an orbit is constrained
    or not is explained in \S{}3.2 of \cite{wendl2010autoTrans}. Specifically,
    the reader is directed to the discussion of the splitting \(T_u
    \mathcal{B} = W_\Lambda^{1,p,\mathbf{\delta}}(u^*TW) \oplus V_\Gamma
\oplus X_\Gamma\), and the proof of Proposition~3.7.} to whether the orbit is
considered as \emph{constrained} or \emph{unconstrained} in the moduli
problem. That is, whether or not we consider families of curves whose
asymptotic orbits are allowed to move freely.

To make this correspondence precise, let \(\dot{\Sigma} = \Sigma \backslash
\Gamma\) be a punctured Riemann surface, where \(\Gamma = \Gamma^+ \cup
\Gamma^-\) is the finite set of punctures partitioned into positive and
negative subsets. Suppose that \((W,\omega)\) has convex and concave
ends modelled on \((M_+,\lambda_+)\) and \((M_-,\lambda_-)\) respectively, and
let \(f : \dot{\Sigma} \to W\) be a punctured curve. 
Fix \(\epsilon \in \R \backslash \{0\}\) so that
\begin{equation}
    \label{eq:perturbSpectrum}
    [-|\epsilon|,|\epsilon|] \cap \sigma(\mathbf{A}_\gamma) \backslash \{0\} =
	\emptyset,
\end{equation}
then \(\gamma\) is constrained if the signs of the puncture \(z\) and
\(\epsilon\) agree, otherwise, it is unconstrained. We
shall write \(\gamma + \epsilon\) to denote the orbit subject to the
perturbation \(\epsilon\), which corresponds to the asymptotic operator
\(\mathbf{A}_\gamma + \epsilon\).

\begin{remark}
    \label{rem:epsilonSmallEnough}
    In the sequel we will always assume (without always saying so) that
\(\epsilon\) (or often \(\delta\)) has been chosen small enough so that
Equation \eqref{eq:perturbSpectrum} holds for a fixed finite collection of
asymptotic operators. These collections arise in situations when one considers
curves with multiple punctures, or, more generally, a finite collection of
punctured curves. When considering the asymptotic operators associated to the
orbits of a punctured curve, we will often write \(\mathbf{A}_z\) to denote
the operator associated to the asymptotic orbit of the puncture \(z \in
\Gamma\). Furthermore, when it is clear from the context, we sometimes just
write \(\mathbf{A}\).
\end{remark}

Before proceeding further, we remark on some differences in approaches
to defining the Conley-Zehnder index in the literature.

\begin{remark}
    \label{rem:CZindex}
    As already described, one can define an index in the Morse-Bott
degenerate case by making a choice \(\constraint{z} \in \R \backslash
\{0\}\) for each asymptotic operator \(\mathbf{A}_z\) and compute the index
\(\CZindex{}\) of the perturbed \emph{non-degenerate} operator \(\mathbf{A}_z
+ \constraint{z}\), as in Equation (3.5) of
\cite{wendl2010autoTrans}; or, one can compute the Robbin-Salamon index
\(\RSindex{}\) \cite{robbinSalamon93indexForPaths} of the linearised Reeb
flow as done in \cite{bourgeoisThesis}. It turns out that the latter can be
realised as a special case of the former. Indeed, as in
\cite[\S{}5]{bourgeoisThesis}, denote the linearised Reeb flow by \(\Psi^\pm\)
and let \(\mathbf{A}_\pm\) be the asymptotic operator with parallel transport
\(\Psi^\pm\). Then
\begin{align*} \CZindex{\tau}(\mathbf{A}_\pm \pm \delta) &=
    \RSindex{}(\Psi^{\pm'}) \\ &= \RSindex{}(\Psi^\pm) \mp
\frac{1}{2}\dim\ker(\Psi^\pm - I) \\ &= \RSindex{}(\Psi^\pm) \mp
\frac{1}{2}\dim\ker(\mathbf{A}_\pm), \end{align*} where \(\Psi^{\pm'}\) is the
perturbed parallel transport defined in \S{}5.2.3 of \cite{bourgeoisThesis}.
Note that the left-hand side of the first line is what appears in the index
formulas of \cite{wendl2010autoTrans}, and the last line appears in the index
formula of \(\bar{\partial}_1\) of \cite[\S{}5]{bourgeoisThesis}.

In the sequel, we shall often use the notation \(\RSindex(\gamma)\) to
represent the Robbin-Salamon index of the linearised Reeb flow around the Reeb
orbit \(\gamma\). That is, \(\RSindex(\gamma) = \RSindex(\Psi^{\pm})\).
\end{remark}

\subsubsection{Definitions and key results}
    Now let \(g : \dot{\Sigma'} \to W\) be another \(J\)-holomorphic curve. In
defining the intersection number between \(f\) and \(g\), it is important to
be precise about the orbit constraints one considers. Here, we will
usually consider the case where \emph{every} orbit of \(f\) and \(g\) is
unconstrained, although, we will also consider the opposite case, where every
orbit is constrained. To keep the notation light, we will denote these numbers
as \(i_U(f,g)\) and \(i_C(f,g)\) respectively. At times where it is clear from
the context which intersection number we are handling, we shall sometimes
refer to them both as the \emph{Siefring intersection number}, due to
Siefring's original work \cite{siefring11intersection}.

The numbers \(i_U(f,g)\) and \(i_C(f,g)\) are generalisations of the usual
intersection product for closed curves. See
\cite[\S{}4]{wendl2020Contact3folds} for a concise account of the
non-degenerate case, and \cite[\S{}4.1]{wendl2010autoTrans} for the
corresponding Morse-Bott statements that we use here. Moreover, there is a
further generalisation: that of the intersection number of two holomorphic
buildings. The mantra that we shall justify in this section is that the
intersection theory of buildings arising in the splitting (or neck stretching)
construction behaves almost exactly like that of closed curves.

Before stating the main definitions, we recall some terminology. We adopt the
conventions of Wendl, which differ slightly from Siefring's; see Remark~4.7 of
\cite{wendl2020Contact3folds} for a note on the differences. Given a Reeb
orbit \(\gamma\) let \(\tau\) denote a unitary trivialisation of the contact
bundle \(\gamma^*\xi \to S^1\) over it.\footnote{Occasionally we will also use
\(\Phi\) instead of \(\tau\).} As is customary, whenever a trivialisation over
a multiple cover is required, we use the pullback of \(\tau\)
under the covering map \(S^1 \to S^1 : \theta \mapsto k\theta\) and denote
this by \(\tau^k\) (although sometimes we abuse notation and just use
\(\tau\)). Similarly, we write \(\gamma^k\) for the pullback of \(\gamma\)
under the \(k\)-fold covering map, and we say that \(\gamma^k\) is a
\(k\)-fold cover of \(\gamma\).

The reader may consult Sections 3 and 4 of the excellent book
\cite{wendl2020Contact3folds}, or the original paper \cite{HWZpropertiesII}
for full details on the following statements about asymptotic winding numbers.
See also Section~3.1.3 of \cite{siefring11intersection} for a summary of the
situation. Relative to \(\tau\), there is a well-defined winding number
function \(\mathrm{wind}_\mathbf{A}^\tau : \sigma(\mathbf{A}) \to \Z\). The
extremal winding numbers \(\alpha_\pm^\tau\) are defined as
follows:\footnote{Observe that these definitions are sound, no matter the
degeneracy of \(\mathbf{A}\).}
\begin{equation}
    \begin{aligned}
        \label{eq:extremalWindingNumbers}
        \alpha_+^\tau(\mathbf{A}) &=
	\min\{\mathrm{wind}_\mathbf{A}^\tau(\lambda) \mid \lambda \in
	(0,+\infty) \cap \sigma(\mathbf{A}) \} \\
	\alpha_-^\tau(\mathbf{A}) &=
	\max\{\mathrm{wind}_\mathbf{A}^\tau(\lambda) \mid \lambda \in
	(-\infty,0) \cap \sigma(\mathbf{A}) \}.
    \end{aligned}
\end{equation}
We will also write \(\alpha_\pm^\tau(\gamma)\) to mean
\(\alpha_\pm^\tau(\mathbf{A}_\gamma)\). Now suppose that \(\gamma\) is a
simple Reeb orbit, then we define the numbers
\[
	\Omega^\tau_\pm(\gamma^k + \epsilon, \gamma^m + \epsilon') =
	km \min\left\{\frac{\mp\alpha_\mp^\tau(\gamma^k + \epsilon)}{k},
	\frac{\mp\alpha_\mp^\tau(\gamma^m + \epsilon')}{m}\right\}.
\]
This definition is extended to all pairs of Reeb orbits by asserting that
\[
    {\Omega_\pm^\tau(\gamma + \epsilon, \gamma' + \epsilon') = 0}
\]
if \(\gamma\)
and \(\gamma'\) are not covers of the same simple orbit.

The final ingredients needed to state the main definitions are the relative
intersection number \(f \bullet_\tau g\) of two punctured curves, and the
relative first Chern number \(c_1^\tau(E)\) associated to a complex bundle \(E
\to \dot{\Sigma}\) over a punctured curve. The former is defined to be the usual
algebraic count of intersections between \(f\) and the push-off of \(g\) in
the direction determined by \(\tau\) near infinity, and the latter is defined
via algebraic counts of zeros of sections of \(E\) that are non-zero and
constant near infinity. For precise definitions see Sections 4.2 and 3.4 of
\cite{wendl2020Contact3folds} respectively. For a punctured curve \(f :
\dot{\Sigma} \to W\) we'll often use the shorthand \(c_1^\tau(f) :=
c_1^\tau(f^*TW)\).

\begin{definition}
\begin{enumerate}
    \item Let \(f : \dot{\Sigma} \to W\) and \(g : \dot{\Sigma'} \to W\) be
punctured holomorphic curves in an almost complex manifold with cylindrical
ends \(W\), then for any \(\delta > 0\) sufficiently small, their constrained
and unconstrained intersection numbers are defined as\footnote{In this
    equation and in those that follow, the notation \(\pm\) indicates that the
sum appears twice, once with a plus sign, and again with a minus sign.}
\begin{align*}
    i_C(f,g) &:= f \bullet_\tau g - \sum_{(z,w) \in \Gamma^\pm \times
	(\Gamma')^\pm} \Omega_\pm^\tau (\gamma_z \pm \delta, \gamma_w \pm
	\delta), \\
    i_U(f,g) &:= f \bullet_\tau g - \sum_{(z,w) \in \Gamma^\pm \times
	(\Gamma')^\pm} \Omega_\pm^\tau (\gamma_z \mp \delta, \gamma_w \mp
	\delta).
\end{align*}
    \item Let \(F = \splitcurve{F}\) and \(G = \splitcurve{G}\) be holomorphic
buildings in a split manifold, then we define their intersection number to be
    \[
	i(F,G) := \sum_{i=0}^N F^{(i)} \bullet_\tau G^{(i)}.
    \]
    \item The normal Chern number of a curve \(f\) is defined as
    \[
	c_N(f) = c_1^\tau(f) - \chi(\dot{\Sigma}) \pm \sum_{z \in
	\Gamma^\pm} \alpha_{\mp}^\tau(\gamma_z \mp \delta).
    \]
\end{enumerate}
\end{definition}
These definitions are sound since the dependence on the trivialisation
\(\tau\) cancels in each of the equations. In particular see Appendix~C.5 of
\cite{wendl2020Contact3folds} for the justification of this claim for
intersections of buildings.

We now state some key results that will be crucial in our analysis of
holomorphic buildings arising from the neck stretching process. First, the
adjunction inequality, which is an immediate corollary of the adjunction
formula given in \cite[\S{}4]{wendl2010autoTrans}.

\begin{theorem}[Adjunction inequality.]
    \label{thm:adjunction}
    Let \(f : \dot{\Sigma} \to W\) be a punctured curve in a manifold with
cylindrical ends. Then,
\[
	i_U(f,f) \ge c_N(f).
\]
\end{theorem}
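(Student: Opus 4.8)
The plan is to read the inequality off directly from Siefring's punctured adjunction formula, in the Morse--Bott form recorded in \S{}4 of \cite{wendl2010autoTrans}. First I would recall the statement of that formula: for a (connected) somewhere injective punctured curve $f : \dot{\Sigma} \to W$ it has the shape
\[
    i_U(f,f) = c_N(f) + 2\delta(f) + \Delta_\infty(f),
\]
where $\delta(f)$ is the algebraic count of double points and critical points of $f$, and $\Delta_\infty(f)$ collects the asymptotic contributions --- the self-intersections and branching pushed off to the punctures, together with the terms built from the covering multiplicities of the asymptotic Reeb orbits. The key point to check is that the perturbation convention built into $\Delta_\infty(f)$ and into the $\Omega_\pm^\tau$ appearing in $i_U(f,f)$ is exactly the unconstrained one ($\mp\delta$ at the $\pm$-punctures) that also appears in the definition of $c_N(f)$ above; this mutual compatibility is precisely what makes the displayed identity hold as stated.

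Granting the formula, the argument is pure bookkeeping: one checks that each correction term is non-negative. That $\delta(f) \ge 0$ is the local structure theory of $J$-holomorphic curves --- McDuff's positivity of intersections \cite{mcduffPosInts} --- since every interior double point or critical point contributes a strictly positive local index. That $\Delta_\infty(f) \ge 0$ follows from the asymptotic winding estimates underlying Siefring's intersection theory: the relevant comparisons of extremal winding numbers at the punctures always have the sign that makes $\Delta_\infty(f)$ a sum of non-negative integers, which is the content of the asymptotic positivity statements quoted in \S{}4 of \cite{wendl2010autoTrans} (see also \cite{siefring11intersection}). Summing, $i_U(f,f) - c_N(f) = 2\delta(f) + \Delta_\infty(f) \ge 0$, which is the claim.

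If $f$ fails to be somewhere injective, I would factor it through its underlying simple curve $v$, say $f = v \circ \varphi$ for a holomorphic branched cover $\varphi$, apply the above to $v$, and then absorb the (again non-negative) covering correction terms appearing in the multiply-covered version of the adjunction formula from \cite{wendl2010autoTrans}. The main obstacle I anticipate is not mathematical but notational: verifying that the $\Omega_\pm^\tau$ and $\alpha_\mp^\tau$ data feeding the definitions of $i_U$ and $c_N$ in the previous subsection line up term-for-term with the quantities in Wendl's formula. No analytic input beyond the cited adjunction formula and positivity of intersections is required.
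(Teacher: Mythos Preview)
Your proposal is correct and follows exactly the route the paper indicates: the paper does not give a separate proof but simply states that the inequality is ``an immediate corollary of the adjunction formula given in \cite[\S{}4]{wendl2010autoTrans}.'' You have supplied the details of that derivation --- reading off the inequality from the non-negativity of the singularity and asymptotic defect terms in the Morse--Bott adjunction formula --- which is precisely what that citation is pointing to.
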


Next, we have a statement of continuity of the intersection products with
respect to the topology of the moduli spaces of holomorphic buildings.
This can be derived from Lemma~5.7 of \cite{siefring11intersection}.
\begin{theorem}[]
    \label{thm:intersection-cts}
If \(f_k\) and \(g_k\) are sequences of closed curves in \((X_k,J_k)\)
converging to holomorphic buildings \(F\) and \(G\) respectively under neck
stretching, then \(i(F,G) = f_k \cdot g_k\) for \(k\) large.
\end{theorem}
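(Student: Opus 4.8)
The plan is to separate the argument into a purely homological reduction, which removes all reference to the sequences $f_k,g_k$, followed by a static identity about the limiting buildings that is essentially Siefring's result adapted to our Morse--Bott situation.

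First I would reduce to a statement not involving the sequences at all. Recall from the SFT compactness discussion of Section~\ref{sec:neck-stretch} that the homology classes converge, $[f_k] \to [\bar F]$ and $[g_k] \to [\bar G]$ in $H_2(X)$. As $H_2(X)$ is discrete, this forces $[f_k] = [\bar F]$ and $[g_k] = [\bar G]$ for all $k$ large, so the homological intersection $f_k \cdot g_k = [\bar F]\cdot[\bar G]$ is independent of $k$ in that range (and equals the geometric count whenever $f_k,g_k$ have distinct images, by positivity of intersections). It therefore suffices to prove the identity $i(F,G) = [\bar F]\cdot[\bar G]$, a statement about the buildings alone.

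To establish this I would follow the gluing argument behind \cite[Lemma~5.7]{siefring11intersection}. Glue the buildings to continuous maps $\bar F : \bar\Sigma \to X$ and $\bar G : \bar\Sigma' \to X$ and perturb each level to be transverse away from small neighbourhoods of the breaking orbits; then compute $[\bar F]\cdot[\bar G]$ as an algebraic count of intersection points. Because the stretching geometry forces the level-$\nu$ piece of $\bar F$ to meet only the level-$\nu$ piece of $\bar G$ (up to the convergence), the intersections organise level by level, and on the $\nu$-th level the count — with the push-off of $G^{(\nu)}$ taken in the $\tau$-direction near the punctures, consistently for both matched ends of a breaking orbit — is exactly the relative intersection number $F^{(\nu)}\bullet_\tau G^{(\nu)}$. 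The trivialisation-dependent boundary contributions along a breaking orbit $\gamma$ appear twice, once for the positive end of the lower level and once for the negative end of the upper level; since the two matched punctures carry opposite signs and share the trivialisation $\tau$, these cancel in pairs. Summing over $\nu$ leaves $\sum_\nu F^{(\nu)}\bullet_\tau G^{(\nu)} = i(F,G)$, the well-definedness of the right-hand side (independence of $\tau$) being the statement recorded just after the definition of $i(F,G)$; compare Appendix~C.5 of \cite{wendl2020Contact3folds}.

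The main obstacle is that Siefring's results are stated for non-degenerate contact forms, whereas the Reeb orbits of $(T_1^*S^2,\lambda_{\mathrm{can}})$ are Morse--Bott degenerate. I would circumvent this exactly as in Section~\ref{sec:asymptoticConstraints}: perturb each asymptotic operator $\mathbf{A}_z$ by a single small constant $\delta$, chosen once and for all small relative to the finite collection of orbits appearing in $F$ and $G$ as in Remark~\ref{rem:epsilonSmallEnough}, making every operator non-degenerate; apply the non-degenerate statement to the perturbed data; and observe that the quantities entering $i(F,G)$ — the relative intersection numbers $\bullet_\tau$, the extremal winding numbers $\alpha_\pm^\tau$, and hence the correction terms $\Omega_\pm^\tau$ — are all locally constant under this perturbation, since the winding number function is locally constant on $\sigma(\mathbf{A}_z)\setminus\{0\}$ and $\delta$ was chosen to avoid the rest of the spectrum. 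As $[\bar F]\cdot[\bar G]$ is visibly unaffected by the perturbation, the identity for the perturbed data yields the one we want, and combining with the first step completes the proof.
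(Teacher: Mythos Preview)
Your proposal is correct and follows the same route as the paper: the paper simply records that the statement ``can be derived from Lemma~5.7 of \cite{siefring11intersection}'', and your argument is precisely an unpacking of that derivation --- the homological reduction to $i(F,G)=[\bar F]\cdot[\bar G]$, the level-by-level count with cancelling $\tau$-dependent boundary terms, and the Morse--Bott perturbation (which the paper handles in its discussion surrounding Theorem~\ref{thm:punc-pos-ints}). There is nothing to add.
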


Finally, the following statement is our justification of the mantra that
holomorphic buildings in split manifolds behave like closed holomorphic
curves. The proof is given in Section~\ref{sec:proofOfBuildingsMantra}. No
novel techniques are involved, so it is only included for completeness.
\begin{proposition}[]
	\label{prop:intersection-additive-covers}
	\begin{enumerate}
	\item 	The intersection product for holomorphic buildings is
		additive over the levels with respect to \(i_U(\cdot,\cdot)\):
		for \(F = \splitcurve{F}\) and \(G = \splitcurve{G}\) we have
		\[ i(F,G) = \sum_{i=0}^N i_U(F^{(i)}, G^{(i)}). \]
	\item The unconstrained intersection number \(i_U(\cdot,\cdot)\) is
	    linear with respect to multiple covers. That is, if \(f\) is a
	    \(d\)-fold cover of \(f'\), then \(i_U(f,g) = d(i_U(f',g))\).
	\end{enumerate}
\end{proposition}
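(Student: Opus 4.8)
The plan is to deduce both statements from the definitions of the intersection numbers together with the combinatorics of how trivializations, relative intersection numbers, and the $\Omega_\pm^\tau$ terms interact across the breaking levels of a building. For part (1), I would start by unwinding $i(F,G) = \sum_i F^{(i)} \bullet_\tau G^{(i)}$ and compare it term-by-term with $\sum_i i_U(F^{(i)},G^{(i)}) = \sum_i \left( F^{(i)} \bullet_\tau G^{(i)} - \sum_{(z,w)} \Omega_\pm^\tau(\gamma_z \mp \delta, \gamma_w \mp \delta)\right)$. The two sides differ precisely by the total sum of $\Omega_\pm^\tau$-correction terms over all levels, so the claim is equivalent to showing this total sum vanishes. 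The key observation is that each interior Reeb orbit appearing at a breaking between level $\nu$ and level $\nu+1$ is a \emph{positive} puncture for $F^{(\nu)}$ and a \emph{negative} puncture for $F^{(\nu+1)}$ (and likewise for $G$); the corresponding $\Omega_+^\tau$ and $\Omega_-^\tau$ contributions appear with opposite signs in the respective levels' correction terms. I would verify, using the definition $\Omega_\pm^\tau(\gamma^k+\epsilon,\gamma^m+\epsilon') = km\min\{\mp\alpha_\mp^\tau(\gamma^k+\epsilon)/k, \mp\alpha_\mp^\tau(\gamma^m+\epsilon')/m\}$ and the relationship $\alpha_+^\tau(\mathbf{A}+\delta)$ versus $\alpha_-^\tau(\mathbf{A}-\delta)$ for small $\delta$ (which come from looking at the same eigenvalue of $\mathbf{A}$ crossing $0$), that these cancel in pairs. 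This is essentially the content of Appendix~C.5 of \cite{wendl2020Contact3folds}, adapted to the Morse--Bott setting; the only care needed is that the perturbation $\delta$ is chosen uniformly small enough (Remark~\ref{rem:epsilonSmallEnough}) so that the ``$+\delta$ at the top of one level / $-\delta$ at the bottom of the next'' bookkeeping is consistent.

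For part (2), I would use the additivity of the three pieces of the normal Chern and intersection data under passing to covers. If $f$ is a $d$-fold cover of $f'$, then $f \bullet_\tau g = d\,(f' \bullet_\tau g)$ since the pushed-off curve is hit $d$ times as often, and the asymptotic orbit $\gamma_z$ of a puncture of $f$ covering a puncture of $f'$ with orbit $\gamma_{z'}$ satisfies $\gamma_z = \gamma_{z'}^{d}$ (more precisely the covering multiplicities multiply). Then the definition $\Omega_\pm^\tau(\gamma^{k}+\epsilon, \gamma^m+\epsilon')$, which already carries the factor $km$ out front with $\min\{\cdot/k, \cdot/m\}$ inside, is designed so that replacing $k$ by $dk$ multiplies the whole expression by $d$ (the $\min$ is unchanged because both arguments are already divided by their multiplicities, and $\alpha_\mp^\tau(\gamma^{dk}+\epsilon)/(dk) = \alpha_\mp^\tau(\gamma^{k}+\epsilon)/k$ when $\epsilon$ is small — extremal winding numbers scale linearly with the covering multiplicity). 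Hence both terms in $i_U(f,g) = f\bullet_\tau g - \sum \Omega_\pm^\tau(\cdots)$ scale by $d$, giving $i_U(f,g) = d\,i_U(f',g)$.

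The main obstacle I anticipate is the careful sign and perturbation bookkeeping in part (1): one must confirm that the $\delta$-perturbation convention ``positive puncture $\mapsto -\delta$, negative puncture $\mapsto +\delta$'' in the definition of $i_U$ interacts correctly with the fact that an interior breaking orbit switches sign between consecutive levels, and that the identity $-\alpha_-^\tau(\mathbf{A}-\delta) = \alpha_+^\tau(\mathbf{A}+\delta)$ (or whichever precise relation holds) is exactly what makes the cross-level terms cancel rather than double. In the non-degenerate case this is standard; the Morse--Bott wrinkle is that one extra eigenvalue pair straddles $0$, so I would want to double-check the winding-number function $\mathrm{wind}_\mathbf{A}^\tau$ on that eigenvalue using \eqref{eq:extremalWindingNumbers}. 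Everything else is routine, which is why, as the statement notes, the full argument is deferred to Section~\ref{sec:proofOfBuildingsMantra} and included only for completeness.
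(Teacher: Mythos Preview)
Your overall strategy matches the paper's: reduce Part~(1) to showing that the ``breaking contribution'' $\mathrm{Br}(\gamma^k,\gamma^m) = \Omega_+^\tau(\gamma^k-\delta,\gamma^m-\delta) + \Omega_-^\tau(\gamma^k+\delta,\gamma^m+\delta)$ vanishes for each breaking pair, and deduce Part~(2) from linearity of the two ingredients $f\bullet_\tau g$ and $\Omega_\pm^\tau$ under covers. You have correctly identified that the relevant identity is $\alpha_+^\tau(\mathbf{A}+\delta) = \mathrm{wind}_\mathbf{A}^\tau(0) = \alpha_-^\tau(\mathbf{A}-\delta)$, coming from the $2$-dimensional kernel of $\mathbf{A}$.

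The gap is the step you treat as automatic: the assertion that ``extremal winding numbers scale linearly with the covering multiplicity,'' i.e.\ $\alpha_\mp^\tau(\gamma^{dk}\pm\delta)/(dk) = \alpha_\mp^\tau(\gamma^{k}\pm\delta)/k$. This is \emph{false} in general for non-degenerate orbits (cf.\ \cite[Proposition~C.2]{wendl2020Contact3folds}), and is exactly the Morse--Bott-specific fact that carries the whole argument. The paper proves it by first establishing that the pullback map $\ker(\mathbf{A}) \to \ker(\mathbf{A}^k)$, $f \mapsto f_k$, is an isomorphism: injectivity is clear, and surjectivity uses Siefring's formula $\cov(g) = \gcd(\mathrm{wind}(\tau^k\circ g),k)$ together with constancy of $\mathrm{wind}$ on eigenspaces and the fact that $\dim\ker(\mathbf{A}) = 2$ already exhausts the multiplicity at the eigenvalue $0$. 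From this one gets $\mathrm{wind}^\tau_{\mathbf{A}^k}(0) = k\,\mathrm{wind}^\tau_{\mathbf{A}}(0)$, hence $\alpha_\pm^\tau(\mathbf{A}^k\pm\delta) = k\alpha_\pm^\tau(\mathbf{A}\pm\delta)$. Without this, your Part~(2) has no content, and your Part~(1) only handles breaking pairs with $k=m$; for $k\neq m$ the min/max in $\mathrm{Br}$ does not collapse unless you know the linearity. So the missing lemma is not bookkeeping but the actual substance of the proof.
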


\subsubsection{Proof of Proposition~\ref{prop:intersection-additive-covers}}
\label{sec:proofOfBuildingsMantra}
We prove Part 1, and Part 2 will be picked up along the way as a corollary of
our techniques. The key is to understand how a common orbit \(\gamma\) of the
buildings \(F\) and \(G\) contributes to the sum \(\sum_{i=0}^N i_U(F^{(i)},
G^{(i)})\) as both a positive orbit for \((F^{(i)}, G^{(i)})\) and a negative
one for \((F^{(i+1)}, G^{(i+1)})\). Such an orbit is called a \emph{breaking
orbit}, and the corresponding pair punctures \((z,w)\) are called a
\emph{breaking pair}

Now,
\[
    i(F,G) - \sum_{i=0}^N i_U(F^{(i)},G^{(i)}) = \sum_{\substack{(z,w)\\
    \text{breaking pairs}}}
    \mathrm{Br}(\gamma_z + \delta, \gamma_w + \delta),
\]
where
\[
    \mathrm{Br}(\gamma_z + \epsilon, \gamma_w + \epsilon) :=
    \Omega^\tau_+(\gamma_z - \epsilon, \gamma_w - \epsilon) +
    \Omega^\tau_-(\gamma_z + \epsilon, \gamma_w + \epsilon).
\]
is the \emph{breaking contribution} of the breaking pair \((z,w)\).
Henceforth we suppress the \(\epsilon\) notation in \(\mathrm{Br}(\gamma +
\epsilon, \gamma' + \epsilon)\) since it is independent of the number
\(\epsilon > 0\) chosen, provided that it is small enough. It suffices to show
that \(\mathrm{Br}(\gamma,\gamma') = 0\) for all unconstrained orbits
\(\gamma\) and \(\gamma'\) in Morse-Bott degenerate  families.

Given an asymptotic operator \(\mathbf{A} = \mathbf{A}_\gamma :
\Gamma(\gamma^*\xi) \to \Gamma(\gamma^*\xi)\) of a Reeb
orbit \(\gamma\), denote its \(k\)-fold cover by \(\mathbf{A}^k =
\mathbf{A}_{\gamma^k}\). Consider the map
\begin{equation*}
	\ker(\mathbf{A}) \to \ker(\mathbf{A}^k) : f \mapsto f_k,
\end{equation*}
sending a section \(f \in \ker(\mathbf{A})\) to its pullback \(f_k\) under the
\(k\)-fold covering map \(S^1 \to S^1 : \theta \mapsto k\theta\). The next
lemma shows that it is an linear isomorphism.
\begin{lemma}[]
	Suppose that \(\mathbf{A}\) is the asymptotic operator of a hermitian
line bundle\footnote{Which is always the case for us, since we deal only with
4-dimensional manifolds.} \(E \to S^1\). Then the map \(\ker(\mathbf{A}) \to
\ker(\mathbf{A}^k) : f \mapsto f_k\) is a linear isomorphism.
\end{lemma}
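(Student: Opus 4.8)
The plan is to translate everything into the language of the linear ODE governing $\ker(\mathbf{A})$. Fix a unitary trivialisation $\tau$ of $\gamma^*\xi$ and write $\mathbf{A} = \mathbf{A}_\gamma = -J_0\partial_t - S(t)$, where $S$ is a $T$-periodic loop of symmetric matrices, $T$ being the common period of the simple Reeb orbits of $(M,\lambda)=(T_1^*S^2,\lambda_{\mathrm{can}})$. Then $f\in\ker(\mathbf{A})$ if and only if $f$ is a $T$-periodic solution of $-J_0\dot f = S(t)f$. Such a solution is uniquely determined by $f(0)$, so evaluation $f\mapsto f(0)$ identifies $\ker(\mathbf{A})$ with the fixed subspace $\ker(\Psi-\mathrm{id})\subseteq\xi_{\gamma(0)}$, where $\Psi\in\mathrm{Sp}(\xi_{\gamma(0)})$ is the linearised Reeb return map over one period (equivalently, the fundamental solution $\Phi(T)$ of the ODE). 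I would note that, although $\Phi(T)$ depends on $\tau$, its fixed subspace does not, so this identification is canonical.

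Next I would carry out the same identification for the $k$-fold cover $\gamma^k$. Using the pullback trivialisation $\tau^k$, one has $\mathbf{A}^k = \mathbf{A}_{\gamma^k} = -J_0\partial_t - S(t)$ acting on $kT$-periodic sections (with $S$ extended $T$-periodically), and by uniqueness of solutions the fundamental solution over the long period factors as $\Phi(kT) = \Phi(T)^k$. Hence evaluation identifies $\ker(\mathbf{A}^k)$ with $\ker(\Psi^k-\mathrm{id})\subseteq\xi_{\gamma(0)}$, and --- crucially --- under these two identifications the map $f\mapsto f_k$ becomes the tautological inclusion $\ker(\Psi-\mathrm{id})\hookrightarrow\ker(\Psi^k-\mathrm{id})$ on initial conditions. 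Linearity and injectivity of $f\mapsto f_k$ are then immediate, so the entire content of the lemma is the equality $\ker(\Psi-\mathrm{id}) = \ker(\Psi^k-\mathrm{id})$.

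To conclude I would invoke the Morse--Bott hypothesis in force throughout (see Section~\ref{sec:asymptoticConstraints}): every Reeb orbit of $T_1^*S^2$ has $\dim_{\R}\ker(\mathbf{A}_\gamma) = 2$. As $\xi$ is a complex line bundle, $\xi_{\gamma(0)}$ is $2$-dimensional over $\R$, so $\ker(\Psi-\mathrm{id}) = \xi_{\gamma(0)}$, i.e. $\Psi = \mathrm{id}$; therefore $\Psi^k = \mathrm{id}$, $\ker(\Psi^k-\mathrm{id}) = \xi_{\gamma(0)}$ as well, and an injective linear map between $2$-dimensional spaces is an isomorphism. (Equivalently, once $\Psi = \mathrm{id}$ every solution of the ODE is already $T$-periodic, hence every element of $\ker(\mathbf{A}^k)$ visibly descends from $\gamma^k$ to $\gamma$, which gives surjectivity directly.)

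I do not anticipate a real obstacle: the statement is a repackaging of standard facts about asymptotic operators (\cite[\S{}3]{wendlSFT}, \cite{wendl2020Contact3folds}). The only points needing care are bookkeeping --- using the pullback trivialisation $\tau^k$ so that $\Phi(kT) = \Phi(T)^k$ holds on the nose rather than merely up to conjugation, and observing that the lemma would be \emph{false} for a completely arbitrary asymptotic operator of a line bundle (e.g. one coming from an elliptic orbit with irrational rotation number, where surjectivity fails). It is precisely the Morse--Bott geometry of $T_1^*S^2$ --- every orbit having $\dim\ker = 2$ --- that makes the argument go through.
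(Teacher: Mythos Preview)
Your argument is correct and is a genuinely different route from the paper's. The paper proves surjectivity via winding numbers: it invokes Siefring's formula \(\cov(g) = \gcd(\mathrm{wind}^{\tau^k}(g), k)\) for the covering multiplicity of an eigenfunction, together with the HWZ fact that the winding is constant on eigenspaces of \(\mathbf{A}^k\). Picking a non-zero \(f \in \ker(\mathbf{A})\), one has \(\mathrm{wind}^{\tau^k}(f_k) = k\,\mathrm{wind}^\tau(f)\), hence any \(g \in \ker(\mathbf{A}^k)\) has winding divisible by \(k\), so \(\cov(g)=k\) and \(g\) descends. Your proof instead identifies \(\ker(\mathbf{A})\) and \(\ker(\mathbf{A}^k)\) with \(\ker(\Psi-\mathrm{id})\) and \(\ker(\Psi^k-\mathrm{id})\) via initial conditions, and then uses \(\dim\ker(\mathbf{A})=2\) to force \(\Psi=\mathrm{id}\). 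This is more elementary (no winding-number or covering-multiplicity machinery) and makes the role of the Morse--Bott hypothesis completely transparent; the paper's version, by contrast, sits more naturally alongside the surrounding computations of \(\alpha_\pm^\tau\), since those already use the winding function.

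One small correction to your closing remark: an elliptic orbit with \emph{irrational} rotation number is not a counterexample, since then \(\ker(\Psi-\mathrm{id}) = 0 = \ker(\Psi^k-\mathrm{id})\) and the map is trivially an isomorphism. The genuine failure occurs for a \emph{rational} rotation, say \(\Psi\) a rotation by \(2\pi/k\), where \(\ker(\Psi-\mathrm{id})=0\) but \(\Psi^k=\mathrm{id}\). Your observation that the lemma requires the ambient Morse--Bott assumption \(\dim\ker(\mathbf{A})=2\) is nonetheless correct; the paper's proof also uses it implicitly when it takes a non-zero \(f\in\ker(\mathbf{A})\).
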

\begin{proof}
	The map is well-defined since, if \(f\) is an eigenfunction of
\(\mathbf{A}\) with eigenvalue \(\lambda\), then \(f_k\) is one of
\(\mathbf{A}^k\) with eigenvalue \(k\lambda\). Injectivity is easily seen, and
surjectivity follows from the fact that the covering multiplicity of an
eigenfunction \(g\) of \(\mathbf{A}^k\) is given by
\cite[Lemma~3.2]{siefring11intersection}
\[
	\cov(g) = \gcd(\mathrm{wind}(\tau^k \circ g), k),
\]
and that the winding function is constant on eigenspaces
\cite[Lemma~3.4]{HWZpropertiesII}. Explicitly, take a non-zero \(f \in
\ker(\mathbf{A})\), then \(f_k \in \ker(\mathbf{A}^k)\) is non-trivial, so
\[
	\mathrm{wind}(\tau^k \circ g) = \mathrm{wind}(\tau^k \circ f_k) = 
		k\mathrm{wind}(\tau \circ f),
\]
and thus \(\cov(g) = k\).
\end{proof}

\begin{lemma}[]
    \label{lem:unconstrained-breaking-vanishes}
    Suppose that \(\gamma\) is a simple Morse-Bott degenerate Reeb orbit.
Then, for all integers \(k,m > 0\), we have that
\(\mathrm{Br}(\gamma^k,\gamma^m) = 0\).
\end{lemma}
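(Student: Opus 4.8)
The plan is to reduce everything to computing the four extremal winding numbers $\alpha_\pm^\tau(\gamma^k \pm \delta)$ and $\alpha_\pm^\tau(\gamma^m \pm \delta)$ appearing in the definition of $\mathrm{Br}$, and then to observe that these are all controlled by a single integer — the winding number of the kernel of $\mathbf{A}_\gamma$ — in such a way that the two $\Omega^\tau$-terms cancel. Fix a unitary trivialisation $\tau$ of $\gamma^*\xi$ and, following the standing convention, use its pullback $\tau^k$ over each multiple cover (writing just $\tau$ by abuse, as in the definition of $\Omega^\tau_\pm$). Write $\mathbf{A} := \mathbf{A}_\gamma$ and $\mathbf{A}^k := \mathbf{A}_{\gamma^k}$. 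The Morse-Bott hypothesis gives $0 \in \sigma(\mathbf{A})$, and the preceding lemma identifies $\ker(\mathbf{A})$ with $\ker(\mathbf{A}^k)$, so $0 \in \sigma(\mathbf{A}^k)$ for every $k$. Since the winding function is constant on eigenspaces \cite[Lemma~3.4]{HWZpropertiesII}, the integer $w := \mathrm{wind}_{\mathbf{A}}^\tau(0)$ is well defined, and combining the same lemma for $\mathbf{A}^k$ with the identity $\mathrm{wind}(\tau^k\circ f_k) = k\,\mathrm{wind}(\tau\circ f)$ from the proof of the preceding lemma yields $\mathrm{wind}_{\mathbf{A}^k}^{\tau}(0) = kw$.

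Next I would pin down the extremal winding numbers. As usual we assume (Remark~\ref{rem:epsilonSmallEnough}) that $\delta > 0$ is small enough that $\big([-\delta,\delta]\setminus\{0\}\big)\cap\sigma(\mathbf{A}^k) = \big([-\delta,\delta]\setminus\{0\}\big)\cap\sigma(\mathbf{A}^m) = \emptyset$. Then $\mathbf{A}^k + \delta$ has spectrum $\sigma(\mathbf{A}^k)+\delta$, whose smallest positive element is the shifted kernel eigenvalue $\delta$, with winding $kw$; monotonicity of the winding function along the spectrum \cite{HWZpropertiesII} then gives $\alpha_+^\tau(\gamma^k+\delta) = kw$. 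Likewise the largest negative element of $\sigma(\mathbf{A}^k)-\delta$ is $-\delta$, again with winding $kw$, so $\alpha_-^\tau(\gamma^k-\delta) = kw$. The same statements hold with $k$ replaced by $m$.

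Finally I would substitute into the definitions. Using $\alpha_-^\tau(\gamma^k-\delta)/k = w = \alpha_-^\tau(\gamma^m-\delta)/m$ and $\alpha_+^\tau(\gamma^k+\delta)/k = w = \alpha_+^\tau(\gamma^m+\delta)/m$, one gets
\[
\Omega_+^\tau(\gamma^k-\delta,\gamma^m-\delta) = km\min\{-w,-w\} = -kmw, \qquad \Omega_-^\tau(\gamma^k+\delta,\gamma^m+\delta) = km\min\{w,w\} = kmw,
\]
so that $\mathrm{Br}(\gamma^k,\gamma^m) = \Omega_+^\tau(\gamma^k-\delta,\gamma^m-\delta) + \Omega_-^\tau(\gamma^k+\delta,\gamma^m+\delta) = 0$, independently of $\delta$, as claimed.

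The computation is short, so there is no genuine obstacle; the one place that requires care is the second paragraph — checking that after the perturbation by $\pm\delta$ the relevant extremal eigenvalue really is the (shifted) kernel and that its winding number is exactly $kw$ — which rests entirely on the facts that $\mathrm{wind}$ is constant on eigenspaces and monotone along $\sigma(\mathbf{A})$, both recalled from \cite{HWZpropertiesII}. As a byproduct, the scaling $\alpha_\pm^\tau(\gamma^k\pm\delta) = kw$ is exactly the ingredient needed for Part 2 of Proposition~\ref{prop:intersection-additive-covers}.
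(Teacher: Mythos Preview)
Your proof is correct and follows essentially the same route as the paper's: both arguments reduce to showing that \(\alpha_+^\tau(\gamma^k+\delta) = k\,\mathrm{wind}_{\mathbf{A}}^\tau(0) = \alpha_-^\tau(\gamma^k-\delta)\) via the preceding lemma on kernels of covers, and then substitute into the definition of \(\mathrm{Br}\). The paper phrases the perturbation step slightly differently (invoking that \(\mathrm{wind}\) attains each value exactly twice together with \(\dim\ker\mathbf{A}^k = 2\)), but your spectrum-shifting formulation is equivalent and arguably cleaner.
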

\begin{proof}
	Recall the definition
\[
	\Omega^\tau_\pm(\gamma^k + \epsilon, \gamma^m + \epsilon') =
	km \min\left\{\frac{\mp\alpha_\mp^\tau(\gamma^k + \epsilon)}{k},
	\frac{\mp\alpha_\mp^\tau(\gamma^m + \epsilon')}{m}\right\}.
\]
Thus,
\[
	\mathrm{Br}(\gamma^k,\gamma^m) = \min\{m\alpha_+^\tau(\gamma^k +
	\epsilon), k\alpha_+^\tau(\gamma^m + \epsilon)\} -
	\max\{m\alpha_-^\tau(\gamma^k - \epsilon), k\alpha_-^\tau(\gamma^m -
	\epsilon)\},
\]
and the result then follows from the claim that there exists an
integer \(l\) such that, for all \(k > 0\), \(\alpha_+^\tau(\gamma^k +
\epsilon) = kl = \alpha_-^\tau(\gamma^k - \epsilon)\). Indeed, from this
it follows that all the terms involved in the min/max functions above are
equal to \(kml\).

To prove the claim, recall that \(\mathrm{wind}_\mathbf{A}^\tau :
\sigma(\mathbf{A}) \to \Z\) is monotone increasing and attains each value in
\(\Z\) exactly twice (accounting for multiplicities of eigenvalues).
Therefore, since \(\dim\ker(\mathbf{A}^k) = 2\), and \(\delta > 0\) is
chosen such that \(\delta < |\lambda|\) for each non-zero eigenvalue \(\lambda
\in \sigma(\mathbf{A}^k)\), it is clear from the definitions that (compare
Figure~\ref{fig:perturbA})
\begin{figure}\centering \begin{tikzpicture}[
    thick,
]
    \foreach \i/\offset in {1/0.2,0/0,-1/-0.2} {
	\draw (-2,\i) -- (2,\i);
	\foreach \j in {-1,0,1} {
	    \fill ($ (\j,\i)+(\offset,0) $) circle [radius=2pt];
	}
    }
    \draw[dashed] (0,1.5) -- (0,-1.5) node [below] {\(0\)};
    \draw (2,1) node [right=10pt] {\(\sigma(\mathbf{A}^k+\delta)\)};
    \draw (2,0) node [right=10pt] {\(\sigma(\mathbf{A}^k)\)};
    \draw (2,-1) node [right=10pt] {\(\sigma(\mathbf{A}^k-\delta)\)};
\end{tikzpicture}
    \caption{
	The effect of perturbing on the spectrum of an asymptotic operator
	\(\mathbf{A}\). The bullets represent elements of the spectrum.
    }
    \label{fig:perturbA}
\end{figure}
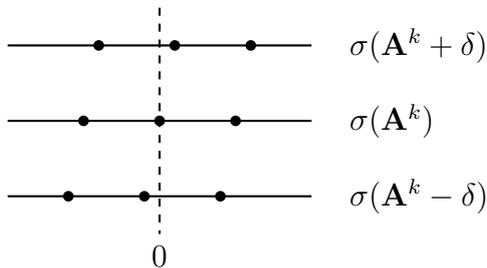

\[
	\alpha_+^\tau(\mathbf{A}^k + \delta) =
	\mathrm{wind}_{\mathbf{A}^k}^\tau(0) =
	\alpha_-^\tau(\mathbf{A}^k - \delta).
\]
The previous lemma ensures that any non-zero \(f \in \ker(\mathbf{A}^k)\) is a
\(k\)-fold cover of a non-zero element of \(\ker(\mathbf{A})\), and so,
\(\mathrm{wind}_{\mathbf{A}^k}^\tau(0) = k\mathrm{wind}_\mathbf{A}^\tau(0)\).
Setting \(l = \mathrm{wind}_\mathbf{A}^\tau(0)\) completes the proof.
\end{proof}

\begin{remark}
    \label{rem:linearity-of-extremal-winding}
    Note that we can extract a proof of Part 2 of
Proposition~\ref{prop:intersection-additive-covers} from the proof of the
claim above through the observation that it shows that\footnote{Note the
	distinction to the non-degenerate case, where this is only an
inequality in general,
\emph{cf}.~\cite[Proposition~C.2]{wendl2020Contact3folds}. See also
\cite[\S{}4.2]{wendl2010autoTrans} for a discussion on covering relations in
the degenerate case, in particular Proposition 4.6 therein, of which this
result is a special case.}
\[
	\alpha_\pm^\tau(\mathbf{A}^k \pm \delta) =
	k\alpha_\pm^\tau(\mathbf{A} \pm \delta).
\]
Indeed, this is because \(\mathrm{wind}^\tau_\mathbf{A}(0) =
\alpha_\pm^\tau(\mathbf{A} \pm \delta)\). Therefore, both components \(f
\bullet_\tau g\) and \(\Omega_\pm^\tau(\gamma^k, \gamma^l)\) scale linearly
with the covering number. More precisely, \((df) \bullet_\tau g = d (f
\bullet_\tau g)\) and \(\Omega_\pm^\tau(\gamma^{dk}, \gamma^{dl}) = d
\Omega_\pm^\tau(\gamma^k, \gamma^l)\). Hence, \(i_U(df,g) = d i_U(f,g)\).
\end{remark}

\subsubsection{Positivity of intersections for curves with degenerate
asymptotics}

Siefring's paper \cite{siefring11intersection} essentially shows that we have
positivity of intersections for punctured curves with Morse-Bott degenerate
asymptotics.
\begin{theorem}[Positivity of intersections of punctured curves]
    \label{thm:punc-pos-ints}
    Suppose that \(f,g\) are punctured curves such that \(f^{-1}(\im g)\) does
not contain a non-empty open set. Then \(i_*(f,g) \ge 0\), where the notation
\(i_*\) indicates either the constrained or unconstrained intersection number.
Moreover, equality occurs if, and only if, \(f\) and \(g\) are disjoint and
remain so after homotopy.
\end{theorem}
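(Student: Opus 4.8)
The plan is to reduce to Siefring's positivity theorem for non\-degenerate punctured curves, established in Sections~4 and~5 of \cite{siefring11intersection}, exploiting the fact that the perturbed asymptotic operators $\mathbf{A}_z \mp \delta$ (respectively $\mathbf{A}_z \pm \delta$) appearing in the definitions of $i_U$ and $i_C$ are genuinely non\-degenerate. The starting point is the decomposition, proved by Siefring in the non\-degenerate case, of the total intersection number as
\[
    i_*(f,g) = \iota(f,g) + \delta_\infty(f,g),
\]
where $\iota(f,g)$ is the ordinary algebraic count of intersection points of $f$ and $g$, and $\delta_\infty(f,g) = \sum_{(z,w)} \delta_\infty^{(z,w)}(f,g)$ is a sum of local ``asymptotic intersection'' contributions, one for each pair of punctures $(z,w)$ of $f$, $g$ asymptotic to covers of a common simple Reeb orbit. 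I would first note that the hypothesis --- that $f^{-1}(\im g)$ contains no nonempty open set --- implies, by unique continuation \cite[\S 2.3]{mcduffSalamonCurves}, that $f$ and $g$ share no component, so $\iota(f,g)$ is a finite integer; McDuff's local positivity of intersections \cite{mcduffPosInts} then gives $\iota(f,g) \ge 0$, with equality if and only if $\im f \cap \im g = \emptyset$.

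The heart of the matter is to check that each $\delta_\infty^{(z,w)}(f,g) \ge 0$, and this is where the Morse--Bott degeneracy of our orbits requires attention --- I expect this to be the main obstacle. In the non\-degenerate setting this non\-negativity is exactly Siefring's relative asymptotic analysis: near the punctures $z$ and $w$ the difference of $f$ and $g$ either vanishes to infinite order or has a well\-defined leading asymptotic eigenfunction of the relevant asymptotic operator, and $\delta_\infty^{(z,w)}$ counts the ``hidden'' intersections that this leading behaviour forces on any small perturbation. In our Morse--Bott situation the same asymptotic description is available (the required estimates being of the kind recorded in \cite[\S 4.1]{wendl2010autoTrans}, building on \cite{HWZpropertiesII}), the only difference being that the comparison eigenfunction now lies on the side of $0$ in $\sigma(\mathbf{A})$ dictated by whether the orbit is treated as constrained or unconstrained --- precisely the information carried by the $\pm\delta$ shift. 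Since $\mathrm{wind}^\tau_{\mathbf{A}}$ is monotone and attains each integer value with total multiplicity two (the fact used in the proof of Lemma~\ref{lem:unconstrained-breaking-vanishes}), the winding comparisons underlying Siefring's argument carry over unchanged, yielding $\delta_\infty^{(z,w)}(f,g) \ge 0$, with equality if and only if $f$ and $g$ are asymptotically disjoint at that end. Summing over all breaking pairs gives $i_*(f,g) = \iota(f,g) + \delta_\infty(f,g) \ge 0$.

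For the equality statement I would use homotopy invariance: $i_*(f,g)$ is manifestly unchanged under homotopies of $f$ and $g$ with fixed asymptotic orbits, since both $f \bullet_\tau g$ and the $\Omega^\tau_\pm$-terms are themselves such invariants. If $i_*(f,g) = 0$, then both summands of the decomposition vanish: $\iota(f,g) = 0$ forces $f$ and $g$ to be disjoint, while $\delta_\infty(f,g) = 0$, together with Siefring's characterisation of the local asymptotic contributions, says that no small perturbation creates an intersection near infinity, so by homotopy invariance $f$ and $g$ remain disjoint under homotopy. Conversely, if $f$ and $g$ are disjoint and remain so after homotopy, then $\iota(f,g) = 0$; and if $\delta_\infty(f,g) > 0$ a small perturbation would produce a homotopic pair with an honest intersection near the offending end, contradicting persistent disjointness, so $\delta_\infty(f,g) = 0$ and hence $i_*(f,g) = 0$. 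Apart from importing the Morse--Bott relative asymptotic package, everything here is a formal consequence of the decomposition, local positivity of intersections, and homotopy invariance.
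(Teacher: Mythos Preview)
Your proposal is correct and takes essentially the same approach as the paper's sketch: both reduce to Siefring's non-degenerate theory by arguing that the two key technical inputs---local positivity of intersections and the asymptotic representative of the difference of two half-cylinders (which relies only on exponential decay, available in the Morse--Bott setting via \cite{HWZpropertiesIV, bourgeoisThesis})---persist unchanged, together with the winding-number monotonicity facts from \cite{HWZpropertiesII}. The only packaging difference is that the paper first identifies Siefring's literal construction with $i_C$ and then derives positivity for $i_U$ via the inequality $i_U \ge i_C$ from \cite[Proposition~4.11]{wendl2010autoTrans}, rather than treating both cases in parallel as you do.
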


We sketch the proof, which essentially depends on two facts. The first is
local positivity of intersections, which is the
same story as the case of closed curves, covered in detail in
\cite{mcduffSalamonCurves}, for example. The second is the asymptotic
representation for a \(J\)-holomorphic half-cylinder. This topic has been
covered extensively in the literature, for example \cite{HWZpropertiesI,
wendl2020Contact3folds} cover the non-degenerate case, and
\cite{HWZpropertiesIV, bourgeoisThesis, behwzSFTCompactness} cover the
Morse-Bott degenerate case.

Despite the non-degeneracy assertion made in \cite{siefring11intersection},
only minor modifications are required to prove
Theorem~\ref{thm:punc-pos-ints}. The technical results on winding numbers that
power the relevant positivity of intersections results are isolated in Section
3.1.4 of \cite{siefring11intersection}. These in turn depend on two key
ingredients: the asymptotic representative of the difference between two
\(J\)-holomorphic half-cylinders (Theorem~3.6 of
\cite{siefring11intersection}\footnote{Which is stated and proved as
Theorem~2.2 in \cite{siefring08asymptotic}.}); and the properties of the
winding function\footnote{Denoted by \(w(\lambda,[\Phi])\) in
\cite{siefring11intersection}.} \(\mathrm{wind}_\mathbf{A}^\tau\)
(Lemma~3.1 of \cite{siefring11intersection}), which is a summary of the
properties proved in Lemmata~3.4--3.7 of \cite{HWZpropertiesII}. The latter of
these is valid irrespective of whether \(\mathbf{A}\) is
degenerate or not, and the former, as Siefring remarks in Section~2.2 of
\cite{siefring08asymptotic}, only depends on the exponential decay of a
half-cylinder. Therefore, one can apply the decay results of\footnote{Note
    that in \cite{HWZpropertiesIV} the manifold of unparametrised Reeb orbits
    is assumed to be a circle. The proof for more general manifolds of orbits
    (including our case of interest, the 2-sphere) can be found in either
    \cite[\S{}3.3]{bourgeoisThesis}, or
\cite[Appendix~A]{behwzSFTCompactness}, or \cite{moraThesis}.}
\cite{HWZpropertiesIV} and \cite[\S{}3.3]{bourgeoisThesis} (see also
\cite[Appendix~A]{behwzSFTCompactness}, or \cite{moraThesis}) to conclude that
Theorem~3.6 of \cite{siefring11intersection}
holds for punctured \(J\)-holomorphic curves with Morse-Bott asymptotics.

To apply the results in \cite{siefring11intersection}, one must be careful
about exactly what is meant by the Conley-Zehnder index that appears
there.\footnote{Denoted by \(\mu^\Phi\) in \cite{siefring11intersection}.}
The properties of \(\alpha_\pm^\tau\), ensure that
\[
	\alpha_\pm^\tau(\mathbf{A} \mp \delta) = \alpha_\pm^\tau(\mathbf{A}),
\]
provided that \(\delta > 0\) is sufficiently small. Therefore, if one interprets the
definition of \(\CZindex{\tau}\) given in \cite{siefring11intersection}
literally,\footnote{This is exactly Definition~3.9 of \cite{HWZpropertiesII}.
    In particular, in our situation where the Morse-Bott manifold is the whole
    contact manifold, and so \(\dim\ker\mathbf{A} = 2\), we have that the
    parity \(p(\gamma^k)\) (of the perturbed operator \(A \pm \epsilon\)) is
    \(1\). Moreover, comparing the notation of Siefring and Wendl, we have,
    for an integer \(k \in \Z \backslash \{0\}\),
    \[ \alpha^\Phi(\gamma^k) = \pm \alpha_\mp^\Phi(\gamma^{|k|}) =
	\pm\alpha_\mp^\Phi(\gamma^{|k|} \pm \epsilon), \] where the leftmost
	sign agrees with the distinction of \(\gamma\) as a positive or
	negative orbit. It then follows that \[ \mu^\Phi(\gamma^k) = 2
	    \alpha^\Phi(\gamma^k) + p(\gamma^k) = \pm
	    (2\alpha_\mp^\Phi(\gamma^{|k|}) \pm 1) =
    \pm\CZindex{\Phi}(\gamma^{|k|} \pm \epsilon), \] where the final equality
follows from equation (2.3) of \cite{wendl2010autoTrans}.} then the
constructions therein\footnote{To directly compare the notation that appears
    in \cite{siefring11intersection} with that used here, one can work through
    the definitions to show that
\[
	m_zm_w\max\left\{\frac{\alpha^\Phi(\gamma_z^{m_z})}{|m_z|},
	\frac{\alpha^\Phi(\gamma_w^{m_w})}{|m_w|}\right\} =
	-\Omega_\pm^\Phi(\gamma_z^{|m_z|},\gamma_w^{|m_w|}).
\]
    The numbers on the left hand side are used to define the intersection in
    \cite{siefring11intersection}, and those on the right are used to define
    the constrained intersection in \cite{wendl2010autoTrans}.} give the
constrained intersection number \(i_C(\cdot,\cdot)\). Therefore, positivity of
intersections holds for \(i_C\). A special case of Proposition~4.11 of
\cite{wendl2010autoTrans} is the inequality
\[
	i_U(u,v) \ge i_C(u,v),
\]
which implies positivity of intersections for \(i_U\) too.

\subsection{\(J\)-holomorphic planes in \(T^*S^2\)}
\label{sub:cotangentPlanes}

Hind proved\footnote{The paper \cite{hind2004Spheres} combined with results on
automatic transversality.} that, for any cylindrical almost complex structure
\(J\) on \(T^*S^2\), there are two transverse foliations
\(\cotangentPlanesModuli{}_\pm\) by finite-energy \(J\)-holomorphic planes.
They are distinguished by the sign of the intersection with the zero-section
\(0_{S^2} \subset T^*S^2\):
\[
	P_\pm \in \cotangentPlanesModuli{}_\pm \Rightarrow P_\pm \cdot 0_{S^2} = \pm 1.
\]
As remarked by Evans \cite[\S{}6.4]{evans2010delPezzo}, two planes
\(P_\pm \in \cotangentPlanesModuli{}_\pm\) of opposite parity intersect positively exactly
once if, and only if, they have distinct asymptotic orbits. Otherwise, they
are disjoint. However, the Siefring intersection ``sees'' this intersection,
even for planes with a common orbit:

\begin{lemma}[]
    \label{lem:cotangent-planes-intersection}
    \begin{enumerate}
	\item The unconstrained intersection of two planes of opposite parity
		is 1: \(i_U(P_+,P_-) = 1\).
	\item 	Let \(P_\pm, Q_\pm \in \cotangentPlanesModuli{}_\pm\) be two
		planes of the same parity. Then \(i_U(P_\pm,Q_\pm) = 0\).
    \end{enumerate}
\end{lemma}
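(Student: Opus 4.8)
The proof divides naturally along the two parts, and both will hinge on the intersection-theoretic toolkit of Section~\ref{sub:intersection-theory} together with the geometric input from Hind's foliations. The basic strategy is: (i) compute the purely topological contribution $f \bullet_\tau g$ by a good choice of trivialisation $\tau$ together with the known geometric intersection behaviour, and (ii) compute the asymptotic defect $\sum \Omega_\pm^\tau$ by understanding the extremal winding numbers $\alpha_\pm^\tau$ of the asymptotic operators associated to the simple Reeb orbits in $T_1^*S^2 \cong \RP^3$. Since all the planes are asymptotic to (possibly multiply-covered) simple Reeb orbits of the same period, and the Morse-Bott family is the whole of $\RP^3$, the winding data is essentially uniform and computable. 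I would first fix a convenient unitary trivialisation $\tau$ along each relevant simple Reeb orbit — the natural one coming from the $SO(3)$-symmetry or from extending over a spanning disc in a cotangent plane — and record that $\alpha_+^\tau(\gamma + \delta) = \alpha_-^\tau(\gamma - \delta) = \mathrm{wind}_{\mathbf{A}_\gamma}^\tau(0)$, which we may normalise to be $0$ for the simple orbit with this choice, so that by Remark~\ref{rem:linearity-of-extremal-winding} the $k$-fold cover has $\alpha_\pm^\tau(\gamma^k \pm \delta) = 0$ as well.

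\emph{Part (1).} Here $P_+$ and $P_-$ are planes of opposite parity, each a finite-energy plane asymptotic to one simple Reeb orbit (in its appropriate cover). By Evans' observation, $P_+$ and $P_-$ intersect geometrically (transversely, once, positively) precisely when their asymptotic orbits are distinct, and are disjoint otherwise. In the first case, the geometric count gives $f \bullet_\tau g = 1$ and the asymptotic correction vanishes because the orbits are not covers of a common simple orbit, so $\Omega_\pm^\tau = 0$ by definition; hence $i_U(P_+,P_-) = 1$. In the second case the curves are disjoint so $f \bullet_\tau g$ depends only on the asymptotic push-off, and the $\Omega_\pm^\tau$ terms must exactly account for the ``hidden'' intersection at infinity; the computation reduces to evaluating $\Omega_+^\tau(\gamma^k + \delta, \gamma^m + \delta)$ with the extremal winding numbers above. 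Invariance of $i_U$ under the choice of $\tau$, together with continuity of the Siefring number under the moduli space (Theorem~\ref{thm:intersection-cts}, or just invariance of $i_U$ within a connected family of curves), lets me transport the value $1$ from the generic (distinct-orbit) configuration to the degenerate one; this is the cleanest argument and I expect to lean on it rather than grinding the winding numbers by hand.

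\emph{Part (2).} Now $P_\pm, Q_\pm$ lie in the same foliation $\cotangentPlanesModuli_\pm$, so geometrically they are either equal or disjoint — in particular $f^{-1}(\im g)$ is not somewhere-dense when they are distinct, and $f \bullet_\tau g = 0$ since distinct leaves of a foliation don't meet. It remains to see that the asymptotic correction also vanishes. If the two planes have distinct asymptotic orbits, then again $\Omega_\pm^\tau = 0$ trivially. If they share an asymptotic orbit (covers of a common simple orbit), one invokes Theorem~\ref{thm:punc-pos-ints}: since $P_\pm$ and $Q_\pm$ are disjoint and, being leaves of a foliation, remain disjoint under homotopy through the foliation, the ``equality'' clause forces $i_*(P_\pm,Q_\pm) = 0$, in particular $i_U = 0$. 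Alternatively, a direct computation: $f\bullet_\tau g = 0$ and $\Omega_+^\tau(\gamma^k + \delta, \gamma^m + \delta) = km \min\{0,0\} = 0$ with the normalised winding numbers, giving the same conclusion. I would present the positivity-of-intersections argument as the main line since it is conceptual and robust, with the winding-number computation as a cross-check.

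\emph{Main obstacle.} The one genuinely delicate point is justifying the winding-number normalisation $\mathrm{wind}_{\mathbf{A}_\gamma}^\tau(0) = 0$ for the correct trivialisation and verifying compatibility across the multiple covers; this requires identifying the asymptotic operator of the simple Reeb orbit on $T_1^*S^2$ (equivalently, the linearised Reeb flow on $\RP^3$) and its kernel, and checking that the trivialisation used to evaluate $P_\pm \cdot 0_{S^2} = \pm 1$ is the same one — or tracking the shift if it is not. Once the trivialisation is pinned down, everything else is bookkeeping via $\Omega_\pm^\tau$ and the additivity/linearity already established in Proposition~\ref{prop:intersection-additive-covers}.
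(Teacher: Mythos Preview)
Your proposal is correct and already contains the paper's argument, but you are making it harder than it needs to be. The paper's proof is two lines: for both parts, use homotopy invariance of $i_U$ to move one plane within its moduli space $\cotangentPlanesModuli_\pm$ so that the two asymptotic orbits are distinct; then the asymptotic correction $\Omega_\pm^\tau$ vanishes by definition, and $i_U$ equals the geometric intersection number, which is $1$ in case~(1) (Evans' observation) and $0$ in case~(2) (leaves of a foliation). You identify exactly this argument in Part~(1) as ``the cleanest argument'' you ``expect to lean on'', and it works verbatim for Part~(2) as well---there is no need to split into sub-cases or to invoke the equality clause of Theorem~\ref{thm:punc-pos-ints}.

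In particular, the ``main obstacle'' you flag---normalising $\mathrm{wind}_{\mathbf{A}_\gamma}^\tau(0)$ and tracking it across covers---never arises, because once you have perturbed to distinct simple orbits the $\Omega_\pm^\tau$ terms are zero regardless of the trivialisation. All of the winding-number bookkeeping can be dropped.
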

\begin{proof}
	\begin{enumerate}
		\item 	As the Siefring intersection is homotopy invariant, we
		can perturb the plane \(P_+\) to one nearby, say \(P_+'\) in
		its moduli space, guaranteeing that \(P_+'\) and \(P_-\) have
		distinct orbits. Then \(i_U(P_+',P_-) = P_+' \cdot P_- = 1\),
		as already noted.
		\item 	As above, we can homotope one of the planes so that it
		has distinct asymptotic to the other, ensuring that the
		asymptotic contribution to the intersection product is zero
		and thus \(i_U(P_\pm,Q_\pm) = P_\pm \cdot Q_\pm = 0\), where
		the last equality follows since planes of the same parity form
		a foliation.
	\end{enumerate}
\end{proof}

\begin{proposition}[]
	\label{prop:cotangent-curves-intersect-planes}
	Let \(u : \dot{\Sigma} \to T^*S^2\) be a punctured \(J\)-holomorphic
curve. Then exactly one of the following is true:
\begin{enumerate}
	\item \(u\) is a cover of a plane in \(\cotangentPlanesModuli{}_\pm\); or,
	\item \(u\) intersects both types of plane positively. More precisely,
		\(i_U(u,P_\pm) > 0\).
\end{enumerate}
\end{proposition}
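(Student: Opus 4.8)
The plan is to split into two cases according to whether the image of \(u\) lies inside a single leaf of one of Hind's foliations \(\cotangentPlanesModuli_\pm\), and then, in the non-covering case, to extract positivity of the Siefring numbers from Theorem~\ref{thm:punc-pos-ints} using homotopy invariance of \(i_U\) and connectedness of \(\cotangentPlanesModuli_\pm\) (see Section~\ref{sub:cotangentPlanes} and \cite{hind2004Spheres}). Throughout I take \(\dot{\Sigma}\) connected, as is implicit in the word ``curve''.

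First I would dispose of alternative (1). Suppose that for some leaf \(P_0\) of \(\cotangentPlanesModuli_+\) (or of \(\cotangentPlanesModuli_-\)) the set \(u^{-1}(\im P_0)\subset\dot{\Sigma}\) has non-empty interior. Since \(P_0\) is an embedded finite-energy plane and \(\dot{\Sigma}\) is connected, unique continuation for \(J\)-holomorphic curves forces \(\im u\subseteq\im P_0\); writing \(u=P_0\circ\varphi\) then exhibits \(\varphi:\dot{\Sigma}\to\C\) as a non-constant holomorphic, hence branched covering, map, so \(u\) is a multiple cover of \(P_0\), and (1) holds.

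So suppose instead that \(u\) is not a cover of any leaf of \(\cotangentPlanesModuli_+\cup\cotangentPlanesModuli_-\); I claim (2) holds. Fix a leaf \(P_+\in\cotangentPlanesModuli_+\). By the previous step \(u^{-1}(\im P_+)\) has empty interior, so Theorem~\ref{thm:punc-pos-ints} applies and gives \(i_U(u,P_+)\ge 0\), with equality precisely when \(u\) and \(P_+\) are disjoint and stay disjoint under homotopy. Since \(\cotangentPlanesModuli_+\) foliates \(T^*S^2\), pick any \(x\in\im u\) and let \(P_x\in\cotangentPlanesModuli_+\) be the leaf through \(x\); then \(u\) meets \(P_x\) at \(x\), so \(i_U(u,P_x)>0\). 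Now \(i_U(u,\cdot)\) is a homotopy invariant of asymptotically cylindrical maps --- this is exactly the feature the \emph{unconstrained} intersection number is designed to enjoy, permitting the asymptotic Reeb orbits to move within their Morse--Bott families --- and \(\cotangentPlanesModuli_+\) is connected, so \(i_U(u,P_+)=i_U(u,P_x)>0\) for every \(P_+\). The identical argument with \(\cotangentPlanesModuli_-\) yields \(i_U(u,P_-)>0\) for every \(P_-\), which is alternative (2).

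It remains to see (1) and (2) are mutually exclusive, so that ``exactly one'' holds. If \(u\) is a \(k\)-fold cover of a leaf \(P_0\in\cotangentPlanesModuli_+\) (the case \(P_0\in\cotangentPlanesModuli_-\) is symmetric), then Proposition~\ref{prop:intersection-additive-covers}(2) together with Lemma~\ref{lem:cotangent-planes-intersection}(2) gives \(i_U(u,P_0)=k\,i_U(P_0,P_0)=0\), so \(u\) fails to meet \emph{every} plane of \(\cotangentPlanesModuli_+\) positively and (2) fails; meanwhile \(i_U(u,P_-)=k\,i_U(P_0,P_-)=k>0\) for \(P_-\in\cotangentPlanesModuli_-\) by Lemma~\ref{lem:cotangent-planes-intersection}(1), consistently with (1). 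The hard part will be the bookkeeping around the equality case of Theorem~\ref{thm:punc-pos-ints} and the homotopy-invariance step: one must verify that \(i_U(u,P)\) really is constant as \(P\) ranges over the connected space \(\cotangentPlanesModuli_+\) --- i.e.\ that the homotopies in play, which move both the curve and its asymptotic orbits, preserve \(i_U\) --- and, more routinely, that the unique-continuation argument for (1) genuinely produces a holomorphic branched cover; everything else is a brief positivity-of-intersections computation.
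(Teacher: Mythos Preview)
Your proof is correct and follows essentially the same approach as the paper's: assume \(u\) is not a cover of any leaf, apply positivity of intersections (Theorem~\ref{thm:punc-pos-ints}) to get \(i_U(u,P_\pm)\ge 0\), and then use the foliation property to find an actually intersecting leaf and conclude strictness. The paper's proof is considerably terser---it leaves the unique-continuation argument for case~(1), the homotopy-invariance step, and the mutual-exclusion check implicit---so your version is a more careful expansion of the same idea rather than a different route.
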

\begin{proof}
	Assume the first scenario is false, that is, \(u\) is not a cover of
any \(P_\pm \in \cotangentPlanesModuli{}_\pm\). Then, Theorem~\ref{thm:punc-pos-ints}
implies that \(i_U(u,P_\pm) \ge 0\). Note that this inequality must be strict
since \(\cotangentPlanesModuli{}_\pm\) is a foliation, and so \(u\) intersects one of the
planes \(P_\pm\) in at least one point.
\end{proof}

\begin{corollary}
    \label{cor:bottomLevelNonNegInts}
    Any unconstrained intersection of punctured \(J\)-holomorphic curves in
\(T^*S^2\) is non-negative.
\end{corollary}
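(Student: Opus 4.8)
The plan is to reduce the statement to the non-negativity of the Siefring self-intersection number of a single somewhere injective curve, where it can be read off from Hind's foliations $\cotangentPlanesModuli{}_\pm$. Let $u$ and $v$ be punctured $J$-holomorphic curves in $T^*S^2$. It is clear from the definitions that $i_U(\cdot,\cdot)$ is additive over the connected components of the domains, so we may assume both domains are connected; then each of $u$ and $v$ is a branched cover of a somewhere injective curve, and by Proposition~\ref{prop:intersection-additive-covers}(2) (applied twice, using the symmetry of $i_U$) the number $i_U(u,v)$ is a positive integer multiple of the intersection number of the two underlying simple curves. Hence it suffices to treat the case in which $u$ and $v$ are themselves somewhere injective.

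If $u$ and $v$ have distinct images, then $u^{-1}(\im v)$ has empty interior --- two distinct somewhere injective $J$-holomorphic curves meet in a set with no interior, by unique continuation --- so Theorem~\ref{thm:punc-pos-ints} immediately gives $i_U(u,v)\ge 0$. Otherwise $u$ and $v$ have the same image, hence coincide up to reparametrisation, and since $i_U$ is reparametrisation-invariant we are reduced to showing $\selfastint{w}\ge 0$ for a single somewhere injective curve $w:=u$. Now apply Proposition~\ref{prop:cotangent-curves-intersect-planes} to $w$. In the first alternative $w$ is a cover of some plane $P\in\cotangentPlanesModuli{}_\pm$, and since $w$ is simple it must equal $P$, whence $\selfastint{w}=i_U(P,P)=0$ by Lemma~\ref{lem:cotangent-planes-intersection}(2). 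In the second alternative $w$ is not a cover of any Hind plane and $i_U(w,P_\pm)>0$ for planes of both parities.

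It remains to treat this second alternative, which is where the real work lies. Here I would invoke the adjunction inequality of Theorem~\ref{thm:adjunction}, which gives $\selfastint{w}\ge c_N(w)$, and then show that the normal Chern number $c_N(w)$ is non-negative. This is the main obstacle: $c_N\ge 0$ is not automatic for somewhere injective punctured curves in general, so the argument must exploit the special geometry of $T^*S^2$. The ingredients I would use are that $c_1(T^*S^2)=0$, that $T_1^*S^2\cong\RP^3$ is foliated by simple Reeb orbits of a common period (so all the relevant asymptotic operators share the same extremal winding numbers in a natural trivialisation $\tau$), and the strict positivity $i_U(w,P_\pm)>0$ just obtained, which forces the relative Chern number $c_1^\tau(w)$ to be large enough; unwinding the definition $c_N(w)=c_1^\tau(w)-\chi(\dot{\Sigma})+\sum_{z\in\Gamma}\alpha_-^\tau(\gamma_z)$ then yields $c_N(w)\ge 0$. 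Everything apart from this last winding-number bookkeeping is a formal consequence of the results already established; that computation, carried out for the (essentially unique) class of simple Reeb orbit on $\RP^3$, is the only step requiring hands-on work.
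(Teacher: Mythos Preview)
Your overall architecture is right and matches what the paper leaves implicit: reduce to somewhere injective curves by linearity, dispatch the case of distinct images with Theorem~\ref{thm:punc-pos-ints}, and handle the self-intersection case via adjunction. The paper states the corollary with no proof; its placement and its later uses (always with one curve a plane) suggest the author had in mind only the case covered directly by Proposition~\ref{prop:cotangent-curves-intersect-planes} and Lemma~\ref{lem:cotangent-planes-intersection}. Your case analysis is what is actually needed for the statement as written.

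There is, however, a confusion in your sketch of the crux. You write that the strict positivity $i_U(w,P_\pm)>0$ ``forces the relative Chern number $c_1^\tau(w)$ to be large enough''. This is not how the bound works: by Lemma~\ref{lem:relChern=absChern}, the trivialisation $\tau$ extends globally over $T^*S^2$, so $c_1^\tau(w)=0$ for \emph{every} curve in $T^*S^2$, regardless of how it meets the Hind planes. The plane intersections are used only to place $w$ in the second alternative of Proposition~\ref{prop:cotangent-curves-intersect-planes}; they play no role in bounding $c_N$. The actual computation is immediate once you drop that red herring. Since $T^*S^2$ has only a convex end, $w$ has only positive punctures, and using $\alpha_-^\tau(\gamma_z-\delta)=\cov(\gamma_z)$ (as computed in the proof of Lemma~\ref{lem:cN=-1-for-c1=1-curves}) one gets
\[
c_N(w)=0-\chi(\dot{\Sigma})+\sum_{z\in\Gamma^+}\cov(\gamma_z)\ge (2g-2+k_+)+k_+=2g+2k_+-2\ge 0,
\]
since $k_+\ge 1$ (there are no closed holomorphic curves in the exact manifold $T^*S^2$). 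So the ``hands-on'' step you flagged is in fact a one-line consequence of $c_1^\tau=0$ and the winding-number formula already established in the paper; the positive intersections with $P_\pm$ are not needed here.
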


\subsection{Analysis of holomorphic limit buildings}
\label{sec:buildingAnalysis}
\subsubsection{Consequences of neck stretching about a Lagrangian sphere}
\label{sec:stretchingAboutSphere}

The following result can be found in essential form in
\cite[p.315]{hind2004Spheres}. See also an expanded proof in
\cite[Lemma~7.5]{evans2010delPezzo}. It is a simple consequence of a curve
in a symplectisation having positive \(\mathrm{d}\lambda\)-energy and the
topology of \(\RP^3\). We recall the argument here as we will need a modified
version of it later.
\begin{lemma}[]
	\label{lem:connectedTopImpliesNoIntermediateLevels}
	Suppose that \(G = \splitcurve{G} : \Sigma^* \to X^*\) is a genus 0
holomorphic building such that \(\toplevel{\Sigma}\) is connected and
\(\toplevel{G}\) has only simple punctures. Then \(N = 0\), that is, \(G\)
has no non-trivial symplectisation levels.
\end{lemma}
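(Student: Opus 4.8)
The plan is to argue by contradiction: suppose $N > 0$, so that $G$ has at least one non-trivial symplectisation level. The key structural fact to exploit is that the top level $\toplevel{G}$ maps into $\toplevel{X} \cong X \setminus L$, and its negative punctures are asymptotic to simple Reeb orbits of $(T_1^*L, \lambda_{\mathrm{can}}) = (\RP^3, \lambda_{\mathrm{can}})$; since $\toplevel{\Sigma}$ is connected and has genus $0$, it is a connected planar domain, i.e.\ a sphere with finitely many punctures, all of which must be negative (there are no positive ends since $\toplevel{X}$ is a completion only at the negative/concave end). First I would recall that every Reeb orbit of $(\RP^3, \lambda_{\mathrm{can}})$ is simple and that these orbits sweep out the whole of $\RP^3$, with the quotient by the Reeb flow being $S^2$; in particular the free homotopy classes of simple Reeb orbits are constrained by $\pi_1(\RP^3) = \Z/2$.

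The heart of the argument is a $\mathrm{d}\lambda$-energy (or ``action'') computation. For a punctured $J$-holomorphic curve in a symplectisation or in $\toplevel{X}$, the $\mathrm{d}\lambda$-energy equals the total action of its positive asymptotics minus that of its negative asymptotics, and it is strictly positive unless the curve is a trivial cylinder (Stokes' theorem plus the fact that $\mathrm{d}\lambda$ is non-negative on $J$-complex tangent planes). Now, all simple Reeb orbits of $\RP^3$ have the \emph{same} period $T$. The top level $\toplevel{G}$, being connected of genus $0$ with $s$ negative punctures (all asymptotic to simple orbits, by hypothesis), has $\mathrm{d}\lambda$-energy equal to $sT - 0 = sT$ if we regard it purely through its asymptotics — wait, more carefully: it has $s$ negative ends and no positive ends, so its $\mathrm{d}\lambda$-energy is $0 - sT = -sT \le 0$. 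Positivity of $\mathrm{d}\lambda$-energy then forces $s = 0$, so $\toplevel{G}$ is a closed genus $0$ curve with no punctures at all. But then $\toplevel{G}$ has no negative asymptotics to match with the positive asymptotics of the level below it (level $N$, a symplectisation level), and the matching condition for holomorphic buildings forces level $N$ to have no positive punctures; a non-trivial symplectisation-level component with no positive punctures has strictly negative $\mathrm{d}\lambda$-energy unless it is a union of trivial cylinders, contradicting stability (a stable building has no level consisting solely of trivial cylinders). Working down the tower inductively, every symplectisation level is empty, so $N = 0$.

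The step I expect to be the main obstacle — or at least the one requiring the most care — is handling the sign conventions and the precise bookkeeping of which ends are positive and which are negative, together with the possibility that the top level is disconnected once one passes below $\nu = 0$: the hypothesis only gives connectedness of $\toplevel{\Sigma}$, and the argument above really wants to conclude ``the top level is closed'' first and then propagate downward. I would therefore organise the proof as: (i) show $\toplevel{G}$ is closed with no negative punctures, using connectedness, genus $0$, the simple-puncture hypothesis, and positivity of $\mathrm{d}\lambda$-energy together with the common period of $\RP^3$ orbits; (ii) deduce via the matching condition that the entire building collapses, since a closed component at the top leaves nothing for level $N$ to attach to, and any remaining symplectisation components would violate stability. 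A subtlety to flag: one must rule out that $\toplevel{G}$ has additional closed components disjoint from the punctured one — but $\toplevel{\Sigma}$ connected means $\toplevel{G}$ is a single component, so this does not arise. I would also double-check the edge case where $\toplevel{G}$ could a priori be constant; genus $0$ and the connectedness/stability hypotheses, plus the fact that it must carry the homology needed to glue to class $[\bar G]$, exclude this.
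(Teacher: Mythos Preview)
Your argument has a genuine gap at the key step. You apply the ``$\mathrm{d}\lambda$-energy equals positive periods minus negative periods, and is non-negative'' identity to the top level $\toplevel{G}$, and from $-sT \le 0$ conclude $s=0$. But this Stokes/action identity is only valid for curves in a \emph{symplectisation} $\R \times M$, where the symplectic form is globally $\mathrm{d}(e^r\lambda)$. The top level $\toplevel{G}$ lives in $\toplevel{X} = \widehat{Y_+}$, which has a genuine closed-manifold piece $Y_+$ carrying the symplectic form $\omega$; the $\omega^+$-area of a curve there is \emph{not} determined by its asymptotics, and there is no obstruction to a non-constant top-level curve having many negative punctures. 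Indeed, later in the paper the top-level planes $\toplevel{(F_{\Eplus})}$ are exactly such curves: one negative puncture, positive $\omega^+$-area. So your conclusion ``$\toplevel{G}$ is closed'' is unjustified, and once that fails the rest of the cascade collapses.

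The paper's proof works at the symplectisation level $G^{(N)}$ instead, where the action identity \emph{is} available. One argues that each connected component $g$ of $G^{(N)}$ has exactly one positive puncture (maximum principle gives at least one; genus $0$ and connectedness of $\toplevel{\Sigma}$ give at most one). That puncture is simple (it matches a simple negative puncture of $\toplevel{G}$), so the total positive action of $g$ is the common period $T$ of simple orbits in $\RP^3$. The component $g$ cannot be a plane, since a plane would cap a simple orbit representing the nontrivial class in $\pi_1(\RP^3)\cong\Z/2$; hence $g$ has at least one negative puncture, forcing $E_{\mathrm{d}\lambda}(g)=0$ and $g$ a trivial cylinder. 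Iterating down the levels, every symplectisation level consists solely of trivial cylinders, contradicting stability. The moral: run the energy argument where $\omega = \mathrm{d}(e^r\lambda)$ globally, namely in the neck, not at the top.
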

\begin{proof}
    Let \(g = (a,u) : \dot{\Sigma} \to \R \times T_1^*L\) be a connected
component of the symplectisation level \(G^{(N)}\) --- the one connected to
the top level \(\toplevel{G}\). The maximum principle implies that \(g\) has
at least one positive puncture. On the other hand, since the
genus\footnote{The genus of a holomorphic building \(G : \Sigma^* \to X^*\) is
the genus of the surface \(\bar{\Sigma}\) obtained by gluing \(\Sigma^*\)
together along its punctures. We will always deal with genus 0 buildings.} of
\(G\) is 0 and \(\toplevel{G}\) is connected, \(g\) has \emph{at most}, and
therefore exactly, one positive puncture \(\gamma\). Moreover, it must have at
least one negative puncture, as otherwise \(\dot{\Sigma}\) would be the
complex plane and thus \(g\) would represent a contraction of a simple Reeb
orbit, which represents the non-trivial element in \(\pi_1(\RP^3) \cong
\Z_2\).

Next we consider the \(E_{\mathrm{d}\lambda}\)-energy of \(g\), defined
in \cite[\S{}5.3]{behwzSFTCompactness} as\footnote{In the notation of that
paper, we have \(\omega = \mathrm{d}\lambda\).}
\[
	E_{\mathrm{d}\lambda}(g) = \int_{\dot{\Sigma}} u^*\mathrm{d}\lambda.
\]
By Lemmata~5.4 and 5.16 of \cite{behwzSFTCompactness} we have that this energy is
non-negative and equal to the difference between the sum of the periods of the
positive punctures and the sum of those of the negative ones. Thus, since
\(\gamma\) is simple and all simple Reeb orbits in
\((T_1^*S^2,\lambda_\mathrm{can})\) have the same period, there is at most one
negative puncture, and therefore, exactly one. Whence we obtain
\(E_{\mathrm{d}\lambda}(g) = 0\). Theorem~6.11 of \cite{HWZpropertiesII} then
implies that \(g\) is a reparametrisation of a trivial cylinder. Repeating
this argument shows that every symplectisation level is composed solely of
trivial cylinders, a situation which is ruled out by the stability condition.
Hence the result is proved.
\end{proof}

The index formula of a punctured curve will be crucial in the proofs of this
section. The index\footnote{The name index comes from the fact that it is
    equal to the Fredholm index of a certain operator derived from the
Cauchy-Riemann equation (see \cite{bourgeoisThesis} for example).} of a curve
is the expected dimension of the moduli space it lives in, and thus it is
often also called the virtual dimension. Let \(u :
\dot{\Sigma} \to W\) be a punctured \(J\)-holomorphic curve with \(k_\pm\)
positive/negative punctures mapping into a symplectic manifold with
cylindrical ends \(W\). The index of \(u\) is given by\footnote{Observe that
    this is the \emph{unconstrained} index. Moreover, in the case that \(u\)
is a closed curve, it reduces to the index formula given in
Equation~\eqref{eq:closedIndexFormula}.}
\[
	\ind(u) = 2c_1^\tau(u) - \chi(\dot{\Sigma}) \pm
	    \sum_{i=1}^{k_\pm} \CZindex{\tau}(\mathbf{A}_{z_i} \mp \delta).
\]
In the notation of \cite{bourgeoisThesis} and \cite{hind2004Spheres}, this
becomes\footnote{See Remark~\ref{rem:CZindex}.}
\[
	\ind(u) = 2c_1^\tau(u) - \chi(\dot{\Sigma}) \pm
		\sum_{i=1}^{k_\pm} \left(\RSindex(\gamma_i^\pm) \pm
			\frac{1}{2} \dim(\gamma_i^+)\right),
\]
where \(\dim(\gamma)\) is the dimension of the moduli space of
\emph{unparametrised} Reeb orbits that \(\gamma\) lives in.
In our situation, we deal only with genus zero curves with asymptotics
that live in 2-dimensional families. Fixing
\(\tau\) to be the trivialisation that appears in Lemma~7 of \cite{hind2004Spheres}, then we
have \(\RSindex(\gamma_i^\pm) = 2\cov(\gamma_i^\pm)\), so the index
formula reduces to:
\begin{equation}
    \label{eq:indexFormula}
	\ind(u) = 2\left( k_+ + k_- - 1 + c_1^\tau(u) \pm
		\sum_{i=1}^{k_\pm} \cov(\gamma_i^\pm)\right).
\end{equation}

We primarily deal with the cases where one of \(k_\pm\) is zero, which
simplifies the formula further. In particular, in the case where \(u\) maps
into the top level \(\toplevel{X}\), we have \(k_+ = 0\) and so, combined with
the fact that \(\cov(\gamma) \ge 1\), we obtain the inequality:\footnote{We
caution the reader that this inequality is \emph{only} valid with respect to
the fixed trivialisation \(\tau\).}
\begin{equation}
	\label{eq:index-ineq}
	\ind(u) \le 2(c_1^\tau(u) - 1).
\end{equation}

\begin{lemma}[]
    \label{lem:relChern=absChern}
    Let \(F = \splitcurve{F}\) be a holomorphic building. Then the relative
first Chern numbers of every level except the top one vanish. In particular,
the relative first Chern number of the top level equals the first Chern number
of the homology class \([\bar{F}]\):
\begin{equation}
    \label{eq:relChern=absChern}
    c_1^\tau(\toplevel{F}) = c_1([\bar{F}]).
\end{equation}
\end{lemma}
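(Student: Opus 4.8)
The plan is to exploit the additivity of the relative first Chern number under the gluing that assembles the building $F = \splitcurve{F}$ into the closed surface $\bar\Sigma$ with the continuous map $\bar F : \bar\Sigma \to X$. The relative Chern number $c_1^\tau(f)$ of a punctured curve $f$ depends on the trivialisations $\tau$ chosen at the punctures, and when two levels share a breaking orbit $\gamma$ with a breaking pair of punctures $(z,w)$, the contributions from $z$ (a positive puncture of one level) and $w$ (a negative puncture of the adjoining level) are computed with the \emph{same} trivialisation $\tau$ of $\gamma^*\xi$. Summing the relative Chern numbers over all levels, the puncture contributions at each breaking orbit cancel in pairs, and what remains is exactly the absolute first Chern number $c_1([\bar F])$ of the glued homology class. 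This is the standard ``additivity of the relative Chern number'' computation; see for instance the analogous statement in \cite[\S3.4]{wendl2020Contact3folds} or \cite{bourgeoisThesis}.

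First I would recall that for each level $F^{(\nu)} : \Sigma^{(\nu)} \to X^{(\nu)}$, the pullback bundle $(F^{(\nu)})^*TX^{(\nu)}$ splits (up to homotopy, away from the cylindrical ends) as a sum of a line bundle along the asymptotic Reeb orbits and a complementary line bundle, so that $c_1^\tau$ is well-defined once trivialisations are fixed at the punctures. Then I would invoke the key geometric input specific to our setting: the contact manifold is $(M,\lambda) = (T_1^*S^2, \lambda_{\mathrm{can}})$, and by the discussion in Section~\ref{sub:cotangentPlanes} (following Hind's Lemma~7 in \cite{hind2004Spheres}) there is a distinguished trivialisation $\tau$ of $\gamma^*\xi$ over every Reeb orbit with respect to which the relevant bundles are trivial — concretely, $c_1^\tau$ of a trivial cylinder vanishes, and more generally the $\mathrm{d}\lambda$-part of the tangent bundle contributes nothing. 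The point is that in the \emph{symplectisation} levels $X^{(\nu)} = \R \times M$ for $1 \le \nu \le N$, the tangent bundle is $\R^2 \oplus \xi$ with $\xi$ pulled back from the orbit space; because every such level (by the analysis leading to Lemma~\ref{lem:connectedTopImpliesNoIntermediateLevels}, or at least by the structure of curves in a symplectisation over $\RP^3$) has its relative Chern number computed purely from the $\xi$-bundle, and this bundle is trivialised compatibly at both ends by $\tau$, one gets $c_1^\tau(F^{(\nu)}) = 0$ for $\nu \ge 1$. The bottom level $\bottomlevel{F}$ mapping into $T^*S^2$ likewise has vanishing relative Chern number: $TX^{(0)}_- \cong T(T^*S^2)$ is trivial as a complex bundle (it is the cotangent bundle of a parallelisable-after-one-blowup... more simply, $c_1(T^*S^2) $ pairs to zero on classes in the image of $\pi_2$, and with the chosen $\tau$ the relative count vanishes).

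The remaining step, then, is the cancellation identity
\[
    c_1([\bar F]) = \sum_{\nu=0}^{N+1} c_1^\tau\bigl(F^{(\nu)}\bigr) - \sum_{(z,w)\ \mathrm{breaking}} \bigl(\text{contribution at } z + \text{contribution at } w\bigr),
\]
where the paired contributions at each breaking orbit cancel because they are computed with the same trivialisation $\tau$ but with opposite puncture signs (exactly the mechanism used in Section~\ref{sec:proofOfBuildingsMantra} to show breaking contributions vanish). Since all levels except the top contribute zero, this collapses to $c_1([\bar F]) = c_1^\tau(\toplevel{F})$, which is \eqref{eq:relChern=absChern}. I expect the main obstacle to be bookkeeping the trivialisation conventions carefully enough that the breaking-orbit contributions genuinely cancel rather than leave a residual winding-number term — this is where one must use that $\tau$ is the specific trivialisation of \cite[Lemma~7]{hind2004Spheres}, under which the asymptotic winding data is normalised so that both $\RSindex$ and the relative Chern contributions behave linearly and symmetrically across a breaking pair. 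Once that convention is pinned down, the cancellation and the vanishing of the intermediate and bottom levels are routine, and the identity follows.
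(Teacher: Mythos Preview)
Your approach is essentially the same as the paper's: additivity of $c_1^\tau$ over the levels together with vanishing of $c_1^\tau$ on every level except the top. The paper's argument is cleaner on the vanishing step --- rather than your somewhat tangled discussion for $T^*S^2$, it simply observes that the trivialisation $\tau$ of \cite[Lemma~7]{hind2004Spheres} extends to a \emph{global} trivialisation of $T(\R\times T_1^*S^2)=\langle Z,R\rangle\oplus\xi$ and of $T(T^*S^2)$ (via the global horizontal/vertical Lagrangian splitting), so $c_1^\tau(u)=0$ for any curve in those levels by definition.
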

\begin{proof}
The chosen trivialisation \(\tau\) extends to a global trivialisation of
\(TX^{(i)}\) and \(T\bottomlevel{X}\) for the symplectisation levels \(X^{(i)} =
\R \times T_1^*S^2\) and bottom level \(\bottomlevel{X} = T^*S^2\). Indeed, for
the symplectisation levels we have the global splitting
\[
    T(\R \times T_1^*S^2) = \langle Z,R\rangle \oplus \xi,
\]
where \(Z\) and \(R\) are the Liouville and Reeb vector fields respectively.
For \(\bottomlevel{X} = T^*S^2\) the extension is facilitated by the global
splitting of \(T(T^*S^2)\) into vertical and horizontal Lagrangian planes
\cite[Lemma~7]{hind2004Spheres}. Therefore, we have \(c_1^\tau(u) = 0\) for
any punctured curve mapping into \(X^{(i)}\) or \(\bottomlevel{X}\).
Combining this with the fact\footnote{This is a consequence of gluing the
levels \(F^{(i)}\) back together to obtain the cycle \([\bar{F}]\) in \(X\).} that,
for a holomorphic building \(F = \splitcurve{F}\), we have
\[
    \sum_{i=0}^N c_1^\tau(F^{(i)}) = c_1([\bar{F}]),
\]
we obtain the result.
\end{proof}

\begin{remark}
    Compare the above discussion with section~6.3 of \cite{evans2010delPezzo},
where Evans gives a proof using complex geometry and the compactification of
\(T^*S^2\) to the projective quadric surface.
\end{remark}

We now turn to recording a genericity result. This will be achieved by
perturbing the almost complex structure \(\toplevel{J}\) on the top level
\(\toplevel{X} \cong X \backslash L\) in a suitable open set. This set must be
chosen carefully though, as an arbitrary perturbation would either destroy the
\(c_1 \le 0\) curves in the divisor \(D' \subset X\), or the cylindrical
nature of \(\toplevel{J}\) in the neck region \((-\infty,\epsilon) \times
T_1^*L\). Let \(V\) be the open set \(\toplevel{X} \backslash
\left((-\infty,\epsilon] \times T_1^*L \cup D'\right)\). We are then free to
perturb \(\toplevel{J}\) in the set \(V\) to make it generic. This means that
any \(\toplevel{J}\)-holomorphic curve \(u\) mapping an injective point into
\(V\) is Fredholm regular, and thus \(\ind(u) \ge 0\).
\begin{lemma}[]
    \label{lem:neg-puncs-implies-regular}
    Let \(\toplevel{J}\) be as above and \(u : \dot{\Sigma} \to \toplevel{X}\) be a
somewhere injective \(J\)-holomorphic curve that is either closed and not
contained in \(D'\) or has at least one (negative) puncture. Then \(u\)
intersects \(V \subseteq \toplevel{X}\) and is thus Fredholm regular.
\end{lemma}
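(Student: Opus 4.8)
The plan is to argue by contradiction. Suppose the image of $u$ is contained in the closed set $\toplevel{X} \setminus V = ((-\infty,\epsilon] \times T_1^*L) \cup D'$; I will derive a contradiction in each of the two cases. The first — and only slightly delicate — step is the reduction to the case where $u(\dot{\Sigma})$ lies entirely in the end $(-\infty,\epsilon]\times T_1^*L$. For this I use that $u$ is \emph{not} contained in $D'$: in case (a) this is part of the hypothesis, and in case (b) it holds because a negative puncture of $u$ is asymptotic to a Reeb orbit sitting arbitrarily deep in the end $(-\infty,\epsilon]\times T_1^*L$, which is disjoint from $D'$ (recall $D'$ was chosen away from the Weinstein neighbourhood of $L$). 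Since $D'$ is a finite union of embedded $\toplevel{J}$-holomorphic spheres and $u$ is somewhere injective, positivity of intersections makes $u^{-1}(D')$ finite, so $u^{-1}((-\infty,\epsilon]\times T_1^*L)$ is open and dense; as $(-\infty,\epsilon]\times T_1^*L$ is closed in $\toplevel{X}$, the whole image lies in it.

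Next I would rule out the two remaining possibilities using facts already in the text. If $u$ is closed (case (a)), it is a non-constant closed $\toplevel{J}$-holomorphic curve contained in the region where $\toplevel{J}$ is cylindrical and the symplectic form is the exact form $\ud(e^r\lambda_\mathrm{can})$; Stokes' theorem gives $\int_\Sigma u^*\ud(e^r\lambda_\mathrm{can}) = 0$, contradicting positivity of area for non-constant $J$-holomorphic curves (equivalently, one can invoke the maximum principle for $r\circ u$). If $u$ has a negative puncture (case (b)), I use that every puncture of a curve with image in $\toplevel{X}$ is negative, since the only cylindrical end of $\toplevel{X}$ is the concave one near $L$. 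Regarding $u$ as a punctured curve in the symplectisation $\R\times T_1^*L$, its $\mathrm{d}\lambda$-energy then satisfies $0 \le E_{\mathrm{d}\lambda}(u) = \sum_{z\in\Gamma^+}T(\gamma_z) - \sum_{z\in\Gamma^-}T(\gamma_z) = -\sum_{z\in\Gamma^-}T(\gamma_z) < 0$, since $\Gamma^+=\emptyset$, $\Gamma^-\neq\emptyset$, and Reeb periods are positive — a contradiction. This is the same energy computation used in the proof of Lemma~\ref{lem:connectedTopImpliesNoIntermediateLevels}, via Lemmata 5.4 and 5.16 of \cite{behwzSFTCompactness}.

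Having shown that $u$ meets $V$, the conclusion is immediate: the set of injective points of the somewhere injective curve $u$ is open and dense in $\dot{\Sigma}$, hence meets the non-empty open set $u^{-1}(V)$, so $u$ maps an injective point into $V$; by the choice of $\toplevel{J}$ as generic on $V$ this forces $u$ to be Fredholm regular, and therefore $\ind(u)\ge 0$.

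I expect the only genuine point requiring care to be the first reduction: verifying that ``$u \not\subseteq D'$'' together with somewhere-injectivity really does push the whole image into the neck region, which relies on $u^{-1}(D')$ being finite and on $(-\infty,\epsilon]\times T_1^*L$ being closed in $\toplevel{X}$. Everything after that is a direct application of tools already developed above — exactness and the maximum principle for the closed case, and the $E_{\mathrm{d}\lambda}$-energy identity for the punctured case.
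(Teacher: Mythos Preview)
Your proof is correct and follows the same approach as the paper: reduce to the image lying in the cylindrical end $(-\infty,\epsilon]\times T_1^*L$, then use exactness (closed case) or the absence of positive punctures (punctured case) to derive a contradiction. Your reduction via positivity of intersections is slightly more elaborate than needed --- the paper simply uses that $D'$ and the neck are disjoint closed sets, so connectedness of $\dot\Sigma$ forces the image into one piece --- and your explicit check that an injective point lands in $V$ is a detail the paper leaves implicit.
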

\begin{proof}
    First we deal with the closed case. Since the cylindrical part
\({(-\infty,\epsilon) \times T_1^*L}\) of \(\toplevel{X}\) is an exact
symplectic manifold, it contains no non-constant closed \(J\)-holomorphic
curves. Therefore, \(u\) must pass through \(V\).

    For the punctured case, a similar argument works. Suppose that the result
is false. Then the image of \(u\) is contained entirely in
\((-\infty,\epsilon) \times T_1^*L\), which is an exact, cylindrical
symplectic manifold with positive \emph{and} negative boundary. However,
\(u\) has no positive punctures, which contradicts the maximum principle.
Therefore, \(u\) must intersect \(V\).
\end{proof}

\begin{corollary}[]
    \label{cor:relChernGE1}
    Let \(u\) be a closed or punctured \(J\)-holomorphic curve in
\(\toplevel{X}\) that is not contained in the divisor \(D'\). Then,
    \begin{equation}
    	\label{eq:c1-ge-1}
    	c_1^\tau(u) \ge 1.
    \end{equation}
\end{corollary}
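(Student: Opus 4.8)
The plan is to combine the Fredholm-regularity input of Lemma~\ref{lem:neg-puncs-implies-regular} with the index inequality \eqref{eq:index-ineq}. Since curves mapping into the top level $\toplevel{X}$ have no positive punctures, the general index formula \eqref{eq:indexFormula} collapses (because $k_+ = 0$ and $\cov \ge 1$) to the inequality $\ind(u) \le 2(c_1^\tau(u) - 1)$ recorded in \eqref{eq:index-ineq}. First I would treat the somewhere injective case. If $u$ is somewhere injective it satisfies the hypotheses of Lemma~\ref{lem:neg-puncs-implies-regular}: either $u$ is closed and not contained in $D'$, or $u$ is punctured, in which case --- every cylindrical end of $\toplevel{X} \cong X \setminus L$ being concave --- $u$ has at least one negative puncture. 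Hence $u$ meets the open set $V$ and is Fredholm regular, so $\ind(u) \ge 0$. Feeding this into \eqref{eq:index-ineq} (or, when $u$ is closed, into the equality $\ind(u) = 2c_1([u]) - 2$ of \eqref{eq:closedIndexFormula}) immediately yields $c_1^\tau(u) \ge 1$.

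Next I would reduce the general case to this one. A connected non-constant $J$-holomorphic curve factors as $u = u' \circ \varphi$, with $u'$ somewhere injective and $\varphi$ a holomorphic branched cover of some degree $k \ge 1$ (so $u' = u$, $k = 1$ when $u$ is already somewhere injective). As $u$ and $u'$ have the same image, $u'$ inherits every hypothesis of the corollary --- it is closed exactly when $u$ is, it is not contained in $D'$, and, since punctures map to punctures, it has a negative puncture whenever $u$ does --- so the first step gives $c_1^\tau(u') \ge 1$. To finish I need multiplicativity of the relative first Chern number under covers, $c_1^\tau(u) = k\, c_1^\tau(u')$, where the trivialisation over the cover is the pullback $\tau^k$ as in the paper's standing convention. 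This is the $c_1^\tau$-analogue of Part~2 of Proposition~\ref{prop:intersection-additive-covers}, justified the same way: $c_1^\tau$ is an algebraic count of zeros of a section of $u^\ast T\toplevel{X}$ that is constant near the punctures, and such a count scales by $k$ under a $k$-fold branched cover. Then $c_1^\tau(u) = k\, c_1^\tau(u') \ge k \ge 1$.

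I do not expect a serious obstacle: everything is an assembly of results already established --- Lemma~\ref{lem:neg-puncs-implies-regular} for regularity, the index computation behind \eqref{eq:index-ineq}, and the elementary covering behaviour of $c_1^\tau$. The only point needing care is the bookkeeping of trivialisations: \eqref{eq:index-ineq} is valid only for the one fixed $\tau$ (the trivialisation from Lemma~7 of \cite{hind2004Spheres}, relative to which $\RSindex(\gamma) = 2\cov(\gamma)$), so I must apply it to $u'$ with that same $\tau$, and state the covering identity $c_1^{\tau^k}(u) = k\, c_1^\tau(u')$ with the correctly pulled-back trivialisations at each puncture.
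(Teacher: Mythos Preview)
Your proposal is correct and follows essentially the same approach as the paper: pass to the underlying simple curve, apply Lemma~\ref{lem:neg-puncs-implies-regular} to get Fredholm regularity and hence $\ind \ge 0$, feed this into the inequality~\eqref{eq:index-ineq} to obtain $c_1^\tau \ge 1$ for the simple curve, and then use multiplicativity of $c_1^\tau$ under covers. The paper's proof is terser (citing Wendl's lecture notes for the factorisation through a simple curve and invoking ``linearity of $c_1^\tau$ with respect to covers'' without further comment), but the logic is identical.
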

\begin{proof}
    If \(u\) is not simple, then by Theorems~2.35 and 6.34 of \cite{wendlSFT},
we may pass to its underlying simple curve \(\tilde{u}\).
Lemma~\ref{lem:neg-puncs-implies-regular} ensures that \(\tilde{u}\) is
Fredholm regular, and so
\[
    \ind(\tilde{u}) \ge 0.
\]
The inequality \eqref{eq:index-ineq} then implies that \(c_1^\tau(\tilde{u})
\ge 1\), so the result follows by linearity of \(c_1^\tau\) with respect to
covers.
\end{proof}

\subsubsection{Analysis of buildings: limits of \(c_1 = 1\) curves}
\label{sub:analysis-buildings-c1=1}

In this section we consider sequences of closed curves in a homology class \(E
\in H_2(X)\) satisfying \(E^2 = -1\) (which is equivalent to \(c_1(E) = 1\)
for embedded genus zero curves, by the adjunction formula). Recall that in
homology, \(L = \mathcal{E}_i - \mathcal{E}_j\) for some \(1 \le i \ne j \le
d\). Since the indices \(i\) and \(j\) won't play a role in what follows, we
do away with them and define\footnote{This notation is chosen so that
    \(\mathcal{E}^\pm \cdot L = \pm 1\). Although, note that \(\mathcal{G}^\pm
    \cdot L = \mp 1\), which might seem confusing, however the author believes
    that the other definition of \(\mathcal{G}^\pm = F - \mathcal{E}^\mp\)
required to achieve \(\mathcal{G}^\pm \cdot L = \pm 1\) is worse.}
\begin{equation}
    \label{eq:EplusMinus}
    \begin{aligned}
	\Eplus &= \mathcal{E}_j,\\
	\Eminus &= \mathcal{E}_i,
    \end{aligned}
    \qquad\text{and}\qquad
    \begin{aligned}
	\FEplus &= F - \Eplus,\\
	\FEminus &= F - \Eminus.
    \end{aligned}
\end{equation}

The main result of this section, proved in
Propositions~\ref{prop:c1=1-sft-limit} and \ref{prop:top-level-2-planes}, is:
\begin{proposition}[]
    Let \(E \in H_2(X)\) satisfy \(c_1(E) = 1\) and suppose we have a sequence
\(e_k \in \mathcal{M}_{0,0}(X,E;J_k)\) of embedded \(J\)-holomorphic curves
converging to a limit building \(F\) under neck stretching about \(L\). Then
\(F = (\toplevel{F},\bottomlevel{F})\) has no symplectisation levels,
and moreover \(\bottomlevel{F}\) is non-empty if, and only if, \(E \cdot L
= E \cdot (\Eminus - \Eplus) \ne 0\), in which case \(\bottomlevel{F}\)
consists of a union of planes of the same parity in the moduli spaces
\(\mathcal{M}_\pm\) in \(T^*S^2\). The parity of the planes coincides with the
sign of the intersection \(E \cdot L\).

Furthermore, in the case \(E \in \{\Eplusminus,\FEplusminus\}\), the buildings
\(F_{\Eplusminus}\) arising from limits of curves in the classes
\(\Eplusminus\) have identical top levels. The analogous result holds for
\(\FEplusminus\). We write these relationships as
\[
    \toplevel{(F_{\Eplus})} = \toplevel{(F_{\Eminus})} \text{ and }
    \toplevel{(F_{\FEplus})} = \toplevel{(F_{\FEminus})}.
\]
On the other hand, the four planes \(\bottomlevel{(F_{\Eplusminus})}\),
\(\bottomlevel{(F_{\FEplusminus})}\) in \(T^*S^2\) are pairwise distinct. 
\end{proposition}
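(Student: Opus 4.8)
The plan is to split the statement into two independent assertions: first, the structural claim that the limit building $F$ of a sequence of embedded $c_1=1$ curves has no symplectisation levels, with $\bottomlevel{F}$ either empty or a union of same-parity planes; and second, the comparison statements among the four distinguished buildings $F_{\Eplusminus}$, $F_{\FEplusminus}$. For the first assertion, the key input is Lemma~\ref{lem:connectedTopImpliesNoIntermediateLevels}: I would argue that the top level $\toplevel{F}$ is connected and carries only simple punctures, so that the building has no intermediate symplectisation levels. Connectedness of $\toplevel{F}$ should follow from a genus~$0$ plus homology argument --- if $\toplevel{\Sigma}$ were disconnected, one could use the positivity/intersection bookkeeping (Corollary~\ref{cor:relChernGE1}, i.e. $c_1^\tau \ge 1$ on each piece not inside $D'$, together with $c_1(E)=1$ and Lemma~\ref{lem:relChern=absChern}) to show one component must be trivial or must lie in $D'$, contradicting stability or the assumption that $e_k \notin D'$. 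The simplicity of the punctures should come from the $\mathrm{d}\lambda$-energy computation in the proof of Lemma~\ref{lem:connectedTopImpliesNoIntermediateLevels} combined with the fact that all simple Reeb orbits of $(T_1^*S^2,\lambda_\mathrm{can})$ share a period. Once $N=0$, $\bottomlevel{F}$ consists of finite-energy punctured curves in $T^*S^2$; applying Proposition~\ref{prop:cotangent-curves-intersect-planes} to each component forces each to be a cover of a plane in $\cotangentPlanesModuli{}_\pm$, and then a count of $\bottomlevel{F} \cdot 0_{S^2}$ against $E \cdot L$ (using $0_{S^2}$ as the Weinstein neighbourhood zero section and continuity of intersection, Theorem~\ref{thm:intersection-cts}) pins down the parity and shows $\bottomlevel{F}$ is non-empty iff $E\cdot L \ne 0$. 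That the planes all have the \emph{same} parity follows from $i_U(P_+,P_-)=1 > 0$ (Lemma~\ref{lem:cotangent-planes-intersection}(1)) clashing with $F^2 = E^2 = -1$ if both parities appeared --- one needs to check the arithmetic of $i(F,F)$ via Proposition~\ref{prop:intersection-additive-covers} and the adjunction inequality Theorem~\ref{thm:adjunction}.

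For the comparison statements, the idea is that $\Eplus$ and $\Eminus$ differ by $L = \Eminus - \Eplus$, which is supported entirely in the Weinstein neighbourhood $T_{\le 1}^*L$; hence the ``part of the curve that escapes into $\toplevel{X} = X \setminus L$'' cannot distinguish them. More precisely, I would argue that $\toplevel{(F_{\Eplus})}$ and $\toplevel{(F_{\Eminus})}$ are both somewhere-injective (or reduce to the same underlying simple curve) $\toplevel{J}$-holomorphic curves in $\toplevel{X}$ with the same negative asymptotics and the same relative homology data: by Lemma~\ref{lem:relChern=absChern}, $c_1^\tau(\toplevel{(F_{\Eplusminus})}) = c_1(\Eplusminus)$, and $c_1(\Eplus) = c_1(\Eminus) = 1$ since both are $-1$-classes; moreover $\Eplus$ and $\Eminus$ have the same intersection number with every component of $D'$ (they differ by $L$, which is orthogonal to $D'$ by Corollary~\ref{cor:homologyClassOfSpheresXdpq} and the computations around Lemma~\ref{lem:constrainNodal}). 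An automatic-transversality / uniqueness argument in $\toplevel{X}$ --- the same style used for the $E_n$ curve in Corollary~\ref{cor:EnAlwaysJhol} --- should then show there is a \emph{unique} such top-level curve, forcing $\toplevel{(F_{\Eplus})} = \toplevel{(F_{\Eminus})}$; the same works for $\FEplus, \FEminus$ after replacing $\fibrationSectionL{}$ by $\fibrationSectionR{}$ as in the proof of Lemma~\ref{lem:constrainNodal}. Finally, for pairwise distinctness of the four bottom-level planes: $\bottomlevel{(F_{\Eplus})}$ and $\bottomlevel{(F_{\Eminus})}$ have opposite parity (the parity tracks $\mathrm{sign}(E \cdot L)$, and $\Eplus \cdot L = -1$ while $\Eminus \cdot L = +1$), so they lie in different foliations $\cotangentPlanesModuli{}_\mp$ and cannot coincide; the remaining distinctions (e.g. $\bottomlevel{(F_{\Eplus})} \ne \bottomlevel{(F_{\FEplus})}$, same parity) would be extracted from the fact that the corresponding closed curves $e_k$ have different homology classes, so their buildings glue to different classes in $H_2(X)$, and a same-image collision would force $\toplevel{F}$ pieces to absorb the homology difference $\Eplus - \FEplus = 2\Eplus - F$, which has positive $\omega$-area --- contradicting $\omega(F_{\Eplus}) = \omega(\Eplus) = l < 2l = \omega(F)$ and the area-budget arguments of Lemma~\ref{lem:enBubble}.

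The main obstacle I expect is establishing connectedness of $\toplevel{\Sigma}$ and the absence of ``sneaky'' bubbling or multiple covers at the top level: a priori the SFT limit of an embedded sequence can have a disconnected top level attached through several breaking orbits, and ruling this out requires a careful simultaneous bookkeeping of genus ($g=0$ of the glued surface), the number and simplicity of breaking orbits, the relative Chern numbers (all vanishing off the top level by Lemma~\ref{lem:relChern=absChern}), and positivity of $i_U$ (Corollary~\ref{cor:bottomLevelNonNegInts}, Theorem~\ref{thm:punc-pos-ints}). I anticipate this is where the bulk of the genuine work lies, and where the intersection-theoretic machinery of Section~\ref{sub:intersection-theory} earns its keep; the uniqueness of top-level curves and the parity/distinctness statements are comparatively formal once that structural result is in hand.
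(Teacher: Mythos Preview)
Your Part~1 (the structural claim) is essentially correct and matches the paper's approach: connectedness of $\toplevel{F}$ via $c_1^\tau \ge 1$ on each component summing to $c_1(E)=1$, then Lemma~\ref{lem:connectedTopImpliesNoIntermediateLevels} to kill symplectisation levels, then the adjunction/intersection bookkeeping $i_U(\bottomlevel{F},\bottomlevel{F}) = 2m_+m_- \le 0$ to force a single parity. One small correction: simplicity of the punctures is not an energy argument but an index computation --- regularity gives $\ind(\toplevel{F}) \ge 0$, while \eqref{eq:index-ineq} gives $\ind(\toplevel{F}) \le 0$, and equality in \eqref{eq:indexFormula} forces $\cov(\gamma_i^-)=1$ for each puncture.

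Your Part~2 argument for $\toplevel{(F_{\Eplus})} = \toplevel{(F_{\Eminus})}$ has a genuine gap. You assert the two top-level planes have ``the same negative asymptotics,'' but nothing you have written establishes this: the asymptotic Reeb orbit of $\toplevel{(F_{\Eplus})}$ is some $\gamma \in T_1^*S^2/S^1$, and a priori $\toplevel{(F_{\Eminus})}$ could be asymptotic to any other simple orbit. Without knowing the orbits agree, your uniqueness-via-transversality argument cannot start, since the two curves need not live in the same moduli space. The paper bypasses this entirely with a one-line intersection computation: since $\Eplus \cdot \Eminus = 0$ and the bottom-level planes have opposite parity (hence $i_U(\bottomlevel{(F_{\Eplus})},\bottomlevel{(F_{\Eminus})}) = 1$ by Lemma~\ref{lem:cotangent-planes-intersection}), additivity forces $i_U(\toplevel{(F_{\Eplus})},\toplevel{(F_{\Eminus})}) = -1$, and positivity of intersections (Theorem~\ref{thm:punc-pos-ints}) then forces the images to coincide.

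Your Part~3 is confused about which pairs of bottom planes are nontrivially distinct. Since $\Eplus \cdot L = +1$ and $\FEplus \cdot L = -1$, the planes $\bottomlevel{(F_{\Eplus})}$ and $\bottomlevel{(F_{\FEplus})}$ already have \emph{opposite} parity and are trivially distinct; your area-budget argument is aimed at this non-issue. The substantive cases are $\bottomlevel{(F_{\Eplus})}$ versus $\bottomlevel{(F_{\FEminus})}$ (both in $\cotangentPlanesModuli_+$) and $\bottomlevel{(F_{\Eminus})}$ versus $\bottomlevel{(F_{\FEplus})}$ (both in $\cotangentPlanesModuli_-$), and here your area argument does not apply: the relevant top-level curves $\toplevel{(F_{\Eplus})}$ and $\toplevel{(F_{\FEplus})}$ are already known to be geometrically distinct (one meets $\fibrationSectionR$, the other $\fibrationSectionL$), so coincidence of bottom planes produces no homological obstruction in the top level. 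The paper instead shows the \emph{asymptotic orbits} $\gamma$ and $\gamma'$ of $\toplevel{(F_{\Eplus})}$ and $\toplevel{(F_{\FEplus})}$ are distinct: from $\Eplus \cdot \FEplus = 1$ and $i_U(\bottomlevel{(F_{\Eplus})},\bottomlevel{(F_{\FEplus})}) = 1$ one gets $i_U(\toplevel{(F_{\Eplus})},\toplevel{(F_{\FEplus})}) = 0$, hence $i_C(\toplevel{(F_{\Eplus})},\toplevel{(F_{\FEplus})}) + i_\mathrm{MB}^-(\gamma+\delta,\gamma'+\delta) \le 0$; but $i_C \ge 0$ by positivity and $i_\mathrm{MB}^-(\gamma+\delta,\gamma+\delta) = 1 > 0$ (Lemma~\ref{lem:mbContributions}), so $\gamma' \ne \gamma$. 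Distinctness of the same-parity planes then follows since each orbit bounds a unique plane of each parity.
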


\begin{remark}
    \label{rem:bottomPlanesAreNodalFibres}
    \begin{enumerate}
	\item In the case \(\bottomlevel{F} = \emptyset\) in the above result, this means
	    that \(\toplevel{F}\) is actually a \emph{closed} holomorphic curve in
	    \(\toplevel{X}\).
	\item In a later section, we will show that the two pairs of planes
	    \(\bottomlevel{(F_{\Eplusminus})} \cup \bottomlevel{(F_{\FEplusminus})}\)
	    are the nodal fibres of a Lefschetz fibration on \(T^*S^2\) such
	    that \(L = 0_{S^2}\) is a matching cycle.
    \end{enumerate}
\end{remark}

\begin{lemma}[]
    \label{lem:limits-of-c1-curves-have-connected-top-level}
    Suppose that \(F = \splitcurve{F}\) is the limiting building of a
sequence of curves in the moduli spaces \(\mathcal{M}_{0,0}(X,E;J_k)\), where
\(E \in H_2(X;\Z)\) satisfies \(c_1(E) = 1\). Then \(\toplevel{F}\) has
connected domain, \emph{i.e.}, it consists of exactly one component, and each
of its punctures is simple.
\end{lemma}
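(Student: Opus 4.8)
The plan is to run the argument off the relative first Chern number of the top level. First I would invoke Lemma~\ref{lem:relChern=absChern}: since the relative first Chern number vanishes on every symplectisation level and on the bottom level, \eqref{eq:relChern=absChern} gives $c_1^\tau(\toplevel{F}) = c_1([\bar{F}]) = c_1(E) = 1$; in particular $\toplevel{F}$ is non-empty. Writing $\toplevel{F}$ as the disjoint union of its connected components $u_\alpha$ and using additivity of $c_1^\tau$, this becomes $\sum_\alpha c_1^\tau(u_\alpha) = 1$. By Corollary~\ref{cor:relChernGE1}, every component not contained in the divisor $D'$ has $c_1^\tau \ge 1$; a component contained in $D'$, on the other hand, is a closed puncture-free sphere (the cycle $D'$ lies in $Y_+$, away from the neck) covering a single component $D_i$ of $D'$, with $c_1^\tau = k([D_i]^2 + 2)$ by adjunction. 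The goal of the structural part of the proof is to show that in fact $\toplevel{F}$ consists of a single component $u_0$, with $u_0 \not\subset D'$ and $c_1^\tau(u_0) = 1$.

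Excluding the remaining configurations is the delicate point. The easy subcase is when every component of $\toplevel{F}$ lies in $D'$: then all of them are closed with no punctures, so no symplectisation or bottom level can be attached, $\toplevel{F}$ must be a single sphere by connectedness of the building, and $[\bar F] = k[D_i] = E$ forces $E = [D_i]$ to be one of the $(-1)$-components $\fibrationSectionL{}, \fibrationSectionR{}$ of $D'$, which is impossible by \eqref{eq:EplusMinus} together with the homology relations from the basis of Lemma~\ref{lem:compactify}; likewise a single closed component realising $c_1^\tau = 1$ inside $D'$ is excluded. So $\toplevel{F}$ has a component outside $D'$, and what must still be ruled out is the presence of any component inside $D'$ (necessarily a cover of a $(\le -3)$-curve, so that the Chern sum can still be $1$) or of a second component outside $D'$; since components inside $D'$ have no punctures they can only be glued into the connected building through nodes with the outside component $u_0$ or with each other. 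I would rule these out by combining the Chern count above with positivity of intersections of the embedded curves $e_k$ against the $J_k$-holomorphic components of $D'$ — which yields $E \cdot D_i \ge 0$ for all $i$ — the area identity $\omega(E) = l$ (from Lemma~\ref{lem:compactify}), and the fact that $\Eplusminus$ and $\FEplusminus$ are orthogonal to all but one component of $D'$; tracing any such nodal chain back into $u_0$ and pairing $[\bar F] = E$ with the appropriate component of $D'$ produces a contradiction. This bookkeeping against the combinatorics of the divisor chain is the main obstacle in the proof; everything else is soft. The conclusion is that $\toplevel{F}$ is connected and equal to a single component $u_0$ with $c_1^\tau(u_0) = 1$.

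It remains to check that the punctures of $u_0$ are simple. If $u_0$ is closed there is nothing to prove, so suppose it has punctures; these are all negative, since $\toplevel{X} \cong X \setminus L$ has only a concave cylindrical end. First, $u_0$ is somewhere injective: if it were a $d$-fold cover of a simple curve $\tilde{u}_0$, then $\tilde u_0$ still has a negative puncture, hence is Fredholm regular by Lemma~\ref{lem:neg-puncs-implies-regular}, so $\ind(\tilde u_0) \ge 0$ and the top-level inequality \eqref{eq:index-ineq} gives $c_1^\tau(\tilde u_0) \ge 1$; linearity of $c_1^\tau$ under covers (Proposition~\ref{prop:intersection-additive-covers}) then forces $1 = c_1^\tau(u_0) = d\,c_1^\tau(\tilde u_0) \ge d$, so $d=1$. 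Thus $u_0$ is somewhere injective and meets the perturbation region $V$, hence is Fredholm regular and $\ind(u_0) \ge 0$. Substituting $c_1^\tau(u_0) = 1$ and $k_+ = 0$ into the index formula \eqref{eq:indexFormula} gives
\[
    \ind(u_0) = 2\left( k_- - 1 + 1 - \sum_{i=1}^{k_-} \cov(\gamma_i^-) \right) = 2\sum_{i=1}^{k_-}\left( 1 - \cov(\gamma_i^-) \right) \le 0,
\]
since each $\cov(\gamma_i^-) \ge 1$. Therefore $\ind(u_0) = 0$ and $\cov(\gamma_i^-) = 1$ for every $i$, that is, every puncture of $\toplevel{F}$ is asymptotic to a simple Reeb orbit. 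This is exactly the hypothesis needed to feed $\toplevel{F}$ into Lemma~\ref{lem:connectedTopImpliesNoIntermediateLevels} in the subsequent proposition.
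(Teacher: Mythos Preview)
Your index computation and the simple-punctures conclusion at the end are correct and essentially match the paper. The difference is entirely in how you argue connectedness of $\toplevel{F}$. The paper's proof here is a single sentence: apply Corollary~\ref{cor:relChernGE1} to every component of $\toplevel{F}$ to get $c_1^\tau(u_\alpha)\ge 1$; since $\sum_\alpha c_1^\tau(u_\alpha)=c_1(E)=1$ by Lemma~\ref{lem:relChern=absChern}, there is exactly one component. The paper does not engage with components landing in $D'$ at all.

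You have noticed that Corollary~\ref{cor:relChernGE1} carries the hypothesis ``not contained in $D'$'', which the paper does not verify, and you attempt to treat that case separately. This is where your proposal turns into a sketch rather than a proof: the ``bookkeeping against the combinatorics of the divisor chain'' is announced but never carried out, and several of the surrounding claims are not right. A component covering a $(-2)$-curve in $D'$ has $c_1^\tau=0$, which is perfectly compatible with the Chern sum being $1$, so ``necessarily a cover of a $(\le -3)$-curve'' is false. In the all-in-$D'$ subcase the top level can be a nodal chain with $[\bar F]=\sum_i k_i[D_i]$, not a single multiple $k[D_i]$. And the area identity $\omega(E)=l$ is specific to $\Eplusminus,\FEplusminus$ and fails for $E_n$, so it cannot serve as the uniform obstruction you want. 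As written, then, you have flagged a point the paper elides but not actually closed it; the honest fix is to run the containment argument of Lemma~\ref{lem:containBoundary} against the maximal connected subgraph of $D'$ orthogonal to $E$ and containing $F$, forcing an $F$-cover into the building and hence $\omega(E)\ge\omega(F)$, which contradicts Lemma~\ref{lem:compactify}(4) for each of the classes in play.
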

\begin{proof}
    If \(\toplevel{F}\) had more than one component then at least one
would satisfy \(c_1^\tau \le 0\), since
\[
    \sum_{\substack{\text{components }f \\ \text{of }\toplevel{F}}}
    c_1^\tau(f) = c_1^\tau(\toplevel{F}) = c_1([\bar{F}]) = c_1(E) = 1,
\]
by Lemma~\ref{lem:relChern=absChern}. However, this is impossible by
Corollary~\ref{cor:relChernGE1}. Therefore, \(\toplevel{F}\) consists of
exactly one component which satisfies \(c_1^\tau(\toplevel{F}) = 1\). Since
\(\toplevel{F}\) is Fredholm regular by
Lemma~\ref{lem:neg-puncs-implies-regular}, we must have \(\ind(\toplevel{F}) =
0\) by \eqref{eq:index-ineq}. This implies that \(k_- - \sum_{i=1}^{k_-}
\cov(\gamma_i^-) = 0\), which implies that the punctures of \(\toplevel{F}\)
are simple.
\end{proof}

Hence, by Lemma~\ref{lem:connectedTopImpliesNoIntermediateLevels}, \(F =
(\toplevel{F}, \bottomlevel{F})\) is a holomorphic building with only a top
and bottom level. Therefore, by Theorem~\ref{thm:intersection-cts} and
Proposition~\ref{prop:intersection-additive-covers} we have
\begin{equation}
	\label{eq:top-and-bottom-int-equals--1}
	\selfastint{\toplevel{F}} + \selfastint{\bottomlevel{F}} =
	i(F,F) = E^2 = -1.
\end{equation}
The adjunction formula implies that \(\selfastint{\toplevel{F}} \ge
c_N(\toplevel{F})\), and so we obtain the inequality
\[
	\selfastint{\bottomlevel{F}} \le -1 - c_N(\toplevel{F}).
\]

\begin{lemma}[]
    \label{lem:cN=-1-for-c1=1-curves}
    The normal Chern number \(c_N(\toplevel{F}) = -1\), and thus
    \[
	\selfastint{\bottomlevel{F}} \le 0.
    \]
\end{lemma}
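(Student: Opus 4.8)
The plan is to compute \(c_N(\toplevel{F})\) directly from the definition of the normal Chern number, exploiting everything we already know about \(\toplevel{F}\): by Lemma~\ref{lem:limits-of-c1-curves-have-connected-top-level}, \(\toplevel{F}\) is a single genus-zero punctured curve with, say, \(k\) negative punctures and no positive punctures, each puncture asymptotic to a \emph{simple} Reeb orbit of \((T_1^*S^2,\lambda_\mathrm{can})\); and by Lemma~\ref{lem:relChern=absChern} its relative first Chern number is \(c_1^\tau(\toplevel{F}) = c_1([\bar F]) = c_1(E) = 1\). Since the domain \(\dot{\Sigma}\) is a \(k\)-punctured sphere, \(\chi(\dot{\Sigma}) = 2-k\).

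First I would pin down the only remaining input, the extremal winding number \(\alpha_+^\tau(\gamma + \delta)\) of a simple asymptotic orbit \(\gamma\) with respect to the fixed trivialisation \(\tau\) (the one from Lemma~7 of \cite{hind2004Spheres}). As observed in the proof of Lemma~\ref{lem:unconstrained-breaking-vanishes}, \(\alpha_+^\tau(\gamma + \delta) = \mathrm{wind}_{\mathbf{A}_\gamma}^\tau(0)\); and the normalisation \(\RSindex(\gamma) = 2\cov(\gamma)\) for this trivialisation, combined with the identity \(\CZindex{\tau}(\mathbf{A}_\gamma + \delta) = \RSindex(\gamma) - \tfrac12\dim\ker(\mathbf{A}_\gamma) = 2 - 1 = 1\) from Remark~\ref{rem:CZindex} and the parity-one relation \(\CZindex{\tau}(\mathbf{A}_\gamma + \delta) = 2\alpha_+^\tau(\mathbf{A}_\gamma + \delta) - 1\) valid in the rank-two Morse--Bott case, forces \(\mathrm{wind}_{\mathbf{A}_\gamma}^\tau(0) = \alpha_+^\tau(\gamma + \delta) = 1\).

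With this in hand the computation is immediate. Since \(\toplevel{F}\) has no positive punctures, the definition of the normal Chern number gives
\[
    c_N(\toplevel{F}) = c_1^\tau(\toplevel{F}) - \chi(\dot{\Sigma}) - \sum_{z \in \Gamma^-} \alpha_+^\tau(\gamma_z + \delta) = 1 - (2-k) - k = -1.
\]
(This is consistent with \(\ind(\toplevel{F}) = 0\): the index formula \eqref{eq:indexFormula} with \(k_+ = 0\), all \(\cov(\gamma_i^-) = 1\), and \(c_1^\tau(\toplevel{F}) = 1\) reads \(\ind(\toplevel{F}) = 2(k - 1 + 1 - k) = 0\).) Finally, I would feed \(c_N(\toplevel{F}) = -1\) into the inequality \(\selfastint{\bottomlevel{F}} \le -1 - c_N(\toplevel{F})\) derived just before the lemma from \eqref{eq:top-and-bottom-int-equals--1} and the adjunction inequality (Theorem~\ref{thm:adjunction}), obtaining \(\selfastint{\bottomlevel{F}} \le 0\), as claimed.

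The only genuinely delicate point — and the step I expect to be the main obstacle — is the convention bookkeeping in the second paragraph: matching the perturbation-sign conventions for the Conley--Zehnder/Robbin--Salamon indices appearing in the index formula with those governing the extremal winding numbers \(\alpha_\pm^\tau\), so as to be certain that \(\mathrm{wind}_{\mathbf{A}_\gamma}^\tau(0) = 1\) for the chosen \(\tau\). Once that is settled everything else is arithmetic; note in particular that the argument uses nothing about embeddedness or simplicity of \(\toplevel{F}\) beyond what Lemma~\ref{lem:limits-of-c1-curves-have-connected-top-level} records, and it handles the degenerate case \(k = 0\) (where \(\bottomlevel{F} = \emptyset\) and the conclusion is vacuous) on exactly the same footing.
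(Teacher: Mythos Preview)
Your proof is correct and follows essentially the same approach as the paper: both compute \(c_N(\toplevel{F})\) directly from the definition by first establishing that \(\alpha_+^\tau(\gamma+\delta)=\cov(\gamma)=1\) for the simple asymptotic orbits and then plugging in \(c_1^\tau(\toplevel{F})=1\), \(\chi(\dot\Sigma)=2-k\), and \(k_+=0\). The only cosmetic difference is that the paper quotes the identities \(2\alpha_\pm^\tau(\gamma+\epsilon)=\CZindex{\tau}(\gamma+\epsilon)\pm p(\gamma+\epsilon)\) and \(\CZindex{\tau}(\gamma\mp\delta)=2\cov(\gamma)\pm 1\) directly from \cite[\S3.2]{wendl2010autoTrans} to obtain \(\alpha_\mp^\tau(\gamma\mp\delta)=\cov(\gamma)\), whereas you reconstruct the same fact via Remark~\ref{rem:CZindex} and the winding identity from Lemma~\ref{lem:unconstrained-breaking-vanishes}; the paper also records the slightly more general formula \(c_N(u)=c_1^\tau(u)+2(k_+-1)\) for curves with simple punctures (Equation~\eqref{eq:cN-simple-punctures}), which it reuses later.
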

\begin{proof}
	By definition,
\[
	c_N(u) = c_1^\tau(u^*TW) - \chi(\dot{\Sigma}) \pm \sum_{z \in
	\Gamma^\pm} \alpha_{\mp}^\tau(\gamma_z \mp \delta).
\]
We have the formulas \cite[\S{}3.2]{wendl2010autoTrans}
\begin{align*}
	2\alpha_\pm^\tau(\gamma + \epsilon) &= \CZindex{\tau}(\gamma +
	\epsilon) \pm p(\gamma + \epsilon), \\
	\CZindex{\tau}(\gamma \mp \delta) &= 2\cov(\gamma) \pm 1,
\end{align*}
where \(p(\gamma + \epsilon) := \alpha_+^\tau(\gamma+\epsilon) -
\alpha_-^\tau(\gamma + \epsilon)\) is called the \emph{parity}. From these we
obtain
\[
	\alpha_\mp^\tau(\gamma \mp \delta) = \cov(\gamma).
\]
Therefore, for a curve with only simple asymptotics,
\begin{equation}
	\label{eq:cN-simple-punctures}
	c_N(u) = c_1^\tau(u^*TW) - (2 - k_+ - k_-) \pm k_\pm = c_1^\tau(u^*TW)
	+ 2(k_+ - 1).
\end{equation}
In particular, for \(u = \toplevel{F}\) we have \(k_+ = 0\) and
\(c_1^\tau(\toplevel{F}) =  1\), resulting in \(c_N(\toplevel{F}) = -1\).
\end{proof}

\begin{proposition}[]
	\label{prop:c1=1-sft-limit}
	Let \(F = (\toplevel{F},\bottomlevel{F})\) be the limit
building of a sequence of curves in the moduli spaces
\(\mathcal{M}_{0,0}(X,E;J_k)\), where \(E \in H_2(X;\Z)\) satisfies
\(c_1(E) = 1\) and \(E \cdot L = \iota\). Then \(\toplevel{F}\) is a sphere
with exactly \(|\iota|\) punctures and
\[
	\bottomlevel{F} \text{ consists of exactly } 
	\begin{cases}
		\iota \quad P_+ \text{ planes}, &\text{ if } \iota \ge 0,
		\text{ or}\\
		|\iota| \quad P_- \text{ planes}, &\text{ if } \iota < 0.
	\end{cases}
\]
\end{proposition}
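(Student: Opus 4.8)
The plan is to combine the structural results already established --- that $F$ has only a top and bottom level (Lemma~\ref{lem:connectedTopImpliesNoIntermediateLevels} via Lemma~\ref{lem:limits-of-c1-curves-have-connected-top-level}), that $\toplevel{F}$ is a connected sphere with simple punctures of index $0$, and that $\selfastint{\bottomlevel{F}} \le 0$ (Lemma~\ref{lem:cN=-1-for-c1=1-curves}) --- with the plane foliations $\cotangentPlanesModuli_\pm$ of $T^*S^2$ from Section~\ref{sub:cotangentPlanes}. First I would observe that every component $v$ of $\bottomlevel{F}$ is a punctured $J$-holomorphic curve in $T^*S^2$ with at least one positive puncture (by the maximum principle, since $T^*S^2$ has only a convex end in the bottom-level completion), and no negative punctures; its positive asymptotics match the (simple) negative asymptotics of $\toplevel{F}$, so $v$ has only simple punctures too. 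The key dichotomy is Proposition~\ref{prop:cotangent-curves-intersect-planes}: each such $v$ is either a cover of some $P_\pm \in \cotangentPlanesModuli_\pm$, or it intersects both families of planes strictly positively in the Siefring sense.

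The heart of the argument is to rule out the second alternative and all covers of degree $>1$. For this I would compute, using Proposition~\ref{prop:intersection-additive-covers}(1) (additivity over levels) and Theorem~\ref{thm:intersection-cts} (continuity of intersection under neck stretching), the intersection of $F$ with the limit building $P$ of a sequence of holomorphic planes of a fixed parity: since the closed-curve count $e_k \cdot (\text{plane})$ is controlled by $E \cdot 0_{S^2} = E \cdot L = \iota$ and the plane's top level is a closed $-1$-curve in $\toplevel{X}$ or empty, I expect $i(F, P_\pm) = $ (a contribution from $\toplevel{F}$, which is non-negative) $+ \sum_{v} i_U(v, P_\pm)$, and the total must equal $\pm\iota$ or $0$ according to parity. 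Since $i_U(v, P_\pm) \ge 0$ always (Corollary~\ref{cor:bottomLevelNonNegInts}) and is $>0$ unless $v$ covers a $P_\pm$ of that parity, the sign constraint forces: $\bottomlevel{F}$ is empty when $\iota = 0$; and when $\iota \ne 0$, every component of $\bottomlevel{F}$ covers a plane of parity $\mathrm{sign}(\iota)$, with the covering degrees summing (with multiplicity $|\iota|$) so that $\sum_v \deg(v) = |\iota|$. Next I would use the index formula~\eqref{eq:indexFormula} together with the matching of asymptotics: $\toplevel{F}$ has index $0$ with all-simple punctures, and Fredholm regularity (Lemma~\ref{lem:neg-puncs-implies-regular}) plus $c_1^\tau(\toplevel{F}) = c_1(E) = 1$ (Lemma~\ref{lem:relChern=absChern}) pins down the number of negative punctures of $\toplevel{F}$ as exactly $|\iota|$ --- this is where I would need a short computation relating the puncture count to the homological intersection $\toplevel{F} \cdot \{\text{asymptotic}\}$, i.e.\ that each simple puncture accounts for exactly one unit of $E \cdot L$ because the asymptotic orbit links the zero section once. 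Since each negative puncture of $\toplevel{F}$ matches the unique positive puncture of a plane in $\bottomlevel{F}$, and simplicity of the asymptotics forbids higher covers, each such plane is a genuine (degree $1$) plane $P_{\mathrm{sign}(\iota)}$, giving exactly $|\iota|$ of them.

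The main obstacle I anticipate is the bookkeeping in the previous step: establishing rigorously that the number of punctures of $\toplevel{F}$ equals $|\iota| = |E \cdot L|$ and not merely $\le |\iota|$ or congruent to it mod something. The clean way is to run the intersection-theoretic count against the plane foliation carefully --- computing $i(F, P_+)$ and $i(F, P_-)$ separately and using that $P_+ \cdot 0_{S^2} = +1$, $P_- \cdot 0_{S^2} = -1$ to extract both the count of punctures and their ``parity'' --- rather than trying to argue directly about asymptotic linking numbers, which requires care with the Morse--Bott trivialisation $\tau$. A secondary subtlety is confirming that $\toplevel{F}$, being a single somewhere-injective index-$0$ sphere with $c_1^\tau = 1$, cannot itself be a multiple cover; this follows because its index is $0$ while a multiple cover of a curve with these asymptotics would have strictly larger index by~\eqref{eq:indexFormula}, contradicting regularity, so no genuine obstacle there. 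Once the puncture count is secured, the statement about $\bottomlevel{F}$ consisting of exactly $\iota$ planes $P_+$ (resp. $|\iota|$ planes $P_-$) is immediate from the asymptotic matching.
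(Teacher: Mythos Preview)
Your proposal overlooks the simplest route and, in its place, puts an intersection argument that is not well-posed as stated. Two concrete issues.

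First, the step you flag as the ``main obstacle'' --- counting the punctures of $\toplevel{F}$ and showing that each component of $\bottomlevel{F}$ is an honest plane --- is handled in the paper by a purely topological observation you miss: the building has genus $0$ and $\toplevel{F}$ is a single punctured sphere, so gluing forces every component of $\bottomlevel{F}$ to be a disc, i.e.\ a plane with exactly one positive puncture. Combined with the simplicity of the asymptotics (already established in Lemma~\ref{lem:limits-of-c1-curves-have-connected-top-level}) and Hind's classification of simple finite-energy planes in $T^*S^2$, each component is then a genuine $P_\pm \in \cotangentPlanesModuli_\pm$, no covers possible. Writing $\bottomlevel{F} = m_+P_+ + m_-P_-$ and using Lemma~\ref{lem:cotangent-planes-intersection} gives $\selfastint{\bottomlevel{F}} = 2m_+m_-$; the bound $\selfastint{\bottomlevel{F}} \le 0$ that you cite from Lemma~\ref{lem:cN=-1-for-c1=1-curves} (but never actually deploy) then forces $m_+m_- = 0$, and the relation $m_+ - m_- = \bottomlevel{F} \cdot L = E \cdot L = \iota$ finishes the count. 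No intersection of $F$ with external ``plane buildings'' is needed, and no delicate bookkeeping with asymptotic linking numbers.

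Second, your proposed substitute --- computing $i(F,P)$ for ``the limit building $P$ of a sequence of holomorphic planes of a fixed parity'' --- is circular. The planes $P_\pm$ live in $T^*S^2$, not in $X$; they are not limits under neck stretching of closed curves in $X$ in any direct sense. Later in the paper they do arise as bottom levels of buildings in the classes $\Eplusminus$ and $\FEplusminus$, but that is precisely the content of \emph{this} proposition (via Corollary~\ref{cor:lim-nodal-fibres}), so invoking such a building here presupposes what you are trying to prove. You could instead compute $i_U(\bottomlevel{F}, P_\pm)$ purely inside $T^*S^2$, but to turn those numbers into a puncture count you would still need to know that $\bottomlevel{F}$ is a union of planes --- which brings you back to the genus-$0$ argument above.
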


\begin{corollary}[]
    \label{cor:En-disjoins-L}
    Suppose that the sequence in the above proposition is given by the unique
curves in \(\mathcal{M}_{0,0}(X,E_n;J_k)\). Then \(\bottomlevel{F}\) is empty
and \(\toplevel{F}\) is a smooth closed curve in \(\toplevel{X}\). 
\end{corollary}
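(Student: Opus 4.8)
The plan is to obtain the corollary as the degenerate case $\iota = 0$ of Proposition~\ref{prop:c1=1-sft-limit}, so the only work is to check that proposition's hypotheses for the class $E = E_n$.

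First I would record that $c_1(E_n) = 1$. By Lemma~\ref{lem:compactify}, $E_n$ is one of the exceptional classes in the blow-up presentation of $X$, so $E_n^2 = -1$; equivalently, applying the adjunction formula to the unique embedded genus-zero representative of $E_n$ (Lemma~\ref{lem:enBubble}, together with the discussion of the unique $E_n$-curve in \S\ref{sec:universalCurve}) gives $c_1(E_n) = E_n^2 + 2 = 1$. Hence the sequence of unique curves in $\mathcal{M}_{0,0}(X, E_n; J_k)$ satisfies the standing hypotheses of Proposition~\ref{prop:c1=1-sft-limit}, and in particular its SFT limit building $F = (\toplevel{F}, \bottomlevel{F})$ exists and has no symplectisation levels.

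Next I would compute the intersection number $\iota := E_n \cdot L$. By Corollary~\ref{cor:homologyClassOfSpheresXdpq}, $[L] = \mathcal{E}_i - \mathcal{E}_j$ for some $1 \le i \ne j \le d$. The basis $\{H, \fibrationSectionL, E_i, \mathcal{E}_j\}$ of $H_2(X)$ produced in Lemma~\ref{lem:compactify} is the standard orthogonal blow-up basis: each $E_i$ and each $\mathcal{E}_j$ is the class of an exceptional curve of a symplectic blow-up at a point, and distinct such classes pair to zero. Therefore $E_n \cdot \mathcal{E}_i = E_n \cdot \mathcal{E}_j = 0$, so $\iota = E_n \cdot L = 0$. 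Substituting $\iota = 0$ into Proposition~\ref{prop:c1=1-sft-limit} shows that $\toplevel{F}$ is a sphere with $|\iota| = 0$ punctures — that is, a closed curve in $\toplevel{X}$ — and that $\bottomlevel{F}$ consists of $0$ planes, i.e.\ is empty. Finally, by Remark~\ref{rem:bottomPlanesAreNodalFibres}(1), the condition $\bottomlevel{F} = \emptyset$ is precisely the assertion that $\toplevel{F}$ is a closed holomorphic curve, which is the content of the corollary.

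I do not expect any genuine obstacle here: all the substance is contained in Proposition~\ref{prop:c1=1-sft-limit}, and this corollary is merely its $\iota = 0$ specialisation. The one point that deserves a sentence of care is the orthogonality $E_n \cdot \mathcal{E}_j = 0$; this is immediate from the fact — built into the construction of the basis in Lemma~\ref{lem:compactify} — that the total-transform classes of an iterated (possibly non-toric) symplectic blow-up form a mutually orthogonal family with each exceptional class of square $-1$.
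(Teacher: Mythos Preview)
Your proposal is correct and follows exactly the approach the paper intends: the corollary is stated immediately after Proposition~\ref{prop:c1=1-sft-limit} with no separate proof, precisely because it is the $\iota = 0$ specialisation once one notes $E_n \cdot L = E_n \cdot (\mathcal{E}_i - \mathcal{E}_j) = 0$. The orthogonality $E_n \cdot \mathcal{E}_j = 0$ that you flag is indeed built into the blow-up basis of Lemma~\ref{lem:compactify} and is used elsewhere in the paper (for instance, in computing $\DinftyJ{J_0}^\perp = \Z\langle F, \mathcal{E}_j\rangle$ in the proof of Lemma~\ref{lem:constrainGenericNodal}).
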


\begin{remark}
    We paraphrase this result by saying that the \(E_n\) curve disjoins from
\(L\) under neck stretching.
\end{remark}

\begin{corollary}[]
    \label{cor:lim-nodal-fibres}
    Suppose that \(E\) is one of the classes in Equation~\eqref{eq:EplusMinus}.
Then \(\bottomlevel{F}\) consists of exactly one plane in the moduli spaces
\(\cotangentPlanesModuli{}_\pm\) of planes in \(T^*S^2\) defined in
Section~\ref{sub:cotangentPlanes}. The sign is given by the sign of \(E \cdot
L\).
\end{corollary}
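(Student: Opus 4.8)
The plan is to obtain this as an essentially immediate consequence of Proposition~\ref{prop:c1=1-sft-limit}: once that proposition is in hand, all that is left is a short homological bookkeeping. First I would assemble the two inputs the proposition requires. For the Chern-number hypothesis, note that each $\mathcal{E}_j$ is an exceptional $-1$-sphere coming from the non-toric blow-ups of Lemma~\ref{lem:compactify}, while $(F - \mathcal{E}_j)^2 = F^2 - 2F\cdot\mathcal{E}_j + \mathcal{E}_j^2 = -1$ since $F^2 = 0$ and $F\cdot\mathcal{E}_j = 0$; the adjunction formula $c_1(A) = A^2 + 2$ for embedded genus-zero curves then gives $c_1(E) = 1$ for each $E \in \{\Eplusminus,\FEplusminus\}$. (This was in fact already noted inside the proof of Lemma~\ref{lem:constrainGenericNodal}, where the components of the nodal fibres were shown to be embedded index-$0$ curves.) I would also point out that the hypotheses of Proposition~\ref{prop:c1=1-sft-limit} are genuinely met here: Lemma~\ref{lem:constrainNodal} shows that each of these four classes admits a (unique, embedded) $J$-holomorphic representative for every $J\in\mathcal{J}(D')$, so along a neck-stretch sequence $J_k$ one gets a sequence of embedded curves converging, by SFT compactness, to the building $F$ appearing in the statement.

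The second input is the value $\iota := E \cdot L$. Using $L = \mathcal{E}_i - \mathcal{E}_j$ in $H_2(X)$ (Corollary~\ref{cor:homologyClassOfSpheresXdpq}) together with the relations $\mathcal{E}_a\cdot\mathcal{E}_b = -\delta_{ab}$ and $F\cdot\mathcal{E}_a = 0$, I would compute
\[
    \Eplus\cdot L = +1,\qquad \Eminus\cdot L = -1,\qquad \FEplus\cdot L = -1,\qquad \FEminus\cdot L = +1,
\]
so that $|\iota| = 1$ in every case; these are precisely the sign conventions fixed in the footnote to Equation~\eqref{eq:EplusMinus}.

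Feeding these two facts into Proposition~\ref{prop:c1=1-sft-limit} finishes the argument: since $|\iota| = 1$, the top level $\toplevel{F}$ is a once-punctured sphere, and $\bottomlevel{F}$ consists of exactly one plane, which is a $P_+$-plane --- hence a leaf of $\cotangentPlanesModuli{}_+$ --- when $\iota = +1$, and a $P_-$-plane --- a leaf of $\cotangentPlanesModuli{}_-$ --- when $\iota = -1$. In other words, the parity of the plane coincides with the sign of $E\cdot L$, which is exactly the assertion. I do not expect any genuine obstacle beyond Proposition~\ref{prop:c1=1-sft-limit} itself; the only thing that needs care is getting the four signs right and checking that they are consistent with the conventions set up in Section~\ref{sub:analysis-buildings-c1=1}.
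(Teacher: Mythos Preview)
Your proposal is correct and follows exactly the route the paper intends: the corollary is stated without proof immediately after Proposition~\ref{prop:c1=1-sft-limit}, as an instance of that proposition with $|\iota|=1$. You have simply filled in the bookkeeping the paper leaves implicit --- verifying $c_1(E)=1$ and computing the four intersection numbers $E\cdot L=\pm 1$ (which the paper records in the footnote to Equation~\eqref{eq:EplusMinus}).
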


\begin{remark}
	It is important (albeit obvious) to note that the planes in \(T^*S^2\)
arising as limits of curves in the classes \(E\) and \(F - E\) have opposite
parity. That is, one is in \(\cotangentPlanesModuli{}_+\) and the other in
\(\cotangentPlanesModuli{}_-\).
\end{remark}

\begin{proof}[Proof of Proposition \ref{prop:c1=1-sft-limit}]
	The genus \(g\) of \(F\) is
0, and so each component of \(\bottomlevel{F}\) is a genus 0 punctured
curve. Moreover, as \(\toplevel{F}\) has only 1 component, the topology of
\(\bottomlevel{F}\) must be a union of discs --- anything else would
increase \(g\). Due to the classification
of simple \(J\)-holomorphic planes of \(T^*S^2\)
(Proposition~\ref{prop:cotangent-curves-intersect-planes}), we deduce that
\(\bottomlevel{F}\) is a union of \(P_\pm\) planes. We write this
as\footnote{Note that the \(\sqcup\) notation only means that the domain of
	\(\bottomlevel{F}\) is a disjoint union of Riemann surfaces, its
image need not be a disjoint union of planes. Moreover, the use of the plus
sign in the equation \(\bottomlevel{F} = m_+P_+ + m_-P_-\) is meant to reflect
the fact that the intersection product is additive over unions.}
\(\bottomlevel{F} = m_+P_+ + m_-P_- := \bigsqcup_{m_+}P_+ \sqcup
\bigsqcup_{m_-}P_-\), for some integers \(m_\pm \ge 0\). The results of
Lemma~\ref{lem:cotangent-planes-intersection} and additivity of the Siefring
intersection over disjoint unions
\cite[Proposition~4.3(3)]{siefring11intersection} then imply that
\[
	\selfastint{\bottomlevel{F}} = 2m_+m_- \ge 0.
\]
Combining this with Lemma~\ref{lem:cN=-1-for-c1=1-curves} yields that
\(\selfastint{\bottomlevel{F}} = 0\), and thus at least one of \(m_\pm\) is
zero. The equation
\[
	m_+ - m_- = \bottomlevel{F} \cdot L = E \cdot L = \iota
\]
then completes the proof.
\end{proof}

\begin{remark}
    \label{rem:repeatedSFTCompactness}
    Observe that, through repeated applications of the SFT compactness
theorem, we can produce a single neck-stretching sequence \(J_k \to
J_\infty\) such that, for every \(E \in \{E_n, \Eplus, \Eminus, \FEplus,
\FEminus\}\), the curves in the moduli spaces
\(\mathcal{M}_{0,0}(E;J_k)\) converge to \(J^*\)-holomorphic buildings
as \(k \to \infty\). Indeed, fix a particular class \(E\) and sequences
\(J_k \to J_\infty\) and \(u_k \in \mathcal{M}_{0,0}(E;J_k)\). The
compactness theorem produces a subsequence \(k\) such that \(u_k\)
converges. We can then apply SFT compactness to a sequence \(v_k \in
\mathcal{M}_{0,0}(E';J_k)\) for some \(E' \ne E\) to obtain another
subsequence, and so on and so forth.

    This idea can be generalised to include sequences of curves
in classes other than the exceptional ones used above, provided that the
sequences of curves are somehow fixed. For example, we could make a point
constraint. When dealing with the classes \(E\) above there was no need to do
this since the moduli spaces \(\mathcal{M}_{0,0}(E;J)\) are just single
points. 

    We shall use this principle repeatedly in the sequel, although we will
not always make reference to it to save cluttering the narrative. Suffice to
say that if we need to compare two \(J^*\)-holomorphic buildings arising
from sequences in the classes \(\alpha, \beta \in H_2(X)\), then this will be
implicitly done with respect to a single neck-stretching sequence for which
both the corresponding sequences of curves converge.
\end{remark}

We now turn to deriving relations between the limits of curves with homology
classes in \(\{\Eplus,\Eminus,\FEplus,\FEminus\}\).
\begin{proposition}[]
    \label{prop:top-level-2-planes}
    The holomorphic limit buildings \(F_{\Eplus} =
    (\toplevel{(F_{\Eplus})},\bottomlevel{(F_{\Eplus})})\), \(F_{\Eminus} =
    (\toplevel{(F_{\Eminus})}, \bottomlevel{(F_{\Eminus})})\), \(F_{\FEplus} =
    (\toplevel{(F_{\FEplus})},\bottomlevel{(F_{\FEplus})})\), and
    \(F_{\FEminus} =
    (\toplevel{(F_{\FEminus})}, \bottomlevel{(F_{\FEminus})})\) satisfy
\[
	\toplevel{(F_{\Eplus})} = \toplevel{(F_{\Eminus})} \qquad \text{and} \qquad
		\toplevel{(F_{\FEplus})} = \toplevel{(F_{\FEminus})}.
\]
\end{proposition}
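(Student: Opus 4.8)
The plan is to compute the Siefring intersection number of the two top-level curves, show it is negative, and conclude from positivity of intersections that the curves coincide. I would treat the pair $\Eplusminus$ in detail; the pair $\FEplusminus$ is handled identically after the obvious substitutions.

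First I would invoke Remark~\ref{rem:repeatedSFTCompactness} to fix a single neck-stretching family $J_k$ along which the sequences $e_k^{\pm}\in\mathcal{M}_{0,0}(X,\Eplusminus;J_k)$ defining $F_{\Eplus}$ and $F_{\Eminus}$ both converge. Since $\Eplus\cdot\Eminus = \mathcal{E}_j\cdot\mathcal{E}_i = 0$ (recall $i\neq j$), continuity of the intersection product (Theorem~\ref{thm:intersection-cts}) gives $i(F_{\Eplus},F_{\Eminus}) = e_k^+\cdot e_k^- = 0$ for large $k$. By Lemmata~\ref{lem:limits-of-c1-curves-have-connected-top-level} and \ref{lem:connectedTopImpliesNoIntermediateLevels} each building has only a top and a bottom level, so, exactly as in Equation~\eqref{eq:top-and-bottom-int-equals--1} (using Proposition~\ref{prop:intersection-additive-covers}),
\[
    0 = i(F_{\Eplus},F_{\Eminus}) = i_U(\toplevel{(F_{\Eplus})},\toplevel{(F_{\Eminus})}) + i_U(\bottomlevel{(F_{\Eplus})},\bottomlevel{(F_{\Eminus})}).
\]
Now $\Eplus\cdot L = 1$ and $\Eminus\cdot L = -1$, so Corollary~\ref{cor:lim-nodal-fibres} (a special case of Proposition~\ref{prop:c1=1-sft-limit}) identifies $\bottomlevel{(F_{\Eplus})}$ with a single plane $P_+\in\cotangentPlanesModuli{}_+$ and $\bottomlevel{(F_{\Eminus})}$ with a single plane $P_-\in\cotangentPlanesModuli{}_-$. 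Lemma~\ref{lem:cotangent-planes-intersection}(1) then gives $i_U(\bottomlevel{(F_{\Eplus})},\bottomlevel{(F_{\Eminus})}) = i_U(P_+,P_-) = 1$, and hence $i_U(\toplevel{(F_{\Eplus})},\toplevel{(F_{\Eminus})}) = -1$.

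The remaining task is to turn this negative number into an honest equality of the two top levels. For this I would first check that both $\toplevel{(F_{\Eplus})}$ and $\toplevel{(F_{\Eminus})}$ are somewhere injective: by Proposition~\ref{prop:c1=1-sft-limit} each is a genus-zero curve with exactly one puncture, which is simple by Lemma~\ref{lem:limits-of-c1-curves-have-connected-top-level}, and a punctured sphere with a single simple puncture cannot be a $d$-fold cover ($d>1$) of another punctured curve, since the lone puncture would then be totally ramified over the lone puncture of the base, forcing its asymptotic Reeb orbit to have covering multiplicity $d>1$. Granting somewhere-injectivity, suppose the two top levels were geometrically distinct, i.e.\ their images overlap on no non-empty open set. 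Then Theorem~\ref{thm:punc-pos-ints} would force $i_U(\toplevel{(F_{\Eplus})},\toplevel{(F_{\Eminus})})\ge 0$, contradicting the computation above. Therefore their images agree on a non-empty open set; unique continuation for $J$-holomorphic curves then gives $\im\toplevel{(F_{\Eplus})} = \im\toplevel{(F_{\Eminus})}$, and as both are simple parametrisations of this common image with domains biholomorphic to $\C$, they agree up to reparametrisation, which is the asserted identity $\toplevel{(F_{\Eplus})} = \toplevel{(F_{\Eminus})}$. The identity $\toplevel{(F_{\FEplus})} = \toplevel{(F_{\FEminus})}$ follows by the same computation, using $\FEplus\cdot\FEminus = 0$, $\FEplus\cdot L = -1$, $\FEminus\cdot L = 1$, and $i_U(P_-,P_+) = 1$.

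I expect the last implication --- passing from a negative Siefring intersection of two \emph{a priori} distinct curves to an equality of parametrised curves --- to be the only delicate point; it hinges on somewhere-injectivity of the top levels (supplied by the single-simple-puncture observation) together with unique continuation. Everything else is bookkeeping with intersection numbers already established in Proposition~\ref{prop:c1=1-sft-limit} and the lemmata of Section~\ref{sub:cotangentPlanes}.
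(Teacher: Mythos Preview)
Your proposal is correct and follows essentially the same route as the paper: compute $i(F_{\Eplus},F_{\Eminus})=0$, split by levels, use $i_U(P_+,P_-)=1$ to get $i_U(\toplevel{(F_{\Eplus})},\toplevel{(F_{\Eminus})})=-1$, and then invoke positivity of intersections for punctured curves to force the top levels to coincide. The only cosmetic difference is that the paper deduces simplicity of the top levels from $c_1^\tau=1$ rather than from the single-simple-puncture observation you use, but the overall argument is identical.
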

\begin{proof}
	Corollary~\ref{cor:lim-nodal-fibres} and
Proposition~\ref{prop:intersection-additive-covers} imply that
\[
	0 = \Eminus \cdot \Eplus = i_U(\toplevel{(F_{\Eminus})},
	\toplevel{(F_{\Eplus})}) + i_U(\bottomlevel{(F_{\Eminus})}, \bottomlevel{(F_{\Eplus})}) =
	i_U(\toplevel{(F_{\Eminus})}, \toplevel{(F_{\Eplus})}) + 1.
\]
Therefore, by positivity of intersections and the fact that
\[
    c_1^\tau(\toplevel{(F_{\Eminus})} ) = c_1^\tau( \toplevel{(F_{\Eplus})} ) = 1,
\]
we must have that
\(\toplevel{(F_{\Eminus})}\) is a reparametrisation of \(\toplevel{(F_{\Eplus})}\), that
is, \(\toplevel{(F_{\Eminus})} = \toplevel{(F_{\Eplus})}\). Similarly, \(\toplevel{(F_{\FEminus})}
= \toplevel{(F_{\FEplus})}\).
\end{proof}

Recall the so-called Morse-Bott contribution to the intersection number
\cite[\S{}4]{wendl2010autoTrans}:
\[
	i_\mathrm{MB}^\pm(\gamma + \epsilon, \gamma' + \epsilon') :=
	\Omega^\tau_\pm(\gamma, \gamma') - \Omega^\tau_\pm(\gamma + \epsilon,
	\gamma' + \epsilon').
\]
A simple calculation shows that this integer is non-negative and that it
satisfies
\[
	i_U(u,v) = i_C(u,v) + \sum_{(z,w) \in \Gamma_u^\pm \times \Gamma_v^\pm}
	i_\mathrm{MB}^\pm(\gamma_z \mp \delta, \gamma_w \mp \delta).
\]

\begin{lemma}[]
	Let \(\gamma\) and \(\gamma'\) denote the asymptotic orbits of
\(\toplevel{(F_{\Eplus})}\) and \(\toplevel{(F_{\FEplus})}\) respectively. Then \(\gamma' \ne
\gamma\), that is, they are geometrically distinct orbits.
\end{lemma}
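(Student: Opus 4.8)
The plan is to pit the homological identities against level-additivity of the Siefring intersection number. First I would record the relevant intersection pairings in $H_2(X)$: from $\mathcal{E}_j^2 = -1$, $\mathcal{E}_i \cdot \mathcal{E}_j = 0$ for $i \neq j$, the nodal-curve relation $\mathcal{E}_j \cdot (F - \mathcal{E}_j) = 1$ (hence $F \cdot \mathcal{E}_j = 0$), and $L = \Eminus - \Eplus$, a short computation gives $\Eplus \cdot L = 1$, $\FEplus \cdot L = -1$, and $\Eplus \cdot \FEplus = 1$. By Corollary~\ref{cor:lim-nodal-fibres} the sign of $\Eplus \cdot L$ forces $\bottomlevel{(F_{\Eplus})}$ to be a single plane $P_+ \in \cotangentPlanesModuli{}_+$, and likewise $\bottomlevel{(F_{\FEplus})}$ is a single plane $Q_- \in \cotangentPlanesModuli{}_-$; by Proposition~\ref{prop:c1=1-sft-limit} and Lemma~\ref{lem:limits-of-c1-curves-have-connected-top-level} the top levels $\toplevel{(F_{\Eplus})}$ and $\toplevel{(F_{\FEplus})}$ are genus-zero curves each with exactly one simple negative puncture, asymptotic to $\gamma$ and $\gamma'$ respectively; and since adjoining building levels share asymptotics, $\gamma$ and $\gamma'$ are the asymptotic orbits of $P_+$ and $Q_-$ as well.

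Next I would extract the key equation. Theorem~\ref{thm:intersection-cts} together with Proposition~\ref{prop:intersection-additive-covers}(1) gives
\[
    1 = \Eplus \cdot \FEplus = i(F_{\Eplus}, F_{\FEplus}) = i_U(\toplevel{(F_{\Eplus})}, \toplevel{(F_{\FEplus})}) + i_U(\bottomlevel{(F_{\Eplus})}, \bottomlevel{(F_{\FEplus})}),
\]
and since $P_+$ and $Q_-$ have opposite parity, Lemma~\ref{lem:cotangent-planes-intersection}(1) gives $i_U(P_+, Q_-) = 1$. Hence $i_U(\toplevel{(F_{\Eplus})}, \toplevel{(F_{\FEplus})}) = 0$.

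Then I argue by contradiction: suppose $\gamma = \gamma'$. Both top-level curves are somewhere injective, since a multiple cover would violate $c_1^\tau(\toplevel{(F_{\Eplus})}) = 1$ together with Corollary~\ref{cor:relChernGE1}. So either they have the same image, in which case homotopy invariance of the Siefring number gives $i_U(\toplevel{(F_{\Eplus})}, \toplevel{(F_{\FEplus})}) = \selfastint{\toplevel{(F_{\Eplus})}}$, which equals $-1$ by Equation~\eqref{eq:top-and-bottom-int-equals--1} and $\selfastint{P_+} = 0$; or they have distinct images, in which case $\toplevel{(F_{\Eplus})}^{-1}(\im \toplevel{(F_{\FEplus})})$ contains no open set, so Theorem~\ref{thm:punc-pos-ints} gives $i_C(\toplevel{(F_{\Eplus})}, \toplevel{(F_{\FEplus})}) \geq 0$, while the Morse–Bott correction over the single shared pair of simple negative punctures is $i_\mathrm{MB}^-(\gamma + \delta, \gamma + \delta) = \Omega_-^\tau(\gamma,\gamma) - \Omega_-^\tau(\gamma + \delta, \gamma + \delta) = \alpha_+^\tau(\mathbf{A}_\gamma) - \alpha_+^\tau(\mathbf{A}_\gamma + \delta) = 1$, whence $i_U(\toplevel{(F_{\Eplus})}, \toplevel{(F_{\FEplus})}) = i_C(\toplevel{(F_{\Eplus})}, \toplevel{(F_{\FEplus})}) + 1 \geq 1$. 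In either case $i_U(\toplevel{(F_{\Eplus})}, \toplevel{(F_{\FEplus})}) \neq 0$, contradicting the previous paragraph; hence $\gamma \neq \gamma'$, and since both orbits are simple this means they are geometrically distinct.

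The homological bookkeeping and the puncture count are routine and already cited, so the one place that needs genuine care — the main obstacle — is the Morse–Bott step: justifying $\alpha_+^\tau(\mathbf{A}_\gamma) - \alpha_+^\tau(\mathbf{A}_\gamma + \delta) = 1$ from $\dim \ker \mathbf{A}_\gamma = 2$ and the fact that $\mathrm{wind}_{\mathbf{A}_\gamma}^\tau$ is monotone and attains each integer exactly twice (so $\mathrm{wind}_{\mathbf{A}_\gamma}^\tau(0)$ is realised precisely on the kernel, forcing the smallest strictly positive eigenvalue to wind one more), and being careful not to overlook the ``same image'' degeneration of the two top-level curves.
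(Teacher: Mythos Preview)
Your proof is correct and follows essentially the same architecture as the paper's: both compute $i_U(\toplevel{(F_{\Eplus})},\toplevel{(F_{\FEplus})}) = 0$ via level-additivity and the bottom-level plane intersection, and both derive the contradiction from the Morse--Bott contribution $i_\mathrm{MB}^-(\gamma+\delta,\gamma+\delta) = 1$ (which the paper packages as the subsequent Lemma~\ref{lem:mbContributions}).

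The one structural difference is how geometric distinctness of the top-level curves is handled. The paper dispatches the ``same image'' case immediately and geometrically: $\Eplus$ meets the section $\fibrationSectionR$ but not $\fibrationSectionL$, while $\FEplus$ does the opposite, so $\toplevel{(F_{\Eplus})}$ and $\toplevel{(F_{\FEplus})}$ cannot have the same image. You instead treat it as a separate case and rule it out via the self-intersection identity $\selfastint{\toplevel{(F_{\Eplus})}} = -1$ from Equation~\eqref{eq:top-and-bottom-int-equals--1}. Your route is more self-contained (no appeal to the sections), while the paper's is more direct; both are sound.
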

\begin{proof}
	Plugging \((u,v) = (\toplevel{(F_{\Eplus})},\toplevel{(F_{\FEplus})})\) into the above
formula yields
\[
	i_U(\toplevel{(F_{\Eplus})},\toplevel{(F_{\FEplus})}) = i_C(\toplevel{(F_{\Eplus})},\toplevel{(F_{\FEplus})}) +
	i_\mathrm{MB}^-(\gamma+\delta, \gamma'+\delta).
\]
Recall that each fibration \(\pi_J\) has two distinguished \(J\)-holomorphic
sections, one of class \(\fibrationSectionL\), and the other
\(\fibrationSectionR=H-E_0-\sum_{j=1}^d\mathcal{E}_j\), and that \(\Eplus\)
intersects \(S'\) and not \(S\), and vice versa for \(\FEplus\). It is then
trivial that \(\toplevel{(F_{\Eplus})}\) and \(\toplevel{(F_{\FEplus})}\) are
geometrically distinct, and so positivity of intersections gives
\(i_C(\toplevel{(F_{\Eplus})},\toplevel{(F_{\FEplus})}) \ge 0\) and thus
\[
	i_U(\toplevel{(F_{\Eplus})},\toplevel{(F_{\FEplus})}) \ge
	    i_\mathrm{MB}^-(\gamma+\delta, \gamma'+\delta).
\]
Observe that \([\Eplus] \cdot [\FEplus] = 1\), and, since \(\bottomlevel{(F_{\Eplus})}\) and
\(\bottomlevel{(F_{\FEplus})}\) are planes of opposite parity in \(T^*S^2\), 
\(i_U(\bottomlevel{(F_{\Eplus})}, \bottomlevel{(F_{\FEplus})}) = 1\). The additivity of the
unconstrained intersection product then implies that
\[
	i_U(\toplevel{(F_{\Eplus})}, \toplevel{(F_{\FEplus})}) = 0,
\]
Combining this with the above inequality, we obtain
\(i_\mathrm{MB}^-(\gamma+\delta, \gamma'+\delta) = 0\). Therefore, the result
follows from the next lemma.
\end{proof}

\begin{lemma}[]
	\label{lem:mbContributions}
	For a Morse-Bott degenerate asymptote \(\gamma\), we have
\[
	i_\mathrm{MB}^\pm(\gamma^k \mp \delta, \gamma^l \mp \delta) =
	\min\{l,k\} > 0.
\]
\end{lemma}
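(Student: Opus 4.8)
The plan is to reduce the statement to an explicit computation with extremal winding numbers, reusing the spectral bookkeeping already carried out in the proof of Lemma~\ref{lem:unconstrained-breaking-vanishes} and in Remark~\ref{rem:linearity-of-extremal-winding}. Write $w := \mathrm{wind}_{\mathbf{A}}^\tau(0)$ for the winding number, relative to $\tau$, of the $2$-dimensional kernel of the asymptotic operator $\mathbf{A} = \mathbf{A}_\gamma$ of the simple orbit $\gamma$ (I use $w$ rather than $l$, since the latter is an exponent in the statement).

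First I would pin down the spectral picture of the iterated operators $\mathbf{A}^k = \mathbf{A}_{\gamma^k}$. Because the whole contact manifold $T_1^*S^2 \cong \RP^3$ is foliated by Reeb orbits, $\dim\ker(\mathbf{A}^k) = 2$ for every $k \ge 1$, and $\mathrm{wind}_{\mathbf{A}^k}^\tau$ is monotone increasing and attains each integer exactly twice counting multiplicity (as recalled in the proof of Lemma~\ref{lem:unconstrained-breaking-vanishes}). The covering-multiplicity argument of that lemma gives $\mathrm{wind}_{\mathbf{A}^k}^\tau(0) = kw$, so the largest strictly negative eigenvalue has winding $kw - 1$ and the smallest strictly positive one has winding $kw + 1$; hence $\alpha_+^\tau(\gamma^k) = kw + 1$ and $\alpha_-^\tau(\gamma^k) = kw - 1$, while perturbing moves the kernel onto the appropriate half-line, giving $\alpha_+^\tau(\gamma^k + \delta) = \alpha_-^\tau(\gamma^k - \delta) = kw$.

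Next I would substitute these into the definitions. Using
\[
    i_\mathrm{MB}^\pm(\gamma^k \mp \delta, \gamma^l \mp \delta) = \Omega_\pm^\tau(\gamma^k, \gamma^l) - \Omega_\pm^\tau(\gamma^k \mp \delta, \gamma^l \mp \delta)
\]
together with $\Omega_\pm^\tau(\gamma^a + \epsilon, \gamma^b + \epsilon') = ab\min\{\mp\alpha_\mp^\tau(\gamma^a + \epsilon)/a,\ \mp\alpha_\mp^\tau(\gamma^b + \epsilon')/b\}$, the $+$ case reads $\Omega_+^\tau(\gamma^k, \gamma^l) = kl(-w + \min\{1/k, 1/l\}) = -klw + \min\{k,l\}$, where I have used $kl/\max\{k,l\} = \min\{k,l\}$, whereas $\Omega_+^\tau(\gamma^k - \delta, \gamma^l - \delta) = -klw$; the difference is $\min\{k,l\}$. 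The $-$ case is symmetric: $\Omega_-^\tau(\gamma^k, \gamma^l) = klw + \min\{k,l\}$ and $\Omega_-^\tau(\gamma^k + \delta, \gamma^l + \delta) = klw$, again of difference $\min\{k,l\}$. Since $k, l \ge 1$ this quantity is positive, which is the claim.

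I do not expect a genuine obstacle here: the only thing requiring care is the bookkeeping of the nested $\mp/\pm$ sign conventions in the definition of $\Omega_\pm^\tau$, and checking that each perturbation by $\pm\delta$ pushes the kernel of $\mathbf{A}^k$ onto the correct half-line, so that the perturbed extremal winding numbers come out as $kw$ rather than $kw \pm 1$. Everything else is elementary arithmetic.
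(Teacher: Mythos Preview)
Your proof is correct and follows essentially the same approach as the paper: both establish the key identities $\alpha_\pm^\tau(\gamma^k) = kw \pm 1$ and $\alpha_\pm^\tau(\gamma^k \pm \delta) = kw$ (with $w = \mathrm{wind}_{\mathbf{A}_\gamma}^\tau(0)$), then substitute into the definition of $\Omega_\pm^\tau$. You carry out the final arithmetic more explicitly than the paper, which simply states the winding number formulas and asserts that the result follows.
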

\begin{proof}
	This is a simple consequence of the definitions and the properties of
asymptotic operators satisfying \(\dim\ker\mathbf{A} = 2\). Indeed,
\begin{align*}
	i_\mathrm{MB}^\pm(\gamma^k \mp \delta, \gamma^l \mp \delta) &=
		\Omega^\tau_\pm(\gamma^k, \gamma^l) - \Omega^\tau_\pm(\gamma^k
		\mp \delta, \gamma^l \mp \delta) \\
	&= \min\{\mp l\alpha^\tau_\mp(\gamma^k), \mp
		k\alpha^\tau_\mp(\gamma^l)\} - 
		\min\{\mp l\alpha^\tau_\mp(\gamma^k \mp \delta),
		\mp k\alpha^\tau_\mp(\gamma^l \mp \delta)\}.
\end{align*}
Moreover, by the properties of the extremal winding numbers
\(\alpha^\tau_\pm\), we have (\emph{c.f.}~the proof of
Lemma~\ref{lem:unconstrained-breaking-vanishes} and
Remark~\ref{rem:linearity-of-extremal-winding})
\[
	\alpha^{\tau^k}_\pm(\gamma^k) =
		k\mathrm{wind}^\tau_{\mathbf{A}_\gamma}(0) \pm 1
	\qquad \text{and} \qquad
	\alpha^{\tau^k}_\pm(\gamma^k \pm \delta) =
		k\mathrm{wind}^\tau_{\mathbf{A}_\gamma}(0).
\]
Thus the result follows.
\end{proof}

Since \(\gamma' \ne \gamma\), and there is a unique
\(\cotangentPlanesModuli_\pm\) plane asymptotic to each orbit, we obtain the
following:
\begin{corollary}
    \label{cor:bottomLevelPlanesDistinct}
    The planes \(\{ \bottomlevel{(F_{\Eplus})}, \bottomlevel{(F_{\FEplus})},
\bottomlevel{(F_{\Eminus})}, \bottomlevel{(F_{\FEminus})} \}\) are pairwise
geometrically distinct. Specifically, the planes with common parity \(
\bottomlevel{(F_{\Eplus})}, \bottomlevel{(F_{\FEminus})} \in \cotangentPlanesModuli{}_+\)
and \( \bottomlevel{(F_{\FEplus})}, \bottomlevel{(F_{\Eminus})} \in
\cotangentPlanesModuli{}_-\) are distinct.
\end{corollary}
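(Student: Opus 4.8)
The plan is to read the parities off Corollary~\ref{cor:lim-nodal-fibres}, note that planes of opposite parity are automatically distinct, and then separate the two planes of each fixed parity by their asymptotic Reeb orbits, using the preceding lemma together with Proposition~\ref{prop:top-level-2-planes}.

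First I would record the relevant intersection numbers. Since \(L = \mathcal{E}_i - \mathcal{E}_j\) with \(\Eplus = \mathcal{E}_j\) and \(\Eminus = \mathcal{E}_i\), one computes \(\Eplus \cdot L = 1\), \(\Eminus \cdot L = -1\), \(\FEplus \cdot L = -1\), and \(\FEminus \cdot L = 1\). By Corollary~\ref{cor:lim-nodal-fibres} this places \(\bottomlevel{(F_{\Eplus})}\) and \(\bottomlevel{(F_{\FEminus})}\) in \(\cotangentPlanesModuli{}_+\), and \(\bottomlevel{(F_{\Eminus})}\) and \(\bottomlevel{(F_{\FEplus})}\) in \(\cotangentPlanesModuli{}_-\). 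Any plane of \(\cotangentPlanesModuli{}_+\) is distinct from any plane of \(\cotangentPlanesModuli{}_-\), since \(P_\pm \cdot 0_{S^2} = \pm 1\) would otherwise force \(1 = -1\). Hence the only pairs still requiring an argument are \((\bottomlevel{(F_{\Eplus})}, \bottomlevel{(F_{\FEminus})})\) and \((\bottomlevel{(F_{\Eminus})}, \bottomlevel{(F_{\FEplus})})\).

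Next I would track asymptotic orbits. By Proposition~\ref{prop:c1=1-sft-limit} each of the top levels \(\toplevel{(F_{\Eplus})}, \toplevel{(F_{\Eminus})}, \toplevel{(F_{\FEplus})}, \toplevel{(F_{\FEminus})}\) is a sphere with exactly one puncture, and by Lemma~\ref{lem:connectedTopImpliesNoIntermediateLevels} each building has only a top and a bottom level, so the single puncture of the plane \(\bottomlevel{F}\) is asymptotic to the same Reeb orbit as the puncture of \(\toplevel{F}\). Proposition~\ref{prop:top-level-2-planes} gives \(\toplevel{(F_{\Eplus})} = \toplevel{(F_{\Eminus})}\) and \(\toplevel{(F_{\FEplus})} = \toplevel{(F_{\FEminus})}\), so \(\bottomlevel{(F_{\Eplus})}\) and \(\bottomlevel{(F_{\Eminus})}\) are both asymptotic to the orbit \(\gamma\) of \(\toplevel{(F_{\Eplus})}\), while \(\bottomlevel{(F_{\FEplus})}\) and \(\bottomlevel{(F_{\FEminus})}\) are both asymptotic to the orbit \(\gamma'\) of \(\toplevel{(F_{\FEplus})}\). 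The preceding lemma asserts \(\gamma \ne \gamma'\). Since a finite-energy plane in \(\cotangentPlanesModuli{}_\pm\) has a single asymptotic orbit, distinct orbits force distinct planes, so \(\bottomlevel{(F_{\Eplus})} \ne \bottomlevel{(F_{\FEminus})}\) (both in \(\cotangentPlanesModuli{}_+\), asymptotic to \(\gamma\) and \(\gamma'\) respectively) and \(\bottomlevel{(F_{\Eminus})} \ne \bottomlevel{(F_{\FEplus})}\) (both in \(\cotangentPlanesModuli{}_-\), asymptotic to \(\gamma\) and \(\gamma'\) respectively). Combined with the parity observation, this shows all four planes are pairwise geometrically distinct.

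I do not expect a genuine obstacle: the corollary is essentially a bookkeeping consequence of the preceding lemma and Proposition~\ref{prop:top-level-2-planes}. The only point demanding a little care is the claim that each bottom-level plane inherits the asymptotic orbit of its top level, which rests on the building having exactly two levels (Lemma~\ref{lem:connectedTopImpliesNoIntermediateLevels}) and on the matching-of-asymptotics property of holomorphic buildings — both already in hand.
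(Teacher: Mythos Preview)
Your proposal is correct and follows essentially the same approach as the paper. The paper's one-line justification is that $\gamma' \ne \gamma$ together with the fact that there is a unique $\cotangentPlanesModuli_\pm$ plane asymptotic to each orbit; you unpack this by first separating by parity and then by asymptotic orbit, which amounts to the same argument written out in more detail.
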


\subsubsection{Controlling the asymptotic orbits}
\label{subsub:asymptotics}

The aim of this section is to show that the almost complex structure on \(X\)
that undergoes neck stretching can be chosen so that the top level of the limiting
building of the sequence of \(\FEplus\) curves, \(\toplevel{(F_{\FEplus})}\), has
asymptotic Reeb orbit \(\gamma'\) equal to the image of the asymptote
\(\gamma\) of \(\toplevel{(F_{\Eplus})}\) under fibre-wise multiplication by
\(-1\). This is the property that will power our argument that the
\(\bottomlevel{J}\)-holomorphic cylinders in \(T^*S^2\) (obtained via neck
stretching in Section~\ref{sec:analysis-buildings-c1=2}) that intersect the
zero section do so along circles.

As mentioned in Section~\ref{sub:complex-structures}, fibre-wise multiplication
by \(-1\) in \(T^*S^2\) corresponds to complex conjugation under the
\(SO(3)\)-equivariant isomorphism \(T^*S^2 \cong (z_1^2+z_2^2+z_3^2=1)\). We
say these asymptotes are conjugate and  write this as \(\gamma' =
\overline{\gamma}\). We first make an explicit construction to force the top level curves
\(\toplevel{(F_{\Eplus})}\) and \(\toplevel{(F_{\FEplus})}\) to have conjugate
asymptotics. Recall the almost-K\"ahler structure
\((T^*S^2,\omega_\mathrm{can},\JTS)\), where \(\JTS\) is any of the
compatible, cylindrical almost complex structures constructed in
Section~\ref{sub:complex-structures}, and that the natural \(SO(3)\) action by
rotations is by exact almost-K\"ahler isometries. Note in particular that
Section~\ref{sub:complex-structures} allows us to produce a compatible
almost complex structure that is cylindrical outside of an arbitrarily small
neighbourhood of the zero section. We'll use this property to alter
\(\toplevel{J}\) in the neck region of \(\toplevel{X}\).

Denote by \(M\) the unit cotangent bundle \(T^*_1S^2\). Since both conjugation
and the Hamiltonian \(SO(3)\) action preserve the length function \(|p|\) on
\(T^*S^2\), they restrict to \(\R\)-equivariant maps on the symplectisation
\(\R \times M\). Moreover, the \(SO(3)\) action is by exact
symplectomorphisms, so it descends to give an action on the quotient \(M/S^1\)
of the unit cotangent bundle by the Reeb flow. Recall that \(M/S^1 \cong S^2\)
and, under this identification, the action of \(SO(3)\) is the usual one by
rotations.

We introduce some notation for convenience: as \(\toplevel{X}\) is a
manifold with the cylindrical end \((-\infty,\epsilon) \times M\), it makes
sense to define the subsets, for all \(r_0 < \epsilon\),
\begin{align*}
    X_{\le r_0 } &:= (-\infty,r_0] \times M, \text{ and,}\\
    X_{\ge r_0 } &:= \toplevel{X} \backslash (-\infty,r_0) \times M.
\end{align*}
Since \(\toplevel{(F_{\Eplus})}\) and \(\toplevel{(F_{\FEplus})}\) are
asymptotic to the distinct Reeb orbits \(\gamma, \gamma' \in M/S^1\), there
exists a number \(r_1 \ll 0\) and an open neighbourhood (which is a lift via
the composition \(X_{\le r_1} \to M \to M/S^1\) of a neighbourhood of \(\gamma
\in M/S^1\)) \(U \subset X_{\le r_1}\) of the half cylinder
\[
    C_{\Eplus} := \im\toplevel{(F_{\Eplus})} \cap X_{\le r_1}
\]
such that \(\im\toplevel{(F_{\FEplus})}\) is disjoint from
\(U\). We are now ready to state and prove the main result of this section.

\begin{lemma}[]
    \label{lem:conjugateAsymptotesACS}
    There exists a compatible almost complex structure \(J\) on
\(\toplevel{X}\) and a number \(r_2 < r_1\) that satisfy the following:
\begin{enumerate}
    \item \(J\) agrees with \(\toplevel{J}\) on the complement of \([r_2,r_1]
	\times M\), \emph{i.e.}~on \(X_{\ge r_1} \cup
	X_{\le r_2}\), and on the neighbourhood \(U\) of \(C_{\Eplus}\). In
	particular, \(J\) is cylindrical on \(X_{\le r_2}\).
    \item there exists a \(J\)-holomorphic plane asymptotic to
	\(\bar{\gamma}\) in the same relative homology class as
	\(\toplevel{(F_{\FEplus})}\).
\end{enumerate}
\end{lemma}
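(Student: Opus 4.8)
The plan is to leave $\toplevel{(F_{\FEplus})}$ untouched above the level $r_1$ and to replace its half-cylinder end by hand. Write $C' := \im\toplevel{(F_{\FEplus})} \cap X_{\le r_1}$ for that end, a $\toplevel{J}$-holomorphic half-cylinder asymptotic to $\gamma'$ which, by the choice of $U$, is disjoint from $U$ (after choosing $r_1$ a regular value of the $r$-coordinate on the embedded plane $\toplevel{(F_{\FEplus})}$). I would build a new embedded half-cylinder $\tilde C \subset X_{\le r_1}$ that coincides with $C'$ near $\{r = r_1\}$, equals the trivial half-cylinder $(-\infty, r_2] \times \bar\gamma$ for some $r_2 < r_1$, stays disjoint from $U$, and is a symplectic submanifold; then perturb $\toplevel{J}$ in a compact neighbourhood of the transition part of $\tilde C$ to make $\tilde C$ pseudoholomorphic; and finally glue $\tilde C$ to $\im\toplevel{(F_{\FEplus})} \cap X_{\ge r_1}$ to obtain the desired plane $P$.

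The heart of the argument is the construction of $\tilde C$, equivalently of a smooth family of embedded circles $\{c_r \subset M : r \le r_1\}$ with $c_r = C' \cap (\{r\}\times M)$ for $r$ near $r_1$, $c_r = \bar\gamma$ for $r \le r_2$, and every $c_r$ disjoint from the preimage in $M$ of the disc $q(U) \subset M/S^1 \cong S^2$, where $q\colon M \to M/S^1$ is the quotient by the Reeb flow. The relevant topological facts are: $\pi_1(M) \cong \Z/2$ is generated by a simple Reeb orbit, so the slice $C' \cap \{r_1\}$ is freely homotopic in $M$ to $\gamma'$, and $\gamma'$ is freely homotopic to its conjugate $\bar\gamma$ (both generate $\pi_1(M)$); moreover $\varphi$ descends to the antipodal involution of $M/S^1 \cong S^2$, which is fixed-point free, so $\bar\gamma \ne \gamma$, and hence, shrinking $U$ if necessary, $\bar\gamma$ lies outside the disc $q(U)$ around $\gamma$; finally, $S^2$ minus a disc is connected, so there is a path of simple Reeb orbits from $\gamma'$ to $\bar\gamma$ avoiding $q(U)$. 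Choosing $r_1 \ll 0$ so that $C'$ is $C^1$-close to the trivial half-cylinder over $\gamma'$, I would take $\{c_r\}$ to be this path of Reeb orbits (exactly $\bar\gamma$ for $r \le r_2$), perturbed to agree with $C' \cap \{r\}$ for $r$ near $r_1$; a generic choice keeps each $c_r$ embedded, and since the trivial cylinder over a Reeb orbit is symplectic and symplecticity is an open condition, $\tilde C$ is symplectic.

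With $\tilde C$ in hand I would invoke the standard fact that a symplectic surface in a symplectic $4$-manifold is $J$-holomorphic for some $\omega$-compatible $J$: fix a small tubular neighbourhood $N$ of the transition part $\tilde C \cap ([r_2, r_1]\times M)$, disjoint from $U$, from $X_{\le r_2}$, and from $D'$, and choose $J$ equal to $\toplevel{J}$ outside $N$ and making $\tilde C$ holomorphic inside $N$; since $\tilde C$ is already $\toplevel{J}$-holomorphic near $\partial N$ (there it is either a piece of $C'$ or the trivial half-cylinder over $\bar\gamma$), the interpolation of compatible structures can be carried out rel $\partial N$ using contractibility of the space of compatible $J$ for which a fixed symplectic surface is holomorphic. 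This $J$ satisfies property (1) by construction. The glued subset $P := (\im\toplevel{(F_{\FEplus})}\cap X_{\ge r_1}) \cup \tilde C$ is then a smooth embedded $J$-holomorphic plane asymptotic to $\bar\gamma$: it is $\toplevel{J}$-holomorphic wherever it is a piece of $\toplevel{(F_{\FEplus})}$ or a trivial cylinder, and $J$-holomorphic inside $N$. Since $P$ and $\toplevel{(F_{\FEplus})}$ agree on $X_{\ge r_1}$ and their ends differ by a cylinder lying in $X_{\le r_1} \simeq M$, where $H_2(M) = 0$ and $\pi_2(M) = 0$, this difference is at worst $2$-torsion (coming from $H_1(M)$) and vanishes in $H_2(\toplevel{X};\Z)$; hence $P$ lies in the same relative homology class as $\toplevel{(F_{\FEplus})}$, giving property (2).

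I expect the step requiring the most care to be the construction of the family $\{c_r\}$: one must arrange embeddedness of each circle and of $\tilde C$, symplecticity of $\tilde C$, disjointness from $U$, and the prescribed matching with $C'$ near $r_1$ and with $\bar\gamma$ near $r_2$, all at once — and it is precisely here that the inputs $\gamma' \ne \gamma$ (from the previous lemma) and $\bar\gamma \ne \gamma$ (fixed-point-freeness of the antipodal map) are used. By contrast, there is no hard holomorphic-curve analysis: the existence of a compatible $J$ making $\tilde C$ holomorphic, the smoothness of $P$, its asymptotic behaviour, and the homology bookkeeping are all routine.
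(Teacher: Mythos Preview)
Your proposal is correct and follows essentially the same strategy as the paper: replace the end of $\toplevel{(F_{\FEplus})}$ by a symplectic half-cylinder asymptotic to $\bar\gamma$, built from a path of Reeb orbits in $M/S^1 \cong S^2$ from $\gamma'$ to $\bar\gamma$ avoiding a neighbourhood of $\gamma$, then choose a compatible $J$ making the new surface holomorphic. The one implementation difference is that the paper exploits the $SO(3)$-invariance of $\toplevel{J}$ (set up in Section~\ref{sub:complex-structures}): it picks a $1$-parameter subgroup $R_t \subset SO(3)$ with $R_1(\gamma') = \bar\gamma$ and transports the whole end via the level-dependent diffeomorphism $R(r,x) = (r, R_{\psi(r)}(x))$, so that on $X_{\le r_2}$ the new end is $R_1(C_{\FEplus})$, which is automatically $\toplevel{J}$-holomorphic, and the linear interpolation $R \rightsquigarrow \id$ gives the relative homology directly---this slightly streamlines both the symplecticity check and your homology argument via $H_2(\RP^3) = 0$.
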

\begin{proof}
    The half-cylinder \(C_{\Eplus}\) is asymptotic to \(\gamma\), whilst the
\(C_{\FEplus} := \im\toplevel{(F_{\FEplus})} \cap X_{\le r_1}\) is
asymptotic to \(\gamma' \ne \gamma\). The asymptotic
convergence of \(C_{\Eplus}\) and \(C_{\FEplus}\) implies that their
projections to the manifold of Reeb orbits \(M/S^1\) yield disjoint
closed neighbourhoods \(N_\gamma\) and \(N_{\gamma'}\) of \(\gamma,\gamma' \in
M/S^1\). Choose a path \(\gamma_t \in M/S^1\) disjoint from \(N_\gamma\) such
that \(\gamma_0 = \gamma'\) and \(\gamma_1 = \bar{\gamma}\) and pick a
corresponding 1-parameter subgroup \(R_t \in SO(3)\) such that \(R_t(\gamma')
= \gamma_t\). Note that the asymptotic convergence of \(C_{\Eplus}\) and
\(C_{\FEplus}\) ensures that \(r_1 \ll 0\) can be chosen large enough to
ensure that \(R_t(N_{\gamma'})\) is disjoint from \(N_\gamma\) for all \(t \in
[0,1]\).

Now choose \(r_2 < r_1\) and a diffeomorphism \(\psi : [r_2,r_1] \to [0,1]\)
such that \(\psi' \equiv -1\) near the ends. Consider the diffeomorphism
\[
    R : X_{\le r_1} \to X_{\le r_1} : R(r,x) = \begin{cases}
	(r,R_{\psi(r)}(x)), &\text{ if }r \in [r_2,r_1],\\
	(r,R_1(x)), &\text{ if }r < r_2,
    \end{cases}
\]
and define the \emph{symplectic} half-cylinder \(C'_{\FEplus} :=
R(C_{\FEplus})\). Note that \(C'_{\FEplus}\) is
disjoint from the neighbourhood \(U\) of \(C_{\Eplus}\). We glue \(C'_{\FEplus}\) to \(\im\toplevel{(F_{\FEplus})}
\cap X_{\ge r_1}\) to obtain a symplectic plane \(P_{\FEplus}\) asymptotic to
\(\bar{\gamma}\). Linear interpolation from \(R\) to the identity map yields a
(relative) homology between \(\im\toplevel{(F_{\FEplus})}\) and
\(P_{\FEplus}\).

Since \(C'_{\FEplus}\) is disjoint from \(U\), we may choose a
compatible almost complex structure \(J\) on \([r_2,r_1] \times M\) that
makes \(C'_{\FEplus}\) holomorphic, and agrees with \(\toplevel{J}\) on \(U\).
Moreover, it can be chosen to agree with \(\toplevel{J}\) on \(\{r_1\} \times M
\cup \{r_2\} \times M\) since \(R_0 = \id\) and \(R_1\) is holomorphic with
respect to \(\toplevel{J}\). Extending \(J\) by \(\toplevel{J}\) on the
complement of \([r_2,r_1] \times M\) completes the construction.
\end{proof}

Since \((\toplevel{X},\omega^+)\) is symplectomorphic to \((X \backslash L,
\omega)\), we can push forward our new almost complex structure \(J\) to \(X
\backslash L\) and perform neck stretching to this new \(J\). What
this amounts to is stretching the neck around the contact-type hypersurface
\(T^*_{e^{r_2}}S^2 \subset X\). To make this completely precise, we need to extend \(J\)
over \(L\). However, this is easily achieved by altering the bump function
used in the construction of \(\JTS\) in Section~\ref{sub:complex-structures}.
The result is a new compatible almost complex structure on \(X\) that is
adapted to the hypersurface \(T^*_{e^{r_2}}S^2\), is equal to our
original almost complex structure on the set \(X_{\ge r_1}\), and, under
stretching the neck, converges to the almost complex structure of
Lemma~\ref{lem:conjugateAsymptotesACS} on \(\toplevel{X}\). To avoid
cluttering the notation, we shall continue to refer to the new split almost
complex structure obtained on \(X^* = \splitcurve{X}\) as \(J^*\), as we
will no longer make reference to the old one.

\begin{remark}
    Note that the new stretched almost complex structure obtained on
\(\bottomlevel{X} = T^*L\) may be different to the original one, since we have
\emph{increased} the portion of the neck \(\R \times T^*_1S^2 \cong T^*L
\backslash L\) on which \(\bottomlevel{J}\) is cylindrical. This amounts to
pushing forward the old \(\bottomlevel{J}\) by the Liouville flow for some
negative time.

Ultimately, though, this is of no importance, since the results of
Section~\ref{sub:cotangentPlanes} are valid for whatever cylindrical almost
complex structure we choose on \(T^*S^2\).
\end{remark}

Since we've changed our neck stretching sequence \(J_k\), morally we need to
redo the limit analysis of the previous section. Indeed, recall the open set
\(V = \toplevel{X} \backslash \left(((-\infty,\epsilon]\times T_1^*L) \cup
D'\right)\) from Section~\ref{sec:stretchingAboutSphere} in which we perturbed
to ensure genericity. The almost complex structure \(\toplevel{J}\)
constructed here satisfies a form of symmetry, which perturbing would destroy.
Therefore, \emph{a priori}, \(\toplevel{J}\)-holomorphic curves don't satisfy
the Fredholm regularity result of Lemma~\ref{lem:neg-puncs-implies-regular}.

Note that we \emph{can} perturb \(\toplevel{J}\) on the complement of the
images of \(\toplevel{(F_{\Eplus})}\) and \(P_{\FEplus}\) in \(V\). Moreover,
since a punctured \(\toplevel{J}\)-holomorphic curve in \(\toplevel{X}\)
either passes through this complement, or has identical image to one of
\(\toplevel{(F_{\Eplus})}\) or \(P_{\FEplus}\), we can apply
Lemma~\ref{lem:neg-puncs-implies-regular} to every punctured curve in
\(\toplevel{X}\) except \(\toplevel{(F_{\Eplus})}\) and \(P_{\FEplus}\). Thus,
if we can show that \(\toplevel{(F_{\Eplus})}\) and \(P_{\FEplus}\) are
Fredholm regular, then we can apply Lemma~\ref{lem:neg-puncs-implies-regular}
exactly as in Section~\ref{sub:analysis-buildings-c1=1}.

As a result, a convergent sequence of curves \(e_k \in
\mathcal{M}_{0,0}(\Eplus;J_k)\) converges to \(\toplevel{(F_{\Eplus})}\) in
the top level, and similarly, for curves in
\(\mathcal{M}_{0,0}(\FEplus;J_k)\), we obtain that they converge to the
constructed plane \(P_{\FEplus}\).

\begin{lemma}[]
    The \(\toplevel{J}\)-holomorphic planes \(\toplevel{(F_{\Eplus})}\) and
\(P_{\FEplus}\) are Fredholm regular.
\end{lemma}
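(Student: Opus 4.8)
The plan is to apply Wendl's automatic transversality criterion in dimension four \cite{wendl2010autoTrans} (the punctured analogue of Hofer--Lizan--Sikorav \cite{hoferLizanSikorav97genericity}), for which I first need to collect the topological data of the two planes. Both $\toplevel{(F_{\Eplus})}$ and $P_{\FEplus}$ are genus-zero curves carrying a single negative puncture asymptotic to a \emph{simple} Reeb orbit of $(T_1^*S^2,\lambda_\mathrm{can})$: for $\toplevel{(F_{\Eplus})}$ this is Lemma~\ref{lem:limits-of-c1-curves-have-connected-top-level} combined with Proposition~\ref{prop:c1=1-sft-limit} applied to $\Eplus$ (noting $\Eplus\cdot L = 1$), and for $P_{\FEplus}$ it follows because $P_{\FEplus}$ is the image of $\toplevel{(F_{\FEplus})}$ under the diffeomorphism of $\toplevel{X}$ assembled in the proof of Lemma~\ref{lem:conjugateAsymptotesACS} (the map $R$ on $X_{\le r_1}$, extended by the identity on $X_{\ge r_1}$), which rotates the cylindrical end within the $SO(3)$-family of Reeb orbits and hence preserves the genus, the number of punctures and their covering multiplicities. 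By Lemma~\ref{lem:relChern=absChern}, $c_1^\tau(\toplevel{(F_{\Eplus})}) = c_1(\Eplus) = 1$; and since that diffeomorphism is isotopic to the identity through asymptotically cylindrical maps (the linear interpolation from $R$ to $\mathrm{id}$ in the proof of Lemma~\ref{lem:conjugateAsymptotesACS}) and $c_1^\tau$ is invariant under such homotopies, $c_1^\tau(P_{\FEplus}) = c_1^\tau(\toplevel{(F_{\FEplus})}) = c_1(\FEplus) = 1$.

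Next I would check that both planes are embedded, hence immersed and somewhere injective. The adjunction inequality (Theorem~\ref{thm:adjunction}) gives $\selfastint{\toplevel{(F_{\Eplus})}} \ge c_N(\toplevel{(F_{\Eplus})}) = -1$ (Lemma~\ref{lem:cN=-1-for-c1=1-curves}), while \eqref{eq:top-and-bottom-int-equals--1} together with $\selfastint{\bottomlevel{(F_{\Eplus})}} = 0$ (a single plane in $T^*S^2$) forces $\selfastint{\toplevel{(F_{\Eplus})}} = -1$; equality in the adjunction formula means $\toplevel{(F_{\Eplus})}$ is embedded and asymptotically simple. The identical computation applied to $F_{\FEplus}$ (using $\FEplus\cdot L = -1$) shows $\toplevel{(F_{\FEplus})}$ is embedded, and embeddedness is inherited by $P_{\FEplus}$ since it is a diffeomorphic image of $\toplevel{(F_{\FEplus})}$ inside $\toplevel{X}$. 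A plane whose asymptotic orbit is simple cannot be a nontrivial cover, so both curves are somewhere injective.

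Then I would run the numerics with respect to the fixed trivialisation $\tau$ of Hind's Lemma~7, for which \eqref{eq:indexFormula} and \eqref{eq:cN-simple-punctures} are valid. Taking $u$ to be either plane, with $k_+ = 0$, $k_- = 1$, covering multiplicity $1$, and $c_1^\tau(u) = 1$, formula \eqref{eq:indexFormula} gives $\ind(u) = 2(0 + 1 - 1 + 1 - 1) = 0$, and \eqref{eq:cN-simple-punctures} gives $c_N(u) = c_1^\tau(u) + 2(k_+ - 1) = -1$. Since $u$ is immersed and somewhere injective and $\ind(u) = 0 > -1 = c_N(u)$, the automatic transversality theorem of \cite{wendl2010autoTrans} implies that $u$ is Fredholm regular, which is what we want.

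The step I expect to require the most care is the passage from $\toplevel{(F_{\FEplus})}$ to $P_{\FEplus}$: I must verify that the modification of Lemma~\ref{lem:conjugateAsymptotesACS} leaves intact \emph{all} of the input to the automatic transversality criterion --- the genus, the puncture data, $c_1^\tau$, embeddedness, somewhere-injectivity, and crucially that the ambient almost complex structure $J$ produced there genuinely makes $P_{\FEplus}$ holomorphic --- all of which hold because the modification is realised by a diffeomorphism isotopic to the identity through maps rotating the cylindrical end, together with a compatible deformation of $J$ supported in $[r_2,r_1]\times M$. The only other point to watch is that the index and normal-Chern-number bookkeeping above is done consistently with respect to the single trivialisation $\tau$ appearing throughout Section~\ref{sub:cotangentPlanes}, since the inequalities \eqref{eq:index-ineq} and the reductions \eqref{eq:indexFormula}, \eqref{eq:cN-simple-punctures} are only valid for that choice.
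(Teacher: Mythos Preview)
Your proposal is correct and, for $P_{\FEplus}$, follows exactly the paper's approach: compute $\ind(P_{\FEplus})=0$ and $c_N(P_{\FEplus})=-1$ via \eqref{eq:indexFormula} and \eqref{eq:cN-simple-punctures}, then invoke Wendl's automatic transversality. Your additional care about embeddedness (via equality in adjunction) and the transfer of $c_1^\tau$ along the relative homology of Lemma~\ref{lem:conjugateAsymptotesACS} fills in details the paper leaves implicit.

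For $\toplevel{(F_{\Eplus})}$ you take a slightly different route. The paper observes that the new $\toplevel{J}$ agrees with the old one on the neighbourhood $U$ of $C_{\Eplus}$ (Lemma~\ref{lem:conjugateAsymptotesACS}(1)), so the linearised Cauchy--Riemann operator at $\toplevel{(F_{\Eplus})}$ is literally unchanged and the curve inherits Fredholm regularity from the earlier generic perturbation (Lemma~\ref{lem:neg-puncs-implies-regular}). You instead run the same automatic transversality numerics for this plane as for $P_{\FEplus}$. Both arguments are valid; the paper's is a one-line observation exploiting the specific construction, while yours is more uniform and does not rely on the genericity already built into the old $\toplevel{J}$.
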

\begin{proof}
    The case of \(\toplevel{(F_{\Eplus})}\) is trivial since \(\toplevel{J}\) was
unchanged in a neighbourhood of this curve. For \(P_{\FEplus}\) we apply
automatic transversality \cite[Theorem~1]{wendl2010autoTrans}. Since
\(P_{\FEplus}\) is embedded, the automatic transversality condition is:
\[
    \ind(P_{\FEplus}) > c_N(P_{\FEplus}).
\]
We compute that \(\ind(P_{\FEplus}) = 0\), which follows from
Equation~\eqref{eq:indexFormula}, Lemma~\ref{lem:relChern=absChern}, and that
\(P_{\FEplus}\) has a single simply covered asymptote. On the other hand,
\[
    c_N(P_{\FEplus}) = -1
\]
follows as in Lemma~\ref{lem:cN=-1-for-c1=1-curves}.
\end{proof}

\begin{proposition}[]
    \label{prop:toplevelPlanesHaveConjugateAsymptotes}
Let \(J_k\) be a neck stretching sequence converging to \(J^*\), and
\(e_k \in \mathcal{M}_{0,0}(\Eplus;J_k)\) a corresponding sequence converging
to a \(J^*\)-holomorphic building \(F = \splitcurve{F}\). Then the
top level \(\toplevel{F}\) consists only of the plane \(\toplevel{(F_{\Eplus})}\)
obtained in Proposition~\ref{prop:top-level-2-planes}, there are no
symplectisation levels, and the bottom level consists of exactly one plane in
\(\cotangentPlanesModuli_+\).

Similarly, a convergent sequence \(g_k \in \mathcal{M}_{0,0}(\FEplus;J_k)\)
converges to a building \(G = \splitcurve{G}\) whose top level
\(\toplevel{G}\) consists only on the plane \(P_{\FEplus}\) constructed in
Lemma~\ref{lem:conjugateAsymptotesACS}. As above, the bottom level then
consists of exactly one plane in \(\cotangentPlanesModuli_-\). In particular,
\(\toplevel{F}\) and \(\toplevel{G}\) have conjugate asymptotes.
\end{proposition}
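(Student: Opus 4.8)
The plan is to re-run the holomorphic building analysis of Section~\ref{sub:analysis-buildings-c1=1} for the new split structure \(J^*\), and then to pin down the top levels of the two limit buildings. First, \(c_1(\Eplus) = c_1(\FEplus) = 1\), since \(\Eplus = \mathcal{E}_j\) and \(\FEplus = F - \mathcal{E}_j\) are classes of embedded \(-1\)-spheres. Using the Fredholm regularity of \(\toplevel{(F_{\Eplus})}\) and \(P_{\FEplus}\) established above, together with the observation (made in the paragraphs preceding the proposition) that every other punctured \(\toplevel{J}\)-holomorphic curve still passes through the perturbable open set \(V\) and is therefore Fredholm regular, the conclusion of Lemma~\ref{lem:neg-puncs-implies-regular}, and hence Corollary~\ref{cor:relChernGE1}, remains valid for the new \(\toplevel{J}\). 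Consequently Lemmata~\ref{lem:limits-of-c1-curves-have-connected-top-level}, \ref{lem:connectedTopImpliesNoIntermediateLevels}, \ref{lem:cN=-1-for-c1=1-curves} and Proposition~\ref{prop:c1=1-sft-limit} apply verbatim to \(F\) and \(G\): they have no symplectisation levels, \(\toplevel{F}\) and \(\toplevel{G}\) are embedded genus-zero planes carrying a single simple puncture with \(c_1^\tau = 1\), and, since \(\Eplus \cdot L = +1\) and \(\FEplus \cdot L = -1\), the bottom level \(\bottomlevel{F}\) is a single plane in \(\cotangentPlanesModuli{}_+\) and \(\bottomlevel{G}\) a single plane in \(\cotangentPlanesModuli{}_-\), necessarily asymptotic to the asymptotic orbits of \(\toplevel{F}\) and \(\toplevel{G}\) since adjoining levels of a building share asymptotics.

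To identify the top levels I would argue uniqueness. Let \(P_+^\gamma \in \cotangentPlanesModuli{}_+\) be the Hind plane asymptotic to the orbit \(\gamma\) of \(\toplevel{(F_{\Eplus})}\); then \((\toplevel{(F_{\Eplus})}, P_+^\gamma)\) is an index-zero, Fredholm-regular \(J^*\)-holomorphic building of class \(\Eplus\). The standard SFT gluing theorem provides, for \(k\) large, closed \(J_k\)-holomorphic curves \(\hat e_k\) of class \(\Eplus\) converging to this building; since \(\mathcal{M}_{0,0}(\Eplus;J_k)\) consists of a single regular point for each \(k\), one has \(\hat e_k = e_k\), forcing \(F = (\toplevel{(F_{\Eplus})}, P_+^\gamma)\) and in particular \(\toplevel{F} = \toplevel{(F_{\Eplus})}\). (Without invoking gluing, one can instead note that if \(\toplevel{F} \ne \toplevel{(F_{\Eplus})}\) then, capping both buildings off with Hind \(\cotangentPlanesModuli{}_+\)-planes and using homotopy invariance and additivity of the Siefring intersection of buildings together with Lemma~\ref{lem:cotangent-planes-intersection}(2), one gets \(i_U(\toplevel{F}, \toplevel{(F_{\Eplus})}) = \Eplus \cdot \Eplus = -1\), which is impossible for the simple curves \(\toplevel{F}\) and \(\toplevel{(F_{\Eplus})}\) by Theorem~\ref{thm:punc-pos-ints}.) The same argument, with \(\FEplus\), \(\cotangentPlanesModuli{}_-\) and \(\bar\gamma\) in place of \(\Eplus\), \(\cotangentPlanesModuli{}_+\) and \(\gamma\), and using \(\FEplus \cdot \FEplus = -1\) together with the fact from Lemma~\ref{lem:conjugateAsymptotesACS}(2) that \(P_{\FEplus}\) lies in the same relative homology class as \(\toplevel{(F_{\FEplus})}\) and so caps off to a building of class \(\FEplus\), shows \(\toplevel{G} = P_{\FEplus}\). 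Finally, \(\toplevel{F} = \toplevel{(F_{\Eplus})}\) is asymptotic to \(\gamma\) and \(\toplevel{G} = P_{\FEplus}\) is asymptotic to the conjugate \(\bar\gamma\) of \(\gamma\), so the two top levels have conjugate asymptotes.

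The step I expect to be the main obstacle is the bookkeeping in this identification: one must check that capping a punctured \(c_1^\tau = 1\) plane with a Hind plane of the correct parity really produces a building in the expected \emph{absolute} homology class (this rests on the homogeneity of the classes of the \(\cotangentPlanesModuli{}_\pm\) planes from Section~\ref{sub:cotangentPlanes}), and that the Siefring intersection of two such capped buildings agrees with the closed intersection pairing — which forces one to distinguish the degenerate case where \(\toplevel{F}\) and \(\toplevel{(F_{\Eplus})}\) share an asymptotic orbit, and hence the same bottom plane, from the case where they do not, moving the asymptotics apart by a smooth homotopy first. A secondary point is that the symmetric, deliberately non-generic \(\toplevel{J}\) must still permit the verbatim reuse of the Section~\ref{sub:analysis-buildings-c1=1} lemmas, but this is precisely what the two lemmas of the excerpt immediately preceding the proposition are engineered to guarantee.
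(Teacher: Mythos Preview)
Your proposal is correct, and your parenthetical intersection argument is exactly the paper's method for identifying \(\toplevel{G} = P_{\FEplus}\): complete \(P_{\FEplus}\) to a \(J^*\)-holomorphic building \(G'\) of class \(\FEplus\), note that \((\FEplus)^2 = -1\) forces an intersection between \(G\) and \(G'\), and observe that if \(\toplevel{G}\) were geometrically distinct from \(P_{\FEplus}\) then positivity of intersections at the top level would push a negative intersection into the bottom level, contradicting Corollary~\ref{cor:bottomLevelNonNegInts}. The only difference is that for the \(\Eplus\) case the paper does not repeat this argument but simply remarks that, since Lemma~\ref{lem:conjugateAsymptotesACS}(1) leaves \(\toplevel{J}\) unchanged in a neighbourhood of \(\toplevel{(F_{\Eplus})}\), there is nothing to check; your uniform treatment via intersections (or gluing) is a valid alternative that the paper does not invoke.
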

\begin{proof}
Since \(\toplevel{J}\) is unchanged in a neighbourhood of
\(\toplevel{(F_{\Eplus})}\), there is nothing to check for the claimed
convergence \(e_k \to F\). We prove that
\(\toplevel{G} = P_{\FEplus}\). Note that \(P_{\FEplus}\) can be completed
into a \(J^*\)-holomorphic building \(G'\) by adding in the unique 
\(\cotangentPlanesModuli_-\) plane in \(T^*S^2\) with asymptotic orbit
\(\bar{\gamma}\). Then, both \(G\) and \(G'\) are \(J^*\)-holomorphic
buildings of the same homology class: \([\bar{G}] = \FEplus = [\bar{G'}]\).
Thus, they must intersect somewhere, since \((\FEplus)^2 = -1\). If
\(\toplevel{G}\) is geometrically distinct from \(\toplevel{(G')} =
P_{\FEplus}\), then positivity of intersections would force there to be a
negative intersection in the bottom level, contradicting
Corollary~\ref{cor:bottomLevelNonNegInts}. Hence, \(\toplevel{G} =
P_{\FEplus}\) and the result follows.
\end{proof}

We have constructed two \(J^*\)-holomorphic buildings \(F\) and \(G\)
in the homology classes \(\Eplus\) and \(\FEplus\), whose asymptotic orbits
are conjugate. To free up the notation, we shall continue to denote these
buildings by \(F_{\Eplus}\) and \(F_{\FEplus}\) respectively. Similarly we
obtain the \(J^*\)-holomorphic buildings \(F_{\Eminus}\) and \(F_{\FEminus}\),
which satisfy
\[
    \toplevel{(F_{\Eminus})} = \toplevel{(F_{\Eplus})} \text{ and } 
    \toplevel{(F_{\FEminus})} = \toplevel{(F_{\FEplus})},
\]
by Proposition~\ref{prop:top-level-2-planes}, and thus they have the same pair
of conjugate asymptotes \(\{\gamma,\bar{\gamma}\}\).

\begin{corollary}[]
    \label{cor:planesIntersectOnL}
    The unique point of intersection of the planes
\(\bottomlevel{(\building{\Eplusminus})} \in \cotangentPlanesModuli_\pm\) and
\(\bottomlevel{(\building{\FEplusminus})} \in \cotangentPlanesModuli_\mp\)
lies on the zero section.
\end{corollary}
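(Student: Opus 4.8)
The plan is to exploit the anti-holomorphic involution $\varphi$ of $\bottomlevel{X} = T^*S^2$ given by fibre-wise multiplication by $-1$, which under the identification $T^*S^2 \cong (z_1^2+z_2^2+z_3^2=1)$ is complex conjugation, and whose fixed-point locus is exactly the zero section $0_{S^2}$. Throughout I will use that the bottom-level almost complex structure $\bottomlevel{J}$ is anti-invariant under $\varphi$: this is built into the construction of $\JTS$ in Proposition~\ref{prop:T*S2-acs}, and it is retained by the modification performed in Section~\ref{subsub:asymptotics}, which only rescales the bump function $\rho$ while each $J_s$ remains anti-invariant. The idea is that $\varphi$ interchanges the two planes making up each of our two intersecting pairs, so their unique intersection point is necessarily $\varphi$-fixed and hence lies on $0_{S^2}$.

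First I would record two elementary properties of $\varphi$ acting on $\bottomlevel{J}$-holomorphic planes. (i) If $u:\C\to T^*S^2$ is $\bottomlevel{J}$-holomorphic, then so is $\varphi\circ u\circ c$ with $c(z)=\bar z$; this is the standard computation combining anti-invariance of $\bottomlevel{J}$ with the anti-holomorphicity of $c$, and it shows that $\varphi$ carries any finite-energy $\bottomlevel{J}$-holomorphic plane asymptotic to a Reeb orbit $\gamma$ to a finite-energy $\bottomlevel{J}$-holomorphic plane asymptotic to the conjugate orbit $\overline{\gamma}$. (ii) $\varphi$ interchanges the two foliations: $\varphi(\cotangentPlanesModuli{}_\pm)=\cotangentPlanesModuli{}_\mp$. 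To see this, note that $\varphi$ is orientation-preserving on $T^*S^2$ (since $\varphi^*\omega_\mathrm{can}=-\omega_\mathrm{can}$, hence $\varphi^*(\omega_\mathrm{can}^2)=\omega_\mathrm{can}^2$) and restricts to the identity of $0_{S^2}$, but the reparametrising conjugation $c$ reverses the complex orientation of a holomorphic plane; consequently $\varphi(P)\cdot 0_{S^2}=-(P\cdot 0_{S^2})$, so a leaf of $\cotangentPlanesModuli{}_+$ is sent to a finite-energy $\bottomlevel{J}$-holomorphic plane meeting $0_{S^2}$ with index $-1$, which by Hind's classification (Proposition~\ref{prop:cotangent-curves-intersect-planes}) must be a leaf of $\cotangentPlanesModuli{}_-$, and conversely.

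Next I would combine (i), (ii) and the uniqueness of the leaf of $\cotangentPlanesModuli{}_\pm$ with a prescribed asymptotic orbit. Recall (Corollary~\ref{cor:lim-nodal-fibres}, Propositions~\ref{prop:top-level-2-planes} and \ref{prop:toplevelPlanesHaveConjugateAsymptotes}) that $\bottomlevel{(F_{\Eplus})}\in\cotangentPlanesModuli{}_+$ is asymptotic to $\gamma$ while $\bottomlevel{(F_{\FEplus})}\in\cotangentPlanesModuli{}_-$ is asymptotic to $\overline{\gamma}$; since $\varphi$ sends the $\cotangentPlanesModuli{}_+$-leaf asymptotic to $\gamma$ to the $\cotangentPlanesModuli{}_-$-leaf asymptotic to $\overline{\gamma}$, this forces $\varphi(\bottomlevel{(F_{\Eplus})})=\bottomlevel{(F_{\FEplus})}$ and (since $\varphi^2=\mathrm{id}$) $\varphi(\bottomlevel{(F_{\FEplus})})=\bottomlevel{(F_{\Eplus})}$. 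The identical argument, using that $\bottomlevel{(F_{\Eminus})}\in\cotangentPlanesModuli{}_-$ is asymptotic to $\gamma$ and $\bottomlevel{(F_{\FEminus})}\in\cotangentPlanesModuli{}_+$ is asymptotic to $\overline{\gamma}$, shows $\varphi$ interchanges $\bottomlevel{(F_{\Eminus})}$ and $\bottomlevel{(F_{\FEminus})}$.

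Finally, $\bottomlevel{(F_{\Eplus})}$ and $\bottomlevel{(F_{\FEplus})}$ have opposite parity and geometrically distinct asymptotic orbits $\gamma\neq\overline{\gamma}$ (Corollary~\ref{cor:bottomLevelPlanesDistinct}), so by Lemma~\ref{lem:cotangent-planes-intersection} together with Evans's observation they meet transversely in exactly one point $x$. Then $\varphi(x)\in\varphi(\bottomlevel{(F_{\Eplus})})\cap\varphi(\bottomlevel{(F_{\FEplus})})=\bottomlevel{(F_{\FEplus})}\cap\bottomlevel{(F_{\Eplus})}=\{x\}$, so $\varphi(x)=x$, and since the fixed locus of $\varphi$ is exactly $0_{S^2}$ we conclude $x\in 0_{S^2}$. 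The same reasoning applied to the pair $\bottomlevel{(F_{\Eminus})}$, $\bottomlevel{(F_{\FEminus})}$ settles the remaining case. I expect the only delicate point to be step (ii): one must verify that $\varphi$ \emph{interchanges} rather than preserves the foliations $\cotangentPlanesModuli{}_\pm$, which hinges entirely on the orientation-reversing domain reparametrisation $c$ flipping the sign of the intersection number with $0_{S^2}$ — an easy place to drop a sign.
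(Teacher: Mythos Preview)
Your proof is correct and follows essentially the same approach as the paper's: both use anti-invariance of $\bottomlevel{J}$ under conjugation to show that $\varphi$ interchanges the two planes, and then conclude that their unique intersection point is $\varphi$-fixed and hence lies on $0_{S^2}$. Your version is more explicit---you spell out why $\varphi$ swaps $\cotangentPlanesModuli_+$ and $\cotangentPlanesModuli_-$ via the orientation argument, whereas the paper simply asserts this---but the underlying argument is the same.
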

\begin{proof}
    Conjugation sends \(\cotangentPlanesModuli_+\) planes to
\(\cotangentPlanesModuli_-\) and vice versa. Therefore, since there is a
unique \(\cotangentPlanesModuli_\pm\) plane asymptotic to each simple Reeb
orbit and the asymptotics of \(\bottomlevel{(\building{\Eplusminus})}\) and
\(\bottomlevel{(\building{\FEplusminus})}\) are conjugate, we have that
\[
    \overline{\bottomlevel{(\building{\Eplusminus})}} =
    \bottomlevel{(\building{\FEplusminus})}.
\]
Moreover, the zero section is the fixed locus of conjugation, so, since each
plane intersects the zero section, the claim follows.
\end{proof}

\begin{remark}
    This result partially justifies the claim made in
Remark~\ref{rem:bottomPlanesAreNodalFibres} that the bottom levels of the
buildings \(\building{\Eplusminus} \cup \building{\FEplusminus}\) form the
nodal fibres of a Lefschetz fibration on \(T^*L\) such that \(L\) is a
matching cycle.
\end{remark}

\subsubsection{Analysis of buildings: limits of \(c_1 = 2\) curves}
\label{sec:analysis-buildings-c1=2}

In this section we prove (Proposition~\ref{prop:sft-limit-fibre-curve}) that
limits of curves in the class of a fibre \(H-S \in H_2(X)\) have bottom
level (if non-empty) either a smooth cylinder, or one of the pairs of nodal
planes \(\bottomlevel{(F_{\Eplusminus})} + \bottomlevel{(F_{\FEplusminus})}\).
First, we need a basic result on unconstrained intersections of trivial
cylinders in symplectisations.\footnote{Compare with the non-degenerate case
    where
\[
    u_\gamma \ast u_\gamma =
    \begin{cases} 0, &\text{ if } \gamma \text{ is odd}\\
		  1, &\text{ if } \gamma \text{ is even}. \end{cases}
\]}
\begin{lemma}[]
    \label{lem:trivialCylinderIntersections}
    Let \(\gamma\) be a simply covered Morse-Bott degenerate
Reeb orbit living in a positive dimensional orbifold \(N\) of unparametrised
orbits of a contact manifold \(M\). Then the unconstrained self intersection
of the corresponding trivial cylinder \(u_\gamma : \R \times S^1 \to \R \times
M\) is zero:
\[
    i_U(u_\gamma,u_{\gamma}) = 0.
\]
\end{lemma}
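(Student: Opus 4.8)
The plan is to compute $i_U(u_\gamma,u_\gamma)$ directly from the definition of the unconstrained intersection number, exploiting the fact that a trivial cylinder has one positive and one negative puncture asymptotic to the same simple orbit $\gamma$, with the same trivialization $\tau$ on each end. First I would recall that for a trivial cylinder $u_\gamma$, since its image is the $\R$-invariant subset $\R\times\gamma \subset \R\times M$, a suitable translate of $u_\gamma$ in the $\R$-direction is disjoint from $u_\gamma$ itself; hence the relative intersection number $u_\gamma \bullet_\tau u_\gamma$ can be computed via such a pushed-off copy, and with the trivialization $\tau$ chosen along $\gamma$ this count is $0$. (This is the content of the relative intersection number being the algebraic count of intersections with the $\tau$-pushoff near infinity, applied here where the pushoff can be taken globally disjoint.)

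Next I would handle the asymptotic correction terms. The cylinder $u_\gamma$ has $\Gamma^+ = \{z_+\}$ and $\Gamma^- = \{z_-\}$, both with asymptotic orbit $\gamma$ (simply covered, so $k=m=1$). Plugging into
\[
    i_U(u_\gamma,u_\gamma) = u_\gamma \bullet_\tau u_\gamma
	- \Omega_+^\tau(\gamma - \delta, \gamma - \delta)
	- \Omega_-^\tau(\gamma + \delta, \gamma + \delta),
\]
I would use $\Omega_\pm^\tau(\gamma \mp \delta, \gamma \mp \delta) = \mp\,\alpha_\mp^\tau(\gamma \mp \delta)$ (the $km\min\{\cdots\}$ with $k=m=1$ collapses to a single term). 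Then, exactly as in the proof of Lemma~\ref{lem:unconstrained-breaking-vanishes} and Remark~\ref{rem:linearity-of-extremal-winding}, for a Morse-Bott degenerate orbit with $\dim\ker\mathbf{A}_\gamma = 2$ one has
\[
    \alpha_+^\tau(\gamma - \delta) = \mathrm{wind}_{\mathbf{A}_\gamma}^\tau(0)
    = \alpha_-^\tau(\gamma + \delta),
\]
so that $\Omega_+^\tau(\gamma-\delta,\gamma-\delta) = -\alpha_-^\tau(\gamma-\delta)$ — wait, I need to be careful about signs here: the correct statement is $\Omega_+^\tau(\gamma-\delta,\gamma-\delta) = -\alpha_-^\tau(\gamma-\delta)$ and $\Omega_-^\tau(\gamma+\delta,\gamma+\delta) = +\alpha_+^\tau(\gamma+\delta)$, and by the monotonicity and double-attainment properties of $\mathrm{wind}_{\mathbf{A}_\gamma}^\tau$ recorded there, $\alpha_-^\tau(\gamma-\delta) = \mathrm{wind}_{\mathbf{A}_\gamma}^\tau(0) = \alpha_+^\tau(\gamma+\delta)$. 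Hence the two correction terms are negatives of each other and cancel, leaving $i_U(u_\gamma,u_\gamma) = u_\gamma\bullet_\tau u_\gamma = 0$.

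The main subtlety — really the only place requiring care — is getting the signs and the $\pm\delta$ perturbations consistent between the definition of $i_U$, the definition of $\Omega_\pm^\tau$, and the winding-number identities; this is purely bookkeeping with the conventions fixed in Section~\ref{sub:intersection-theory}, and no new input beyond the degeneracy fact $\dim\ker\mathbf{A}_\gamma = 2$ (equivalently, that the winding function attains $\mathrm{wind}_{\mathbf{A}_\gamma}^\tau(0)$ at the eigenvalues immediately straddling $0$) is needed. One should also note that the trivialization-dependence genuinely cancels, as guaranteed in general, so the choice of $\tau$ making $u_\gamma\bullet_\tau u_\gamma = 0$ is harmless. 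I expect the write-up to be short.
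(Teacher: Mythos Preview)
Your direct computation is a genuinely different route from the paper's proof, which is a one-liner via homotopy invariance: since $\dim N > 0$, one homotopes $u_\gamma$ to $u_{\tilde\gamma}$ for a nearby distinct orbit $\tilde\gamma$ in the same Morse--Bott family; the two trivial cylinders are then disjoint with distinct asymptotes, so $i_U(u_\gamma,u_\gamma) = i_U(u_\gamma,u_{\tilde\gamma}) = 0$. The paper never unpacks the $\Omega_\pm^\tau$ terms at all, and the hypothesis $\dim N > 0$ is used directly to produce the displacement. Your approach instead encodes the degeneracy through the identity $\alpha_-^\tau(\gamma-\delta) = \mathrm{wind}_{\mathbf{A}_\gamma}^\tau(0) = \alpha_+^\tau(\gamma+\delta)$, which makes the two $\Omega$-corrections cancel; this is correct and is precisely the mechanism behind Lemma~\ref{lem:unconstrained-breaking-vanishes}, so it is nice to see it reappear here.

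There is, however, a genuine slip in your justification of $u_\gamma \bullet_\tau u_\gamma = 0$. You write that ``a suitable translate of $u_\gamma$ in the $\R$-direction is disjoint from $u_\gamma$ itself'', but this is false: the image $\R \times \gamma$ is $\R$-invariant, so every $\R$-translate of $u_\gamma$ has \emph{the same image}. What does work --- and what you drift toward in your parenthetical --- is that the trivialisation $\tau$ of $\gamma^*\xi$ extends to a global trivialisation of the normal bundle of $u_\gamma$ in $\R\times M$ (since that normal bundle is the pullback of $\gamma^*\xi$ along the projection $\R\times\gamma \to \gamma$), so the $\tau$-pushoff can be taken globally and is then disjoint from $\R\times\gamma$. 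With that correction your argument goes through. The paper's homotopy argument is shorter and makes the role of the hypothesis $\dim N > 0$ transparent; your computation has the virtue of showing exactly which spectral identity (the coincidence of $\alpha_\pm^\tau$ at the zero eigenvalue) is responsible.
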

\begin{proof}
    Since \(\dim(N) > 0\), we can homotope \(u_\gamma\) to
\(u_{\tilde{\gamma}}\) for some \(\tilde{\gamma} \ne \gamma\) in the same
family \(N\). Since \(i_U\) is homotopy invariant, we obtain the result, as
\(u_\gamma\) and \(u_{\tilde{\gamma}}\) are disjoint and asymptotic to
distinct orbits.
\end{proof}

\begin{remark}
    \label{rem:triv-cyl-self-int}
    In our situation, all simple Reeb orbits live in the same manifold \(N
\cong S^2\) of unparametrised orbits, and so the above result combined with
Proposition~\ref{prop:intersection-additive-covers}(2) yields, for any
integers \(k,l > 0\) and simple orbits \(\gamma\) and \(\gamma'\),
\[
	i_U(u_{\gamma^k},u_{(\gamma')^l}) = 0.
\]
From this and positivity of intersections (Theorem~\ref{thm:punc-pos-ints}),
we deduce that any curve in a symplectisation level intersects a trivial
cylinder non-negatively.
\end{remark}

Let \(F = \splitcurve{F}\) denote a \(J^*\)-holomorphic building arising as the
limit of a sequence of fibre curves. If the bottom level \(\bottomlevel{F}\)
is non-empty, the following lemma shows that its top-level \(\toplevel{F}\)
has to be the disjoint union of the limits that arose in
Proposition~\ref{prop:toplevelPlanesHaveConjugateAsymptotes}.
\begin{lemma}[]
    \label{lem:top-level-fibre-coincides-with-planes}
    Let \(F = \splitcurve{F}\) be the limit of a sequence of curves in the
class \([\bar{F}]=H-\fibrationSectionL \in H_2(X)\) of a fibre. Suppose that
\(\bottomlevel{F} \ne \emptyset\). Then \(\toplevel{F}\) is the disjoint union
of the planes \(\toplevel{(\building{\Eplus})}\) and
\(\toplevel{(\building{\FEplus})}\), where \(\building{\Eplus}\) and
\(\building{\FEplus}\) are the holomorphic buildings that arose in
section~\ref{subsub:asymptotics}. That is, \(\toplevel{F} =
\toplevel{(\building{\Eplus})} + \toplevel{(\building{\FEplus})} =
\toplevel{(\building{\Eplus})} \sqcup \toplevel{(\building{\FEplus})}\) is the
disjoint union of two planes with conjugate asymptotics
\(\{\gamma,\bar{\gamma}\}\).
\end{lemma}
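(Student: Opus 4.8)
The plan is to run the same intersection-theoretic bookkeeping as in Section~\ref{sub:analysis-buildings-c1=1}, now applied to a fibre class. First I would observe that since $[\bar F] = H - \fibrationSectionL = F$ with $c_1(F) = 2$ (this is the $c_1 = 2$ case advertised in the section title), Lemma~\ref{lem:relChern=absChern} gives $c_1^\tau(\toplevel F) = 2$. By Corollary~\ref{cor:relChernGE1}, every component of $\toplevel F$ that is not contained in $D'$ has relative first Chern number $\ge 1$; since $F \cdot (\text{any component of } D') = 0$ by positivity of intersections (the limit curves are disjoint from $\DinftyJ{J}$, hence from $D'$), no component of $\toplevel F$ lies in $D'$. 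Therefore $\toplevel F$ has at most two components, each with $c_1^\tau \ge 1$, and if it has two then each has $c_1^\tau = 1$. The hypothesis $\bottomlevel F \ne \emptyset$ should force the two-component case: a single component $\toplevel F$ with $c_1^\tau = 2$ and a non-empty bottom level would have to be a genus-zero curve with punctures, and I would rule this out either via the index/genus constraint (a connected top level with $c_1^\tau = 2$ has $\ind \le 2$ by \eqref{eq:index-ineq}, forcing at most one negative puncture, hence at most one bottom-level plane, whose parity is $\pm 1$ so $F \cdot L = \pm 1$; but $F \cdot L = F \cdot (\Eminus - \Eplus) = 0$, a contradiction) — so in fact $\bottomlevel F \ne \emptyset$ already forces $\toplevel F$ to be disconnected with exactly two components.

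Once $\toplevel F = f_1 \sqcup f_2$ with $c_1^\tau(f_i) = 1$, I would apply Lemma~\ref{lem:connectedTopImpliesNoIntermediateLevels} componentwise (each $f_i$ is connected with simple punctures by the index-zero argument of Lemma~\ref{lem:limits-of-c1-curves-have-connected-top-level}) to conclude there are no symplectisation levels, so $F = (\toplevel F, \bottomlevel F)$. Next, the homology decomposition: writing $[f_1] + [f_2] + [\bottomlevel F] = F$ and using that $[\bottomlevel F]$ is supported in $T^*L$ (so contributes nothing to the $\toplevel X$-homology beyond the gluing with the asymptotic caps), together with $f_i \cdot (\text{components of } \DinftyJ{J}) = 0$, I would pin down $[f_i]$ to lie in $\DinftyJ{J}^\perp = \Z\langle F, \mathcal{E}_j\rangle$ — and then, exactly as in the proof of Lemma~\ref{lem:constrainGenericNodal}, the conditions $c_1^\tau(f_i) = 1$ plus the adjunction/embeddedness force $\{[f_1],[f_2]\} = \{\Eplus,\FEplus\}$ (or the $\Eminus,\FEminus$ pair — but Proposition~\ref{prop:top-level-2-planes} identifies those top levels anyway, so $\toplevel{(\building{\Eplus})} = \toplevel{(\building{\Eminus})}$ and the statement is unambiguous). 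Finally, to upgrade "$[f_i]$ in these classes" to "$f_i$ is the actual plane $\toplevel{(\building{\Eplus})}$ resp.\ $\toplevel{(\building{\FEplus})}$ constructed in Section~\ref{subsub:asymptotics}", I would use positivity of intersections together with the uniqueness forced by $c_1^\tau = 1$ and Fredholm regularity, as in the last line of the proof of Proposition~\ref{prop:top-level-2-planes}: any $\toplevel X$-curve in the class $\Eplus$ with the right Chern number coincides (up to reparametrisation) with $\toplevel{(\building{\Eplus})}$. The conjugacy of the asymptotes $\{\gamma,\bar\gamma\}$ is then inherited directly from Proposition~\ref{prop:toplevelPlanesHaveConjugateAsymptotes}.

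The main obstacle I anticipate is the disconnectedness/rigidity step — showing that $\bottomlevel F \ne \emptyset$ genuinely forces $\toplevel F$ to split as two planes rather than, say, being a single connected curve in a class like $F$ itself (with empty bottom) or an irreducible $c_1 = 2$ punctured curve. Here the key leverage is the parity count: every $\bottomlevel X$-plane in $\cotangentPlanesModuli_\pm$ meets the zero section with intersection $\pm 1$, so $\bottomlevel F \cdot L = (\#P_+) - (\#P_-)$, while on the $X$-side $F \cdot L = 0$; combined with the no-symplectisation-levels structure this forces the number of bottom planes, hence the number of negative punctures of $\toplevel F$, hence (via the index formula) the number of top components. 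Getting this matching of punctures-to-planes across the single breaking level precise — using Proposition~\ref{prop:intersection-additive-covers} and that all Reeb orbits have the same period — is the delicate accounting, but it is entirely parallel to arguments already carried out in Section~\ref{sub:analysis-buildings-c1=1}, so no new ideas should be needed.
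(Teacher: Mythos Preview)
Your approach differs from the paper's and contains a genuine gap at the disconnectedness step. The claim that $\ind(\toplevel{F}) \le 2$ ``forces at most one negative puncture'' is false: with $k_+ = 0$ and $c_1^\tau = 2$, formula \eqref{eq:indexFormula} gives $\ind(\toplevel{F}) = 2\bigl(k_- + 1 - \sum_i \cov(\gamma_i^-)\bigr)$, which equals $2$ for \emph{any} number $k_-$ of simple punctures. In particular, nothing you wrote rules out a connected top-level cylinder ($k_- = 2$) with bottom level a pair $P_+ + P_-$ of opposite-parity planes --- this configuration satisfies your parity constraint $F \cdot L = 0$ perfectly well. (One can in fact exclude it via adjunction: such a cylinder has $c_N = 0$ by \eqref{eq:cN-simple-punctures}, while $i_U(\toplevel{F},\toplevel{F}) = F^2 - i_U(P_+ + P_-, P_+ + P_-) = -2$, violating Theorem~\ref{thm:adjunction}. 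But you do not invoke adjunction, and cases with a doubly-covered asymptote would still need separate handling.) Your subsequent step of assigning classes $[f_i] \in H_2(X)$ to the individual punctured components and running the argument of Lemma~\ref{lem:constrainGenericNodal} is also shaky, since that lemma treats closed curves, and a punctured component of $\toplevel{F}$ does not by itself carry a class in $H_2(X)$.

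The paper takes a different route that avoids both issues. It does not first count components. Since $\bottomlevel{F} \ne \emptyset$ and $\bottomlevel{F} \cdot L = 0$, Proposition~\ref{prop:cotangent-curves-intersect-planes} forces $\bottomlevel{F}$ to intersect both of the specific planes $\bottomlevel{(\building{\Eplus})}$ and $\bottomlevel{(\building{\FEplus})}$ positively. Combining $[\bar F] \cdot \Eplus = [\bar F] \cdot \FEplus = 0$ with additivity of $i_U$ over levels (Proposition~\ref{prop:intersection-additive-covers}) and non-negativity of the symplectisation-level contributions (Remark~\ref{rem:triv-cyl-self-int}) yields $i_U\bigl(\toplevel{F}, \toplevel{(\building{\Eplus})}\bigr) < 0$ and $i_U\bigl(\toplevel{F}, \toplevel{(\building{\FEplus})}\bigr) < 0$; positivity of intersections then forces components of $\toplevel{F}$ to \emph{cover} those two specific planes. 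An $\omega^+$-area exhaustion shows the covering multiplicities are both $1$ and there are no further components. This identifies $\toplevel{F}$ directly with the known planes without any component-counting or homology-class bookkeeping.
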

\begin{proof}
    Since \(\bottomlevel{F} \ne \emptyset\), we can use
Proposition~\ref{prop:cotangent-curves-intersect-planes} to analyse how
\(\bottomlevel{F}\) intersects the planes in \(\cotangentPlanesModuli{}_\pm\).
Note that \(\bottomlevel{F}\) cannot possibly be composed exclusively of
(covers of) planes in only one of the families \(\cotangentPlanesModuli{}_+\)
or \(\cotangentPlanesModuli{}_-\), as otherwise the intersection of
\(\bottomlevel{F}\) with \(L\) would be non-zero, contradicting \([\bar{F}]
\cdot L = 0\). Thus, \(\bottomlevel{F}\) intersects both
\(\bottomlevel{(\building{\Eplus})} \in \cotangentPlanesModuli_+\) and
\(\bottomlevel{(\building{\FEplus})} \in \cotangentPlanesModuli_-\)
positively. That is
\[
    i_U(\bottomlevel{F}, \bottomlevel{(\building{\Eplus})}) > 0
    \qquad \text{and} \qquad
    i_U(\bottomlevel{F}, \bottomlevel{(\building{\FEplus})}) > 0.
\]

Now let \(\digamma = (\toplevel{\digamma}, \bottomlevel{\digamma})\)
denote either of the buildings \(\building{\Eplus}\) or
\(\building{\FEplus}\). \emph{A priori}, the building \(F\) may have
non-trivial symplectisation levels, so to make sense of the intersection
number of the buildings \(i(F,\digamma)\) we may need to extend \(\digamma\)
by \emph{trivial} symplectisation levels.\footnote{See Appendix~C.5 of
\cite{wendl2020Contact3folds} for a full discussion of why this operation is
well defined.} With this understood, Theorem~\ref{thm:intersection-cts},
Proposition~\ref{prop:intersection-additive-covers},
\(i_U(\bottomlevel{F},\bottomlevel{\digamma}) > 0\) and \([\bar{F}] \cdot
[\bar{\digamma}] = 0\) imply that
\[
    i_U(\toplevel{F},\toplevel{\digamma}) + \sum_{i=1}^N
i_U(F^{(i)},\digamma^{(i)}) < 0. \] Furthermore,
Remark~\ref{rem:triv-cyl-self-int} implies that, for each \(1 \le i \le N\),
\(i_U(F^{(i)},\digamma^{(i)}) \ge 0\), and thus
\(i_U(\toplevel{F},\toplevel{\digamma}) < 0\). Therefore, by positivity of intersections,
at least one component of \(\toplevel{F}\) covers \(\toplevel{\digamma}\).

Summarising, we have proved that some components of \(\toplevel{F}\) cover both
\(\toplevel{(\building{\Eplus})}\) \emph{and}
\(\toplevel{(\building{\FEplus})}\), and so there exist positive integers
\(m,n > 0\) such that
\[
    \toplevel{F} = m\toplevel{(\building{\Eplus})} +
    n\toplevel{(\building{\FEplus})} + \text{(other terms)}.
\]
However, this already exhausts the total \(\omega^+\)-area. Indeed, by
Corollary~2.11 of \cite{cieliebakMohnkeSFTCompactness}, we have that
\begin{align*}
    \omega([\bar{F}]) &= \int_{\toplevel{\Sigma}}(\toplevel{F})^*\omega^+ \\
	&\ge m\int_{\C} (\toplevel{(\building{\Eplus})})^*\omega^+ +
	n\int_{\C} (\toplevel{(\building{\FEplus})})^*\omega^+ \\
	&= m\omega([\Eplus]) + n\omega([\bar{F}] - [\Eplus]) \\
	&= n\omega([\bar{F}]) + (m-n)\omega([\Eplus]) \\
	&= m\omega([\bar{F}]) + (n-m)(\omega([\bar{F}]) - \omega([\Eplus])).
\end{align*}
Recall that Lemma~\ref{lem:compactify} states that \(\omega([\bar{F}]) \ge
\omega([\Eplus]) = l\), and so, in either of the cases \(n \ge m\), or \(m \ge
n\), we obtain
\[
    \omega([\bar{F}]) \ge m\omega([\Eplus]) + n\omega([\bar{F}] - [\Eplus]) \ge
    \omega([\bar{F}]),
\]
and thus \(n=m=1\). Moreover, this implies that there can be no other terms
in the expression for \(\toplevel{F}\). That is, \(\toplevel{F} =
\toplevel{(\building{\Eplus})} + \toplevel{(\building{\FEplus})}\), and so the
result is proved.
\end{proof}

\begin{lemma}[]
    \label{lem:fibre-no-intermediate-levels}
    Let \(F = \splitcurve{F}\) be the limit of a sequence of curves in the
class of a fibre \([\bar{F}]=H-S \in H_2(X)\) such that \(\bottomlevel{F} \ne
\emptyset\). Then \(N = 0\), that is, \(F = (\toplevel{F}, \bottomlevel{F})\)
consists only of a top and bottom level.
\end{lemma}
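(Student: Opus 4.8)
The plan is to reprise the argument of Lemma~\ref{lem:connectedTopImpliesNoIntermediateLevels}, the only new work being to cope with the fact that, by Lemma~\ref{lem:top-level-fibre-coincides-with-planes}, the top level \(\toplevel{F}\) is now \emph{disconnected}: it is the disjoint union of the two planes \(\toplevel{(\building{\Eplus})}\) and \(\toplevel{(\building{\FEplus})}\), each with a single simply-covered puncture, asymptotic to the distinct simple Reeb orbits \(\gamma\neq\bar{\gamma}\). Assume \(N\ge 1\) for contradiction. Then the symplectisation level \(F^{(N)}\) adjacent to the top level has exactly two positive punctures, at \(\gamma\) and \(\bar{\gamma}\), both simple, since these must match the two negative punctures of \(\toplevel{F}\).

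Let \(T\) be the common period of the simple Reeb orbits of \((T_1^{*}S^2,\lambda_{\mathrm{can}})\), and for each level write \(P_i^{\pm}\) for the total period of its positive, resp.\ negative, asymptotics. Matching of asymptotics gives \(P_i^{-}=P_{i-1}^{+}\) for \(2\le i\le N\) and \(P_1^{-}=P_0\), where \(P_0=E_{\mathrm{d}\lambda}(\bottomlevel{F})\) is the total period of the positive punctures of \(\bottomlevel{F}\), while \(P_N^{+}=2T\). Non-negativity of the \(\mathrm{d}\lambda\)-energy \(E_{\mathrm{d}\lambda}(F^{(i)})=P_i^{+}-P_i^{-}\) (Lemmata 5.4 and 5.16 of \cite{behwzSFTCompactness}), combined with \(P_0>0\) (since \(\bottomlevel{F}\ne\emptyset\) is a non-constant curve in the exact manifold \(T^{*}S^2\)), yields \(2T=P_N^{+}\ge P_{N-1}^{+}\ge\cdots\ge P_1^{+}\ge P_0>0\). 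As each \(P_i^{+}\) is a positive multiple of \(T\) at most \(2T\), it equals \(T\) or \(2T\). If \(P_i^{+}=T\) for some \(i\), then \(F^{(i)}\) has a single simply-covered positive puncture, and the argument of Lemma~\ref{lem:connectedTopImpliesNoIntermediateLevels} applies verbatim to the component carrying it: it has one positive puncture and, a simple orbit of \(\RP^3\) being non-contractible, at least one negative puncture, hence a single simple negative puncture with \(E_{\mathrm{d}\lambda}=0\); by Theorem~6.11 of \cite{HWZpropertiesII} it is a trivial cylinder, so \(F^{(i)}\) is composed solely of trivial cylinders, contradicting stability. Hence \(P_i^{+}=2T\) for all \(1\le i\le N\).

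It remains to treat \(F^{(N)}\) itself, whose negative asymptotics have total period \(P_N^{-}\in\{T,2T\}\). If \(P_N^{-}=2T\) then \(E_{\mathrm{d}\lambda}(F^{(N)})=0\), so each non-ghost component of \(F^{(N)}\) is a branched cover of a trivial cylinder, whose asymptotics all lie over one simple orbit; such a component cannot carry both \(\gamma\) and \(\bar{\gamma}\), so \(F^{(N)}\) must be the disjoint union of the trivial cylinders over \(\gamma\) and over \(\bar{\gamma}\) — again contradicting stability. The only remaining possibility is \(P_N^{-}=T\), which forces \(N=1\) and \(P_0=T\); then \(\bottomlevel{F}\) is a single finite-energy plane asymptotic to the simple orbit \(\gamma_1\) underlying the unique negative puncture of \(F^{(1)}\). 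But then \(\bottomlevel{F}\cdot 0_{S^2}\) is odd — it differs by a multiple of \(0_{S^2}\cdot 0_{S^2}=-2\) from the value \(\pm 1\) attained by a \(\cotangentPlanesModuli{}_\pm\)-plane asymptotic to \(\gamma_1\) — hence non-zero; on the other hand the zero section is disjoint from \(\toplevel{X}\) and from every symplectisation level, so this intersection number must equal \([\bar{F}]\cdot[L]=(H-\fibrationSectionL{})\cdot(\mathcal{E}_i-\mathcal{E}_j)=0\), a contradiction. Therefore \(N=0\).

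The step I expect to be the main obstacle is precisely the last one: excluding the ``pair of pants'' symplectisation level \(F^{(1)}\) that can occur only because \(\toplevel{F}\) is disconnected (and hence is absent in Lemma~\ref{lem:connectedTopImpliesNoIntermediateLevels}); the essential inputs there are Proposition~\ref{prop:cotangent-curves-intersect-planes} and the homological identity \(F\cdot L=0\).
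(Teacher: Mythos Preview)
Your proof is correct and takes a genuinely different route from the paper's. The paper argues via Siefring intersection theory: assuming a non-trivial symplectisation component \(f\) with two positive punctures \(\gamma,\bar{\gamma}\), it compares the building \(F\) with the exceptional building \(\building{\Eplus}\) (extended by trivial cylinders). Since \(i_U(\toplevel{F},\toplevel{(\building{\Eplus})})=-1\), the Morse-Bott contribution \(i_\mathrm{MB}^+(\gamma,\gamma)=1\) at the shared asymptote forces \(i_U(\bottomlevel{F},\bottomlevel{(\building{\Eplus})})\le 0\); by Corollary~\ref{cor:bottomLevelNonNegInts} this vanishes, so \(\bottomlevel{F}\) would consist of same-parity planes, contradicting \([\bar F]\cdot L=0\).

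Your argument instead stays at the level of periods and \(\mathrm{d}\lambda\)-energy, using only the distinctness \(\gamma\neq\bar{\gamma}\) (from Section~\ref{subsub:asymptotics}) to split any zero-energy level into two trivial cylinders, and handling the residual pair-of-pants case by a direct homological parity computation of \(\bottomlevel{F}\cdot 0_{S^2}\). This is more elementary: it avoids the punctured intersection numbers \(i_U,i_C\) and the Morse-Bott contribution calculus entirely. The paper's approach, on the other hand, is more uniform with the surrounding analysis (it reuses the same intersection machinery that drives Proposition~\ref{prop:sft-limit-fibre-curve} and Lemma~\ref{lem:cylindersZeroIntersection}), and does not need the separate case split on \(P_N^-\). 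One small simplification available to you: by Hind's classification (Section~\ref{sub:cotangentPlanes}) your single plane \(\bottomlevel{F}\) in the \(P_N^-=T\) case already lies in \(\cotangentPlanesModuli_\pm\), so \(\bottomlevel{F}\cdot L=\pm1\) directly, without the relative-homology parity argument.
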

\begin{proof}
    Lemma~\ref{lem:top-level-fibre-coincides-with-planes} shows that
\(\toplevel{F} = \toplevel{(\building{\Eplus})} +
\toplevel{(\building{\FEplus})}\) and so any non-trivial component \(f\) of
\(F^{(N)}\) (the level adjacent to the top level \(\toplevel{F}\)) must have
exactly two positive punctures. Indeed, if it only had one, then the argument
of Lemma~\ref{lem:connectedTopImpliesNoIntermediateLevels} would imply that it
is a trivial cylinder, and thus \(F^{(N)}\) would be a union of trivial
cylinders, which is ruled out by the stability condition. Therefore, \(f\) has
exactly two positive punctures: \(\gamma\) and \(\bar{\gamma}\) of
\(\toplevel{(\building{\Eplus})}\) and \(\toplevel{(\building{\FEplus})}\).

    Since \(\toplevel{F} = \toplevel{(\building{\Eplus})} +
\toplevel{(\building{\FEplus})}\), we have that \(i_U(\toplevel{F},
\toplevel{(\building{\Eplus})}) = -1\), and so, to balance the equation
\(i(F,\Eplus) = [\bar{F}] \cdot [\Eplus] = 0\), there must be a positive
intersection between \(F\) and \(\building{\Eplus}\) in a different level. The
previous paragraph implies that this positive intersection is eaten up by the
non-trivial level \(F^{(N)}\), which then forces \(\bottomlevel{F}\) to take
on an illegal form. Indeed, extend \(\building{\Eplus}\) by trivial
intermediate levels, then, as \((\building{\Eplus})^{(N)}\) and \(f\) are
geometrically distinct, their intersection number is bounded below by the
Morse-Bott contributions from their asymptotic orbits, which are computed in
Lemma~\ref{lem:mbContributions}:
\[
    i_U(f,(\building{\Eplus})^{(N)}) \ge i_\mathrm{MB}^+(\gamma,\gamma) = 1.
\]
Combining this with Remark~\ref{rem:triv-cyl-self-int}, which says that
any further intersections appearing in intermediate levels are non-negative,
we obtain that
\begin{align*}
    0 = i(F,\Eplus) &\ge i_U(\toplevel{F}, \toplevel{(\building{\Eplus})}) +
    i_U(f,(\building{\Eplus})^{(N)}) +
    i_U(\bottomlevel{F}, \bottomlevel{(\building{\Eplus})}) \\
      &\ge i_U(\toplevel{F}, \toplevel{(\building{\Eplus})}) + 1 +
      i_U(\bottomlevel{F}, \bottomlevel{(\building{\Eplus})}) \\
      &= i_U(\bottomlevel{F}, \bottomlevel{(\building{\Eplus})}),
\end{align*}
and therefore \(i_U(\bottomlevel{F}, \bottomlevel{(\building{\Eplus})}) = 0\). Since
\(\bottomlevel{F} \ne \emptyset\),
Lemma~\ref{lem:cotangent-planes-intersection} implies that \(\bottomlevel{F}\)
must consist of covers of \(J\)-holomorphic planes of the same parity as
\(\bottomlevel{(\building{\Eplus})}\), which contradicts \([\bar{F}] \cdot L =
0\). Hence, there are no non-trivial symplectisation levels, and so the result
is proved.
\end{proof}

The following is the main result of the analysis of \(c_1=2\) buildings.
\begin{proposition}[]
    \label{prop:sft-limit-fibre-curve}
    Let \(F\) be a limiting building of a sequence of curves in the class
\([\bar{F}]=H-S \in H_2(X)\) of a fibre such that \(\bottomlevel{F} \ne
\emptyset\). Then \(F = ( \toplevel{F}, \bottomlevel{F} )\) where
\(\toplevel{F} = \toplevel{(\building{\Eplus})} +
\toplevel{(\building{\FEplus})}\) and \(\bottomlevel{F}\) is either
\begin{enumerate}
    \item one of the pairs of nodal curves \(\bottomlevel{(\building{\Eplus})} +
	\bottomlevel{(\building{\FEplus})}\) or \(\bottomlevel{(\building{\Eminus})} +
	\bottomlevel{(\building{\FEminus})}\), with \(\building{\Eplus},
	\building{\FEplus}, \building{\Eminus}, \building{\FEminus}\) as in
	section~\ref{subsub:asymptotics}; or,
    \item a smooth cylinder with the same asymptotes \(\{\gamma,
	\bar{\gamma}\}\) as \(\toplevel{(\building{\Eplus})} +
	\toplevel{(\building{\FEplus})}\).
\end{enumerate}
\end{proposition}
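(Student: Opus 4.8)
The plan is to combine the two preceding lemmas with an intersection-theoretic bookkeeping argument carried out in the bottom level. First I would invoke Lemma~\ref{lem:top-level-fibre-coincides-with-planes} and Lemma~\ref{lem:fibre-no-intermediate-levels}: since $\bottomlevel F\ne\emptyset$, the top level is exactly $\toplevel F=\toplevel{(\building{\Eplus})}\sqcup\toplevel{(\building{\FEplus})}$, a disjoint union of two planes with conjugate simple asymptotes $\{\gamma,\bar\gamma\}$, and the building has no symplectisation levels, so $F=(\toplevel F,\bottomlevel F)$. Matching breaking orbits across the single neck then shows $\bottomlevel F$ is a genus-zero curve in $\bottomlevel X\cong T^*S^2$ with exactly two positive punctures, one asymptotic to $\gamma$ and one to $\bar\gamma$, and no negative ones; since $\bottomlevel F$ is a limit of connected curves, the glued domain $\bar\Sigma$ is a connected genus-zero (nodal) curve.

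Next I would record the intersection numbers that constrain $\bottomlevel F$. Using $\selfastint{\bottomlevel{(\building{\Eplus})}}=\selfastint{\bottomlevel{(\building{\FEplus})}}=0$ (Lemma~\ref{lem:cotangent-planes-intersection}(2)) together with $\selfastint{\toplevel{(\building{\Eplus})}}+\selfastint{\bottomlevel{(\building{\Eplus})}}=(\Eplus)^2=-1$, the analogous identity for $\FEplus$, and the vanishing cross term $i_U(\toplevel{(\building{\Eplus})},\toplevel{(\building{\FEplus})})=0$ established before Corollary~\ref{cor:bottomLevelPlanesDistinct}, I obtain $\selfastint{\toplevel F}=-2$; hence by Theorem~\ref{thm:intersection-cts}, Proposition~\ref{prop:intersection-additive-covers}, and $[\bar F]^2=0$, we get $\selfastint{\bottomlevel F}=2$. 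The same computation applied to $i(F,\building{\Eplus})=[\bar F]\cdot\Eplus=0$ and $i(F,\building{\FEplus})=[\bar F]\cdot\FEplus=0$ yields
\[
    i_U(\bottomlevel F,\bottomlevel{(\building{\Eplus})})=1=i_U(\bottomlevel F,\bottomlevel{(\building{\FEplus})}),
\]
where $\bottomlevel{(\building{\Eplus})}\in\cotangentPlanesModuli_+$ and $\bottomlevel{(\building{\FEplus})}\in\cotangentPlanesModuli_-$.

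Then I would classify $\bottomlevel F$. Each non-constant component is genus zero with at most two punctures; by Proposition~\ref{prop:cotangent-curves-intersect-planes} it is either a plane in $\cotangentPlanesModuli_\pm$ — it cannot be a non-trivial multiple cover, since its asymptote must be one of the \emph{simple} orbits $\gamma,\bar\gamma$ — or a curve meeting \emph{both} families positively, and a short stability argument excludes ghost components. Feeding the two identities above and Lemma~\ref{lem:cotangent-planes-intersection}(1) ($i_U(P_+,P_-)=1$) into a case analysis on the components of $\bottomlevel F$ leaves precisely two possibilities: either $\bottomlevel F$ consists of one $\cotangentPlanesModuli_+$ plane and one $\cotangentPlanesModuli_-$ plane, which must be joined by a node for $\bar\Sigma$ to be connected, and matching the asymptotic orbits via Corollary~\ref{cor:bottomLevelPlanesDistinct} identifies the pair as $\bottomlevel{(\building{\Eplus})}+\bottomlevel{(\building{\FEplus})}$ or $\bottomlevel{(\building{\Eminus})}+\bottomlevel{(\building{\FEminus})}$ (case (1)); or $\bottomlevel F$ is a single irreducible component carrying both punctures, i.e.\ a smooth cylinder with asymptotes $\{\gamma,\bar\gamma\}$ (case (2)); here $\selfastint{\bottomlevel F}=2$ together with $c_N(\bottomlevel F)=2$ even forces the cylinder to be embedded via the adjunction formula, a bonus we need not use.

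The main obstacle I expect is the case analysis ruling out the "mixed" configurations — a transverse component accompanied by extra planes, or a pair of same-parity planes — and correctly matching which $\cotangentPlanesModuli_\pm$ plane is asymptotic to which of $\gamma,\bar\gamma$; both are forced by the two intersection identities of the middle step, so the real content lies in getting those two numbers right. The ghost-component exclusion and the observation that connectedness of $\bar\Sigma$ promotes the pair of planes to a genuine nodal curve are routine but should be spelled out.
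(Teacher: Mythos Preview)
Your proposal is correct, but it takes a more computational route than the paper. After invoking the two preceding lemmas exactly as you do, the paper bypasses the intersection bookkeeping entirely: since $\bottomlevel F$ is a connected genus-zero curve with exactly two (simple) positive punctures, its domain is a tree of spheres with two marked points, hence a \emph{chain}; exactness of $T^*S^2$ forbids non-constant closed components, and stability then forces the chain to collapse to either a smooth cylinder or a single nodal pair of planes. The nodal pair is then pinned down directly by the conditions $(P_1+P_2)\cdot L=0$ and asymptotic set $\{\gamma,\bar\gamma\}$, without ever computing $i_U(\bottomlevel F,\bottomlevel{(\building{\Eplus})})$ or $\selfastint{\bottomlevel F}$.

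Your approach trades this topological shortcut for the identities $i_U(\bottomlevel F,\bottomlevel{(\building{\Eplusminus})})=1$ and $\selfastint{\bottomlevel F}=2$, which are correct (indeed the latter reappears as Lemma~\ref{lem:cylindersZeroIntersection}) and do force the dichotomy once you run the case analysis. The cost is that the ``mixed'' cases you flag as the main obstacle---a non-plane component accompanied by a $\cotangentPlanesModuli_\pm$ plane, or two same-parity planes---genuinely require the intersection constraints to exclude, whereas in the paper's argument they never arise because the chain structure and exactness have already reduced you to at most two non-constant components, each a plane. Your ghost-exclusion via stability is also slightly more delicate than you suggest: ghosts with three or more nodes are stable, and it is the chain topology (only two leaves) that rules them out, which is again closer to the paper's reasoning than to pure intersection theory.
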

\begin{proof}
    The previous results show that \(F\) is a building with only a top and
bottom level, and that \(\toplevel{F} = \toplevel{(\building{\Eplus})} +
\toplevel{(\building{\FEplus})}\). This tells us that the positive asymptotics
of \(\bottomlevel{F}\) are exactly \(\gamma\) and \(\bar{\gamma}\) of
\(\toplevel{(\building{\Eplus})}\) and \(\toplevel{(\building{\FEplus})}\).
Thus, \(\bottomlevel{F}\) is a connected genus 0 holomorphic curve with
exactly two positive punctures. Therefore, either it is a smooth cylinder, or
it has nodes forming a chain of closed spheres connecting two
planes. However, there are no closed holomorphic
spheres in \(T^*L\) since this is an exact symplectic manifold, and so there
can be at most one node.

Observe that a nodal pair \(P_1 + P_2\) of \(J\)-holomorphic planes
satisfying \((P_1 + P_2) \cdot L = 0\) and having asymptotic orbits
\(\{\gamma,\bar{\gamma}\}\) must be exactly one of the pairs
\(\bottomlevel{(\building{\Eplus})} + \bottomlevel{(\building{\FEplus})}\) or
\(\bottomlevel{(\building{\Eminus})} + \bottomlevel{(\building{\FEminus})}\).
Therefore, either we are in case (1), or \(\bottomlevel{F}\) passes through a
point in \(T^*L\) not contained in the images of the curves
\(\bottomlevel{(\building{\Eplus})}, \bottomlevel{(\building{\FEplus})},
    \bottomlevel{(\building{\Eminus})},\) or
\(\bottomlevel{(\building{\FEminus})}\), and is thus a smooth cylinder.
\end{proof}

\subsection{Constructing the foliation}
\label{sec:foliationConstruction}
In this section we use the limit analysis of
section~\ref{sec:buildingAnalysis} to construct a \(\JTS\)-holomorphic
foliation of \(T^*S^2\) by cylinders. The process also picks out a particular
neck stretching sequence of almost complex structures \(J_k\) such that all
the curves of interest converge. First, suppose that we have a neck stretching
sequence \(J_k\), and a countable collection of sequences \((f_k^m)\) of
\(J_k\)-holomorphic curves with uniformly bounded energy. One then applies a
diagonal argument, a countable generalisation of that discussed in
Remark~\ref{rem:repeatedSFTCompactness}, using the SFT compactness theorem to
extract a subsequence \(J_k\) such that, for all \(m\), the sequences
\((f_k^m)\) converge to \(J^*\)-holomorphic buildings as \(k \to
\infty\).

We apply this process to the following sequences. Recall the \(-1\)-classes
\(\{\mathcal{E}_j \mid 1 \le j \le d\}\) defined in
Lemma~\ref{lem:compactify},
where classes of the form \(\mathcal{E}_i - \mathcal{E}_j\) support the
Lagrangian spheres in \(B_{d,p,q}\). The critical points
of the Lefschetz fibrations \(\pi_{J_k} : \XJ{J_k} \to \C\) correspond exactly to
the unique intersection points of \(\mathcal{E}_j \cdot (F - \mathcal{E}_j)\).
This allows us to partition the set of critical points into so-called
\emph{relevant} and \emph{irrelevant} sets determined by whether the
intersection \(\mathcal{E}_j \cdot L\) is non-zero or not. That is, the
relevant critical points correspond to the classes \(\{\Eplus,\Eminus\}\) as
defined in \eqref{eq:EplusMinus}, and the irrelevant ones correspond to the
remaining \(\mathcal{E}_j\) classes. Recall also the class \(E_n\), which
represents (the underlying simple curve of) a component of the exotic curve
\(\infinityCurve{J}\) (see Corollary~\ref{cor:c00FmoduliDescription}). Even
though curves in this class do not correspond to Lefschetz critical points, we
shall also call them irrelevant. With this in mind, for each \(1 \le j \le
d\), take the unique sequences of \(J_k\)-holomorphic \(-1\)-curves
\begin{align*}
    e_k^j &\in \mathcal{M}_{0,0}(\mathcal{E}_j;J_k),\\
    g_k^j &\in \mathcal{M}_{0,0}(F-\mathcal{E}_j;J_k),\\
    \varepsilon_k &\in \mathcal{M}_{0,0}(E_n;J_k),
\end{align*}
along with a countable number of sequences of fibre curves
\[
    f_k^m \in \mathcal{M}_{0,0}(F;J_k)
\]
determined by point constraints for some fixed dense set
\(\{x^m \in \weinsteinNbhd\}\) in the Weinstein neighbourhood
\(\weinsteinNbhd\) of \(L\). We extract a subsequence \(J_k\) for
which all these sequences converge to the \(J^*\)-holomorphic buildings
\(F^m\).

Note that Proposition~\ref{prop:c1=1-sft-limit} implies that, for sufficiently
large \(k\), the images of the \emph{irrelevant} curves stay bounded away from
\(L\). Moreover, a further diagonal argument ensures that the convergence of
all curves is monotonic, which will be useful in
Section~\ref{sec:convergenceOfMatchingPaths}.

To upgrade the dense set \(\{\bottomlevel{(F^m)}\}\) of
\(\JTS\)-holomorphic curves in \(T^*S^2\) to a foliation we apply a
bubbling argument, which is inspired by
\cite[\S{}6]{HWZ03threeSphere}.\footnote{The argument of
\cite[\S{}6]{HWZ03threeSphere} is much more complicated than what is required
for our situation.} The idea is choose \(x \in T^*S^2\) and to take a
subsequence of \((x^m)\) converging to \(x\) and analyse the corresponding
limits of the curves \(\bottomlevel{(F^m)}\) under SFT compactness. However,
this time the SFT compactness theorem is that relating to manifolds with
cylindrical ends \cite[Theorem~10.2]{behwzSFTCompactness}. To that
end, denote the moduli space\footnote{More precisely, this is the moduli space of
    genus 0 curves with exactly two positive asymptotes that represent the
    homology class
    \[
	(0,1,1) \in H_2(T^*S^2,\gamma\sqcup\bar{\gamma}) \cong
	H_2(T^*S^2) \oplus H_1(\gamma\sqcup\bar{\gamma}) \cong \langle 0_{S^2},
	\gamma,\bar{\gamma}\rangle \cong \Z^3.
    \]
} of \(\bottomlevel{J}\)-holomorphic cylinders with
fixed positive asymptotes \(\{\gamma,\bar{\gamma}\}\) by
\(\cylindersModuli\). We first compute the
self-intersection of a cylinder in \(\cylindersModuli\):

\begin{lemma}[]
    \label{lem:cylindersZeroIntersection}
    The \(\JTS\)-holomorphic cylinders in \(\cylindersModuli\) have
zero \emph{constrained} self-intersection. In other words, for a building in
the class \([\bar{F}]=H-S\) arising from
Proposition~\ref{prop:sft-limit-fibre-curve} we have
\[
    i_U(\bottomlevel{F},\bottomlevel{F}) = 2 \qquad\text{and}\qquad
    i_C(\bottomlevel{F},\bottomlevel{F}) = 0.
\]
\end{lemma}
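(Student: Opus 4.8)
The plan is to read both numbers off the intersection theory of the limit building, using additivity of the intersection product over the levels together with the self-intersections of the top-level planes that were already computed. Fix a limit building $F = (\toplevel{F},\bottomlevel{F})$ in the class $[\bar F] = H - S$ arising from Proposition~\ref{prop:sft-limit-fibre-curve} with $\bottomlevel{F}\neq\emptyset$. By Lemma~\ref{lem:fibre-no-intermediate-levels} it has no symplectisation levels, and by Lemma~\ref{lem:top-level-fibre-coincides-with-planes} its top level splits as the disjoint union $\toplevel{F} = \toplevel{(\building{\Eplus})}\sqcup\toplevel{(\building{\FEplus})}$ of the two planes from Section~\ref{subsub:asymptotics}, whose asymptotic orbits $\gamma$ and $\bar\gamma$ are conjugate.

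First I would compute $\selfastint{\bottomlevel{F}} = 2$. Since $(H-S)^2 = 0$, Theorem~\ref{thm:intersection-cts}, Proposition~\ref{prop:intersection-additive-covers}(1), and additivity of $i_U$ over disjoint unions (the two pieces map into the disjoint components $\toplevel{X}$, $\bottomlevel{X}$) give
\[
0 = i(F,F) = \selfastint{\toplevel{F}} + \selfastint{\bottomlevel{F}}.
\]
Expanding $\selfastint{\toplevel{F}}$ over the two top-level planes, the mixed term $i_U(\toplevel{(\building{\Eplus})},\toplevel{(\building{\FEplus})})$ vanishes (shown in Section~\ref{sub:analysis-buildings-c1=1}), and each self-term equals $-1$: applying \eqref{eq:top-and-bottom-int-equals--1} to the $c_1 = 1$ building $\building{\Eplus}$ gives $\selfastint{\toplevel{(\building{\Eplus})}} + \selfastint{\bottomlevel{(\building{\Eplus})}} = (\Eplus)^2 = -1$, while Corollary~\ref{cor:lim-nodal-fibres} identifies $\bottomlevel{(\building{\Eplus})}$ with a single plane in $\cotangentPlanesModuli{}_+$, so $\selfastint{\bottomlevel{(\building{\Eplus})}} = 0$ by Lemma~\ref{lem:cotangent-planes-intersection}(2); the same applies to $\building{\FEplus}$. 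Hence $\selfastint{\toplevel{F}} = -1 + 0 + (-1) = -2$, and therefore $\selfastint{\bottomlevel{F}} = 2$.

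Next I would extract $i_C(\bottomlevel{F},\bottomlevel{F}) = 0$ from the identity $i_U(u,v) = i_C(u,v) + \sum_{(z,w)\in\Gamma_u^\pm\times\Gamma_v^\pm} i_\mathrm{MB}^\pm(\gamma_z\mp\delta,\gamma_w\mp\delta)$ with $u = v = \bottomlevel{F}$. The cylinder $\bottomlevel{F}$ has exactly two punctures, both positive and simply covered, asymptotic to $\gamma$ and $\bar\gamma$; these are geometrically \emph{distinct} orbits, which is precisely what the construction in Section~\ref{subsub:asymptotics} arranged ($\bar\gamma = \gamma' \neq \gamma$). Thus the two off-diagonal pairs contribute nothing, since the numbers $\Omega^\tau_\pm$ vanish for covers of distinct simple orbits, whereas each of the two diagonal pairs contributes $i_\mathrm{MB}^+(\gamma - \delta,\gamma - \delta) = \min\{1,1\} = 1$ by Lemma~\ref{lem:mbContributions}. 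The total correction is $2$, so $i_C(\bottomlevel{F},\bottomlevel{F}) = 2 - 2 = 0$.

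Given the earlier results the argument is essentially bookkeeping; the only step that genuinely uses the preceding machinery is the distinctness $\gamma \neq \bar\gamma$, which is the whole point of the special almost complex structure built in Section~\ref{subsub:asymptotics}. The subtleties worth checking are that the asymptotes of $\bottomlevel{F}$ are simply covered --- inherited from the top-level planes --- which is what makes Lemma~\ref{lem:mbContributions} applicable with $k = l = 1$, and that additivity of $i_U$ over disjoint unions is being invoked legitimately both for $\toplevel{F}\sqcup\bottomlevel{F}$ and for the two-plane top level.
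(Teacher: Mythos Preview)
Your proof is correct and follows essentially the same route as the paper: compute $\selfastint{\toplevel{F}} = -2$ from the two plane self-intersections (and the vanishing cross term), use additivity over the levels to get $\selfastint{\bottomlevel{F}} = 2$, then subtract the two Morse--Bott contributions $i_\mathrm{MB}^+(\gamma,\gamma) = i_\mathrm{MB}^+(\bar\gamma,\bar\gamma) = 1$ to obtain $i_C(\bottomlevel{F},\bottomlevel{F}) = 0$. The only difference is cosmetic: the paper states $\selfastint{\toplevel{(\building{\Eplus})}} = -1$ directly (it is implicit from \eqref{eq:top-and-bottom-int-equals--1} and Lemma~\ref{lem:cotangent-planes-intersection}(2)), whereas you spell out that derivation.
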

\begin{proof}
    Since \(\toplevel{F} = \toplevel{(\building{\Eplus})} +
\toplevel{(\building{\FEplus})}\) and
\[
    i_U(\toplevel{(\building{\Eplus})}, \toplevel{(\building{\Eplus})}) = -1 =
    i_U(\toplevel{(\building{\FEplus})}, \toplevel{(\building{\FEplus})}),
\]
we have that
\begin{align*}
    0 &= [\bar{F}]^2 \\
      &= i_U(\toplevel{(\building{\Eplus})}, \toplevel{(\building{\Eplus})}) +
    i_U(\toplevel{(\building{\FEplus})}, \toplevel{(\building{\FEplus})}) +
    i_U(\bottomlevel{F},\bottomlevel{F}) \\
      &= i_U(\bottomlevel{F},\bottomlevel{F}) - 2,
\end{align*}
and so, \(i_U(\bottomlevel{F},\bottomlevel{F}) = 2\). Then, since
\[
    i_U(\bottomlevel{F},\bottomlevel{F}) =
    i_C(\bottomlevel{F},\bottomlevel{F}) + i_\mathrm{MB}^+(\gamma,\gamma) +
    i_\mathrm{MB}^+(\bar{\gamma},\bar{\gamma}) =
    i_C(\bottomlevel{F},\bottomlevel{F}) + 2,
\]
we obtain the result.
\end{proof}

Applying the SFT compactness theorem to sequences of curves in
\(\cylindersModuli\) yields holomorphic buildings of height \(k_-|1|k_+\), as
in Section~8 of \cite{behwzSFTCompactness}. Since \(T^*S^2\) is a manifold
with no negative cylindrical ends, we have that \(k_-=0\), so the resultant
buildings have a \emph{main level} \(F^{(0)} : \Sigma^{(0)} \to T^*S^2\) and
\(k_+\) \emph{upper levels} \(F^{(\nu)} : \Sigma^{(\nu)} \to \R \times
T_1^*S^2\).
\begin{lemma}[]
    A holomorphic building with non-empty main level in the SFT
compactification of \(\cylindersModuli\) has no non-trivial upper levels, and
so, is given by a curve in \(T^*S^2\). Moreover, this curve must be exactly
one of those in Proposition~\ref{prop:sft-limit-fibre-curve}.
\end{lemma}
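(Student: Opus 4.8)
The statement has two parts: (i) a building with non-empty main level in the SFT compactification of $\cylindersModuli$ has no non-trivial upper levels, hence is a single curve in $T^*S^2$; and (ii) that curve must be one of those listed in Proposition~\ref{prop:sft-limit-fibre-curve}. The plan is to mirror, almost verbatim, the arguments already used for limits of fibre curves under neck stretching, since a sequence of cylinders in $\cylindersModuli$ is essentially the bottom-level piece of such a limit. First I would set up notation: let $F^\bullet = (F^{(0)}, F^{(1)}, \ldots, F^{(k_+)})$ be such a building, with $F^{(0)} : \Sigma^{(0)} \to T^*S^2$ the main level and the $F^{(\nu)}$ living in $\R \times T_1^*S^2$, all levels having positive asymptotics that glue to $\{\gamma, \bar\gamma\}$ at the top.

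For part (i), I would argue exactly as in Lemma~\ref{lem:connectedTopImpliesNoIntermediateLevels} and Lemma~\ref{lem:fibre-no-intermediate-levels}: the genus is $0$, and since the whole building has two positive punctures asymptotic to $\gamma$ and $\bar\gamma$ (which are geometrically distinct simple orbits), any component of the top non-trivial symplectisation level $F^{(k_+)}$ has either one positive puncture — forcing it to be a trivial cylinder by the maximum principle and the $E_{d\lambda}$-energy computation (all simple Reeb orbits in $T_1^*S^2$ have equal period, so positive and negative periods must balance) — or two positive punctures $\gamma, \bar\gamma$. In the first case the whole of $F^{(k_+)}$ is a union of trivial cylinders, contradicting stability, so it is the second. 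But then I would reproduce the intersection-theoretic contradiction of Lemma~\ref{lem:fibre-no-intermediate-levels}: extending $\building{\Eplus}$ (or rather its bottom level) by trivial intermediate levels, positivity of intersections together with the Morse–Bott contribution $i_\mathrm{MB}^+(\gamma,\gamma) = 1$ from Lemma~\ref{lem:mbContributions} and the non-negativity of intersections with trivial cylinders (Remark~\ref{rem:triv-cyl-self-int}) would force $i_U(\bottomlevel{F}, \bottomlevel{(\building{\Eplus})}) = 0$ while $\bottomlevel{F} \neq \emptyset$, contradicting $[\bar F] \cdot L = 0$ via Lemma~\ref{lem:cotangent-planes-intersection} and Proposition~\ref{prop:cotangent-curves-intersect-planes}. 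Hence $k_+ = 0$ and $F^\bullet$ is a single (possibly nodal) curve in $T^*S^2$ with positive asymptotics $\{\gamma, \bar\gamma\}$.

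For part (ii), the curve $F^{(0)}$ is a connected genus $0$ $\JTS$-holomorphic curve with exactly two positive punctures asymptotic to $\{\gamma, \bar\gamma\}$, and satisfies $F^{(0)} \cdot 0_{S^2} = 0$ (this is the homology constraint defining $\cylindersModuli$, inherited from $[\bar F] \cdot L = 0$). This is precisely the data controlled by Proposition~\ref{prop:sft-limit-fibre-curve}: by exactness of $T^*S^2$ there are no closed holomorphic spheres, so $F^{(0)}$ is either a smooth cylinder or a single-node chain of two planes; and a nodal pair $P_1 + P_2$ of planes with asymptotics $\{\gamma,\bar\gamma\}$ meeting $0_{S^2}$ in total zero times must — by uniqueness of the $\cotangentPlanesModuli_\pm$ plane asymptotic to a given orbit (Proposition~\ref{prop:cotangent-curves-intersect-planes}) — be one of $\bottomlevel{(\building{\Eplus})} + \bottomlevel{(\building{\FEplus})}$ or $\bottomlevel{(\building{\Eminus})} + \bottomlevel{(\building{\FEminus})}$. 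In the smooth case, the same classification argument identifies it as one of the smooth cylinders of Proposition~\ref{prop:sft-limit-fibre-curve}(2).

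**Main obstacle.** The delicate point is checking that a building in the SFT compactification of $\cylindersModuli$ genuinely has the same homological and asymptotic bookkeeping as the bottom level of a limiting fibre building, so that the intersection-theoretic machinery of Section~\ref{sec:buildingAnalysis} applies directly. Concretely, I must verify that the positive asymptotics are exactly $\{\gamma, \bar\gamma\}$ (not some multiple cover or some other orbits) — this follows because $\cylindersModuli$ is by definition the space of cylinders with fixed asymptotes $\{\gamma, \bar\gamma\}$ and SFT compactness preserves asymptotics — and that trivial-cylinder self-intersection and Morse–Bott contribution computations (Lemmata~\ref{lem:trivialCylinderIntersections}, \ref{lem:mbContributions}) transfer to buildings with upper levels in $\R \times T_1^*S^2$; these are already established in the earlier sections, so the work is bookkeeping rather than new analysis. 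I expect the write-up to be short, explicitly flagging that it is ``the same argument as Lemma~\ref{lem:fibre-no-intermediate-levels} and Proposition~\ref{prop:sft-limit-fibre-curve}, applied to the bottom level in isolation.''
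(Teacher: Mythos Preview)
Your plan is correct and close in spirit to the paper's, but the paper reaches the contradiction in part (i) by a slightly different route. You intersect the building with the single plane $P_+^\gamma = \bottomlevel{(\building{\Eplus})}$ (extended by trivial cylinders), conclude that the main level $F^{(0)}$ must consist of covers of $\cotangentPlanesModuli_+$ planes, and then invoke the homological constraint $F^{(0)}\cdot 0_{S^2}=0$ for the contradiction. The paper instead intersects with the \emph{pair} $P_\pm^\gamma + P_\mp^{\bar\gamma}$ (total intersection $2$, upper level contributes $\ge i_\mathrm{MB}^+(\gamma,\gamma)+i_\mathrm{MB}^+(\bar\gamma,\bar\gamma)=2$), obtaining $i_U(F^{(0)},P_\pm^\gamma)=0=i_U(F^{(0)},P_\pm^{\bar\gamma})$ for \emph{both} sign choices; hence $F^{(0)}$ would have to cover all four distinct planes $P_\pm^\gamma,P_\pm^{\bar\gamma}$ simultaneously, which is absurd without ever appealing to the $L$-intersection. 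Both arguments are short and use the same ingredients; the paper's is marginally cleaner in that it does not need the extra homology constraint.

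One bookkeeping point to watch when you ``reproduce'' Lemma~\ref{lem:fibre-no-intermediate-levels}: in that lemma the total intersection $i(F,\building{\Eplus})=[\bar F]\cdot[\Eplus]=0$ is balanced by a $-1$ from the top level in $\toplevel{X}$. Here there is no such level; the total intersection of a cylinder in $\cylindersModuli$ with $P_+^\gamma$ is $1$ (by additivity and Lemma~\ref{lem:cotangent-planes-intersection}), so the chain of inequalities reads $1\ge i_U(F^{(k_+)},u_\gamma)+i_U(F^{(0)},P_+^\gamma)\ge 1+i_U(F^{(0)},P_+^\gamma)$, which still gives $i_U(F^{(0)},P_+^\gamma)\le 0$ as you need. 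Part (ii) is identical to the paper's.
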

\begin{proof}
    Denote the building by \(F = (F^{(0)},\ldots,F^{(k_+)})\). Its upper-most
level \(F^{(k_+)} : \Sigma^{(N)} \to \R \times T_1^*S^2\) is a
\(J^{(k_+)}\)-holomorphic curve in a symplectisation with positive asymptotes
given by \(\{\gamma,\bar{\gamma}\}\). If \(F^{(k_+)}\) is not the union
of trivial cylinders \(u_\gamma\sqcup u_{\bar{\gamma}}\), then it must
intersect them positively:
\[
    i_U(F^{(k_+)},u_\gamma\sqcup u_{\bar{\gamma}}) \ge
    i_\mathrm{MB}^+(\gamma,\gamma) + i_\mathrm{MB}^+(\bar{\gamma},\bar{\gamma})
    =2.
\]
Let \(P_\pm^\gamma \in \cotangentPlanesModuli_\pm\) denote the unique
\(\JTS\)-holomorphic plane asymptotic to \(\gamma\). Since \(F\) is the limit
of cylinders in \(\cylindersModuli\), we have that (extending \(P_\pm^\gamma +
P_\mp^{\bar{\gamma}}\) by trivial upper levels)
\[
    2 = i(F,P_\pm^\gamma + P_\mp^{\bar{\gamma}}) \ge i_U(F^{(0)},P_\pm^\gamma
    + P_\mp^{\bar{\gamma}}) + i_U(F^{(k_+)},u_\gamma\sqcup u_{\bar{\gamma}}),
\]
which implies that
\[
    i_U(F^{(0)},P_\pm^\gamma + P_\mp^{\bar{\gamma}}) \le 0.
\]
In light of Corollary~\ref{cor:bottomLevelNonNegInts} and
Proposition~\ref{prop:cotangent-curves-intersect-planes}, this yields
\[
    i_U(F^{(0)},P_\pm^\gamma) = 0 = i_U(F^{(0)},P_\pm^{\bar{\gamma}}),
\]
which is only possible if \(F^{(0)}\) is simultaneously a cover of all four
planes \(P_\pm^\gamma\), \(P_\pm^{\bar{\gamma}}\). However, since these planes
are distinct, this is impossible.

We have shown that \(F\) has no non-trivial upper levels, and so \(F^{(0)}\)
is a curve in \(T^*S^2\) with exactly two asymptotes
\(\{\gamma,\bar{\gamma}\}\). Therefore, we can apply 
Proposition~\ref{prop:sft-limit-fibre-curve} to complete the proof.
\end{proof}

We write \(\cylindersModuliBar\) to denote the subset of the SFT
compactification of \(\cylindersModuli\) consisting of buildings with non-empty
main level. This is a topological surface homeomorphic to the complex plane
\(\C\). Moreover, just as in Section~\ref{sec:compactLefFib},
\(\cylindersModuliBar\) can be equipped with a smooth structure (indeed the
unique one) that makes the natural map
\[
    \TSlefFib : T^*S^2 \to \cylindersModuliBar : x \mapsto \text{the curve
	passing through } x
\]
into a Lefschetz fibration with exactly two critical points corresponding to
the intersection points of the two pairs of planes \(P_\pm^\gamma +
P_\mp^{\bar{\gamma}}\).

The next goal is to prove that \(L\) is fibred by circles in the foliation
\(\cylindersModuliBar\), and thus the image of \(L\) under \(\TSlefFib\) is a
smooth path.

\begin{proposition}[]
    \label{prop:cylindersIntersectAlongCircles}
    Let \(u : (\C^\times,j) \to T^*S^2\) be a smooth, properly
embedded\footnote{All of the curves in \(\cylindersModuli\) are properly
embedded. Properness follows from their asymptotic behaviour, and embeddedness
follows from the adjunction formula \cite[\S{}4.1]{wendl2010autoTrans}.}
\(\JTS\)-holomorphic cylinder with conjugate asymptotic Reeb orbits
\(\{\gamma,\bar{\gamma}\}\), as in scenario (2) of
Proposition~\ref{prop:sft-limit-fibre-curve}, that intersects the zero section
\(L \subset T^*L\). Then this intersection is along a circle contained in
\(L\).
\end{proposition}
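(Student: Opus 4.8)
The plan is to exploit the anti-invariance of $\JTS$ under the fibre-wise $(-1)$-map $\varphi$, whose fixed-point locus is precisely the zero section $L = 0_{S^2}$. First I would form the curve $\bar{u} := \varphi \circ u \circ c$, where $c : \C^\times \to \C^\times$ is complex conjugation $z \mapsto \bar z$. Since $\varphi$ satisfies $\ud\varphi \circ \JTS = -\JTS \circ \ud\varphi$ and $c$ reverses the complex structure on the domain, a short diagram chase shows $\bar{u}$ is again $\JTS$-holomorphic with respect to the standard complex structure on $\C^\times$. Moreover $\bar{u}$ is properly embedded (being a composite of an embedding and diffeomorphisms), and because $\varphi$ is an orientation-preserving diffeomorphism of $T^*S^2$ fixing $0_{S^2}$ and sending a curve asymptotic to $\gamma$ to one asymptotic to $\bar{\gamma}$, the curve $\bar u$ carries the same homology class in $H_2(T^*S^2, \gamma \sqcup \bar{\gamma})$ and has its two punctures asymptotic to $\{\gamma, \bar{\gamma}\}$. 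Hence $\bar{u}$ is another element of $\cylindersModuli$, so $\im \bar{u} = \varphi(\im u)$ is a smooth fibre of the Lefschetz fibration $\TSlefFib$.

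Next I would show that $\im u$ is $\varphi$-invariant. Pick any $x \in \im u \cap L$, which exists by hypothesis. Since $L = \mathrm{Fix}(\varphi)$, we have $x = \varphi(x) \in \varphi(\im u) = \im \bar{u}$, so the two smooth fibres $\im u$ and $\im \bar{u}$ of $\TSlefFib$ meet. Distinct smooth fibres of a Lefschetz fibration are disjoint, so $\im u = \im \bar{u} = \varphi(\im u)$.

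Now $\varphi$ restricts to an involution of $\im u$; conjugating by the embedding $u$ produces an involution $\sigma := u^{-1} \circ \varphi \circ u$ of $\C^\times$, which is \emph{anti}-holomorphic because $\varphi$ is anti-$\JTS$-linear while $u$ is $\JTS$-holomorphic. As $\varphi$ sends the $\gamma$-asymptotic end of $u$ to a $\bar{\gamma}$-asymptotic end and $\gamma \ne \bar{\gamma}$ (the distinctness established in Section~\ref{subsub:asymptotics}), the involution $\sigma$ must interchange the two punctures $0$ and $\infty$. The anti-holomorphic involutions of $\C^\times$ that swap the ends are exactly the maps $z \mapsto a/\bar z$ with $a \in \R \setminus \{0\}$, whose fixed locus is the circle $\{|z|^2 = a\}$ when $a > 0$ and empty when $a < 0$. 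Since $u$ meets $L$, the set $\mathrm{Fix}(\sigma) = u^{-1}(\im u \cap L)$ is non-empty, forcing $a > 0$; therefore $u^{-1}(\im u \cap L)$ is a single circle, and, since $L = \mathrm{Fix}(\varphi)$ gives $\im u \cap L = u(\mathrm{Fix}(\sigma))$, the intersection $\im u \cap L$ is a smoothly embedded circle in $L$.

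The only genuine subtlety I anticipate is the bookkeeping in the first paragraph: checking cleanly that precomposing with $c$ restores $\JTS$-holomorphicity, that $\bar u$ lands in $\cylindersModuli$ (matching both the asymptotics and the homology class), and that $\varphi$ really swaps the two ends of $u$ rather than fixing them — this last point is exactly where $\gamma \ne \bar{\gamma}$ is used. Everything after that is the elementary classification of anti-holomorphic involutions of the cylinder together with the disjointness of distinct fibres of $\TSlefFib$.
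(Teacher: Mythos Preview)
Your argument is correct and reaches the same conclusion as the paper, but by a genuinely different route at the two key steps.

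For the step ``$\im u = \varphi(\im u)$'', the paper does \emph{not} invoke the Lefschetz fibration $\TSlefFib$; instead it computes the constrained intersection number $i_C(u,\bar u)=0$ directly, by homotoping $u$ inside $\cylindersModuliBar\cong\C$ to the nodal pair $P_+^{\gamma}+P_-^{\bar\gamma}$ (which is sent to itself by $\varphi$) and then expanding $i_C$ bilinearly to get $-1+2-1=0$. Positivity of intersections then forces $u$ and $\bar u$ to have the same image, since they visibly meet on $L$. Your use of ``distinct smooth fibres of $\TSlefFib$ are disjoint'' is cleaner and avoids the intersection-theoretic computation, at the cost of having to verify carefully that $\bar u\in\cylindersModuli$ (asymptotics \emph{and} relative homology class are preserved by $\varphi\circ(\,\cdot\,)\circ c$); you flag this bookkeeping, and it does go through.

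For the step ``the fixed locus is a circle'', the paper uses the general classification of anti-holomorphic involutions of $(\C^\times,j)$ (reducing to $j=i$ by uniformisation): the fixed set is either empty, $S^1$, or $\R\sqcup\R$, and \emph{compactness} (from properness of $u$) excludes the last case. You instead observe that $\varphi$ exchanges the two ends because $\gamma\neq\bar\gamma$, so the induced involution must swap $0$ and $\infty$ and hence lies in the family $z\mapsto a/\bar z$ with $a\in\R$, whose fixed set is a circle whenever it is non-empty. This is a sharper argument: it pinpoints exactly where the distinctness of $\gamma$ and $\bar\gamma$ (established in Section~\ref{subsub:asymptotics}) is consumed, whereas the paper's route hides this inside the compactness assertion. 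One small point: your classification of involutions tacitly assumes the standard complex structure on $\C^\times$; strictly you should first uniformise $(\C^\times,j)\cong(\C^\times,i)$, as the paper does.
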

\begin{proof}
    The construction of the almost complex structure \(\JTS\) ensures that it
is anti-invariant under the action of conjugation on \(T^*L\). That is,
\[
    \overline{\JTS} = -\JTS.
\]
Composing with conjugation gives a
\((-j,\JTS)\)-holomorphic cylinder \(\bar{u}\). We aim to show that
\(i_C(u,\bar{u}) = 0\). To this end, since \(\cylindersModuliBar \cong \C\),
we can choose a homotopy from \(u\) to one of the nodal
cylinders \(P_+^\gamma + P_-^{\bar{\gamma}}\), where \(P_+^\gamma =
\bottomlevel{(\building{\Eplus})}\) and \(P_-^{\bar{\gamma}} =
\bottomlevel{(\building{\Eplus})}\) are the \(\cotangentPlanesModuli_\pm\)
planes asymptotic to \(\gamma\) and \(\bar{\gamma}\) respectively. Then, as
conjugation sends this pair of planes to itself, we can use the homotopy
invariance, additivity, and symmetry of \(i_C\) to compute:
\begin{align*}
    i_C(u,\bar{u})
    &= i_C(P_+^{\gamma} + P_-^{\bar{\gamma}},
	\overline{P_+^{\gamma} + P_-^{\bar{\gamma}}}) \\
    &= i_C(P_+^{\gamma} + P_-^{\bar{\gamma}},
	P_+^{\gamma} + P_-^{\bar{\gamma}}) \\
    &= i_C(P_+^{\gamma}, P_+^{\gamma})
	+ 2i_C(P_+^{\gamma}, P_-^{\bar{\gamma}})
	+ i_C(P_-^{\bar{\gamma}}, P_-^{\bar{\gamma}}) \\
    &= -1 + 2 -1 = 0.
\end{align*}

However, since the zero section \(L\) is the fixed locus of conjugation, there
is necessarily an intersection between \(u\) and \(\bar{u}\). In view of the
above, we deduce that \(u\) and \(\bar{u}\) have the same image --- they are
geometrically indistinct. This implies that conjugation restricts to a
\(j\)-anti-holomorphic involution of \((\C^\times,j)\) with
non-empty fixed locus. Moreover, it's compact since \(u\) is proper.
The fixed locus of such an involution is diffeomorphic to a circle, as is
proved in the next lemma. As \(u\) is an embedding, this completes the
proof.
\end{proof}

The following result is surely well known, and we include the proof only for
completeness.
\begin{lemma}[]
    \label{lem:fixedLocusOfAntiHoloInv}
    The fixed locus of an anti-holomorphic involution \(\iota\) of
\((\C^\times,j)\) is either empty, or non-empty and diffeomorphic to \(S^1\),
or two copies of \(\R\).
\end{lemma}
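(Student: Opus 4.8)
The plan is to classify anti-holomorphic involutions of $(\C^\times,j)$ up to conjugation by a biholomorphism and then read off the fixed locus in each case. Here we may take $j$ to be the standard complex structure, since in Proposition~\ref{prop:cylindersIntersectAlongCircles} the domain of $u$ has two cylindrical ends and is therefore conformally $\C^\times$ (a twice-punctured sphere).

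First I would pass to the universal cover $\exp\colon\C\to\C^\times$. As $\C$ is simply connected, the anti-holomorphic map $\iota\circ\exp$ lifts to an anti-holomorphic diffeomorphism $\tilde\iota\colon\C\to\C$ with $\exp\circ\tilde\iota=\iota\circ\exp$, and every anti-holomorphic automorphism of $\C$ has the form $\tilde\iota(z)=a\bar z+b$ with $a\in\C^\times$ and $b\in\C$. Two observations pin down $a$. Since $\iota^2=\mathrm{id}$, the map $\tilde\iota^2$ is a deck transformation, so $\tilde\iota^2(z)=z+2\pi i n$ for some $n\in\Z$; expanding $\tilde\iota^2(z)=|a|^2z+(a\bar b+b)$ gives $|a|^2=1$ and $a\bar b+b\in2\pi i\Z$. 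Since $\tilde\iota$ descends through $\exp$, it must normalise the deck group generated by $z\mapsto z+2\pi i$; comparing $\tilde\iota(z+2\pi i)=\tilde\iota(z)-2\pi ia$ with $\tilde\iota(z)\pm2\pi i$ forces $a\in\{1,-1\}$.

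Next I would normalise $b$. Conjugating $\iota$ by a rotation or a real rescaling of $\C^\times$, which amounts to conjugating $\tilde\iota$ by a translation of $\C$, preserves the diffeomorphism type of the fixed locus; a short computation then reduces $\tilde\iota$ to exactly one of three normal forms: (i) $z\mapsto\bar z$, descending to $w\mapsto\bar w$ on $\C^\times$; (ii) $z\mapsto-\bar z$, descending to $w\mapsto1/\bar w$; and (iii) $z\mapsto-\bar z+\pi i$, descending to $w\mapsto-1/\bar w$. Indeed, (i) is the case $a=1$, where $a\bar b+b\in2\pi i\Z$ forces $b\in i\R$ and a translation kills $b$; and (ii), (iii) are the case $a=-1$, where the same condition forces $\mathrm{Im}\,b\in\pi\Z$, a real translation kills $\mathrm{Re}\,b$, and $b\equiv0$ or $b\equiv\pi i\pmod{2\pi i}$ are the two surviving possibilities.

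Finally I would compute the fixed loci directly in $\C^\times$: $\mathrm{Fix}(w\mapsto\bar w)=\R^\times$, two copies of $\R$; $\mathrm{Fix}(w\mapsto1/\bar w)=\{|w|=1\}\cong S^1$; and $\mathrm{Fix}(w\mapsto-1/\bar w)=\{w:|w|^2=-1\}=\emptyset$. Since conjugating by a biholomorphism of $\C^\times$ carries the fixed locus onto a diffeomorphic one, this establishes the stated trichotomy. The only mildly delicate step is the deck-group bookkeeping forcing $a=\pm1$ and constraining $b$ modulo $2\pi i\Z$; everything else is a routine coordinate computation.
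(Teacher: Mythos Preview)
Your proof is correct and complete. The route you take is genuinely different from the paper's, though both begin by reducing to the standard complex structure.

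The paper compactifies: it identifies $(\C^\times,i)$ with $\CP^1\setminus\{0,\infty\}$, observes that any anti-holomorphic involution extends to $\CP^1$ and must preserve $\{0,\infty\}$, and then composes with conjugation to get a holomorphic automorphism of $\CP^1$ fixing $\{0,\infty\}$ setwise, hence of the form $\alpha z$ or $\alpha/z$. This gives $\iota(z)=\alpha\bar z$ or $\iota(z)=\alpha/\bar z$, and the involution condition forces $|\alpha|=1$ in the first case and $\alpha\in\R$ in the second; the fixed loci are then read off directly.

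You instead pass to the universal cover $\exp\colon\C\to\C^\times$ and classify anti-holomorphic lifts $z\mapsto a\bar z+b$ that normalise the deck group and square to a deck transformation. Your approach requires slightly more bookkeeping (the constraints on $a$ and $b$) but has the virtue of producing the three normal forms $\bar w$, $1/\bar w$, $-1/\bar w$ explicitly and uniformly, including the fixed-point-free case, which the paper handles only implicitly. The paper's argument is shorter; yours is more systematic and would generalise more readily to other quotients of $\C$.
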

\begin{proof}
    The action of the diffeomorphism group of \(\C^\times\) is transitive on
complex structures, so we can assume that \(j = i\) is the standard complex
structure. Identify \((\C^\times,i)\) with \((\CP^1 \backslash
\{0,\infty\},i)\), so that we have an anti-holomorphic involution \(\iota\) of
\(\CP^1\) that preserves the set \(\{0,\infty\}\). Then \(z \mapsto
\iota(\bar{z})\) is holomorphic, and preserves \(\{0,\infty\}\), so it must be
of the form
\[
    \iota(\bar{z}) = \begin{cases}
        \alpha z, &\text{ if } \iota(0) = 0,\\
	\alpha/z, &\text{ if } \iota(\infty) = \infty,
    \end{cases}
\]
for some \(\alpha \in \C^\times\). In the first case \(\iota(z) =
\alpha\bar{z}\), if there is a fixed point \(z_0 \in \C^\times\), then we have
that \(\alpha = z_0/\bar{z}_0\), and
\[
    \mathrm{fix}(\iota|_{\C^\times}) = \{rz_0 \mid r \in \R\backslash\{0\}\}
    \cong \R\sqcup\R.
\]
On the other hand, if \(\iota(z) = \alpha/\bar{z}\) has a non-zero fixed point
\(z \in \C^\times\), then
\[
    \alpha = z\bar{z} = |z|^2
\]
must be a positive real number, and
\[
    \mathrm{fix}(\iota|_{\C^\times}) = (|z|^2 = \alpha) \cong S^1.
\]
\end{proof}

The next result is the pay-off for the work of this section.
\begin{corollary}[]
    \label{cor:LisMatchingCycle}
    The zero section \(L \subset T^*S^2\) is a matching cycle of the Lefschetz
fibration \(\TSlefFib\).
\end{corollary}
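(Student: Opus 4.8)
The plan is to read the matching-cycle structure off the foliation directly. I will show that $\delta := \TSlefFib(L)$ is an embedded arc in $\cylindersModuliBar\cong\C$ joining the two critical values of $\TSlefFib$, that over the interior of $\delta$ the circle $\TSlefFib^{-1}(t)\cap L$ is the \emph{vanishing} (``waist'') circle of the fibre, and hence that $L$ is, up to Hamiltonian isotopy, the union of the two Lefschetz thimbles over $\delta$.

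\textbf{Step 1 (the image is an embedded arc).} Let $c_1,c_2$ be the two critical values and $p_1,p_2$ the corresponding critical points. By Corollary~\ref{cor:planesIntersectOnL} we have $p_i\in L$, and since each of Hind's planes $\bottomlevel{(\building{\Eplusminus})},\bottomlevel{(\building{\FEplusminus})}$ meets $L$ in a single point (Section~\ref{sub:cotangentPlanes}), the nodal fibre over $c_i$ meets $L$ exactly at $p_i$; thus $(\TSlefFib|_L)^{-1}(c_i)=\{p_i\}$ and $c_1,c_2\in\delta$. For a regular value $t\in\delta$ the fibre $C_t:=\TSlefFib^{-1}(t)$ is a smooth cylinder in $\cylindersModuli$ which meets $L$, so $C_t\cap L$ is a circle by Proposition~\ref{prop:cylindersIntersectAlongCircles}. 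At any $x\in C_t\cap L$ the tangent line $T_x(C_t\cap L)$ lies in $\ker d(\TSlefFib|_L)_x$, so this kernel is nonzero; it cannot be all of $T_xL$, since otherwise $T_xL=T_xC_t$ would be simultaneously Lagrangian and $\JTS$--complex. Hence $d(\TSlefFib|_L)$ has constant rank $1$ on $L\setminus\{p_1,p_2\}$, so by the constant-rank theorem $\TSlefFib|_L$ restricts there to a circle fibration over a connected $1$--manifold $\delta^\circ$. Since $L\setminus\{p_1,p_2\}$ is an open cylinder (hence noncompact) it is not a circle bundle over $S^1$, so $\delta^\circ\cong\R$; together with compactness of $\TSlefFib|_L$ and the Lefschetz normal form of Lemma~\ref{lem:lefCoords} near $p_1,p_2$, this shows $\delta=\delta^\circ\cup\{c_1,c_2\}$ is an embedded arc from $c_1$ to $c_2$ whose interior consists of regular values.

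\textbf{Step 2 (the fibre circles are vanishing cycles).} Complex conjugation $\varphi$ on $T^*S^2\cong(z_1^2+z_2^2+z_3^2=1)$ is anti-$\JTS$--holomorphic and interchanges the asymptotic orbits $\gamma\leftrightarrow\bar\gamma$; by Corollary~\ref{cor:planesIntersectOnL} it preserves each of the two nodal pairs of planes, swapping the two planes in each. Hence the induced anti-holomorphic involution $\bar\varphi$ of $\cylindersModuliBar\cong\C$ fixes $c_1$ and $c_2$, so it is the reflection in the line through them; since $L=\mathrm{fix}(\varphi)$ we get $\delta\subset\mathrm{fix}(\bar\varphi)$, and for every $t\in\delta$ we have $\bar\varphi(t)=t$, so $\varphi$ preserves $C_t$. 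Then $\varphi|_{C_t}$ is an anti-holomorphic involution of $C_t\cong\C^\times$ whose fixed locus is the nonempty compact circle $C_t\cap L$; by Lemma~\ref{lem:fixedLocusOfAntiHoloInv} it must be of the type $z\mapsto\alpha/\bar z$, interchanging the two ends of $C_t$ (as it must, since $\varphi$ swaps $\gamma$ and $\bar\gamma$), with $C_t\cap L$ the separating waist circle. In particular $C_t\cap L$ is noncontractible in $C_t$ and is therefore isotopic, inside $C_t$, to the vanishing cycle of either adjacent critical point.

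\textbf{Step 3 (conclusion) and the main obstacle.} Parametrise $\delta:[0,1]\to\C$ with $\delta(0)=c_1$, $\delta(1)=c_2$. The half $\Delta_1:=L\cap\TSlefFib^{-1}(\delta([0,1/2]))$ is a Lagrangian disc fibred over $[0,1/2]$ by the waist circles $C_{\delta(s)}\cap L$ and collapsing to $p_1$ at $s=0$; by the uniqueness of Lefschetz thimbles (\cite{seidelLectures}) it is Hamiltonian isotopic rel its boundary to the thimble of $\TSlefFib$ over $\delta|_{[0,1/2]}$, and symmetrically for $\Delta_2:=L\cap\TSlefFib^{-1}(\delta([1/2,1]))$. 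Since the common boundary $C_{\delta(1/2)}\cap L$ of the two thimbles is the vanishing cycle from both ends, $\delta$ is a matching path and $L=\Delta_1\cup\Delta_2$ is Hamiltonian isotopic to the associated matching cycle, as required. I expect Step~2 to be the crux: no purely topological argument distinguishes the waist circle from a contractible one, and one genuinely needs the anti-invariance of $\JTS$, the classification in Lemma~\ref{lem:fixedLocusOfAntiHoloInv}, and the fact that $\bar\varphi$ fixes both critical values. A minor additional point to check is that $L$ matches the Lefschetz normal form near $p_1,p_2$ closely enough for the thimble-uniqueness statement to apply; this is automatic because $L$ is embedded, Lagrangian, and fibred by the waist circles right up to each node.
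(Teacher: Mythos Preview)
Your proof is correct but takes a longer route than the paper. The paper argues in three lines: $L$ is fibred by circles (Proposition~\ref{prop:cylindersIntersectAlongCircles}), hence projects submersively to a smooth embedded path joining the two critical values; because $L$ is \emph{Lagrangian}, it is automatically invariant under symplectic parallel transport along that path (the paper cites Lemma~1.17 of \cite{evans24kiaslectures} for this standard fact); and since the critical points of $\TSlefFib$ lie on $L$ (Corollary~\ref{cor:planesIntersectOnL}), $L$ is literally the union of the two thimbles. You replace the parallel-transport step by your Step~2, verifying directly via the conjugation symmetry that each $C_t\cap L$ is the noncontractible waist circle, and then invoke thimble uniqueness in Step~3. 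This works, but note that your sentence about ``the induced anti-holomorphic involution $\bar\varphi$ of $\cylindersModuliBar\cong\C$'' is unjustified --- no complex structure on $\cylindersModuliBar$ has been established, only a smooth one --- and in any case unnecessary, since $\varphi(C_t)=C_t$ for $t\in\delta$ follows directly from $\varphi$ fixing $L$ pointwise together with the foliation property (exactly as in the proof of Proposition~\ref{prop:cylindersIntersectAlongCircles}). The upshot is that what you flagged as ``the crux'' is precisely the part the paper bypasses: the Lagrangian condition plus parallel transport forces the fibre circles to be vanishing cycles with no further appeal to conjugation, and your Step~3 is essentially that same lemma restated in thimble language.
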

\begin{proof}
    Since \(L\) is fibred by circles, it lives submersively over its
projection by \(\TSlefFib\), which is a smooth embedded path \(p : [-1,1] \to
\cylindersModuliBar\) joining the two critical
values of \(\TSlefFib\). Denote the symplectic parallel transport of this
fibration by \(\phi_t\). By Lemma~1.17 of \cite{evans24kiaslectures} \(L\) is
the trace under \(\phi_t\) of its intersection with the fibre over \(p(0)\),
that is,
\[
    L = \bigcup_{t \in [-1,1]} \phi_t\left(L \cap \TSlefFib^{-1}(p(0))\right).
\]
Since the critical points of \(\TSlefFib\) lie on \(L\), we conclude that it
is indeed a matching cycle.
\end{proof}

\section{Isotopy to a matching cycle}
\label{ch:isotopy}
In this section we use the neck stretching analysis of
Section~\ref{ch:neckStretch} to show that a matching cycle \(\Sigma_k\) of of
\(\pi_{J_k}\) converges to the matching cycle of \(\TSlefFib\). In turn, this
yields a Lagrangian isotopy from \(L\) to \(\Sigma_k\) for sufficiently
large \(k\).

Before discussing convergence of matching cycles, we need to understand how to
construct a sequence of matching paths for \(\pi_{J_k}\) that converges (in
some sense) to the matching path of Corollary~\ref{cor:LisMatchingCycle}. This
is the subject of Section~\ref{sec:convergenceOfMatchingPaths}. Once this is
done, Section~\ref{sec:convergenceOfParTransp} shows that the convergence of
the matching cycles essentially follows from smooth dependence of ODEs on
their defining vector field and initial condition. Finally, 
Section~\ref{sec:constructingTheIsotopy} completes the proof that every
Lagrangian sphere \(L \subset B_{d,p,q} \subset X\) is Lagrangian isotopic to
a matching cycle of a fixed Lefschetz fibration \(\refFib\).

\subsection{Convergence of matching paths}
\label{sec:convergenceOfMatchingPaths}

Let \(x \in T^*S^2\) and recall from Section~\ref{sec:foliationConstruction}
that there is a unique sequence of curves \(f_k^x \in
\overline{\mathcal{M}}_{0,0}(X,F;J_k)\) that converge to a
\(\JTS\)-holomorphic building passing through \(x\). In this section, we'll
primarily be interested in the convergence to the bottom levels in the
Weinstein neighbourhood \(\weinsteinNbhd\) of \(L\).
Before we can talk about convergence of matching cycles of the fibrations
\(\pi_{J_k} : \XJ{J_k} \to \overline{\mathcal{M}}_{0,0}(\XJ{J_k},F;J_k)\) to the zero
section matching cycle of \(\TSlefFib : T^*S^2 \to \cylindersModuliBar\), we
need to make sense of what it means for matching paths in the bases
\(\overline{\mathcal{M}}_{0,0}(\XJ{J_k},F;J_k)\) to converge to the matching path of
Corollary~\ref{cor:LisMatchingCycle}. The global charts of
\(\overline{\mathcal{M}}_{0,0}(\XJ{J_k},F;J_k)\) constructed in
Section~\ref{sec:compactLefFib} are not fit for this purpose, since the neck
stretching analysis of Section~\ref{sec:neck-stretch} shows that, as \(k \to
\infty\), the subset \(\weinsteinModuli{J_k} \subset
\overline{\mathcal{M}}_{0,0}(\XJ{J_k},F;J_k)\) of 
curves that pass through \(\weinsteinNbhd\) shrinks to a
point.
We construct a new chart of \(\overline{\mathcal{M}}_{0,0}(\XJ{J_k},F;J_k)\)
with image contained in \(\weinsteinModuli{J_k}\) using one of the
\(\JTS\)-holomorphic \(\cotangentPlanesModuli_+\) planes in \(T^*S^2\). Choose
a simple Reeb orbit \(\gamma_0\) that is neither \(\gamma\) nor
\(\bar{\gamma}\). Fix the unique plane \(P_+^{\gamma_0} \in
\cotangentPlanesModuli_+\) asymptotic to \(\gamma_0\) along with a
parametrisation \(u : \C \to T^*S^2\) of it.

\begin{lemma}[]
    The plane \(P_+^{\gamma_0}\) intersects each curve in
\(\cylindersModuliBar\) exactly once, transversely. In particular, for each
\(C \in \cylindersModuliBar\),
\[
    i_U(P_+^{\gamma_0},C) = i_C(P_+^{\gamma_0},C) = 1.
\]
Therefore, the composition \(\TSlefFib \circ u : \C \to \cylindersModuliBar\)
is a global chart.
\end{lemma}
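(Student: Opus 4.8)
The plan is to compute the Siefring intersection number of $P_+^{\gamma_0}$ with an arbitrary $C \in \cylindersModuliBar$, show it equals $1$ with no hidden asymptotic contribution, deduce that the two curves meet transversally in a single point lying in the smooth locus of $C$, and then read off from this that $\TSlefFib \circ u$ is a bijective local diffeomorphism. For the first step I would observe that, since $\gamma_0$ is neither $\gamma$ nor $\bar\gamma$, no asymptotic orbit of $P_+^{\gamma_0}$ covers an asymptotic orbit of $C$, so every $\Omega_\pm^\tau$-term (hence every Morse--Bott contribution $i_\mathrm{MB}^\pm$) in the definitions of $i_U(P_+^{\gamma_0},C)$ and $i_C(P_+^{\gamma_0},C)$ vanishes and both numbers equal the relative intersection number $P_+^{\gamma_0}\bullet_\tau C$. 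Using homotopy invariance of $i_U$ I would degenerate $C$ within $\cylindersModuliBar$ to one of the nodal fibres $P_+^\gamma + P_-^{\bar\gamma}$ of Proposition~\ref{prop:sft-limit-fibre-curve} (with $P_\pm^\delta \in \cotangentPlanesModuli_\pm$ the unique Hind plane asymptotic to $\delta$) and apply Lemma~\ref{lem:cotangent-planes-intersection}: planes of the same parity contribute $0$ and planes of opposite parity contribute $1$, so
\[
    i_U(P_+^{\gamma_0},C) = i_U(P_+^{\gamma_0},P_+^\gamma) + i_U(P_+^{\gamma_0},P_-^{\bar\gamma}) = 0 + 1 = 1 ,
\]
and hence $i_C(P_+^{\gamma_0},C) = i_U(P_+^{\gamma_0},C) = 1$ for every $C$.

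Next I would upgrade this count to geometry. Since $P_+^{\gamma_0}$ is not a cover of $C$ nor vice versa, Theorem~\ref{thm:punc-pos-ints} gives positivity of intersections, and because $i_U = i_C$ there is no intersection hidden at infinity, so the value $1$ is exactly the count of honest intersection points weighted by their strictly positive local indices; therefore $P_+^{\gamma_0}$ and $C$ meet at exactly one point, with local index $1$, i.e.\ transversally. For the two nodal fibres I would additionally note that $i_C(P_+^{\gamma_0},P_+^\gamma) = 0$ forces $P_+^{\gamma_0}$ to be disjoint from the component $P_+^\gamma$, on which the node of $C$ lies by Corollary~\ref{cor:planesIntersectOnL}; hence that unique intersection point sits in the smooth locus of $C$.

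Finally, the curves in $\cylindersModuliBar$ are precisely the fibres of $\TSlefFib$, so $\TSlefFib\circ u$ sends $z$ to the unique fibre through $u(z)$; it is surjective because every fibre meets $P_+^{\gamma_0}$, and injective because $u$ is an embedding and each fibre meets $P_+^{\gamma_0}$ once. Transversality of all these intersections makes $u(\C)$ a global slice transverse to the foliation $\cylindersModuliBar$, so its small discs are among the transverse discs $D_p$ used to build the smooth atlas of Lemma~\ref{lem:c00smoothAtlas} (transported to this setting in Section~\ref{sec:foliationConstruction}), while over the two critical values the same conclusion follows from the Lefschetz-coordinate argument of Section~\ref{sec:compactLefFib}, since there the intersection point lies off the node. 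Thus $\TSlefFib\circ u$ is a bijection that is locally a chart of this atlas, hence a global chart. I expect the main obstacle to be making precise the passage from ``$i_C = 1$'' to ``one transverse geometric intersection'' in the Morse--Bott degenerate setting --- i.e.\ ruling out a positive asymptotic contribution hidden in the intersection count --- which is supplied by the vanishing of the winding/$\Omega^\tau$ terms for the geometrically distinct orbit $\gamma_0$.
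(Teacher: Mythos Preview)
Your proposal is correct and follows essentially the same approach as the paper: both compute the intersection number by homotoping $C$ to a nodal fibre $P_+^\gamma + P_-^{\bar\gamma}$, use Lemma~\ref{lem:cotangent-planes-intersection} to get $0+1=1$, invoke positivity of intersections for the geometric conclusion, and then appeal to the construction of the smooth atlas for the chart claim. Your version is in fact more careful than the paper's in two places: you make explicit why $i_U=i_C$ here (distinct underlying simple orbits kill the $\Omega^\tau_\pm$ terms), and you verify that the unique intersection point avoids the node of a nodal $C$, which is needed for the chart statement but only implicit in the paper.
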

\begin{proof}
    The intersection claim follows easily from homotopy invariance of \(i_C\)
and the fact that \(\gamma_0 \ne \gamma,\bar{\gamma}\) (see
Section~\ref{sub:cotangentPlanes})
\[
    i_C(P_+^{\gamma_0},C) =
    i_C(P_+^{\gamma_0}, P_+^\gamma + P_-^{\bar{\gamma}}) =
    i_C(P_+^{\gamma_0}, P_-^{\bar{\gamma}}) = 1.
\]
By positivity of intersections, there is exactly one positive, transverse
intersection as claimed. The fact that this provides a global chart follows by
definition of the smooth structure on \(\cylindersModuliBar\).
\end{proof}

\begin{remark}
    Since \(PSL(2,\C)\) acts transitively on the configuration space of 3
points on \(\CP^1\), we can reparametrise \(u\) to ensure that the two
critical values of \(\TSlefFib\) correspond to the points \(\pm 1\).
\end{remark}

Recall that the neck stretching setup gives, for each \(k\), an almost complex
embedding \(\iota_k : (T_{\le e^k}^*S^2, \JTS) \to (X,J_k)\) with image the
Weinstein neighbourhood \(\weinsteinNbhd{}=T_{\le 1}^*L\) of \(L\). This
allows us to embed larger and larger portions of the plane \(P_+^{\gamma_0}\)
as the neck gets longer. The idea is that, eventually, the curves in
\(\weinsteinModuli{J_k}\) get close enough to their limiting bottom level
curves in \(\cylindersModuliBar\) so that the transverse intersection property
of the previous lemma holds for them too.

\begin{lemma}[]
    \label{lem:charts}
    There exists an integer \(K > 0\) such that for all \(k \ge K\) every
curve in \(\weinsteinModuli{J_k}\) intersects \(P_+^{\gamma_0}\) exactly once
transversely. Therefore, the parametrisation \(u\) of \(P_+^{\gamma_0}\)
facilitates a chart of the moduli spaces
\(\overline{\mathcal{M}}_{0,0}(\XJ{J_k},F;J_k)\) for all \(k \ge K\). Moreover,
\(K\) can be chosen large enough so that there are only the two
\emph{relevant} critical values of the maps \(\pi_{J_k}\) contained in the
image of this chart. That is, the two that correspond to the nodal curves
\(P_\pm^\gamma + P_\mp^{\bar{\gamma}}\).
\end{lemma}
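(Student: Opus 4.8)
The plan is to prove the first (and main) assertion by a contradiction argument driven by SFT compactness, bootstrapping from the intersection count in the immediately preceding lemma and the no-intermediate-levels results of Section~\ref{sec:buildingAnalysis}, and then to deduce the chart and critical-value statements from the smooth atlas of Lemma~\ref{lem:c00smoothAtlas}.

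If no such $K$ existed, there would be a subsequence $k_\nu \to \infty$ and curves $f_{k_\nu} \in \weinsteinModuli{J_{k_\nu}}$ not meeting $\iota_{k_\nu}(P_+^{\gamma_0})$ in exactly one transverse point. Every such intersection lies inside $Y_- = \im\iota_{k_\nu}$, so it is equivalent to count intersections of the pullback $\iota_{k_\nu}^{-1}(f_{k_\nu})$ with $P_+^{\gamma_0}$ in $T^*S^2$. Passing to a further subsequence, SFT compactness produces a limit building $F$; since the $f_{k_\nu}$ persistently enter the Weinstein neighbourhood, $\bottomlevel{F}\ne\emptyset$ (this is exactly the regime of Section~\ref{sec:foliationConstruction}). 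By Proposition~\ref{prop:sft-limit-fibre-curve}, $\bottomlevel{F}$ is either a smooth cylinder with asymptotes $\{\gamma,\bar\gamma\}$ or one of the nodal pairs $\bottomlevel{(\building{\Eplusminus})}+\bottomlevel{(\building{\FEplusminus})}$, so in every case $\bottomlevel{F}\in\cylindersModuliBar$. The preceding lemma gives $i_C(P_+^{\gamma_0},\bottomlevel{F})=1$, and as $\gamma_0$ is geometrically distinct from $\gamma$ and $\bar\gamma$ there is no asymptotic (Morse--Bott) contribution, so positivity of intersections for punctured curves (Theorem~\ref{thm:punc-pos-ints}) forces $P_+^{\gamma_0}$ and $\bottomlevel{F}$ to meet in exactly one point $y_0$, transversely.

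It remains to transfer this count back to $f_{k_\nu}$ for large $\nu$, which I would split over a fixed bounded region and a collar near the neck. On any bounded $\mathcal{K}\subset T^*S^2$ the $\Cinfloc$-convergence furnished by SFT compactness identifies the relevant pieces of $\iota_{k_\nu}^{-1}(f_{k_\nu})$ with the corresponding piece of $\bottomlevel{F}$; choosing $\mathcal{K}$ to be a small neighbourhood of $y_0$ on which $P_+^{\gamma_0}$ is an embedded disc, transversality of the limiting intersection together with the openness of transversality force exactly one transverse intersection there for $\nu$ large. For the collar I would use Lemma~\ref{lem:fibre-no-intermediate-levels}: the limit building has no symplectisation levels, so along the lengthening neck the $f_{k_\nu}$ are $\Cinfloc$-close to trivial cylinders over $\gamma$ and $\bar\gamma$ \emph{only}; since $P_+^{\gamma_0}$ is asymptotically confined near the cylinder over $\gamma_0$ and $\gamma_0\ne\gamma,\bar\gamma$, the part of $\iota_{k_\nu}(P_+^{\gamma_0})$ lying outside $\mathcal{K}$ is disjoint from $f_{k_\nu}$ for $\nu$ large. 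These two facts contradict the hypothesised failure, establishing the first assertion. I expect the collar estimate — the uniform control of $f_k$ near the stretching neck, combining the no-intermediate-levels statement with asymptotic convergence to the breaking orbits — to be the main obstacle; the bounded part is a routine open-condition argument.

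For the remaining claims, fix $k\ge K$. The first assertion says $\iota_k(P_+^{\gamma_0})$ is an embedded disc in $\XJ{J_k}$ meeting each fibre of $\pi_{J_k}$ through $Y_-$ exactly once and transversely, so by the construction in Lemma~\ref{lem:c00smoothAtlas} the assignment ``intersect with $\iota_k(P_+^{\gamma_0})$, then apply $u^{-1}$'' is a smooth chart of $\overline{\mathcal{M}}_{0,0}(\XJ{J_k},F;J_k)$ defined on an open set containing $\weinsteinModuli{J_k}$ (every point of $\iota_k(P_+^{\gamma_0})$ lies in $Y_-$, and conversely every fibre through $Y_-$ meets it). A critical value of $\pi_{J_k}$ lies in this chart's domain iff the corresponding nodal fibre meets $Y_-$. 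By Proposition~\ref{prop:c1=1-sft-limit}, the $d-2$ irrelevant classes $\mathcal{E}_j$ and the classes $F-\mathcal{E}_j$ with $\mathcal{E}_j\cdot L=0$, together with $E_n$ (Corollary~\ref{cor:En-disjoins-L}), all have limit buildings with empty bottom level and so disjoin from $L$ under stretching; hence after enlarging $K$, for $k\ge K$ the only nodal fibres meeting $Y_-$ are the two relevant ones, which by the SFT-transfer argument above do meet $\iota_k(P_+^{\gamma_0})$ and have bottom levels the nodal pairs $P_\pm^\gamma+P_\mp^{\bar\gamma}$ by Proposition~\ref{prop:sft-limit-fibre-curve} and Corollary~\ref{cor:bottomLevelPlanesDistinct}. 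This establishes the stated conclusion.
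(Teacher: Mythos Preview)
Your argument is correct and takes a genuinely different route from the paper. The paper argues constructively: for each fixed $x\in\weinsteinNbhd$ it uses the pre-arranged convergence $f_k^x\to F^x$ from Section~\ref{sec:foliationConstruction} to produce a pointwise threshold $K_x$, then shows the single-transverse-intersection property is open in $\weinsteinModuli{J_{K_x}}$ by building tubular neighbourhoods of $f_{K_x}^x$ (treating the smooth and nodal cases separately, the latter via the gluing map), and finally extracts a uniform $K$ from a finite subcover of the compact $\weinsteinNbhd$, invoking the $C^1$-monotonic convergence arranged by the diagonal argument to pass from $k=K_x$ to all $k\ge K_x$. Your contradiction argument bypasses this scaffolding entirely: you pull a single bad subsequence, run SFT compactness once, and let Proposition~\ref{prop:sft-limit-fibre-curve} pin down the limit so that the preceding lemma gives the contradiction. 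This is cleaner in that it needs neither the tubular-neighbourhood openness step nor the monotonicity, though it leaves slightly implicit the claim that passing through $\weinsteinNbhd$ forces $\bottomlevel{F}\ne\emptyset$ (which is fine, since a closed curve meeting $\weinsteinNbhd$ must traverse the arbitrarily long neck). Your collar estimate is the same content as the paper's sentence ``the curves $f_k^x$ are bounded away from the cylindrical end of $P_+^{\gamma_0}$'' (citing the SFT-convergence definition in \cite{cieliebakMohnkeSFTCompactness}); your phrasing via closeness to trivial cylinders over $\gamma,\bar\gamma$ is a valid alternative justification. The chart and critical-value deductions match the paper's.
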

\begin{proof}
    Fix \(x \in \weinsteinNbhd\), then there is a unique sequence \(f_k^x \in
\weinsteinModuli{J_k}\) of fibre curves that pass through \(x\). We first
prove that the result holds for this sequence. That is, that there exists
\(K_x > 0\) such that, for all \(k \ge K_x\), \(f_k^x\) transversely
intersects \(P_+^{\gamma_0}\) exactly once. Recall from
Section~\ref{sec:foliationConstruction} that the sequence \(J_k\) was chosen
to ensure that each of the sequences \(f_k^x\) converges\footnote{Note that
    some of the sequences \(f_k^x\) necessarily contain nodal curves, since
    there are two nodal curves in \(\cylindersModuliBar\). We have not defined
    exactly what it means for such a sequence (consisting of potentially
    non-smooth \(J_k\)-holomorphic curves) to converge to a holomorphic
    building. However, since we are only interested in the convergence
\emph{away from the nodes}, we will not go into the details.} to a
\(J\)-holomorphic building \(F^x : \Sigma^* \to X^*\),
where the bottom level \(\bottomlevel{(F^x)}\) is the element of
\(\cylindersModuliBar\) passing through \(x\). The nature of the convergence
\(f_k^x \to F^x\) implies that, for any \(\epsilon > 0\), there exists
\(K_{x,\epsilon}\) such that (up to reparametrisation) \(f_k^x\) is
\(\epsilon\)-close to \(\bottomlevel{(F^x)}\) in the \(C^1_\mathrm{loc}\)
topology.

Now, \(\bottomlevel{(F^x)}\) and \(P_+^{\gamma_0}\) intersect exactly once,
but since they are non-compact, we need to be slightly careful before claiming
that all nearby curves satisfy the same property. Fortunately, they are
non-compact in a very controlled way, due to the asymptotic convergence to
the Reeb orbits. The convergence \(f_k^x \to F^x\) guarantees,\footnote{See
Definition~2.7(d) of \cite{cieliebakMohnkeSFTCompactness}.} for all
sufficiently large \(k\), that the curves \(f_k^x\) are bounded away from the
cylindrical end of \(P_+^{\gamma_0}\). Therefore, the only intersections
between \(f_k^x\) and \(P_+^{\gamma_0}\) can occur in a compact set. Thus, by
standard differential topology, any such map transversely
intersects \(P_+^{\gamma_0}\) in exactly the same number of points as
\(\bottomlevel{(F^x)}\) --- that is, exactly once. Therefore, we deduce that
there exists \(\epsilon_x > 0\), and \(K_x = K_{x,\epsilon_x} > 0\), such
that, for all \(k \ge K_x\), \(f_k^x\) is \(\epsilon_x\)-close to \(\bottomlevel{(F^x)}\)
\emph{and} bounded away from the cylindrical end of \(P_+^{\gamma_0}\), from
which it follows that \(f_k^x\) intersects \(P_+^{\gamma_0}\) exactly once,
transversely.

We turn to showing that the result continues to hold as stated. This is a
consequence of the local description of the moduli spaces
\(\weinsteinModuli{J_k}\) and compactness of the Weinstein neighbourhood
\(\weinsteinNbhd\). First, we show that if the transverse intersection
property holds for a curve \(f_{K_x}^x \in \weinsteinModuli{J_{K_x}}\), then
it holds for all nearby curves \(f \in \weinsteinModuli{J_{K_x}}\). This is
achieved by finding a tubular neighbourhood \(N_x\) of \(f_{K_x}^x\) such that
the intersection \(P_+^{\gamma_0} \cap N_x\) is connected and \(C^1\)-close to
a fibre of \(N_x\). The claim then follows from standard differential
topology.

If the curve \(f_{K_x}^x\) is nodal, then the tubular neighbourhood \(N_x\) is
constructed via the gluing map. The nodal points of curves in
\(\weinsteinModuli{J_k}\) converge to the nodal points of
the curves in \(\cylindersModuliBar\), and since \(P_+^{\gamma_0}\) intersects
the curves in \(\cylindersModuliBar\) away from the nodes, we can increase \(K_x\) so that either
\(f_{K_x}^x\) is no longer nodal, or the node is bounded away from
\(P_+^{\gamma_0}\). Note that, away from nodal points the gluing map is a
diffeomorphism, and this is where the unique transverse intersection
\(f_{K_x}^x \cap P_+^{\gamma_0}\) occurs. By varying the gluing parameter, we
obtain arbitrarily small tubular neighbourhoods \(N_x\) of \(f_{K_x}^x\).

On the other hand, if \(f_{K_x}^x\) is smooth we
use the local description of the moduli space of smooth curves
\(\mathcal{M}_{0,0}(X,F;J_{K_x})\) to express nearby curves as sections of
the (trivial) normal bundle \(\nu f_{K_x}^x\). Thus, we obtain the tubular neighbourhood \(N_x\). In both the
smooth and the nodal cases, \(f_{K_x}^x\) is bounded away from the
cylindrical part of \(P_+^{\gamma_0}\), so we can choose the tubular
neighbourhood \(N_x\) small enough so that the intersection \(N_x \cap
P_+^{\gamma_0}\) can be isotoped to a fibre of \(N_x\). In both the smooth and
the nodal cases, this implies that the intersection property continues to hold
for all sufficiently nearby curves \(f \in \weinsteinModuli{J_{K_x}}\). In
particular, there exists a number \(\delta_x > 0\) such that, for all \(f \in
\weinsteinModuli{J_{K_x}}\) satisfying\footnote{This metric refers to any that
induces the manifold topology on \(\weinsteinModuli{J_{K_x}}\).}
\(d(f,f_{K_x}^x) < \delta_x\), \(f\) intersects \(P_+^{\gamma_0}\) exactly
once transversely.

In summary, so far we have proved that, for all \(x \in \weinsteinNbhd\),
there exists \(K_x > 0\) and \(\delta_x > 0\) such that, for all \(y \in
\weinsteinNbhd\), and \(f_{K_x}^y \in \weinsteinModuli{J_{K_x}}\) satisfying
\(d(f_{K_x}^y,f_{K_x}^x) < \delta_x\), we have that \(f_{K_x}^y\) intersects
\(P_+^{\gamma_0}\) exactly once, transversely. Furthermore, since the
convergence \(f_k^y \to F^y\) is \(C^1\)-monotonic, we can upgrade the
statement to hold for all \(k \ge K_x\). Specifically, for all \(k \ge K_x\)
and \(y \in \weinsteinNbhd\) such that \(d(f_{K_x}^y, f_{K_x}^x) < \delta_x\),
we have that \(f_k^y\) intersects \(P_+^{\gamma_0}\) as desired. We now
harness compactness to turn this into a global statement.

For a given \(x \in \weinsteinNbhd\), the curves near \(f_{K_x}^x\) form a
local foliation around \(x\). Therefore, we have that
\[
    U_x := \bigcup_{\substack{f \in \weinsteinModuli{J_{K_x}} \\
    d(f,f_{K_x}^x) < \delta_x}} \im f \cap \weinsteinNbhd
\]
is a neighbourhood of \(x\). Thus, we obtain a cover of \(\weinsteinNbhd\).
Since it's compact, we can pass to a finite subcover \(U_{x_1}, \ldots,
U_{x_k}\) and define
\begin{equation}
    K := \max\{K_{x_1}, \ldots, K_{x_k}\} < \infty.
\end{equation}
Then, as any point \(x \in \weinsteinNbhd\) is necessarily contained in some
\(U_{x_i}\), we have that \(d(f_{K_{x_i}}^x,f_{K_{x_i}}^{x_i}) <
\delta_{x_i}\), which implies that \(f_k^x\) intersects \(P_+^{\gamma_0}\)
exactly once, transversely, for all \(k \ge K \ge K_{x_i}\). As any curve \(f
\in \weinsteinModuli{J_k}\) is equal to \(f_k^x\) for some \(x \in
\weinsteinNbhd\), this completes the proof.

The claim about the critical values follows from the discussion on choosing
the sequence \(J_k\) in Section~\ref{sec:foliationConstruction}.
\end{proof}

As a result, for sufficiently large \(k\), we can choose a sequence of
matching paths \(p_k : [-1,1] \to \overline{\mathcal{M}}_{0,0}(\XJ{J_k},F;J_k)\)
joining the two relevant critical points, such that, in the coordinates of the
charts given by Lemma~\ref{lem:charts}, \(p_k\) converges to the matching path
\(p : [-1,1] \to \C\) of the Lefschetz fibration \(\TSlefFib\) associated to
\(L\). Our next goal is to show that we can choose \(k\) large enough so that
the matching cycle of \(\pi_{J_k}\) associated to \(p_k\) is Lagrangian
isotopic to \(L\).

\subsection{Convergence of the parallel transport}
\label{sec:convergenceOfParTransp}
\subsubsection{The parallel transport of \(\TSlefFib : T^*S^2 \to
\cylindersModuliBar\)}

    The problem of symplectic parallel transport involves lifting a vector
field defined over a compact path in the base to a horizontal vector field in
the total space and integrating it. This amounts to solving an ODE over a
compact family of fibres of the Lefschetz fibration \(\TSlefFib\). However,
since these fibres are non-compact, we need to justify why solutions to this
ODE exist for all time.

More precisely, fix an embedded path \(\gamma : [0,1] \to
\cylindersModuliBar\) whose image avoids the critical values of \(\TSlefFib\).
Taking symplectic orthogonal complements of the tangent spaces to the regular
fibres of \(\TSlefFib\) yields a field of \emph{horizontal} planes \(H \subset
TT^*S^2\), such that the restriction \(\ud\TSlefFib|_H : H \to
T\cylindersModuliBar\) is an isomorphism. Therefore, there exists a unique
horizontal lift \(\tilde{X}\) of the vector field \(X = \der{}{t}p\). This
defines an ODE on the space \(\TSlefFib^{-1}(\im \gamma)\).

\begin{lemma}[]
    \label{lem:TSparallelTransport}
    The symplectic parallel transport of the Lefschetz fibration \(\TSlefFib :
T^*S^2 \to \cylindersModuliBar\) is well-defined.
\end{lemma}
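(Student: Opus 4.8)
The plan is to show that the horizontal lift $\tilde X$ of $X = \partial_t p$ is a complete vector field on $\TSlefFib^{-1}(\im\gamma)$, so that its flow exists for all time and defines the parallel transport maps $\phi_t$ between fibres. The only thing that can go wrong is escape to infinity along the non-compact (cylindrical) ends of the fibres in finite time, so the entire content is a uniform control on $\tilde X$ near those ends. First I would recall that each regular fibre $C \in \cylindersModuliBar$ is a properly embedded $\JTS$-holomorphic cylinder asymptotic to the fixed pair of simple Reeb orbits $\{\gamma,\bar\gamma\}$ in $M = T_1^*S^2$; away from a compact set, every such fibre is $C^0$-close (indeed exponentially close, by the asymptotic analysis of Section~\ref{sub:intersection-theory}) to one of the two trivial half-cylinders $[r_0,\infty)\times\gamma$ or $[r_0,\infty)\times\bar\gamma$ inside the cylindrical end $\R \times M \cong T^*S^2 \setminus 0_{S^2}$ of $T^*S^2$. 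The key point is that the asymptotic Reeb orbits are the \emph{same} for every fibre: the map $\TSlefFib$ is, near infinity, modelled on the projection $\R\times M \to (\text{neighbourhoods of }\gamma,\bar\gamma\text{ in }M/S^1)$, which is manifestly a fibration with the $\R$-coordinate and the Reeb direction lying in the kernel.

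The heart of the argument is then the following completeness estimate. Work in the cylindrical end $\R\times M$ with its symplectisation form $\ud(e^r\lambda_\mathrm{can})$ and the $\R$-invariant almost complex structure $\JTS = J_0$ there. Since $\JTS$ is cylindrical, the Liouville vector field $Z = \partial_r$ and the Reeb field $R$ are related by $\JTS Z = R$, and the symplectic orthogonal complement of the fibre tangent space is spanned (near infinity) by $Z$ and $R$ together with the horizontal directions pulled back from $M/S^1$. Because $X = \partial_t p$ has compact support in the base and the base-direction pairing $\ud\TSlefFib|_H$ is an $\R$-invariant isomorphism on the cylindrical end, the horizontal lift $\tilde X$ is, on the end, a vector field whose $\R\times M$-component is bounded (it is a bounded combination of $Z$, $R$ and the lifts of the two compactly-supported horizontal vector fields on $M/S^1$), and moreover its $\partial_r$-component is bounded. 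Consequently the $r$-coordinate of an integral curve of $\tilde X$ grows at most linearly in $t$, so no integral curve can reach $r = +\infty$ in finite time; combined with the fact that $\TSlefFib^{-1}(\im\gamma)$ minus its cylindrical end is a compact family of fibres over the compact path $\im\gamma$, this gives completeness. I would phrase the linear-growth bound by computing $\tfrac{\ud}{\ud t}(r\circ c(t)) = \ud r(\tilde X)$ along an integral curve $c$ and noting $|\ud r(\tilde X)|$ is globally bounded on the end by $\R$-invariance of all the data; then Gronwall (or just direct integration) finishes it.

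The main obstacle I expect is making the "$\tilde X$ is bounded on the cylindrical end'' step rigorous, since the fibres are only asymptotically, not exactly, cylindrical: one must check that the error between a genuine fibre $C$ and its model trivial cylinder — controlled by the exponential decay estimates of \cite{HWZpropertiesIV, bourgeoisThesis, behwzSFTCompactness} quoted in Section~\ref{sub:intersection-theory} — does not spoil the uniform bound on $\tilde X$. The clean way around this is to observe that the statement is really about the \emph{foliation} $\cylindersModuliBar$, which is $\R$-invariant on the cylindrical end up to this exponentially small error, so that the horizontal distribution $H$ is itself $C^0$-close to the $\R$-invariant model horizontal distribution; hence $\tilde X$ is $C^0$-close to the (bounded, complete) model lift, and in particular its $\partial_r$-component stays bounded. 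Alternatively — and this is perhaps the slickest route — one can note that $T^*S^2$ carries a complete metric for which $\TSlefFib$ is a Riemannian submersion outside a compact set and the fibres have bounded geometry, reducing completeness of $\tilde X$ to the standard fact that horizontal lifts of compactly-supported base vector fields under a submersion with complete fibres of bounded geometry are complete. Either way the argument is entirely analogous to the well-known completeness of parallel transport for Lefschetz fibrations with convex/cylindrical ends, e.g. as in Seidel's and Evans' treatments; I would cite \cite{seidelLectures} or \cite{evans24kiaslectures} for the model statement and supply only the verification that our $\TSlefFib$ has the required cylindrical structure near infinity.
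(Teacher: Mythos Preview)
Your proposal is correct and follows essentially the same route as the paper: both arguments invoke the exponential convergence of the fibres to their asymptotic Reeb cylinders (citing \cite{HWZpropertiesIV, bourgeoisThesis}) to see that the horizontal distribution converges to the contact distribution $\xi$ near infinity, and then use a radial barrier function on which $\xi$ vanishes to rule out escape of integral curves. The only cosmetic difference is that the paper works with the explicit Hamiltonian $H(p,q)=\tfrac{1}{2}|p|^2$ and records the estimate $|L_{\tilde X}H|<Ce^{-ds}$ directly, whereas you phrase the same control in terms of the Liouville coordinate $r$; since $H$ is a monotone function of $|p|\sim e^r$, these are equivalent, and the paper's choice of $H$ just makes the ``contact plane $\subset\ker\ud H$'' step a one-liner rather than an appeal to $\R$-invariance of a model.
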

\begin{proof}
    The curves in the moduli space \(\cylindersModuliBar\) are finite energy
\(\JTS\)-holomorphic curves and so, by the results of Hofer, Wysocki, and
Zehnder \cite[Theorem~1.3]{HWZpropertiesIV}, Bourgeois
\cite[\S{}3.3]{bourgeoisThesis}, and Mora-Donato
\cite[Proposition~1.2]{moraThesis}, they satisfy exponential convergence to
their asymptotic Reeb cylinders. The fixed path \(\gamma :  [0,1] \to
\cylindersModuliBar\) yields a compact family \([0,1] \cong \kappa \subset
\cylindersModuliBar\) of such curves, and so, there exists a compact Liouville
subdomain of \(T^*S^2\) outside of which, all the curves in \(\kappa\) satisfy
the exponential convergence estimates.

Fix the standard Riemannian metric on \(T^*S^2\) coming from the embedding
\[
    T^*S^2 = \{(p,q) \in \R^3 \times \R^3 \mid |q|=1,\,
	\langle p,q \rangle = 0\},
\]
and consider the Hamiltonian \(H : T^*S^2 \to \R : H(p,q) =
\frac{1}{2}|p|^2\). The \(\omega_\mathrm{can}\)-orthogonal complement of the
tangent space of a Reeb cylinder is a contact plane, which is contained in
\(\ker \ud H\). Therefore, the exponential convergence of the curves implies
that the horizontal spaces converge exponentially to the contact planes. In
particular, the horizontal lift \(\tilde{X}\) satisfies
\[
    |L_{\tilde{X}} H| < Ce^{-ds},
\]
where \(C > 0\) and \(d > 0\) are constants, and \(s \in [s_0,\infty)\) is the
Liouville coordinate on \(T^*S^2\). An integral curve \(\alpha\) of
\(\tilde{X}\) escapes to infinity if, and only if, \(|H(\alpha)| \to \infty\).
Therefore, the above estimate shows that this is impossible, as we are
integrating over the compact set \([0,1]\).
\end{proof}

\subsubsection{The parallel transport of \(\pi_{J_k} : \XJ{J_k} \to \C\)}

Recall from Lemma~\ref{lem:charts} that, for large enough \(k\), we can
identify the bases of the Lefschetz fibrations \(\pi_{J_k}|_{\weinsteinNbhd}\)
and \(\TSlefFib|_{\weinsteinNbhd}\) with a bounded disc \(D \subset \C\).

\begin{lemma}[]
    \label{lem:transportConvergence}
    Let \(\gamma_k : [0,1] \to D\) be a sequence of embedded paths converging
\(C^1\) to \(\gamma : [0,1] \to D\). Suppose that each path avoids the
critical locus of the corresponding Lefschetz fibration, then parallel
transport \(\phi_{\gamma_k}\) of \(\pi_{J_k}|_{\weinsteinNbhd}\) over
\(\gamma_k\) is \(C^0\)-close to that of \(\TSlefFib|_{\weinsteinNbhd}\) over
\(\gamma\).
\end{lemma}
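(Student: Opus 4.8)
The parallel transport $\phi_{\gamma_k}$ is by definition the time-one flow of the horizontal lift $\tilde X_k$ of the vector field $X_k = \frac{d}{dt}\gamma_k$ along the fibres of $\pi_{J_k}|_{\weinsteinNbhd}$, and likewise $\phi_\gamma$ is the time-one flow of the horizontal lift $\tilde X$ of $X = \frac{d}{dt}\gamma$ for $\TSlefFib|_{\weinsteinNbhd}$. The plan is therefore to: (i) show that, under the neck-stretching identifications $\iota_k\colon (T^*_{\le e^k}S^2,\JTS) \to \weinsteinNbhd \subset (X,J_k)$, the horizontal lifts $\tilde X_k$ converge to $\tilde X$ in the $\Cinfloc$ topology on $\TSlefFib^{-1}(D)$; and (ii) combine this with a Gr\"onwall-type continuous-dependence argument for ODEs to deduce $C^0$-convergence of the time-one flows, after confining all the relevant integral curves to a single compact set.

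For (i), recall from Section~\ref{sec:neck-stretch} that, pulled back by $\iota_k$, the almost complex structures $J_k$ converge in $\Cinfloc$ to $\JTS$ on $T^*S^2$, and the rescaled symplectic forms converge to $\omega_{\mathrm{can}}$ (see \cite[Example~2.4]{cieliebakMohnkeSFTCompactness}). The fibres of $\pi_{J_k}$ are $J_k$-holomorphic, so $\iota_k^*J_k$ preserves the vertical distribution $V_k = \ker\ud\pi_{J_k}$; by the foliation analysis of Section~\ref{sec:foliationConstruction}, and using the charts of Lemma~\ref{lem:charts} to identify the bases with the disc $D$, the foliation $\weinsteinModuli{J_k}$ converges to $\cylindersModuliBar$, so $V_k \to V := \ker\ud\TSlefFib$ in $\Cinfloc$. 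Since taking symplectic orthogonal complements is a continuous operation, the horizontal distributions $H_k = V_k^{\perp_{\omega_k}}$ converge to $H = V^{\perp_{\omega_{\mathrm{can}}}}$; together with the hypothesis $\gamma_k \to \gamma$ in $C^1$, this gives $\tilde X_k \to \tilde X$ in $\Cinfloc$.

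For (ii), the only obstruction to the standard continuous-dependence theorem is the non-compactness of the fibres, so the real work is to produce a fixed compact set containing every integral curve of $\tilde X_k$ (for $k$ large) and of $\tilde X$ over the compact parameter interval $[0,1]$. This is where the exponential convergence of the curves in $\cylindersModuliBar$ to their asymptotic Reeb cylinders enters, exactly as in the proof of Lemma~\ref{lem:TSparallelTransport}: along the compact family of fibres over $\im\gamma$ one has a uniform bound $|L_{\tilde X}H| < Ce^{-ds}$ for $H(p,q) = \frac{1}{2}|p|^2$, which forbids escape to infinity; and because the convergence $f_k^x \to F^x$ is $C^1$-monotonic (Section~\ref{sec:foliationConstruction}), the same bound, with the same constants, holds for $\tilde X_k$ and the fibres over $\im\gamma_k$ once $k$ is large. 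Hence all of these integral curves stay inside a common compact set $\mathcal{K}\subset\weinsteinNbhd$, on which $\tilde X_k \to \tilde X$ uniformly and the vector fields are uniformly Lipschitz, so the usual Gr\"onwall estimate yields $\|\phi_{\gamma_k} - \phi_\gamma\|_{C^0} \to 0$.

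The main obstacle is the uniform-in-$k$ containment of trajectories in a fixed compact set in step (ii); once that is in hand, the convergence of the flows is soft, and everything in step (i) — the convergence of $H_k$ and of the lifted vector fields — follows routinely from the $\Cinfloc$-convergence of the almost complex structures, symplectic forms, and fibre foliations already established in Sections~\ref{sec:neck-stretch} and \ref{sec:foliationConstruction}.
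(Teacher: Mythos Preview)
Your proof is correct and follows essentially the same approach as the paper: establish that the horizontal vector fields defining the parallel transport converge (via convergence of the fibre foliations and symplectic forms under neck stretching), and then invoke continuous dependence of ODEs on their defining data. The paper's argument is considerably terser, simply citing ``compact families of curves'' to assert uniform convergence of fibres and then appealing directly to smooth dependence of ODEs; you fill in the mechanism (convergence of vertical and hence horizontal distributions) and address the uniform-in-$k$ compactness issue more explicitly by recycling the exponential decay estimates from Lemma~\ref{lem:TSparallelTransport}, which the paper leaves implicit.
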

\begin{proof}
    Since we are dealing with compact families of curves, so one can show that
the fibres of \(\pi_{J_k}|_{\weinsteinNbhd}\) living over \(\gamma_k\)
converge uniformly to those of \(\TSlefFib|_{\weinsteinNbhd}\) over
\(\gamma\). It follows that the ODE defining the parallel transport
\(\phi_\gamma\) of \(\TSlefFib\) can be arbitrarily well approximated by that
defining \(\phi_{\gamma_k}\) by increasing \(k\). Therefore, by smooth
dependence of ODEs on the initial condition and the vector field that defines
it (see \cite[Appendix~B]{duistermaatKolk} for example) we obtain the result.
\end{proof}

\begin{lemma}[]
    \label{lem:matchingCycleC0Close}
    Let \(\gamma_k : [-1,1] \to D\) be a sequence of matching paths converging
to the matching path \(\gamma : [-1,1] \to D\) corresponding to the zero
section matching cycle of \(\TSlefFib\). Then, for sufficiently large \(k\),
the matching cycle \(\Sigma_k\) of \(\pi_{J_k}|_{\weinsteinNbhd}\) associated
to \(\gamma_k\) is \(C^0\)-close to the zero section \(L\).
\end{lemma}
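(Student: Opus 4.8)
The plan is to show that the matching cycle $\Sigma_k$ is, piece by piece, a small $C^0$-perturbation of the zero section $L$, by combining the convergence of matching paths (Section~\ref{sec:convergenceOfMatchingPaths}), the convergence of parallel transport (Lemma~\ref{lem:transportConvergence}), and the explicit description of matching cycles as traces of vanishing cycles under parallel transport. Recall that a matching cycle over a path $\gamma_k$ joining the two critical values is built as follows: pick the midpoint $\gamma_k(0)$, and transport the vanishing cycles (circles) of the two critical points along the two halves of $\gamma_k$; the matching cycle is the union of the two discs swept out, which glue along the fibre $\pi_{J_k}^{-1}(\gamma_k(0))$ precisely because the two transported circles agree there (this is the matching condition). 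The same description holds for $L = 0_{S^2}$ as a matching cycle of $\TSlefFib$ by Corollary~\ref{cor:LisMatchingCycle}: $L$ is the trace of $L \cap \TSlefFib^{-1}(p(0))$ under symplectic parallel transport over the two halves of $p$.

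First I would fix the charts of Lemma~\ref{lem:charts} identifying the relevant parts of the bases $\overline{\mathcal{M}}_{0,0}(\XJ{J_k},F;J_k)$ and $\cylindersModuliBar$ with a fixed disc $D \subset \C$, with the two relevant critical values at $\pm 1$, and take the matching paths $\gamma_k$ converging $C^1$ to $\gamma = p$ as produced at the end of Section~\ref{sec:convergenceOfMatchingPaths}. Next I would observe that the vanishing cycles of $\pi_{J_k}$ at the relevant critical points converge to those of $\TSlefFib$: near a Lefschetz singular point the fibration is modelled (via Lemma~\ref{lem:lefCoords}) on $\pi_0(z_1,z_2) = z_1 z_2$, and since the nodal curves of $\pi_{J_k}$ converge (away from the nodes, in $C^1_{\mathrm{loc}}$) to the nodal curves $P_\pm^\gamma + P_\mp^{\bar\gamma}$ of $\TSlefFib$ inside the Weinstein neighbourhood $\weinsteinNbhd$, the vanishing cycles — small circles in nearby regular fibres shrinking to the node — can be taken $C^0$-close. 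Then I would apply Lemma~\ref{lem:transportConvergence}: transporting these circles over the halves of $\gamma_k$ gives discs that are $C^0$-close to the discs obtained by transporting the limiting circles over the halves of $\gamma = p$, whose union is exactly $L$. Taking $k$ large enough that all these approximations are simultaneously within a prescribed $\varepsilon$, the union $\Sigma_k$ of the two discs is $C^0$-close to $L$.

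The main obstacle I expect is the non-compactness of the fibres together with the subtlety that $\pi_{J_k}$ and $\TSlefFib$ live on genuinely different manifolds ($\XJ{J_k}$ versus $T^*S^2$), so "$C^0$-close" must be interpreted via the almost-complex embeddings $\iota_k : (T_{\le e^k}^*S^2,\JTS) \hookrightarrow (X,J_k)$ with image $\weinsteinNbhd$. Concretely, one must check that the relevant parts of the matching cycles — the vanishing circles, their transports, and in particular the entire matching cycle $\Sigma_k$ — are contained in the region of $X$ that is identified via $\iota_k$ with a fixed compact Liouville subdomain of $T^*S^2$ on which the parallel transport ODE of Lemma~\ref{lem:TSparallelTransport} is controlled; this is where the containment statements of Section~\ref{sec:convergenceOfMatchingPaths} (that curves in $\weinsteinModuli{J_k}$ stay bounded away from the cylindrical ends, and that $\Sigma_k$ lives submersively over $\gamma_k$) are essential, and where the smooth-dependence estimate for ODEs must be applied uniformly over the compact base $[-1,1]$. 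Once everything is confined to a fixed compact piece, the conclusion is a routine application of continuous dependence of flows on their defining data, and I would not grind through the ODE estimates explicitly, instead citing \cite[Appendix~B]{duistermaatKolk} as in Lemma~\ref{lem:transportConvergence}.
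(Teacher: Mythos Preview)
Your overall strategy matches the paper's proof almost exactly: split the matching path into two halves, argue that the vanishing thimbles converge near the critical points via the local Lefschetz model, and then invoke Lemma~\ref{lem:transportConvergence} on the remainder of each half-path (the paper does this on \(\gamma_k^\pm|_{[\mp1\pm\epsilon,0]}\)) to get \(C^0\)-convergence of the full thimbles. Your discussion of the non-compactness and of the embeddings \(\iota_k\) is also in the right spirit.

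There is, however, a genuine gap at the gluing step. You assert that the matching cycle is simply the union of the two discs and that these ``glue along the fibre \(\pi_{J_k}^{-1}(\gamma_k(0))\) precisely because the two transported circles agree there (this is the matching condition)''. This is not correct: the two vanishing cycles transported to the central fibre are only guaranteed to be Hamiltonian isotopic circles in that fibre, not literally equal, and there is no reason the paths \(\gamma_k\) produced in Section~\ref{sec:convergenceOfMatchingPaths} would make them coincide on the nose. The paper confronts this explicitly: it invokes Seidel's construction \cite[Lemma~15.3]{seidelBook}, which deforms the symplectic structure on the Weinstein neighbourhood so that the two thimbles genuinely match, and then runs a Moser argument \cite[Lemma~7.1]{seidelBook} to carry the resulting sphere back to the original symplectic structure. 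The point is that because the two thimbles are already \(C^0\)-close to the corresponding halves of \(L\), the required deformation and the Moser isotopy can be taken \(C^0\)-small, so the final matching cycle \(\Sigma_k\) remains \(C^0\)-close to \(L\). Without this step your two discs need not form a smooth embedded Lagrangian sphere at all, and the argument is incomplete.
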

\begin{proof}
    Since the critical points of \(\pi_{J_k}|_{\weinsteinNbhd}\) converge to
those of \(\TSlefFib|_{\weinsteinNbhd}\), smooth dependence of ODEs implies
that we have \(C^0\) convergence of the vanishing thimbles associated to the
paths \(\gamma_k^- := \gamma_k|_{[-1,0]}\) and \(\gamma_k^+ :=
\gamma_k|_{[0,1]}\) in a small neighbourhood of the critical points.
Therefore, we can apply Lemma~\ref{lem:transportConvergence} to the
restrictions \(\gamma_k^-|_{[-1+\epsilon,0]}\) and
\(\gamma_k^+|_{[0,1-\epsilon]}\) for some small \(\epsilon > 0\) to deduce
that the vanishing thimbles of \(\pi_{J_k}\) over the paths \(\gamma_k^\pm\)
can be made \(C^0\)-close to those of \(\TSlefFib\) over \(\gamma^\pm\) by
increasing \(k\).

Choose \(k\) large enough so that the vanishing thimbles are very close to
those of \(\TSlefFib\) and in particular, are contained in the Weinstein
neighbourhood \(\weinsteinNbhd\). To form the matching cycle a 
deformation of the symplectic structure on \(\weinsteinNbhd\) is made to
account for the fact that the vanishing cycles over \(\gamma_k(0)\) may not
agree (see \cite[Lemma~15.3]{seidelBook}). A Moser-type argument
(\cite[Lemma~7.1]{seidelBook}) is then used to map the resulting sphere back
to the original symplectic structure. Since the vanishing thimbles themselves
are \(C^0\)-close, the deformation (and resulting Moser isotopy) can be made
so that the matching sphere \(\Sigma_k\) remains \(C^0\)-close to \(L\). This
completes the proof.
\end{proof}

\subsection{Constructing the isotopy}
\label{sec:constructingTheIsotopy}

\begin{theorem}[]
    \label{thm:isotopyToMatchingCycle}
    There exists \(k\) sufficiently large such that \(L\) is Lagrangian
isotopic to a matching cycle of \(\pi_{J_k}\).
\end{theorem}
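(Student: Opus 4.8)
The plan is to combine the two convergence results from Section~\ref{sec:convergenceOfParTransp} with the homotopy-invariance of matching cycles. First I would recall that by Corollary~\ref{cor:LisMatchingCycle} the zero section \(L \subset T^*S^2\) is the matching cycle of \(\TSlefFib\) associated to an embedded matching path \(p : [-1,1] \to \cylindersModuliBar\). Via the charts constructed in Lemma~\ref{lem:charts}, for all \(k \ge K\) the base \(\weinsteinModuli{J_k} \subset \overline{\mathcal{M}}_{0,0}(\XJ{J_k},F;J_k)\) is identified with a bounded disc \(D \subset \C\) containing exactly the two relevant critical values (corresponding to the nodal curves \(P_\pm^\gamma + P_\mp^{\bar\gamma}\)), and under these identifications the path \(p\) and the critical values themselves are fixed data in \(D\). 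By Section~\ref{sec:convergenceOfMatchingPaths} the critical values of \(\pi_{J_k}\) converge to those of \(\TSlefFib\), so I can choose, for each sufficiently large \(k\), an embedded matching path \(\gamma_k : [-1,1] \to D\) joining the two relevant critical values of \(\pi_{J_k}\) with \(\gamma_k \to p\) in \(C^1\).

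Next I would invoke Lemma~\ref{lem:matchingCycleC0Close}: for \(k\) large enough the matching cycle \(\Sigma_k\) of \(\pi_{J_k}|_{\weinsteinNbhd}\) over \(\gamma_k\) is \(C^0\)-close to \(L\), and moreover (from the proof of that lemma) \(\Sigma_k\) is contained in the Weinstein neighbourhood \(\weinsteinNbhd = T_{\le 1}^*L\). Since \(\Sigma_k\) is a Lagrangian sphere \(C^0\)-close to the zero section of \(T^*L\), Weinstein's neighbourhood theorem together with a standard graphical argument (any Lagrangian sufficiently \(C^0\)-close to the zero section is in fact \(C^1\)-close, hence the graph of a closed, thus exact since \(H^1(S^2)=0\), one-form; Moser interpolation then produces a Hamiltonian isotopy in \(T^*L\) carrying \(0_{S^2}\) to \(\Sigma_k\)) shows that \(L\) is Lagrangian (indeed Hamiltonian) isotopic to \(\Sigma_k\), with the isotopy supported in \(\weinsteinNbhd \subset B_{d,p,q}\). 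Finally, \(\Sigma_k\) is by construction a matching cycle of \(\pi_{J_k}\), so chaining the isotopies gives the claim.

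The main obstacle, and the step requiring the most care, is the passage from \(C^0\)-closeness of \(\Sigma_k\) to \(L\) to an actual Lagrangian isotopy between them. The convergence statements in Lemmata~\ref{lem:transportConvergence} and \ref{lem:matchingCycleC0Close} are phrased in the \(C^0\) topology, obtained from smooth dependence of ODE solutions on the vector field and initial conditions; but to deduce that \(\Sigma_k\) is a \emph{graph} over the zero section one genuinely needs \(C^1\) control near \(L\). I would address this by strengthening the convergence arguments to \(C^1\): the horizontal distributions of \(\pi_{J_k}|_{\weinsteinNbhd}\) converge in \(C^0\) to that of \(\TSlefFib|_{\weinsteinNbhd}\) (this follows from \(C^1_{\mathrm{loc}}\) convergence of the fibre curves established in Section~\ref{sec:foliationConstruction}), so the parallel transport maps converge in \(C^1\), and likewise the vanishing thimbles converge in \(C^1\) near the critical points by the local normal form for Lefschetz singularities. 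Given \(C^1\)-closeness the Weinstein graphical argument applies verbatim. An alternative, which sidesteps the differentiability issue entirely, is to observe that \(\Sigma_k\) and \(L\) are both matching cycles of the \emph{same} Lefschetz fibration \(\pi_{J_k}\) for matching paths that are \(C^0\)-close — so isotopic rel endpoints within the complement of the critical values — and then apply the standard fact (e.g.\ \cite[\S16]{seidelBook}) that matching cycles along isotopic matching paths are Lagrangian isotopic; this requires first establishing that \(L\) itself is a matching cycle of \(\pi_{J_k}\) for some path, which is precisely what the \(C^0\) estimate together with the graphical argument provides. Either route completes the proof.
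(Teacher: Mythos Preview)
Your overall strategy matches the paper's up to the point where you have a Lagrangian sphere \(\Sigma_k\) contained in the Weinstein neighbourhood \(\weinsteinNbhd\) of \(L\); the divergence is in how you conclude that \(\Sigma_k\) is Lagrangian isotopic to \(L\). The paper does not attempt any graphical or \(C^1\) argument at all: it simply invokes Hind's uniqueness theorem for Lagrangian spheres in \(T^*S^2\) (\cite[Theorem~18]{hind12Stein}). Since \(\Sigma_k\) and \(L\) are both Lagrangian spheres in a domain symplectomorphic to \((T^*_{\le e^{R_k}}S^2, e^{-R_k}\omega_{\mathrm{can}})\), Hind's theorem yields the isotopy immediately, and a Liouville rescaling confines it to \(\weinsteinNbhd\). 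This requires nothing of \(\Sigma_k\) beyond mere containment in the neighbourhood, so the \(C^0\)-closeness from Lemma~\ref{lem:matchingCycleC0Close} is used only to establish that containment.

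Your route has a real gap. The parenthetical claim that a Lagrangian \(C^0\)-close to the zero section is automatically \(C^1\)-close (hence graphical) is false in general; one can have Lagrangians arbitrarily \(C^0\)-close to \(0_{S^2}\) that are not graphs over it. You recognise this and propose to upgrade the convergence, but your fix~(a) contains a further error: \(C^0\) convergence of the horizontal distributions gives only \(C^0\) convergence of the defining vector fields, and \(C^0\) convergence of vector fields yields \(C^0\), not \(C^1\), convergence of flows. To get \(C^1\) convergence of the parallel transport you would need \(C^1\) convergence of the horizontal lift, i.e.\ \(C^2\) convergence of the fibre curves. This is in fact available from the \(C^\infty_{\mathrm{loc}}\) nature of SFT convergence, so the argument can be repaired, but you would still need to track the regularity through the Moser deformation used to glue the vanishing thimbles into a matching cycle (cf.\ the appeal to \cite[Lemma~15.3]{seidelBook} in Lemma~\ref{lem:matchingCycleC0Close}). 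Your fix~(b) is, as you note yourself, circular. The paper's appeal to Hind's theorem sidesteps all of this: it is a heavier black box, but it turns the step into a one-line citation rather than a delicate regularity chase.
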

\begin{proof}
    Lemma~\ref{lem:matchingCycleC0Close} shows that the relevant matching
cycle \(\Sigma_k\) is a Lagrangian sphere contained in a neighbourhood of
\(L\) that is symplectomorphic to\linebreak \((T_{\le e^{R_k}}^*S^2,
e^{-R_k}\omega_\mathrm{can})\) for some positive number \(R_k > 0\).
Therefore, we can apply Hind's theorem \cite[Theorem~18]{hind12Stein} on
uniqueness of Lagrangian spheres in \(T^*S^2\) to obtain a Lagrangian isotopy
from \(\Sigma_k\) to \(L\). If necessary, by re-scaling by the
Liouville flow, we can ensure that the isotopy is supported in \(T_{\le
e^{R_k}}^*S^2\). Finally, since this neighbourhood of \(L\) symplectically embeds
into \(X\), we obtain the isotopy in \(X\) as desired.
\end{proof}

In summary, we have found a Lefschetz fibration \(\pi_{J_k} : \XJ{J_k} \to \C\) and
a Lagrangian isotopy supported in a small neighbourhood of \(L\) taking \(L\)
to a matching cycle \(\Sigma_k\). The next task is to ``undo the neck
stretch'' and find another Lagrangian isotopy taking \(\Sigma_k\) to a
matching cycle of some fixed Lefschetz fibration with
respect to which we will do the remaining computations.

We use the fact that the space \(\mathcal{J}(D')\) of compatible
almost complex structures for which each component of the divisor \(D' \subset
X\) is \(J\)-holomorphic is connected. In fact, we restrict ourselves even
further to the subset of \(\mathcal{J}(D')\) of almost complex structures that
are \emph{fixed} in a neighbourhood of the \(F\)-component of \(D'\). In more
detail, consider a neighbourhood \(N\) of this component of the form shown in
Figure~\ref{fig:fixedNeighbourhood}.
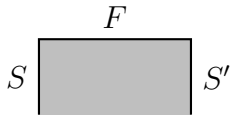
\begin{figure}\centering \begin{tikzpicture}[
    moment polytope,
]
    \fill (0,0) -- node [left] {\(\fibrationSectionL\)} (0,1) -- node[above]
	{\(F\)} (2,1) -- node [right] {\(\fibrationSectionR\)} (2,0) -- cycle;
    \draw (0,0) -- (0,1) -- (2,1) -- (2,0);
\end{tikzpicture}
    \caption{
	The neighbourhood \(N\) of the fibre component of \(D'\) in which we
	fix the almost complex structures to be equal to the usual product
	complex structure coming from \(S^2 \times S^2\).
    }
    \label{fig:fixedNeighbourhood}
\end{figure}
Remark~\ref{rem:productACS} ensures that we can choose\footnote{In fact, in
    Section~\ref{sec:mainTheoremProof} we shall explicitly construct a
    specific \(\Jref\).} \(\Jref \in \mathcal{J}(D')\) so that it agrees with the product
almost complex structure coming from \(S^2 \times S^2\) in \(N\). Let \(U := X
\backslash N\) and denote the subset of \(\mathcal{J}(D')\) of almost complex
structures that agree with \(\Jref\) in \(N=X\backslash U\) by
\(\mathcal{J}(U,\Jref)\). That is,
\[
    \mathcal{J}(U,\Jref) := \{ J \in \mathcal{J}(D') \mid J|_{X\backslash U} =
    J|_N = \Jref|_N = \Jref|_{X\backslash U}\}.
\]
This space is connected, which follows from S\'evennec's argument (see
\cite[Proposition~1.1.6]{audin94SympAndAlmostComplexManifolds} for example).
Observe that, for a suitable choice of neighbourhood \(N\), all the stretched
almost complex structures \(J_k\) are contained in \(\mathcal{J}(U,\Jref)\).

The idea is to use connectedness of \(\mathcal{J}(U,\Jref)\) to choose a
path \(J_t\) of almost complex structures from \(J_0 = J_k\) to \(J_1 =
\Jref\). Applying the results of Section~\ref{sec:fibreModuliSpaces} yields a
path of Lefschetz fibrations \(\pi_t = \pi_{J_t} : \XJ{J_t} \to \C\) from
\(\pi_0 = \pi_{J_k}\) to \(\pi_1 = \pi_{\Jref}\). This path of
fibrations induces a path in the configuration space \(C(\C,d)\) of \(d\)
points in \(\C\) corresponding to the positions of the critical values of
\(\pi_t\) (see Remark~\ref{rem:lefFibMarkedPoints}(1)).

We note two features of this path. Firstly, recall that the point at infinity in
the bases \(\overline{\mathcal{M}}_{0,0}(\XJ{J_t},F;J_t) \cong \C\)
corresponds to the exotic stable curve \(\infinityCurve{J_t}\).
Therefore, the points must remain in a bounded subset by the fact that
Lefschetz critical fibres never intersect \(\infinityCurve{J_t}\).
Secondly, since the almost complex structures \(J_t\) are fixed on \(N\) and
\(N\) is foliated by smooth \(J_t\)-holomorphic \(F\)-curves the
points also remain bounded away from \(0 \in \C\). As a result, this path in
\(C(\C,d)\) is really a path in \(C(A,d)\) where \(A \cong [0,1] \times S^1\)
is a compact annulus. Now, choose an isotopy of matching paths \(\gamma_t :
[-1,1] \to A\) for \(\pi_t\) such that \(\gamma_0 = \gamma_k\) is the matching
path for \(\Sigma_k\) from Lemma~\ref{lem:matchingCycleC0Close}. Forming the
corresponding matching cycles \(L_t\) yields a Lagrangian isotopy from \(L_0
= \Sigma_k\) to the matching cycle of \(\refFib\) over \(\gamma_1\). This
isotopy is disjoint from open neighbourhoods of \(\infinityCurve{J_t}\) and
the \(F\)-component of the divisor \(D' \subset X\). However, \emph{a priori}
it may pass through the sections \(\fibrationSectionL\) and
\(\fibrationSectionR\). This is a problem since we want the Lagrangian isotopy
to be supported in a subset of \(X\) that is symplectomorphic to a subset of
\(B_{d,p,q}\). The point of the next result is to avoid this behaviour by
explicitly altering the Lefschetz fibrations \(\pi_t\).

The curves in \(\overline{\mathcal{M}}_{0,0}(X,F;J)\) form singular foliations
\(\mathscr{F}_t\) on \(X\). The singular leaves are exactly the singular
Lefschetz fibres of \(\pi_t\), and one \emph{exotic} leaf given by
\(\infinityCurve{J_t}\). In small tubular neighbourhoods of the
\(J_t\)-holomorphic sections \(\fibrationSectionL\) and
\(\fibrationSectionR\), these foliations are smooth with leaves given by
symplectic \(2\)-discs. We denote these by \(\mathscr{D}_t\).
\begin{lemma}[]
    \label{lem:gompf}
    There exists an isotopy \(\mathscr{D}_{s,t}\) of foliations such that
\(\mathscr{D}_{0,t} = \mathscr{D}_t\) and \(\mathscr{D}_{1,t}\) are foliations
by symplectic \(2\)-discs intersecting the sections \(\fibrationSectionL\) and
\(\fibrationSectionR\) positively and symplectically orthogonally. The isotopy
can be chosen so that it is invariant in \(s\) outside of an arbitrarily small
neighbourhood of each section. As a result, we obtain singular foliations
\(\mathscr{F}_{s,t}\) of \(X\) such that, excluding a single exotic leaf,
\(\mathscr{F}_{s,t}\) forms a \emph{symplectic Lefschetz
fibration}\footnote{That is, Lefschetz fibrations whose fibres are only
symplectic submanifolds, and not necessarily \(J\)-holomorphic.} on \(X\).
\end{lemma}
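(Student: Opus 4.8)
The plan is to straighten out the foliations $\mathscr{D}_t$ near each section by a fibrewise isotopy, using the fact that, near $\fibrationSectionL$ (and similarly $\fibrationSectionR$), the ambient geometry is standard: the sections are $J_t$-holomorphic symplectic spheres of a fixed square $-1$, and the leaves of $\mathscr{D}_t$ are symplectic $2$-discs each meeting the section transversely and positively in exactly one point. The model to aim for is a symplectic tubular neighbourhood of $\fibrationSectionL$ identified with a disc bundle $D\nu$, in which the target foliation $\mathscr{D}_{1,t}$ is the one by fibre discs (which are automatically symplectically orthogonal to the zero section). So the first step is to invoke the symplectic neighbourhood theorem for the section to fix such an identification, continuously in $t$; this is routine since the relevant normal data (a line bundle of degree $-1$ with its symplectic connection) is rigid.

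Next I would produce the isotopy $\mathscr{D}_{s,t}$. Over a small enough neighbourhood, each leaf $\ell$ of $\mathscr{D}_t$ is a section-like symplectic disc: it is a graph over the zero section of a map into the normal bundle whose $1$-jet along the intersection point is controlled (transversality $=$ the graph is $C^1$-close to a fibre near that point, after possibly shrinking). The honest statement is that the leaves, viewed in the disc-bundle chart, form a smooth foliation whose leaf through $0 \in \fibrationSectionL$-fibre is $C^0$-close — and near the central point $C^1$-close — to the linear fibre. A standard fibrewise straightening (linearly interpolating the leaf-diffeomorphism of the bundle that carries fibres to leaves, or equivalently interpolating the defining $1$-forms/connection) gives a path of foliations $\mathscr{D}_{s,t}$ from $\mathscr{D}_t$ to the fibre foliation. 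Keeping this supported in an arbitrarily small neighbourhood of each section is achieved by cutting the interpolation off with a bump function in the radial coordinate of the disc bundle, chosen so that $\mathscr{D}_{s,t}$ is unchanged outside that neighbourhood and so that all interpolated leaves remain \emph{symplectic} (this holds automatically if the neighbourhood is small enough, since symplecticity of a $2$-plane is an open condition and we only move leaves by a $C^1$-small amount there). Orthogonality of $\mathscr{D}_{1,t}$ to the sections then follows from the model, after a further $C^\infty$-small fibrewise adjustment of the almost complex structure / metric used to define ``orthogonal,'' if needed — but in the disc-bundle model the linear fibres are genuinely $\omega$-orthogonal to the zero section, so no adjustment is required. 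Gluing the locally modified foliations back to the unchanged $\mathscr{F}_t$ outside the neighbourhoods of $\fibrationSectionL \cup \fibrationSectionR$ yields the singular foliations $\mathscr{F}_{s,t}$; away from the exotic leaf they still have exactly the $d$ nodal singular leaves of type $(\mathcal{E}_j, F-\mathcal{E}_j)$, with all leaves symplectic, hence a symplectic Lefschetz fibration in the stated sense (dropping the requirement that fibres be $J$-holomorphic).

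The main obstacle I expect is \textbf{controlling the interpolation near the boundary of the tubular neighbourhood while keeping every intermediate leaf symplectic and keeping the intersections with $\fibrationSectionL$, $\fibrationSectionR$ transverse and positive for all $(s,t)$ simultaneously}: one is interpolating a whole compact $[0,1]$-family (in $t$) of foliations, so the cutoff radius and the size of the allowed $C^1$-perturbation must be chosen uniformly in $t$, and one must check that the cut-off leaf is still a closed symplectic disc (not merely a symplectic surface with corners or self-intersections) — this is where a careful choice of the disc-bundle trivialisation and of the bump function, together with the openness of the symplectic condition and compactness of $[0,1]$, does the work. Everything else — existence of the neighbourhood identification, smoothness and compatibility of the resulting $\mathscr{F}_{s,t}$ with the unaltered part of the foliation, and the bookkeeping of singular leaves — is standard and follows the treatment of symplectic Lefschetz pencils in \cite{seidelBook}.
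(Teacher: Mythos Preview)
Your proposal is correct and follows essentially the same approach as the paper: a local straightening of the leaves near each section via a compactly supported isotopy that makes the leaves symplectically orthogonal to $\fibrationSectionL$ and $\fibrationSectionR$, then gluing to the unchanged part of $\mathscr{F}_t$. The paper simply packages this step by citing Gompf's argument \cite[Lemma~2.3]{gompf95Construction} rather than spelling out the interpolation, and records the two features you also identify (the intersection point of each leaf with $S_i$ is fixed throughout the isotopy, and the isotopy is the identity outside a small neighbourhood of $S_i$); your more detailed sketch of the disc-bundle model and cutoff is exactly what underlies Gompf's lemma. One small terminological slip: the leaves are fibre-like (graphs over a normal fibre), not ``section-like'' graphs over the zero section, but your subsequent description makes clear you have the right picture.
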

\begin{proof}
    Put \(S_1 = \fibrationSectionL\) and \(S_2 = \fibrationSectionR\). Note
that near a section \(S_i\), the fibrations \(\pi_t\)
are foliations \(\mathscr{D}_t\) of symplectic 2-discs whose leaves intersect
\(S_i\) transversely and positively. Therefore, we can use a
variation of an argument of Gompf \cite[Lemma~2.3]{gompf95Construction} to
construct an isotopy of foliations \(\mathscr{D}_{s,t}\) such that: (1) the
point of intersection of a leaf and \(S_i\) is invariant in
\(s\); and (2) outside of a small neighbourhood of \(S_i\),
the leaves are also invariant in \(s\). Furthermore, the leaves
of \(\mathscr{D}_{1,t}\) intersect \(S_i\) symplectically
orthogonally. Consequently, we can glue the symplectic discs in
\(\mathscr{D}_{s,t}\) to the leaves of \(\mathscr{F}_t\) to obtain new
foliations \(\mathscr{F}_{s,t}\). By construction, these are constant in \(s\)
in a complement of small neighbourhoods of the sections \(S_i\), which implies
that (except for the one exotic fibre corresponding to the
\(\infinityCurve{J_t}\) curve) their leaves form Lefschetz fibrations, since
the existence of a Lefschetz chart at a critical point is a local condition.
This completes the proof.
\end{proof}

Let \(X_t\) denote \(X\) with the exotic leaf of \(\mathscr{F}_{1,t}\)
excised. Write \(\pi_{1,t} : X_t \to \C\) for the corresponding Lefschetz
fibration. Define the reference Lefschetz fibration by \(\refFib :=
\pi_{1,1}\), and observe that it does not depend on the Lagrangian sphere
\(L\). In addition to the tubular neighbourhood \(N\) defined earlier, choose
a tubular neighbourhood foliated by leaves of \(\mathscr{F}_{1,1}\) of the
exotic leaf, then define \(\mathring{X} \subset U \subset X\) to be the
excision of this as well as the sections \(\fibrationSectionL\) and
\(\fibrationSectionR\). The restriction
\(\refFib|_{\mathring{X}}\) is a Lefschetz fibration over the compact annulus.
\begin{corollary}[]
    \label{cor:isotopyToMatchingCycle}
    There exists a Lagrangian isotopy from \(L\) to a matching cycle of
\(\refFib\) that is supported in \(\mathring{X}\).
\end{corollary}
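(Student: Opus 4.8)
The plan is to assemble the pieces already constructed in Section~\ref{ch:isotopy}. The key observation is that all the ingredients are in place: Theorem~\ref{thm:isotopyToMatchingCycle} gives a Lagrangian isotopy from \(L\) to a matching cycle \(\Sigma_k\) of \(\pi_{J_k}\), supported in a small neighbourhood of \(L\) (hence disjoint from the divisor \(D'\), the sections, and the exotic curve for large \(k\), since by Proposition~\ref{prop:c1=1-sft-limit} and Corollary~\ref{cor:En-disjoins-L} the irrelevant curves disjoin from \(L\)). Then the connectedness of \(\mathcal{J}(U,\Jref)\) (via S\'evennec's argument) lets us pick a path \(J_t\) from \(J_k\) to \(\Jref\) inside \(\mathcal{J}(U,\Jref)\), and Proposition~\ref{prop:lefFibXJ} produces a corresponding path of Lefschetz fibrations \(\pi_t\). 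The movement of the \(d\) critical values traces a path in a configuration space \(C(A,d)\) of a compact annulus \(A\), since the critical fibres never meet \(\infinityCurve{J_t}\) (boundedness away from \(\infty\)) and, because the \(J_t\) are fixed on the neighbourhood \(N\) of the \(F\)-component of \(D'\) which remains foliated by smooth \(F\)-curves, they stay bounded away from \(0\).

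First I would choose, for each \(t\), a matching path \(\gamma_t : [-1,1] \to A\) joining the two relevant critical values, varying continuously with \(t\) and starting at \(\gamma_0 = \gamma_k\) (the matching path for \(\Sigma_k\) furnished by Lemma~\ref{lem:matchingCycleC0Close}). Forming the associated matching cycles \(L_t\) yields a Lagrangian isotopy from \(L_0 = \Sigma_k\) to a matching cycle of \(\refFib = \pi_{1,1}\) over \(\gamma_1\). By construction this isotopy avoids open neighbourhoods of \(\infinityCurve{J_t}\) and of the \(F\)-component of \(D'\). The remaining defect is that \emph{a priori} the isotopy may cross the sections \(\fibrationSectionL\) and \(\fibrationSectionR\); this is precisely what Lemma~\ref{lem:gompf} is designed to fix. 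Applying that lemma replaces the foliations \(\mathscr{F}_t\) near the sections with foliations \(\mathscr{F}_{1,t}\) whose leaves meet \(\fibrationSectionL,\fibrationSectionR\) symplectically orthogonally and positively, without changing anything outside arbitrarily small neighbourhoods of the sections. Excising these sections (and the exotic leaf) produces \(\mathring{X} \subset U\) and the Lefschetz fibration \(\refFib|_{\mathring{X}}\) over the annulus.

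Concatenating the two isotopies — first the one from Theorem~\ref{thm:isotopyToMatchingCycle} taking \(L\) to \(\Sigma_k\) inside a Weinstein neighbourhood of \(L\), then the one above taking \(\Sigma_k\) to the matching cycle of \(\refFib\) over \(\gamma_1\) — gives a Lagrangian isotopy from \(L\) to a matching cycle of \(\refFib\). The first isotopy is supported near \(L\), which is disjoint from \(D'\), the sections, and the exotic curve (for \(k\) large); the second is supported in the complement of neighbourhoods of the exotic leaf, the \(F\)-component, and (after Lemma~\ref{lem:gompf}) the sections. Hence the concatenated isotopy is supported in \(\mathring{X}\). Since \(\mathring{X} \subset U \subset X\) and this subset symplectically embeds into \(B_{d,p,q}\) (it is the compact symplectic submanifold whose completion is \(B_{d,p,q}\), with the compactifying divisor removed), the isotopy descends to \(B_{d,p,q}\).

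The main obstacle is purely bookkeeping about supports: one must verify that \emph{both} isotopies, and hence their concatenation, genuinely avoid the sections \(\fibrationSectionL,\fibrationSectionR\), the exotic leaf, and the \(F\)-component of \(D'\) \emph{uniformly} in the relevant parameters, so that after the Gompf-type modification of Lemma~\ref{lem:gompf} the total support lands in \(\mathring{X}\). This requires choosing the various tubular neighbourhoods small enough and \(k\) large enough that the matching cycles \(\Sigma_k\) (being \(C^0\)-close to \(L\) by Lemma~\ref{lem:matchingCycleC0Close}) stay well inside the Weinstein neighbourhood, and invoking the fact — already recorded in the discussion of Section~\ref{sec:foliationConstruction} — that the irrelevant critical values and the \(E_n\)-curve disjoin from \(L\) for large \(k\). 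None of this involves new techniques; it is an exercise in patching together the supported isotopies constructed above.
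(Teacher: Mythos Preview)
Your outline follows the paper's approach closely, but there is a genuine gap in the step where you invoke Lemma~\ref{lem:gompf}. You correctly identify that the isotopy of matching cycles \(L_t\) may \emph{a priori} cross the sections \(\fibrationSectionL,\fibrationSectionR\), and you say ``this is precisely what Lemma~\ref{lem:gompf} is designed to fix.'' But you never explain the mechanism by which the Gompf modification actually forces the matching cycles to avoid the sections --- you simply assert ``(after Lemma~\ref{lem:gompf}) the sections'' are avoided.

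The missing point is this: after the Gompf deformation, the fibres of \(\pi_{1,t}\) meet \(\fibrationSectionL\) and \(\fibrationSectionR\) \emph{symplectically orthogonally}. This means that along each section the horizontal distribution (symplectic orthogonal complement of the fibre tangent space) is tangent to the section, and hence the symplectic parallel transport of \(\pi_{1,t}\) preserves each \(S_i\) setwise. Consequently it preserves the complement of the sections, so a matching cycle formed with respect to \(\pi_{1,t}\) from a vanishing cycle disjoint from the sections remains disjoint from them throughout the isotopy. This is the content of the paper's proof, and without it your argument does not close: merely modifying the foliation near the sections does not by itself prevent parallel transport from pushing matching cycles into those neighbourhoods.

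Relatedly, you should be explicit that the matching cycles are to be formed with respect to the \emph{modified} fibrations \(\pi_{1,t}\), not the original \(\pi_{J_t}\). Since the Gompf deformation is supported away from \(\Sigma_k\), the sphere \(\Sigma_k\) is still a matching cycle for \(\pi_{1,0}\), so the isotopy \(t \mapsto \Sigma_{\gamma_t}\) taken in \(\pi_{1,t}\) begins at \(\Sigma_k\) and ends at a matching cycle of \(\refFib = \pi_{1,1}\); the orthogonality argument then confines it to \(\mathring{X}\).
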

\begin{proof}
    Theorem~\ref{thm:isotopyToMatchingCycle} shows that it suffices to find
such an isotopy from the matching cycle \(\Sigma_k\) to one of \(\refFib\).
Observe that \(\Sigma_k\) is still a matching cycle for the deformed fibration
\(\pi_{1,0}\) since none of the structure changes near \(\Sigma_k\) throughout
the isotopy \(\mathscr{F}_{s,t}\). Therefore, we can construct the Lagrangian
isotopy using the matching paths \(\gamma_t\) defined earlier, except this
time we form the matching cycles with respect to the maps \(\pi_{1,t}\). To
see that this has support as claimed, note that the orthogonality condition
ensures that the parallel transport maps of \(\pi_{1,t}\) preserve the
sections \(\fibrationSectionL_i\). Therefore, they preserve their complements
too. Since \(\Sigma_k\) lives in this complement, the claim follows from this
and the discussion preceding Lemma~\ref{lem:gompf}.
\end{proof}

\section{Mapping class groups of surfaces and symplectomorphisms}
\label{ch:mcg}
In this section we harness the theory of mapping class groups of surfaces to
improve Corollary~\ref{cor:isotopyToMatchingCycle} to the main result
Theorem~\ref{thm:intro:main}. Corollary~\ref{cor:isotopyToMatchingCycle} tells
us that any Lagrangian sphere \(L \subset B_{d,p,q}\) is Lagrangian isotopic
to a matching cycle of the Lefschetz fibration \(\refFib\). To convert this
into an isotopy statement phrased in terms of Dehn twists, we need to
understand their relation to (isotopy classes of) matching paths. This will be
facilitated by the fact that the natural action of the mapping class group of
the \(d\)-punctured annulus \(A^{\times d} := [0,1] \times S^1 \backslash
\{d\text{ points}\}\) is transitive on matching paths, and so, it will suffice
to understand particularly simple matching paths.

\subsection{The mapping class group of the punctured annulus}

The mapping class group of a surface is a classical object with plenty of rich
theory. Farb and Margalit's book \cite{farbMargalit}  --- in particular,
Sections~1--4 --- contains everything we will use here. We recall the main
definition.
\begin{definition}[]
    Let \(S\) be an oriented surface (possibly with boundary and punctures).
The \emph{mapping class group of \(S\)}, \(\Mod(S)\), is defined to be the group
of connected components of the group of orientation preserving
diffeomorphisms\footnote{In fact, as Section~1.4.2 of \cite{farbMargalit}
    explains, we are free to consider homeomorphisms or diffeomorphisms
interchangeably.} of \(S\) that fix the boundary point wise. That is,
\[
    \Mod(S) := \pi_0(\mathrm{Diff}^+(S,\partial S)).
\]
\end{definition}

Recall from Section~\ref{sec:constructingTheIsotopy}, the restriction of the
Lefschetz fibration \(\refFib\) to \(\mathring{X}\) has base a \(d\)-punctured
annulus \(A\). Therefore, we seek the understand the mapping class group
\(\Mod(A)\).
\begin{figure}\centering \begin{tikzpicture}
    \draw (0,0) circle [radius=0.5cm];
    \draw (0,0) circle [radius=3cm];
    \foreach \x in {0.66,1.33,2.0}
	\draw[xshift=\x cm] (0.5cm,0) node {\(\times\)};
    \draw [dotted, xshift=0.5cm] (0.66cm,0) -- node [below] {\(\gamma_1\)}
	(1.33cm,0) -- node [below] {\(\gamma_2\)} (2.0cm,0);
\end{tikzpicture}
    \caption{
	The punctured annulus and the matching paths \(\gamma_i\).
    }
    \label{fig:puncturedAnnulus}
\end{figure}
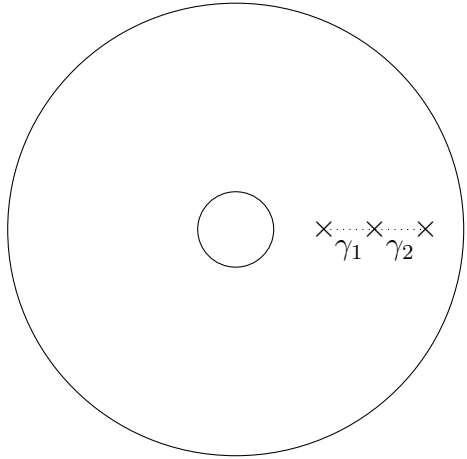
In Appendix~\ref{app:annulusMCG} we compute a presentation of \(\Mod(A)\):
\begin{example}[Appendix~\ref{app:annulusMCG}]
    The mapping class group of the punctured annulus is isomorphic to the
direct product of the \emph{annular braid group} with a copy of \(\Z\). This
is finitely generated, and each generator is one of three types: the Dehn twist
\(T\) about the central boundary, the so-called central twist \(\tau\), and
half-twists \(\sigma_i\) around the straight line paths \(\gamma_i\) joining the
adjacent \(i\) and \((i+1)\)th punctures. Explicitly, the
presentation is:
\[
    \Mod(A) = \left\langle \tau, \sigma_1,\ldots,\sigma_{n-1} \,\left|\,
    \begin{array}{c}
	(\tau\sigma_1)^2 = (\sigma_1\tau)^2,\\
	\tau\sigma_i = \sigma_i\tau \; \forall i>1,\\
	\sigma_i\sigma_{i+1}\sigma_i = \sigma_{i+1}\sigma_i\sigma_{i+1},\\
	\sigma_i\sigma_j = \sigma_j\sigma_i \;\forall |i-j|>1 
    \end{array}\right.\right\rangle \times \langle T \rangle.
\]
The behaviour of each generator is characterised by the diagrams in
Figure~\ref{fig:MCGgenerators}.
\begin{figure}\centering \begin{tikzpicture}
    \draw (0,0) circle [radius=0.5cm];
    \draw (0,0) circle [radius=1.5cm];
    \draw (1.0cm,0) node {\(\times\)};
    \draw [dotted] (1.0cm,0) -- (1.5cm,0);
    \draw [dashed,domain=0:360,variable=\t,smooth,samples=30]
	plot (\t:{(1-\t/360)*1.5 cm + (\t/360)*1.0 cm});
    \draw (0,-1.5cm) node [below=5pt] {\(\tau\)};
    \begin{scope}[xshift=4cm]
	\draw (0,0) circle [radius=0.5cm];
	\draw (0,0) circle [radius=1.5cm];
	\draw (1.0cm,0) node {\(\times\)};
	\draw [dotted] (0.5cm,0) -- (1.0cm,0);
	\draw [dashed,domain=0:360,variable=\t,smooth,samples=30]
	    plot (\t:{(1-\t/360)*0.5 cm + (\t/360)*1.0 cm});
	\draw (0,-1.5cm) node [below=5pt] {\(T\)};
    \end{scope}
    \begin{scope}[xshift=8cm]
	\draw (0,0) ellipse [x radius=1.5cm, y radius=0.75cm];        
	\draw (-0.75cm,0) node {\(\times\)} (0.75cm,0) node {\(\times\)};
	\draw [dotted] (-1.5cm,0) -- (-0.75cm,0);
	\draw [dotted] (1.5cm,0) -- (0.75cm,0);
	\draw [dashed] (-1.5cm,0)
	    .. controls (-0.75cm,-0.75cm) and (-0.1cm,-0.5cm) .. (0,-0.5cm)
	    .. controls (0.1cm,-0.5cm) and (0.75cm,-0.2cm) .. (0.75cm,0);
	\draw [dashed] (1.5cm,0)
	    .. controls (0.75cm,0.75cm) and (0.1cm,0.5cm) .. (0,0.5cm)
	    .. controls (-0.1cm,0.5cm) and (-0.75cm,0.2cm) .. (-0.75cm,0);
	\draw (0,-1.5cm) node [below=5pt] {\(\sigma_i\)};
    \end{scope}
\end{tikzpicture}
    \caption{
	Local characterisations of each generator of \(\Mod(A)\). In each
	case, dotted lines map to dashed.
    }
    \label{fig:MCGgenerators}
\end{figure}
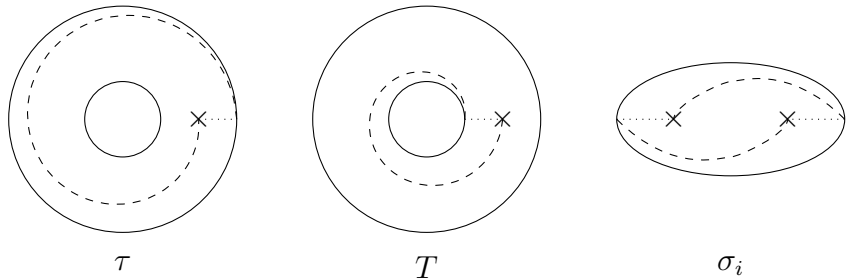
\end{example}

Since any Lagrangian sphere \(L \subset B_{d,p,q}\) is Lagrangian isotopic to
a matching cycle \(\Sigma_\gamma\) fibred over the matching path \(\gamma\),
the following basic fact will be crucial in proving
Theorem~\ref{thm:intro:main}.
\begin{fact*}
    The mapping class group \(\Mod(A)\) acts transitively on the set of
isotopy classes of matching paths \(\mathcal{P}\).
\end{fact*}

\begin{remark}
    In fact, this is true if \(A\) is replaced by any punctured oriented
surface, as can be proved by a cut-and-paste argument appealing to the
classification of surfaces, as the author learned from the MathOverflow answer
\cite{MCGtransitiveActionMathOverflow}.
\end{remark}

Therefore, \(\gamma\) is isotopic to the image of the so-called standard
matching path \(\gamma_1\) (see Figure~\ref{fig:puncturedAnnulus}) under an
element \(\rho \in \Mod(A)\). It follows that the matching cycle
\(\Sigma_\gamma\) is Lagrangian isotopic to \(\Sigma_{\rho(\gamma_1)}\). So to
prove Theorem~\ref{thm:intro:main}, we seek to a correspondence between
symplectomorphisms of \(B_{d,p,q}\) and mapping classes in \(\Mod(A)\).

That the half-twists \(\sigma_i\) correspond to generalised Dehn twists about
the Lagrangian spheres \(\Sigma_{\gamma_i}\) is well known (and proved in
\cite[16h]{seidelBook}). The Dehn twist \(T \in \Mod(A)\) acts
trivially on any class \([\gamma] \in \mathcal{P}\) since no matching path has
an end point lying on the central boundary component. So, our task is to
understand to what symplectomorphism the central twist \(\tau\) corresponds.
More precisely, is there a symplectomorphism \(\phi\) of \(B_{d,p,q}\) such
that \(\phi(\Sigma_{\gamma_1})\) is Lagrangian isotopic to
\(\Sigma_{\tau(\gamma_1)}\)?

\subsection{Some symplectomorphisms of \(B_{d,p,q}\)}
\label{sec:dpqSymplectomorphisms}

The aim of this section is to prove the following result.
\begin{proposition}[]
    There exists a compactly-supported symplectomorphism \(\pqtwist\) of
\(B_{d,p,q}\) arising from the symplectic monodromy of the
\(\frac{1}{p^2}(1,pq-1)\) singularity. The central twist \(\tau \in
\Mod(A)\) corresponds to \(\pqtwist\), that is,
\(\pqtwist(\Sigma_{\gamma_1})\) is Lagrangian isotopic to
\(\Sigma_{\tau(\gamma_1)}.\) Moreover, no iterate
\(\Sigma_{\tau^k(\gamma_1)}\), for \(k\ne 0\), is Lagrangian isotopic to
\(\Sigma_{\gamma_1}\).
\end{proposition}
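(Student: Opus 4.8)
The plan is to construct $\pqtwist$ explicitly as the symplectic monodromy of the versal deformation of the $\frac{1}{p^2}(1,pq-1)$ singularity, sitting inside $B_{d,p,q}$, and then to identify its effect on matching paths combinatorially. First I would recall that $B_{d,p,q}$ fibres (in the almost toric sense, or via the Lefschetz fibration $\refFib$) in a way that exhibits a subdomain symplectomorphic to (a rescaling of) the Milnor fibre $B_{1,p,q}$ of the $\frac{1}{p^2}(1,pq-1)$ singularity; the $d-1$ standard spheres live ``above'' the segment joining the $d$ punctures, while a neighbourhood of the pinwheel is where the monodromy of the $\frac{1}{p^2}(1,pq-1)$ singularity acts. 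Concretely, deforming the defining equation $z_1z_2 = \prod_{i=1}^d(z_3^p - i)$ by letting the critical values $1,\ldots,d$ of the product travel around a loop encircling the origin $z_3 = 0$ (equivalently, rotating the $z_3$-plane by $e^{2\pi i t/p}$ for $t\in[0,1]$, which is a symmetry of the $\frac{1}{p^2}(1,pq-1)$ cone near $z_3=0$) gives a compactly-supported symplectic isotopy whose time-one map $\pqtwist$ is supported in a neighbourhood of the pinwheel, exactly as asserted in the introduction. This is the content of the construction referenced as ``carried out in Section~\ref{ch:mcg}''.

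Next I would show $\pqtwist$ realises the central twist $\tau$. The Lefschetz fibration $\refFib : \mathring{X}\to A$ identifies $B_{d,p,q}$ (modulo the excised pieces, which are irrelevant for compactly-supported statements by Corollary~\ref{cor:isotopyToMatchingCycle}) with a total space over the $d$-punctured annulus, the central boundary circle of $A$ corresponding to the ``pinwheel end''. The monodromy of $\refFib$ around this central boundary is, by construction, the symplectic monodromy of the $\frac{1}{p^2}(1,pq-1)$ singularity, i.e.\ $\pqtwist$; on the base this monodromy acts as a diffeomorphism of $A$ fixing the punctures setwise and dragging a collar of the central boundary once around, which is precisely the central twist $\tau\in\Mod(A)$ (compare Figure~\ref{fig:MCGgenerators}). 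Since the action of the symplectic mapping class group on isotopy classes of matching cycles factors through the action of $\Mod(A)$ on isotopy classes of matching paths (the half-twists $\sigma_i$ corresponding to Dehn twists $\tau_i$ by \cite[16h]{seidelBook}, $T$ acting trivially), it follows that $\pqtwist(\Sigma_{\gamma_1})$ is Lagrangian isotopic to $\Sigma_{\tau(\gamma_1)}$.

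Finally, for the non-triviality of the iterates, I would argue at the level of $\Mod(A)$ that $[\tau^k(\gamma_1)]\neq[\gamma_1]$ in $\mathcal{P}$ for all $k\neq 0$, and then promote this to a statement about Lagrangian isotopy classes. The first part is a purely topological computation in the punctured annulus: the central twist $\tau$ acts on the set of isotopy classes of embedded arcs between two adjacent punctures by ``winding one endpoint neighbourhood around the central hole,'' and this changes a well-defined winding number (an element of $\Z$, detected e.g.\ by the algebraic intersection number of $\tau^k(\gamma_1)$ with a fixed arc joining the central boundary to the outer boundary, or by lifting to the universal/cyclic cover of $A$ relative to the central boundary); distinct $k$ give distinct winding numbers, so $\tau^k(\gamma_1)$ is not isotopic to $\gamma_1$ for $k\neq0$. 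To conclude that $\Sigma_{\tau^k(\gamma_1)}$ is not Lagrangian isotopic to $\Sigma_{\gamma_1}$, I would invoke the fact that for a Lefschetz fibration with these combinatorics two matching cycles over non-isotopic matching paths are not Lagrangian isotopic --- this can be seen either by a Floer-theoretic invariant (the Floer cohomology of $\Sigma_{\tau^k(\gamma_1)}$ with the fibre sphere components, or with the standard spheres $\Sigma_{\gamma_i}$, distinguishes them) or, more in the spirit of this paper, by observing that a Lagrangian isotopy would descend to an isotopy of matching paths after applying the classification machinery of Sections~\ref{ch:neckStretch}--\ref{ch:isotopy} in reverse. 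I expect \textbf{this last step --- showing non-isotopic matching paths yield non-isotopic Lagrangian spheres --- to be the main obstacle}, since it requires a genuine symplectic (not merely smooth or combinatorial) invariant; smoothly all these spheres are isotopic, so the argument must use something like Lagrangian Floer homology or the monodromy action on the Fukaya category, and care is needed because $B_{d,p,q}$ is neither monotone nor simply connected.
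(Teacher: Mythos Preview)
Your construction of $\pqtwist$ and the identification with the central twist are essentially what the paper does (the paper is more explicit, tracking vanishing thimbles under the family $N_t$ via the projection $z\mapsto z_3^p$, but the idea is the same).

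The genuine gap is in the final step. Your plan is to show $\tau^k(\gamma_1)\not\simeq\gamma_1$ as paths and then argue that non-isotopic matching paths yield non-Lagrangian-isotopic matching cycles. You correctly flag this implication as the obstacle, but your proposed routes do not work: the classification machinery of Sections~\ref{ch:neckStretch}--\ref{ch:isotopy} only produces \emph{some} matching path for a given sphere, it does not say that path is unique up to isotopy, so it cannot be ``run in reverse''; and a direct Floer computation inside $B_{d,p,q}$ is exactly where the non-monotone, non-simply-connected difficulties you mention bite.

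The paper sidesteps this entirely by passing to the universal cover. The degree-$p$ cover $\tilde c:\tilde N_0\to N_0$ has $\tilde N_0$ equal to the $A_{dp-1}$ Milnor fibre, and the Lefschetz fibration $\varpi$ is covered by $\tilde\varpi(z)=z_3$ via $c(x)=x^p$. Lifting $\Sigma_{\tau^k(\gamma_1)}$ to $\tilde N_0$ one computes its homology class by Picard--Lefschetz; for $k\not\equiv 0\pmod p$ this class already differs from that of the lift of $\Sigma_{\gamma_1}$, so the spheres are not even homotopic in $B_{d,p,q}$ (using $\pi_2(N_0)\cong H_2(\tilde N_0)$). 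For $k$ a nonzero multiple of $p$ the lifted sphere is $\tau_{\Sigma_\mu}^{2m}(\Sigma_{\mu_{0,1}})$ for an explicit $A_3$-configuration $(\Sigma_{\mu_{0,1}},\Sigma_\mu,\Sigma_{\mu_{p-1,1}})$ in $\tilde N_0$, and Seidel's knotting theorem \cite{seidel99SympKnotting} shows $HF(\tau_{\Sigma_\mu}^{2m}\Sigma_{\mu_{0,1}},\Sigma_{\mu_{p-1,1}})\neq 0$ while $HF(\Sigma_{\mu_{0,1}},\Sigma_{\mu_{p-1,1}})=0$. A putative Hamiltonian isotopy in $N_0$ would lift to one in $\tilde N_0$, contradicting this. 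The point is that the Floer-theoretic input lives in the simply-connected $A_{dp-1}$ Milnor fibre, where it is classical, rather than in $B_{d,p,q}$ itself.
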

As a corollary of this, we find that \(\pqtwist\) has infinite order in the
symplectic mapping class group \(\pi_0(\mathrm{Symp}_c(B_{d,p,q}))\).

We adapt Seidel's approach \cite[\S{}4.c]{seidel00graded} to the symplectic
monodromy to suit our situation. We work in
\(\C^3\) with coordinates \(z = (z_1,z_2,z_3)\).
Let \(\psi\) be a cut-off function\footnote{The purpose of which is to ensure
    that the symplectic parallel transport maps used in the
definition of monodromy are well defined.} satisfying
\[
    \psi(t^2) = \begin{cases}
        1, &\text{ if }t \le \frac{1}{3}\\
	0, &\text{ if }t \ge \frac{2}{3}.
    \end{cases}
\]
and define
\begin{equation*}
    \tilde{M}_w := \{(z_1,z_2,z_3) \in \C^3 \mid |z| \le 1,\,
    z_1z_2=z_3^{dp} + \psi(|z|^2)w\}
\end{equation*}
Note that, for some \(\epsilon > 0\) sufficiently small, the manifolds
\(\tilde{M}_w\) with \(0 < |w| \le \epsilon\) are
smooth symplectic manifolds diffeomorphic to the Milnor fibre
\((z_1z_2=z_3^{dp})\cap(|z|\le 1)\) of the \(A_{dp-1}\) singularity
(see Lemmata~4.9 and 4.10 of \cite{seidel00graded}). The
\(\tilde{M}_w\) are invariant under the \(\frac{1}{p}(1,-1,q)\) action on
\(\C^3\), and so (being a subgroup of the unitary group \(U(3)\)) we can take
the quotient to obtain symplectic manifolds \(M_w\) diffeomorphic to
\(B_{d,p,q}\).

The Milnor fibration associated to the singular point \(0 \in M_0\) is
defined to be restriction of the projection \(\C^3 \times S^1 \to S^1\):
\[
    \pi : M := \bigcup_{|w|=\epsilon} M_w \times \{w\} \to S^1,
\]
Pulling back the standard symplectic form
\(\omega_{\C^3}\) to \(M\) yields a closed 2-form \(\Omega\) whose
restriction to each fibre is symplectic. Therefore, we can define the
symplectic parallel transport of this fibration. Write \(\omega =
\Omega|_{M_\epsilon}\).

\begin{definition}
    Winding once (anticlockwise) around the base \(S^1\) yields the
\emph{symplectic monodromy map} \(f \in \Aut(M_\epsilon, \partial M_\epsilon,
\omega)\).
\end{definition}

\begin{lemma}[]
    \label{lem:pqMonodromyInducesCentralTwist}
    In the case \(d=1\), the monodromy, which we now denote by \(\pqtwist\),
of \(B_{p,q}\) induces the central twist \(\tau \in \Mod(A)\) in the mapping
class group of the punctured annulus \(A\). More precisely, let \(\gamma\) and
\(\gamma' = \tau(\gamma)\) be the dotted and dashed (respectively) vanishing
paths in the left panel of Figure~\ref{fig:MCGgenerators}. Form the associated
vanishing thimbles \(D_{\gamma}\) and \(D_{\gamma'}\). Then
\(\pqtwist(D_{\gamma})\) is Lagrangian isotopic to \(D_{\gamma'} =
D_{\tau(\gamma)}\).
\end{lemma}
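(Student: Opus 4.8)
The plan is to compute the monodromy $\pqtwist$ directly, exploiting the explicit model $M = \bigcup_{|w| = \epsilon} M_w \times \{w\}$ and the structure of the $A_{p-1}$ Milnor fibration that it covers. First I would recall that, upstairs in $\C^3$, the family $\tilde M_w = \{z_1 z_2 = z_3^p + \psi(|z|^2) w\}$ is (away from the cut-off region) the standard Milnor fibration of the $A_{p-1}$-singularity, whose symplectic monodromy is classically understood: it is the composition of the $p-1$ Dehn twists along the vanishing cycles of the Lefschetz fibration $z \mapsto z_3$ restricted to $\tilde M_w$, and Seidel's computation in \cite[\S 4.c]{seidel00graded} (see also \cite{seidel00graded} Lemmata~4.9--4.10) identifies this with the ``boundary twist'' of the $A_{p-1}$ Milnor fibre. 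Quotienting by the $\frac1p(1,-1,q)$ action, the $S^1$-family of fibres $M_w$ becomes the Milnor fibration of $\frac1{p^2}(1,pq-1)$, and the quotient of the $z_3$-projection furnishes a Lefschetz fibration on each $M_w$ whose base is (after excising the neighbourhoods of the sections as in Section~\ref{sec:constructingTheIsotopy}) precisely the punctured annulus $A$ — the central boundary circle of $A$ comes from the link of the singular point (equivalently, from the fixed locus of the $\Z/p$-action), and the $p-1$ punctures are the critical values. This is the same fibration as $\refFib$ in the $d=1$ case up to the identifications already set up, so I would phrase everything in terms of $\refFib$.

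Given that identification, the key step is to understand how the monodromy $\pqtwist$ acts on the base $A$. Since $\pqtwist$ is fibre-preserving for the Milnor fibration $\pi : M \to S^1$, and since parallel transport around $S^1$ is exactly what rotates the fibre while dragging the critical values, one extracts an induced mapping class on $A$. The crucial geometric point is that the $\Z/p$-quotient has the effect of ``winding'' the critical values once around the central boundary circle: upstairs the $A_{p-1}$ monodromy is a product of Dehn twists supported near the vanishing cycles (these descend to the half-twists $\sigma_i$), but the non-trivial $\Z/p$ isotropy introduces, in the quotient, an extra full loop of the puncture configuration around the core circle of the annulus relative to the outer boundary. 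That extra loop is precisely the central twist $\tau \in \Mod(A)$ (compare the left panel of Figure~\ref{fig:MCGgenerators}). Concretely, I would track a vanishing path $\gamma$ from a puncture to the outer boundary: parallel transport around the Milnor base carries the thimble $D_\gamma$ to the thimble over the path obtained by composing $\gamma$ with one trip of its endpoint around the annular core, which is by definition $D_{\tau(\gamma)}$, up to Lagrangian isotopy (parallel transport of thimbles being canonical up to isotopy by \cite[\S 15--16]{seidelBook}).

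The main obstacle is making the ``extra full loop'' claim precise rather than heuristic: one must verify that the $\Z/p$-quotient of the $z_3$-projection really does produce a base with an annular (rather than disc) topology, and that the monodromy of the Milnor fibration downstairs differs from the lift of the $A_{p-1}$ monodromy precisely by one central twist. I would handle this by a careful bookkeeping of the $S^1$-action coming from the phase of $w$: on $\tilde M_w$ the natural scaling $(z_1, z_2, z_3, w) \mapsto (e^{i\theta} z_1, e^{i\theta} z_2, e^{\frac{2i\theta}{p}} z_3, e^{2i\theta} w)$ (valid in the region where $\psi \equiv 1$) intertwines the $\Z/p$-action and realises the monodromy as a geometric rotation; the fractional exponent $2/p$ on $z_3$ is exactly the source of the extra winding, and descends to the rotation of the punctures around the core of $A$ by one full turn after $\theta$ runs from $0$ to $2\pi$. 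Once this is set up, the final statement follows: $\pqtwist(D_\gamma)$ is Lagrangian isotopic to $D_{\tau(\gamma)}$, as the two thimbles lie over isotopic paths in $A$ and hence (by the parallel transport characterisation of matching data, \cite[Lemma~1.17]{evans24kiaslectures} and \cite[\S 16]{seidelBook}) are Lagrangian isotopic.
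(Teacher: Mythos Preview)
Your proposal has a real gap: the count of punctures is wrong. In the case $d=1$, the downstairs Lefschetz fibration $\varpi_w : M_w \to \C^\times$, $z \mapsto z_3^p$, has exactly \emph{one} critical value, not $p-1$. Upstairs the fibration $z \mapsto z_3$ on $\tilde M_w$ has $p$ critical values (the $p$-th roots of $-w$), but the $\frac1p(1,-1,q)$ action permutes them transitively (since $\gcd(q,p)=1$), so they collapse to a single point downstairs. Thus the punctured annulus $A$ has one puncture, and there are no half-twists $\sigma_i$ in play at all --- your attempt to identify the monodromy as a descent of the $A_{p-1}$ product of Dehn twists plus an ``extra'' central twist is not the right decomposition. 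Relatedly, the fractional exponent $2/p$ on $z_3$ does not produce any \emph{extra} winding in $z_3^p$: it produces exactly one full turn in the base coordinate $z_3^p$ as $w$ goes once around, which is precisely the central twist and nothing more.

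The paper's proof bypasses all of this $A_{p-1}$ bookkeeping. It works directly with $\varpi(z,w) = z_3^p$, observes that the unique Lefschetz critical value $x_w$ of $\varpi_w$ winds once around the origin as $w$ traverses $|w|=\epsilon$, and then writes down an explicit Lagrangian isotopy: pick a smooth family of vanishing paths $\gamma_t$ with $\gamma_0 = \gamma$, $\gamma_1 = \tau(\gamma)$, $\gamma_t(1) = x_{e^{2\pi i t}\epsilon}$, and $\gamma_t$ fixed on an initial segment; form the thimbles $D_{\gamma_t} \subset M_{e^{2\pi i t}\epsilon}$; and pull them back to $M_\epsilon$ via the parallel transport $\mu_{-t}$ of the Milnor fibration $\pi$. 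The isotopy $L_t = \mu_{-t}(D_{\gamma_t})$ then runs from $D_\gamma$ to $\pqtwist^{-1}(D_{\tau(\gamma)})$. Your final paragraph gestures at this idea (tracking a vanishing path under parallel transport), so you are close --- but you should drop the $A_{p-1}$ detour entirely and argue directly with the single critical value downstairs.
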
 
\begin{proof}
Consider the map \(\varpi : M \to \C : \varpi(z,w) = z_3^p\). We have
the following diagram of maps:
\[
\begin{tikzcd}
    M \arrow[d, "\varpi"] \arrow[r, "\pi"] & S^1 \\
    \C
\end{tikzcd}
\]
Restricting \(\varpi\) to each fibre \(M_w\) of \(\pi\) yields a Lefschetz
fibration \(\varpi_w := \varpi|{M_w}\) (except on the exotic fibre
\(\varpi_w^{-1}(0)\)). The core idea of this proof is thinking about how the
unique Lefschetz critical value \(x_w \in \C^\times\) of \(\varpi_w\) winds
around the origin as we go around the base \(S^1\) of \(\pi\).

For \(t \in [0,1]\) choose a smooth family \(\gamma_t\) of vanishing paths
such that \(\gamma_0 = \gamma\), \(\gamma_1 = \gamma' = \tau(\gamma_0)\),
\(\gamma_t(1) = x_w\), and \(\gamma_t\) restricted to an interval of the form
\([0,b]\) for \(b < 1\) agrees with \(\gamma_0|_{[0,b]}\). Let
\(\tilde{X}\) be the horizontal lift (with respect
to \(\pi\) and \(\Omega\)) of the vector field \(X(t) = 2\pi i e^{2\pi i
t}\) and consider its flow \(\mu_t\). Note that
\(\mu_t\) maps \(M_w\) to
\(M_{e^{2\pi it}w}\) and \(\mu_1|_{M_\epsilon} = \pqtwist\) by
definition. Form the vanishing thimbles \(D_{\gamma_t} \subset
M_{e^{2\pi it}w}\) and consider the Lagrangian isotopy
\[
    L_t := \mu_{-t}(D_{\gamma_t}),
\]
which is contained in \((M_\epsilon,\omega)\). Moreover,
\(L_0 = D_{\gamma_0}\) and \(L_1 = \mu_{-1}(D_{\gamma_1}) =
\pqtwist^{-1}(D_{\tau(\gamma_0)})\). Therefore, it follows that \(\pqtwist(D_\gamma) \simeq
D_{\gamma'}\), as desired.
\end{proof}

\begin{remark}
    Consider the circle action \(\sigma_t\) with weights
\((1,dpq-1,q)\) on \(\C^3\):
\[
    \sigma_t(z_1,z_2,z_3) = (e^{2\pi i t}z_1, e^{2\pi i(dpq-1)t}z_2,
    e^{2\pi i qt}z_3).
\]
In the quotient \(\C^3/\frac{1}{p}(1,-1,q)\) this descends to a circle action
that factors through \(\sigma_{t/p}\). The restriction of \(\sigma_{t/p}\) to
\(\partial M_w\) gives the boundary \(\partial M_w\) the structure of a
Seifert fibration. Consider the Hamiltonian \(H \in C^\infty(M_w,\R)\):
\[
    H(z) = \pi(|z_1|^2 + (dpq-1)|z_2|^2 + q|z_3|^2).
\]
Its time \(-1\) flow \(\phi_{-1}^H\) is a representative of the boundary Dehn
twist on \(M_w\) (induced by the Seifert structure on \(\partial M_w\)). Since
the Milnor fibration \(\pi : M \to S^1\) is equivariant with respect to the
circle actions \((\sigma_{t/p},e^{2\pi dq i})\) on \(M\) and \(e^{2\pi dq i}\)
on \(S^1\), one can apply an argument of Seidel
\cite[Lemma~4.16]{seidel00graded} (with \(\sigma_{t/p}\) in place of
\(\sigma_t\) and \(\beta=dq\)) to show that
\[
    [\phi_{-1}^H] = [f]^{dq}
\]
in the symplectic mapping class group of \((M_\epsilon,\omega)\).
\end{remark}

We can view the \(B_{p,q}\) monodromy explicitly inside \(B_{d,p,q}\) as
follows. Consider the singular manifold
\[
    \tilde{N} = \{(z_1,z_2,z_3) \in \C^3 \mid
    z_1z_2=z_3^p\prod_{i=2}^d(z_3^p-a_i)\},
\]
and its quotient \(N = \tilde{N}/\frac{1}{p}(1,-1,q)\). This is a partial
smoothing of the \(\frac{1}{dp^2}(1,dpq-1)\) singularity. The unique singular
point \(0 \in N\) is of type \(\frac{1}{p^2}(1,pq-1)\). Similarly to
above, we form the manifolds
\[
    \tilde{N}_t := \{(z_1,z_2,z_3) \in \C^3 \mid |z| \le 1,\, z_1z_2 = (z_3^p -
    \psi(|z|^2)e^{2\pi i t}a_1)\prod_{i=2}^d(z_3^p - \psi(|z|^2)a_i)\}
\]
where \(0 < a_1 < \ldots < a_d\) are sufficiently small real numbers, along
with their quotients \(N_t = \tilde{N}_t/\frac{1}{p}(1,-1,q)\). The family
\(N_t\) is a smoothing of the singular manifold \(N\) with a unique
\(\frac{1}{p^2}(1,pq-1)\) singularity. Therefore the monodromy of \(N_t\) is
the \(B_{p,q}\) monodromy. Observe that, \(N_t \cong B_{d,p,q}\). As in the
proof of Lemma~\ref{lem:pqMonodromyInducesCentralTwist}, projecting to
\(z_3^p\) yields a Lefschetz fibration (up to the \(p\)-covered fibre over
\(0\)) \(\varpi_t : N_t \to \C\), and the same proof shows that the monodromy
of \(N_t\) induces the central twist \(\tau \in \Mod(A)\). Since we primarily
work with \(N_0\), we abuse notation and continue to write \(\varpi : N_0 \to
\C\) for \(\varpi_0 : N_0 \to \C\).

Recall the matching path \(\gamma_1\) shown in
Figure~\ref{fig:puncturedAnnulus}.
\begin{lemma}[]
    \label{lem:pqMonodromyNonTrivialHomotopy}
    The \(B_{p,q}\) symplectic monodromy \(\pqtwist\) acts non-trivially on
\(\pi_2(B_{d,p,q})\). Moreover, it acts with order \(p\) on the matching cycle
\(\Sigma_{\gamma_1}\).
\end{lemma}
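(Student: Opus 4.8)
The plan is to analyze the action of $\pqtwist$ on homotopy and on the matching cycle $\Sigma_{\gamma_1}$ by exploiting the Lefschetz fibration $\varpi : N_0 \to \C$ constructed above, together with the explicit description of the central twist $\tau$ on the punctured annulus $A$. The two assertions are really two manifestations of the same phenomenon: going around the central boundary of $A$ once, the critical value $x_w$ of $\varpi_w$ winds once around the origin, and after composing with the rotation $\mu_{-t}$ (as in the proof of Lemma~\ref{lem:pqMonodromyInducesCentralTwist}) this produces a non-trivial reparametrisation of the base. I would extract the order-$p$ behaviour from the fact that the fibre of $\varpi$ over $0 \in \C$ is a $p$-fold covered fibre, so that the relevant loop in $\C^\times$ based at $x_w$ closes up only after $p$ turns when lifted through the $z_3 \mapsto z_3^p$ cover.

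First I would set up the action on $\pi_2(B_{d,p,q})$. Recall from Section~\ref{sec:spheresInBdpq} that $H_2(B_{d,p,q}) \cong \Z^{d-1}$ is generated by the standard $A_{d-1}$-chain of Lagrangian spheres $L_1, \ldots, L_{d-1}$, and $\pi_2 \to H_2$ is surjective (indeed $B_{d,p,q}$ has the homotopy type of a wedge of a pinwheel and $d-1$ spheres). In the partial smoothing $N_0$, the matching cycle $\Sigma_{\gamma_1}$ corresponds to one of these homology classes, say (up to sign and relabelling) $\mathcal{E}_i - \mathcal{E}_j$ via Corollary~\ref{cor:homologyClassOfSpheresXdpq}. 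The key computation is to track how $\pqtwist$ permutes the Lefschetz thimbles: since $\pqtwist$ induces the central twist $\tau$, and $\tau$ cyclically ``rotates'' the configuration of $d$ punctures around the annulus, $\pqtwist$ acts on the set of matching cycles $\Sigma_{\gamma_i}$ by the corresponding cyclic-type permutation of vanishing paths. Translating this through the Picard--Lefschetz formula (stated in the proof of Lemma~\ref{lem:homologyClassOfSpheresBdpq}) shows $\pqtwist_*$ is a non-trivial automorphism of $H_2$ — in particular it is not the identity — which proves the first claim. Concretely: $\tau$ does not fix the isotopy class of $\gamma_1$ (its image $\tau(\gamma_1)$ is a genuinely different, ``looped'' path, as in Figure~\ref{fig:MCGgenerators}), and one checks that $\tau(\gamma_1)$ is not isotopic to $\pm\gamma_1$ rel endpoints, so the induced homology class differs.

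Next, for the order-$p$ statement, I would argue as follows. The central twist $\tau$ itself has \emph{infinite} order in $\Mod(A)$ (it is the Dehn twist about a core curve of the annulus that separates the punctures from the central boundary, composed with no torsion), but its action on the specific isotopy class $[\gamma_1] \in \mathcal{P}$ need not be infinite order. What we must pin down is the cyclic order of $[\tau^k(\gamma_1)]$. Here is where the $p$-fold cover enters: the base of $\varpi : N_0 \to \C$, once we remove the exotic fibre over $0$, is $\C^\times$ up to the marked points, but the ambient monodromy acts on the $z_3^p$-coordinate, and the loop traced by the critical value $x_w$ as $w$ traverses $S^1$ is a loop in $\C^\times$. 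Lifting to the $z_3$-coordinate (a $p$-fold cover $\C^\times \to \C^\times$), this loop becomes a path from a point to $\zeta_p$ times that point; hence $\tau^p$ — but no smaller power — acts by a full loop in the $z_3$-cover, which is isotopic to the identity when restricted to $\gamma_1$ (or, depending on conventions, to a boundary twist acting trivially on matching paths). So $\pqtwist^p(\Sigma_{\gamma_1})$ is Lagrangian isotopic to $\Sigma_{\gamma_1}$ while $\pqtwist^k(\Sigma_{\gamma_1})$ is not for $0 < k < p$. I would make this precise by computing, in the braid-group model $\Mod(A)$, the image of $[\gamma_1]$ under $\tau^k$ and comparing it with $[\gamma_1]$: since $\tau$ acts on the straight path between the first two punctures essentially by ``dragging'' one endpoint around the annulus, and the punctures carry the local $\Z/p$ symmetry of the $p$-covered fibre, the stabiliser of $[\gamma_1]$ in $\langle\tau\rangle$ is exactly $\langle\tau^p\rangle$.

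The main obstacle I anticipate is the order-$p$ claim rather than the non-triviality on $\pi_2$. The non-triviality is soft: any statement that $\tau$ moves $[\gamma_1]$ and that matching cycles of non-isotopic paths have different homology classes suffices. But establishing \emph{exactly} order $p$ — neither less nor infinite — requires carefully relating the winding of the critical value $x_w$ around $0 \in \C$ to the $p$-covering $z_3 \mapsto z_3^p$, and checking that the resulting ``$p$-th root of a full loop'' genuinely acts trivially on the matching path (and not merely up to some further twist). This will involve a concrete identification of $\tau$ with an explicit element of the annular braid group acting on $[\gamma_1]$, and verifying that $[\gamma_1]$ has orbit of size exactly $p$ under the cyclic subgroup $\langle\tau\rangle$ — equivalently, that the point of $\C^\times$ that $\gamma_1$'s moving endpoint is dragged around returns to its isotopy class after precisely $p$ revolutions in the cover. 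I would handle this by a direct picture-chase in Figure~\ref{fig:puncturedAnnulus}, using that the $d$ punctures lie ``in a row'' and $\tau$ cycles the first puncture all the way around, combined with the fact that the multiplicity-$p$ fibre over the origin forces the relevant local model to be $z_1 z_2 = z_3^p$, whose monodromy on vanishing cycles has order $p$.
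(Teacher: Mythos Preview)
Your approach has a genuine gap at the very first step. You propose to detect non-triviality of $(\pqtwist)_*$ on $\pi_2(B_{d,p,q})$ by computing in $H_2(B_{d,p,q})\cong\Z^{d-1}$, arguing that since $\tau(\gamma_1)$ is not isotopic to $\gamma_1$, the homology class of $\Sigma_{\tau(\gamma_1)}$ must differ from that of $\Sigma_{\gamma_1}$. But this inference fails: non-isotopic matching paths can give homologous matching cycles, and here they do. One can check directly that $\tilde c_*$ kills the ``extra'' class picked up by the twist (it lies in the span of spheres permuted cyclically by the deck group, hence is $p$-torsion in a torsion-free group), so $[\Sigma_{\tau(\gamma_1)}]=[\Sigma_{\gamma_1}]$ in $H_2(B_{d,p,q})$. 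The Hurewicz map $\pi_2\to H_2$ is only surjective here, not injective, because $\pi_1(B_{d,p,q})\cong\Z/p$ is non-trivial; in fact $\pi_2(B_{d,p,q})\cong\Z^{dp-1}$, and the non-triviality you seek lives entirely in the kernel of Hurewicz. The paper's proof handles this by passing to the universal cover $\tilde N_0$, which is the $A_{dp-1}$ Milnor fibre: there $\pi_2\cong H_2$ by simple connectivity, the Lefschetz fibration pulls back via $z_3\mapsto z_3^p$, matching paths lift to honest matching paths in the cover, and a Picard--Lefschetz computation in $H_2(\tilde N_0;\Z)\cong\Z^{dp-1}$ detects the change.

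Two smaller issues compound this. First, your description of $\tau$ is off: the central twist does not cyclically permute the $d$ punctures; it pushes only the innermost puncture once around the central hole, fixing the others setwise. Second, you read the ``order $p$'' assertion as saying $\pqtwist^p(\Sigma_{\gamma_1})$ is Lagrangian isotopic to $\Sigma_{\gamma_1}$. It is not --- the very next Proposition in the paper proves the opposite. The claim is that $(\pqtwist^p)_*[\Sigma_{\gamma_1}]=[\Sigma_{\gamma_1}]$ in $\pi_2$, i.e.\ the $p$-th iterate returns to the same \emph{homotopy} class, which the paper verifies by showing that the lift $\tilde\gamma_p$ of $\tau^p(\gamma_1)$ is related to $\mu_{0,1}$ by a \emph{squared} Dehn twist in $\tilde N_0$ (hence smoothly, though not symplectically, isotopic). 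Your heuristic about the $p$-fold cover $z_3\mapsto z_3^p$ is pointing at the right structure, but the conclusion you draw from it is the wrong one.
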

\begin{proof}
    The quotient map \(\tilde{c} : \tilde{N}_0 \to N_0\) is the
(degree \(p\)) universal cover of \(N_0 \cong B_{d,p,q}\).
Consider the Lefschetz fibration fibration \(\tilde{\varpi} : \tilde{N}_0 \to
\C : \tilde{\varpi}(z_1,z_2,z_3) = z_3\) and the canonical \(p\)-to-\(1\)
branched cover \(c : \C \to \C : c(x) = x^p\). Observe that the diagram
\begin{equation}
    \label{eq:commutingLefFibs}
\begin{tikzcd}
    \tilde{N}_0 \arrow[r, "\tilde{c}"] \arrow[d, "\tilde{\varpi}"] &
	N_0 \arrow[d, "\varpi"] \\
    \C \arrow[r, "c"] & \C
\end{tikzcd}
\end{equation}
commutes and realises the Lefschetz fibration \(\tilde{\varpi} : \tilde{N}_0
\to \C\) as the pullback under \(c\) of \(\varpi : N_0 \to
\C\). Combined with the fact that \(\tilde{c}\) respects the symplectic
structures, this implies that \(\tilde{c}\) commutes with the parallel
transport of \(\tilde{\varpi}\) and \(\varpi\). In particular, given a
matching path \(\gamma\) of \(\varpi\) and a lift \(\tilde{\gamma}\) via \(c\)
to a matching path for \(\tilde{\varpi}\), the matching cycle
\(\Sigma_{\tilde{\gamma}} \subset \tilde{N}_0\) will be a lift of
\(\Sigma_\gamma \subset N_0\) with respect to \(\tilde{c}\).

Since \(\tilde{c}\) is the universal cover, it induces isomorphisms on homotopy groups
\(\pi_i(\tilde{N}_0) \cong \pi_i(N_0)\) for all \(i > 1\).
Moreover, as \(\tilde{N}_0\) is simply connected, the Hurewicz theorem
implies that \(\pi_2(\tilde{N}_0) \cong H_2(\tilde{N}_0;\Z) \cong \Z^{dp-1}\).
Therefore, the calculation of \((\pqtwist)_*[\Sigma_{\gamma_1}] =
[\Sigma_{\tau(\gamma_1)}] \in \pi_2(N_0)\) reduces to computing
the homology class of \(\Sigma_{\tilde{\gamma}}\), where \(\tilde{\gamma}\)
lifts \(\tau(\gamma_1)\).
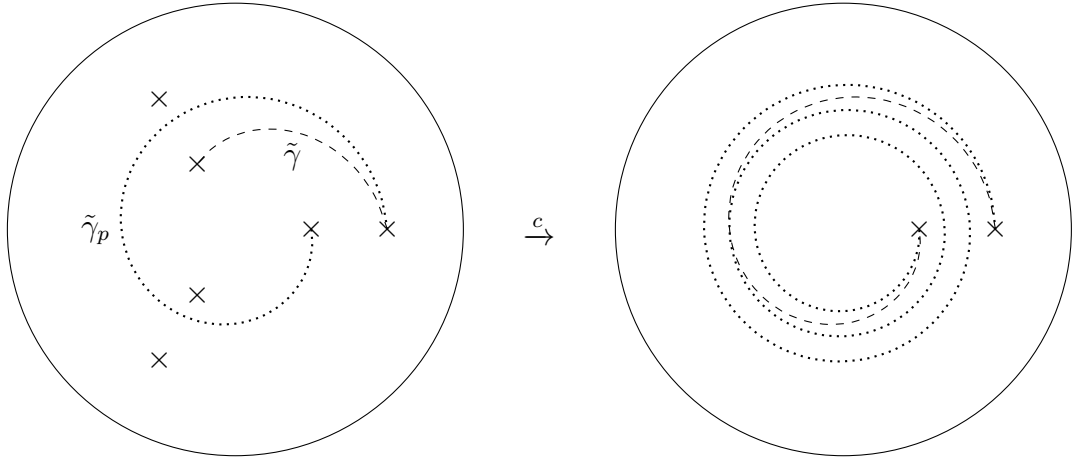
\begin{figure}\centering \begin{tikzpicture}
    \begin{scope}[shift={(-4,0)}]
	\draw (0,0) circle [radius=3];
	\foreach \t in {0,120,240}
	\draw (\t:1) node {\(\times\)} (\t:2) node {\(\times\)};
	\draw [dashed,domain=0:120,variable=\t,smooth,samples=10]
	    plot (\t:{(1-\t/120)*2 + (\t/120)});
	\draw (60:1.5) node [below] {\(\tilde{\gamma}\)};
	\draw[dotted,thick,domain=0:360,variable=\t,smooth,samples=30]
	    plot (\t:{(1-\t/360)*2 + (\t/360)});
	\draw (180:1.5) node [left] {\(\tilde{\gamma}_p\)};
    \end{scope}
    \draw (0,0) node {\(\xrightarrow{c}\)};
    \begin{scope}[shift={(4,0)}]
	\draw (0,0) circle [radius=3];
	\draw (1,0) node {\(\times\)} (2,0) node {\(\times\)};
	\draw [dashed,domain=0:360,variable=\t,smooth,samples=30]
	    plot (\t:{(1-\t/360)*2 + (\t/360)});
	\draw [dotted,thick,domain=0:1080,variable=\t,smooth,samples=90]
	    plot (\t:{(1-\t/1080)*2 + (\t/1080)});
    \end{scope}
\end{tikzpicture}
    \caption{
	The dashed path \(\tilde{\gamma}\) is a lift of the matching path
	\(\tau(\gamma_1)\). The dotted path \(\tilde{\gamma}_p\) is a lift of
	the path \(\tau^p(\gamma_1)\). The example drawn here is \(d=2\) and
	\(p=3\).
    }
    \label{fig:matchingPathLift}
\end{figure}

To this end, we fix a basis of \(H_2(\tilde{N}_0;\Z)\) to be the collection of
matching cycles corresponding to the following matching paths: the \(p-1\)
arcs, \(\mu_j : [0,1] \to \C : \mu_j(s) =
\sqrt[p]{a_1}e^{2\pi i(j+s)/p}\), for \(0 \le j < p-1\); and the \((d-1)p\)
paths \(\mu_{j,k} : [0,1] \to \C\), for \(0 \le j < p\) and \(1 \le k <
d\), where \(\mu_{0,k}\) is the straight path between the points
\(\sqrt[p]{a_k}\) and \(\sqrt[p]{a_{k+1}}\), and \(\mu_{j,k} = e^{2\pi i
j/p}\mu_{0,k}\). The illustration in Figure~\ref{fig:H2basis} should
dispel any confusion.
\begin{figure}\centering \begin{tikzpicture}[
]
    \begin{scope}[shift={(-4,0)}]
	\draw (0,-4) node {(a)};
        \draw (0,0) circle [radius=3];
        \foreach \t/\ttext/\pos in {0/0/below,120/1/left,240/2/below right}
    	\draw[dashed] (\t:1) node {\(\times\)} (\t:2) node {\(\times\)}
    	    (\t:1) -- node [\pos] {\(\mu_{\ttext,1}\)} (\t:2);
        \draw[dotted, thick] (1,0) arc [start angle=0, end angle=120, radius=1]
    	node[midway, above right] {\(\mu_0\)};
        \draw[dotted, thick] (120:1) arc [start angle=120, end angle=240,
    	radius=1] node [midway, left] {\(\mu_1\)};
    \end{scope}
    \begin{scope}[shift={(4,0)}]
	\draw (0,-4) node {(b)};
        \draw (0,0) circle [radius=3];
        \foreach \t in {0,120,240}
	    \draw (\t:1) node {\(\times\)} (\t:2) node {\(\times\)};
	\draw[dashed] (1,0) -- (2,0) (240:1) -- (240:2);
	\draw[dotted,thick,domain=0:120,variable=\t,smooth,samples=10]
	    plot (\t:{(1-\t/120)*1 + (\t/120)*1.5})
	    plot (\t+120:{(1-\t/120)*1.5 + (\t/120)*1});
	\draw (180:1.25) node [right=2mm] {\(\mu\)};
	\draw[dash dot,thick,domain=0:360,variable=\t,smooth,samples=30]
	    plot (\t:{(1-\t/360)*2 + (\t/360)});
	\draw (315:1.25) node [right] {\(\sigma_\mu(\mu_{0,1})\)};
    \end{scope}
    
\end{tikzpicture}
    \caption{
	(a) The matching paths corresponding to the chosen basis of
	\(H_2(\tilde{N}_0;\Z)\). (b) An \(A_3\)-configuration of matching paths:
	\(\mu_{0,1}\) (dashed), \(\mu=\sigma_{\mu_1}(\mu_0)\) (dotted), and
	\(\mu_{2,1}\) (dashed); along with the matching
	path \(\tilde{\gamma}_p = \sigma_\mu(\mu_{0,1})\) (dash dot). The case
	drawn here has \(d=2\), and \(p=3\).
    }
    \label{fig:H2basis}
\end{figure}
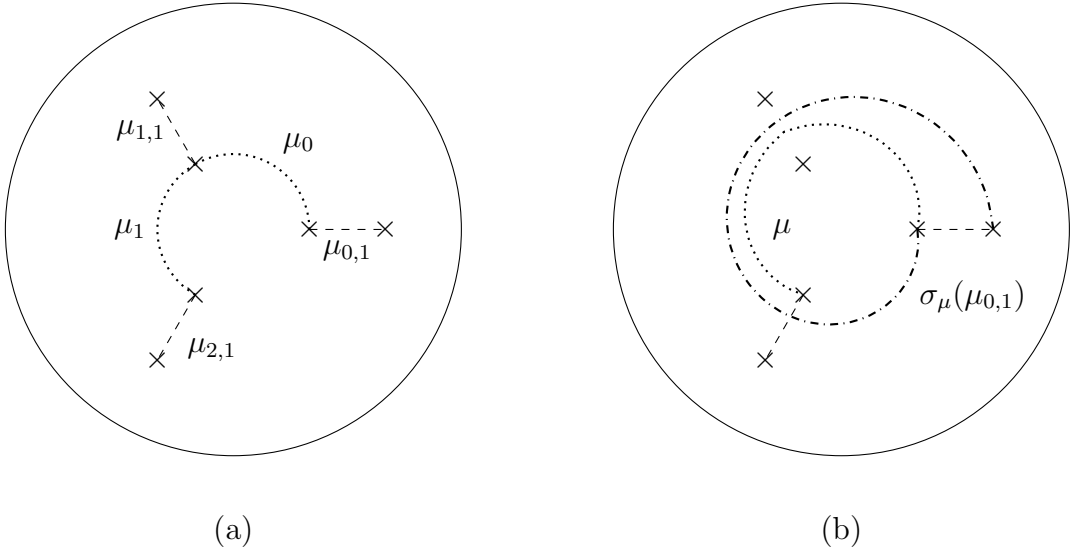

We have that \(\Sigma_{\tilde{\gamma}}\) is isotopic to the Dehn twist
\(\tau_{\Sigma_{\mu_0}}(\Sigma_{\mu_{0,1}})\) of \(\Sigma_{\mu_{0,1}}\) about
\(\Sigma_{\mu_0}\), so we can use the Picard-Lefschetz formula to calculate the
homology class:
\[
    [\tau_{\Sigma_{\mu_0}}(\Sigma_{\mu_{0,1}})] = [\Sigma_{\mu_{0,1}}] +
    ([\Sigma_{\mu_0}]\cdot[\Sigma_{\mu_{0,1}}])[\Sigma_{\mu_0}] =
    [\Sigma_{\mu_{0,1}}] + [\Sigma_{\mu_0}].
\]
This proves that \((\pqtwist)_*\) acts non-trivially on \(H_2(\tilde{N}_0;\Z) \cong
\pi_2(B_{d,p,q})\).

To show that \((\pqtwist^p)_*[\Sigma_{\gamma_1}] = [\Sigma_{\gamma_1}]\) it
suffices to check that \([\Sigma_{\tilde{\gamma}_p}] = [\Sigma_{\mu_{0,1}}]\),
where \(\tilde{\gamma}_p\) is the lift of \(\tau^p(\gamma_1)\) drawn in
Figure~\ref{fig:matchingPathLift}. Denote by \(\mu\) the matching path
obtained by successively half-twisting \(\mu_0\) along \(\mu_i\) for \(i > 0\)
(see Figure~\ref{fig:H2basis}(b) for an example with \(d=2\), and \(p=3\)):
\[
    \mu := \sigma_{\mu_{p-2}}\sigma_{\mu_{p-3}}\cdots\sigma_{\mu_1}(\mu_0).
\]
Then, \(\tilde{\gamma}_p\) is isotopic to \(\sigma_\mu^2(\mu_{0,1})\), which
implies that the matching cycle \(\Sigma_{\tilde{\gamma}_p}\) is Lagrangian
isotopic to the Dehn twisted sphere
\(\tau_{\Sigma_\mu}^2(\Sigma_{\mu_{0,1}})\), which is \emph{smoothly} isotopic
to \(\Sigma_{\mu_{0,1}}\). This implies that \([\Sigma_{\tilde{\gamma}_p}] =
[\Sigma_{\mu_{0,1}}]\), which completes the proof.
\end{proof}

The previous lemma shows that \(\pqtwist^p(\Sigma_{\gamma_1}) \simeq
\Sigma_{\tau^p(\gamma_1)}\) is homotopic to \(\Sigma_{\gamma_1}\). However,
they are not Lagrangian isotopic as we now show.
\begin{proposition}[]
    The matching cycle \(\pqtwist^p(\Sigma_{\gamma_1}) \simeq
\Sigma_{\tau^p(\gamma_1)}\) is \emph{not} Lagrangian isotopic to
\(\Sigma_{\gamma_1}\).
\end{proposition}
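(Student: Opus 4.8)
The plan is to transfer the statement to the universal cover $\tilde N_0$ of $N_0\cong B_{d,p,q}$, where it becomes the assertion that a twice--Dehn-twisted sphere in an $A_3$-configuration inside the $A_{dp-1}$ Milnor fibre $W_{dp-1}$ is Lagrangian knotted, and then to settle that by a Floer-cohomology computation. Suppose, for contradiction, that there is a Lagrangian isotopy $\iota\colon S^2\times[0,1]\to N_0$ with $\iota_0(S^2)=\Sigma_{\gamma_1}$ and $\iota_1(S^2)=\pqtwist^{p}(\Sigma_{\gamma_1})\simeq\Sigma_{\tau^p(\gamma_1)}$. First I would lift $\iota$ to $\tilde N_0$: since $\tilde c\colon\tilde N_0\to N_0$ is a covering and $S^2\times[0,1]$ is simply connected, $\iota$ admits a unique lift $\tilde\iota$ once a lift of $\iota_0$ is chosen, and because $\tilde c^*\omega=\omega$ and each $\tilde\iota_t$ is an injective immersion of a compact surface, $\tilde\iota$ is again a Lagrangian isotopy. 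By the commuting square~\eqref{eq:commutingLefFibs} and the analysis in the proof of Lemma~\ref{lem:pqMonodromyNonTrivialHomotopy}, the ends of $\tilde\iota$ are the matching cycles $\Sigma_{\mu_{0,1}}$ (a lift of the straight path $\gamma_1$ via $c(x)=x^p$) and $\Sigma_{\tilde\gamma_p}$, and $\Sigma_{\tilde\gamma_p}$ is Lagrangian isotopic in $\tilde N_0$ to $\tau_{\Sigma_\mu}^{2}(\Sigma_{\mu_{0,1}})$; the freedom in which of the $p$ lifts occur is harmless, since passing between them is effected by a deck transformation of $\tilde N_0$, which is a symplectomorphism and so preserves Lagrangian isotopy classes (of the rotated configuration). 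Thus everything reduces to showing that $\tau_{\Sigma_\mu}^{2}(\Sigma_{\mu_{0,1}})$ is not Lagrangian isotopic to $\Sigma_{\mu_{0,1}}$ in $W_{dp-1}$.

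For that last point I would use that $W_{dp-1}$ is a Stein manifold with vanishing first Chern class (it is an $A_{dp-1}$-type ALE space), so that its Lagrangian spheres are automatically exact and $\Z$-gradeable, and pairwise Lagrangian Floer cohomology --- defined here without the transversality subtleties of the closed case --- is invariant under Lagrangian isotopy up to an overall grading shift. Put $A=\Sigma_{\mu_{0,1}}$ and $B=\Sigma_\mu$, and let $C$ be the remaining sphere of an $A_3$-configuration $A-B-C$ as in Figure~\ref{fig:H2basis}(b) (for $p=2$, where the particular path $\mu_{2,1}$ coincides with $\mu_{0,1}$, any matching cycle completing $A-B$ to such a chain will do, and one exists since $W_{dp-1}$ with $d>1,\ p\ge2$ contains $A$-chains of length at least $3$). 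Then $HF^*(B,A)$ and $HF^*(C,B)$ are each one-dimensional and concentrated in a single degree, while $A$ and $C$ are disjoint, so $HF^*(C,A)=0$. Feeding $HF^*(C,-)$ into Seidel's exact triangle for the twist $\tau_B$ and using $HF^*(C,A)=0$ together with the fact that $\tau_B$ preserves the set $B$, one identifies $HF^*(C,\tau_BA)$ with a grading shift of $HF^*(C,B)\otimes HF^*(B,A)$, hence one-dimensional in a single degree; running the triangle once more for the pair $(C,\tau_BA)$, the incoming term $HF^*(C,B)\otimes HF^*(B,\tau_BA)=HF^*(C,B)\otimes HF^*(B,A)$ sits in a degree different from that of $HF^*(C,\tau_BA)$, so the connecting map between them vanishes and $HF^*(C,\tau_B^2A)\neq0$. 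Since $HF^*(C,A)=0$, this is incompatible with $A$ being Lagrangian isotopic to $\tau_B^2A$, giving the contradiction. (Alternatively, one could simply invoke that for an $A_2$-pair $B,A$ of Lagrangian spheres in $W_n$, $n\ge2$, the sphere $\tau_B^2A$ is Lagrangian knotted relative to $A$ --- essentially Seidel's original knotting phenomenon~\cite{seidel99SympKnotting}.)

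The main obstacle is making the Floer-theoretic step fully rigorous: one must pin down the invariance of Floer cohomology up to shift under Lagrangian (equivalently, here, Hamiltonian) isotopy of exact spheres, and then track gradings carefully enough through the two applications of the exact triangle to be certain the relevant connecting homomorphism is forced to vanish for degree reasons; the framework and the exact triangle are available in~\cite{seidelBook}. By contrast the lifting step is routine --- the only mild subtlety is that $\tilde c^{-1}(\Sigma_{\gamma_1})$ has $p$ components, so one may need to conjugate by a deck transformation to place the two endpoints of the lifted isotopy into the standard $A_3$-configuration, which costs nothing because deck transformations preserve Lagrangian isotopy classes.
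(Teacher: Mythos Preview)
Your proof is correct and follows essentially the same approach as the paper: lift the assumed isotopy to the universal cover $\tilde N_0$, identify the endpoints there as $\Sigma_{\mu_{0,1}}$ and $\tau_{\Sigma_\mu}^2(\Sigma_{\mu_{0,1}})$, and derive a contradiction from Seidel's knotting theorem via Floer cohomology with the third sphere of the $A_3$-chain. The only differences are cosmetic: the paper lifts the Hamiltonian function itself rather than invoking covering-space lifting of the isotopy, and it cites Seidel's result directly rather than sketching the exact-triangle computation.
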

\begin{proof}
    Suppose that the result is false. Then, denoting \(L = \Sigma_{\gamma_1}\)
and \(L' = \Sigma_{\tau^p(\gamma_1)}\), there exists a \emph{Hamiltonian}
isotopy \(\phi_t^{H} \in \mathrm{Ham}(N_0,\omega)\) satisfying
\[
    \phi_1^H(L) = L'.
\]
Then \(H\) lifts to the Hamiltonian \(\tilde{H} := H \circ \tilde{c} : \tilde{N}_0 \to
\R\), whose flow \(\phi_t^{\tilde{H}}\) covers that of \(H\):
\[
    \tilde{c} \circ \phi_t^{\tilde{H}} = \phi_t^H.
\]
Therefore, by choosing the lift \(\tilde{L} := \Sigma_{\mu_{0,1}}\) of \(L\)
we must have that \(\tilde{L}' := \phi_1^{\tilde{H}}(\tilde{L})\) is a lift of
\(L'\). Moreover, by the diagram in Equation~\eqref{eq:commutingLefFibs},
\(\tilde{\varpi}(\tilde{L}')\) is (the image of) a lift of the matching path
\(\tau^p(\gamma_1)\). For topological reasons, this lift is exactly
\(\tilde{\gamma}_p\) as shown in Figure~\ref{fig:matchingPathLift}.

Picking up where we left off in the proof of
Lemma~\ref{lem:pqMonodromyNonTrivialHomotopy}, we deduce that \(\tilde{L}'\)
is Lagrangian isotopic to \(\tau_{\Sigma_\mu}^2(\Sigma_{\mu_{0,1}})\). As
noted, this is smoothly isotopic to \(\tilde{L} = \Sigma_{\mu_{0,1}}\), but a
famous result of Seidel \cite{seidel99SympKnotting} shows that it is not
Lagrangian isotopic. Indeed, an \(A_3\)-configuration of Lagrangian spheres
one can use to apply this result is given by the collection of matching cycles
corresponding to the matching paths \(\mu_{0,1}\), \(\mu
=\sigma_{\mu_{p-2}}\sigma_{\mu_{p-3}}\cdots\sigma_{\mu_1}(\mu_0)\), and
\(\mu_{p-1,1}\). Applying Seidel's theorem shows that the Lagrangian Floer
cohomology \(HF(\tilde{L}',\Sigma_{\mu_{p-1,1}})\) is non-zero,\footnote{Note
how in Figure~\ref{fig:H2basis} the matching path \(\sigma_\mu(\mu_{0,1})\)
intersects \(\mu_{p-1,1}\), whereas \(\mu_{0,1}\) doesn't.} so
\(\tilde{L}'\) cannot be Hamiltonian isotoped to \(\tilde{L}\) since
\(HF(\tilde{L},\Sigma_{\mu_{p-1,1}}) = 0\). This yields a contradiction, and
hence our initial assumption that \(L=\Sigma_{\gamma_1}\) and
\(L'=\Sigma_{\tau^p(\gamma_1)}\) were Lagrangian isotopic is false.
\end{proof}

\begin{corollary}[]
    The \(B_{p,q}\) monodromy has infinite order in the symplectic mapping
class group of \(B_{d,p,q}\).
\end{corollary}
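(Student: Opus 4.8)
The plan is to deduce the corollary from the preceding proposition together with Lemmata~\ref{lem:pqMonodromyInducesCentralTwist} and \ref{lem:pqMonodromyNonTrivialHomotopy}. Iterating Lemma~\ref{lem:pqMonodromyInducesCentralTwist} (and the matching-cycle version of it established immediately afterwards) gives $\pqtwist^k(\Sigma_{\gamma_1}) \simeq \Sigma_{\tau^k(\gamma_1)}$ for every $k \in \Z$. Hence, if $\pqtwist$ had finite order $N$ in $\pi_0(\mathrm{Symp}_c(B_{d,p,q}))$, then $\pqtwist^N \simeq \mathrm{id}$ would force the class $[\pqtwist^k(\Sigma_{\gamma_1})] = [\Sigma_{\tau^k(\gamma_1)}]$ to depend only on $k \bmod N$, so the set of Lagrangian isotopy classes $\{\,[\Sigma_{\tau^k(\gamma_1)}] : k \in \Z\,\}$ would be finite. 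It therefore suffices to exhibit infinitely many distinct Lagrangian isotopy classes among the spheres $\Sigma_{\tau^k(\gamma_1)}$, and I would do this using only the subfamily $k = jp$, $j \ge 0$.

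The key step is to transport this subfamily to the universal cover, exactly as in the proof of the preceding proposition. Suppose $\Sigma_{\tau^{jp}(\gamma_1)}$ were Lagrangian isotopic to $\Sigma_{\tau^{j'p}(\gamma_1)}$ for some $j,j' \ge 0$; since these are $2$-spheres, the flux of the isotopy lies in $H^1(S^2;\R) = 0$, so the isotopy is Hamiltonian and lifts along the universal cover $\tilde{c} : \tilde{N}_0 \to N_0 \cong B_{d,p,q}$. Pinning down the sheets via the commuting square \eqref{eq:commutingLefFibs} — just as ``for topological reasons'' is invoked in the proof of Lemma~\ref{lem:pqMonodromyNonTrivialHomotopy} — the lifted isotopy carries the matching cycle $\Sigma_{\tilde\gamma_{jp}}$ over the path $\tilde\gamma_{jp}$ (Figure~\ref{fig:matchingPathLift}) to $\Sigma_{\tilde\gamma_{j'p}}$. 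By the same computation that identifies $\tilde\gamma_p$ with $\sigma_\mu^2(\mu_{0,1})$ in that proof, the path $\tilde\gamma_{jp}$ is isotopic to $\sigma_\mu^{2j}(\mu_{0,1})$, where $\mu = \sigma_{\mu_{p-2}}\sigma_{\mu_{p-3}}\cdots\sigma_{\mu_1}(\mu_0)$ as in Figure~\ref{fig:H2basis}, so $\Sigma_{\tilde\gamma_{jp}}$ is Lagrangian isotopic to the iterated Dehn twist $\tau_{\Sigma_\mu}^{2j}(\Sigma_{\mu_{0,1}})$ in $\tilde{N}_0$.

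It then remains to show that the Lagrangian spheres $\tau_{\Sigma_\mu}^{2j}(\Sigma_{\mu_{0,1}})$, $j \ge 0$, represent infinitely many Lagrangian isotopy classes; this is the crux, and it is precisely Seidel's theorem that a Dehn twist about a Lagrangian sphere has infinite order, detected by Floer cohomology. Using the $A_3$-configuration of Lagrangian spheres attached to the matching paths $\mu_{0,1}$, $\mu$, $\mu_{p-1,1}$ (so that $\mu_{0,1}$ and $\mu_{p-1,1}$ are disjoint while $\mu$ links both), Seidel's exact sequence \cite{seidel99SympKnotting} computes $\dim HF\big(\tau_{\Sigma_\mu}^{2j}(\Sigma_{\mu_{0,1}}),\, \Sigma_{\mu_{p-1,1}}\big)$ and shows it grows with $j$ (it vanishes for $j = 0$, as in the proof of the preceding proposition), so these classes are pairwise distinct; combined with the first two paragraphs this makes $\{\,[\Sigma_{\tau^k(\gamma_1)}]\,\}$ infinite and hence forces $\pqtwist$ to have infinite order. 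I expect the only delicate point beyond bookkeeping to be the verification that this Floer computation, carried out for $j=1$ in the preceding proposition, goes through uniformly in $j$ — i.e.\ the standard infinite-order statement for Dehn twists. If one preferred to avoid any Floer input past $j=1$, one could instead prove $\pqtwist^k \not\simeq \mathrm{id}$ for every $k \ge 1$ directly, handling the case where $p$ does not divide $k$ via the order-$p$ action of $\pqtwist$ on $[\Sigma_{\gamma_1}] \in \pi_2(B_{d,p,q})$ from Lemma~\ref{lem:pqMonodromyNonTrivialHomotopy}, and the remaining case $k = jp$ by the $j$-fold analogue of the preceding proposition described above.
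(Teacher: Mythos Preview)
Your proposal is correct and takes essentially the same approach as the paper: the paper's two-line proof simply asserts that ``a basic extension of the above proof'' shows $L_k := \Sigma_{\tau^{kp}(\gamma_1)} \not\simeq L_0$ for all $k \ne 0$, and your write-up is precisely that extension, lifting to $\tilde N_0$ and invoking Seidel's Floer result for iterated Dehn twists in the $A_3$-configuration $(\mu_{0,1},\mu,\mu_{p-1,1})$. Your closing ``alternative'' paragraph is in fact exactly the paper's formulation; the earlier framing via infinitely many distinct isotopy classes is a mild repackaging that requires the same Seidel input (and the dimension-growth version you cite is indeed standard), though for the bare infinite-order conclusion the weaker statement $HF(\tau_{\Sigma_\mu}^{2j}(\Sigma_{\mu_{0,1}}),\Sigma_{\mu_{p-1,1}}) \ne 0$ for $j \ne 0$ already suffices.
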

\begin{proof}
    A basic extension of the above proof shows that \(L_k :=
\Sigma_{\tau^{kp}(\gamma_1)}\) is not Lagrangian isotopic to \(L_0 =
\Sigma_{\gamma_1}\) for all \(k \ne 0\). Since \(L_k \simeq
\pqtwist^{kp}(L_0)\) this proves the result.
\end{proof}
\begin{remark}
    This is not a surprising result; it parallels the story of weighted
homogeneous hypersurface singularities. What would be interesting is
investigating whether it is truly a symplectic phenomenon or not. That is, is
the monodromy of infinite order in the \emph{smooth} mapping class group?

Note that in the case \((d,p,q) = (n+1,1,0)\) --- which is the \(A_n\) du
Val singularity --- this is truly a symplectic phenomenon by Brieskorn's
simultaneous resolution \cite{brieskron68simultaneousResolution}. On the other
hand, recent work of Konno, Lin, Mukherjee, and Mu\~noz-Ech\'aniz
\cite{KLMME24monodromy} shows that the monodromy diffeomorphism of the Milnor
fibration for every weighted homogeneous hypersurface singularity
\emph{excluding} the ADE singularities has infinite order in the smooth
mapping class group. Their theorem does not apply to the \(B_{d,p,q}\) case
since \(b^+(B_{d,p,q}) = 0\) and \(\pi_1(B_{d,p,q}) \ne 1\).
\end{remark}

\subsection{Proof of the main theorem}
\label{sec:mainTheoremProof}

Consider the subgroup \(G\) of the symplectic mapping class group
\(\pi_0(\mathrm{Symp}_c(B_{d,p,q}))\) generated by the \(d-1\) Dehn twists
about the standard spheres \(L_i = \Sigma_{\gamma_i}\) and the
\(\frac{1}{p^2}(1,pq-1)\) symplectic monodromy \(\pqtwist\). We now state the
main theorem in its most precise form:
\begin{theorem}[]
    \label{thm:main}
    For every Lagrangian sphere \(L \subset B_{d,p,q}\) there exists \(\phi
\in G\) such that \(L\) is Lagrangian isotopic to \(\phi(L_1)\).
\end{theorem}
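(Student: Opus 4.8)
The plan is to combine the geometric input already assembled --- namely Corollary~\ref{cor:isotopyToMatchingCycle}, which says any Lagrangian sphere $L \subset B_{d,p,q}$ is Lagrangian isotopic to a matching cycle of the fixed Lefschetz fibration $\refFib|_{\mathring X}$ over the $(d-1)$-punctured annulus $A$ --- with the group theory of $\Mod(A)$. First I would invoke the transitivity fact: the matching path $\gamma$ for $L$ is isotopic to $\rho(\gamma_1)$ for the standard path $\gamma_1$ and some $\rho \in \Mod(A)$, hence $L$ is Lagrangian isotopic to $\Sigma_{\rho(\gamma_1)}$. Writing $\rho$ as a word in the generators $\tau, \sigma_1,\ldots,\sigma_{d-2}, T$ from the presentation of $\Mod(A)$, I would then translate each generator into a symplectomorphism of $B_{d,p,q}$: the half-twist $\sigma_i$ is realised by the Dehn twist $\tau_i = \tau_{\Sigma_{\gamma_i}}$ about the standard sphere $L_i$ (this is the classical matching-cycle/Dehn-twist correspondence, \cite[16h]{seidelBook}); the boundary twist $T$ acts trivially on isotopy classes of matching paths since no matching path touches the central boundary component; and the central twist $\tau$ is realised by the $\frac{1}{p^2}(1,pq-1)$ monodromy $\pqtwist$, by Lemma~\ref{lem:pqMonodromyInducesCentralTwist} (for $d=1$) and its generalisation to the fibration $\varpi : N_0 \to \C$ with $N_0 \cong B_{d,p,q}$ discussed immediately afterwards.

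The key step is making the translation "word in $\Mod(A)$ $\leadsto$ symplectomorphism" compatible with matching cycles. Concretely, if $\phi$ is a symplectomorphism inducing the mapping class $g \in \Mod(A)$ on the base (in the sense that $\phi$ commutes with $\refFib$ up to the base diffeomorphism $g$, at least away from the exotic fibre and the sections), then $\phi(\Sigma_\gamma)$ is Lagrangian isotopic to $\Sigma_{g(\gamma)}$ for every matching path $\gamma$; iterating, a product $\phi_1 \cdots \phi_N$ realising the word $g_1 \cdots g_N = \rho$ carries $\Sigma_{\gamma_1}$ to something Lagrangian isotopic to $\Sigma_{\rho(\gamma_1)} \simeq L$. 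I would take $\phi_1,\ldots,\phi_N$ from the list $\{\tau_1,\ldots,\tau_{d-1},\pqtwist\}$ according to whether $g_j$ is a half-twist, a boundary twist (contributing nothing), or the central twist. Setting $\phi = \phi_N \cdots \phi_1 \in G$, we get $L$ Lagrangian isotopic to $\phi(L_1)$; upgrading the Lagrangian isotopy to a compactly-supported Hamiltonian one (as in the statement of Theorem~\ref{thm:intro:main}) is then automatic since all the relevant Lagrangians are spheres and any Lagrangian isotopy of a sphere in these manifolds is Hamiltonian --- or, if one prefers, this is absorbed into the conclusion by the flux/exactness considerations already used for $B_{d,p,q}$.

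I expect the main obstacle to be the bookkeeping needed to ensure the monodromy $\pqtwist$ (constructed on $N_0$ via the Milnor fibration and a cut-off function) genuinely intertwines with the \emph{reference} fibration $\refFib$ built in Section~\ref{sec:constructingTheIsotopy}, rather than with the ad hoc fibration $\varpi : N_0 \to \C$: one must check that the identification $N_0 \cong B_{d,p,q} \subset X$ carries $\varpi$ to (a fibration Lefschetz-equivalent to) $\refFib|_{\mathring X}$, so that the central-twist computation of Lemma~\ref{lem:pqMonodromyInducesCentralTwist} and Lemma~\ref{lem:pqMonodromyNonTrivialHomotopy} applies to \emph{our} matching paths. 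Once the two fibrations are matched, the remaining point --- that the half-twists $\sigma_i$ are realised by the $\tau_i$ supported near the standard spheres --- is standard, and the base-point issues with $0 \in \C$ and the exotic fibre are exactly the ones already handled by restricting to the compact annulus $A$ in Corollary~\ref{cor:isotopyToMatchingCycle}. The proof then concludes by assembling $\phi$ as the corresponding word in $\tau_1,\ldots,\tau_{d-1},\pqtwist$.
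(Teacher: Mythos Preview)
Your proposal is correct and follows essentially the same route as the paper: the obstacle you anticipate---matching the ad hoc fibration $\varpi : N_0 \to \C$ with the reference fibration $\refFib$---is precisely what the paper resolves in Lemma~\ref{lem:lefFibLocalModel} and Corollary~\ref{cor:lefFibsCommute} immediately before the proof, after which the argument is exactly your word-by-word translation via transitivity of $\Mod(A)$ on matching paths. One minor slip: the base annulus is $d$-punctured (one puncture per nodal fibre $(\mathcal{E}_j, F-\mathcal{E}_j)$ for $1 \le j \le d$), not $(d-1)$-punctured, so the half-twists run $\sigma_1,\ldots,\sigma_{d-1}$.
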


We'll do this by combining the main result of Section~\ref{ch:isotopy} with a
proof that the Lefschetz fibration \(\varpi : N_0 \to \C\) in the previous
section compactifies in a suitable sense to \(\refFib\).
We show that we can choose the reference almost complex structure \(\Jref\) on
\(X = X_{d,p,q}\) so that the fibres of the map \(\pi_{\Jref} : X \to \C\) are
compactifications of those of \(\varpi : N_0 \to \C\).\footnote{Recall that
    \(\refFib\) is the deformation of \(\pi_{\Jref}\) under the Gompf argument
of Lemma~\ref{lem:gompf}.} The upshot of this is that all the results proved
in Section~\ref{sec:dpqSymplectomorphisms} transfer immediately to
\(\refFib\).

\begin{lemma}[]
    \label{lem:lefFibLocalModel}
    There exists a symplectic embedding \(\iota : N_0 \hookrightarrow
X=X_{d,p,q}\) and an almost complex structure \(\Jref \in \mathcal{J}(D')\) on
\(X\) such that the fibres of \(\varpi : N_0 \to \C\) compactify to
\(\Jref\)-holomorphic fibres of the map \(\pi_{\Jref} : X \to \C\).
\end{lemma}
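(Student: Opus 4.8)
The plan is to build $\iota$ and $\Jref$ by exploiting the explicit almost toric descriptions of both $N_0$ and $X$. Recall from the construction in Section~\ref{sec:compactification} that $X$ was obtained from $B_{d,p,q}$ by a sequence of symplectic cuts followed by resolving the two cyclic quotient singularities; in particular, there is a distinguished piece of $B_{d,p,q}$ that symplectically embeds into $X$ (this is exactly the Liouville subdomain referred to in Lemma~\ref{lem:compactify}, whose symplectic completion is $B_{d,p,q}$). Since $N_0 \cong B_{d,p,q}$ via the partial-smoothing construction of Section~\ref{sec:dpqSymplectomorphisms}, up to shrinking (rescaling by the Liouville flow) we obtain the desired symplectic embedding $\iota : N_0 \hookrightarrow X$ whose image is the complement of a neighbourhood of the compactifying divisor $D$. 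The key point to check here is that the Lefschetz fibration $\varpi : N_0 \to \C$, which projects to $z_3^p$, matches up fibrewise with the projection $\pi_{\Jref}$ onto $\overline{\mathcal{M}}_{0,0}(\XJ{\Jref},F;\Jref) \cong \C$: both have generic fibre a copy of the $A_{dp-1}$ Milnor fibre $\{z_1z_2 = z_3^{dp} - c\}$ (for $c \ne 0$) which is a cylinder $\C^\times$, and both have exactly $d$ Lefschetz critical fibres together with one exotic/degenerate fibre over $0$.

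First I would set up the almost complex structure on the fibres. On $N_0$, the fibres of $\varpi$ carry the restriction of the standard complex structure from $\C^3$ (modulo the $\frac1p(1,-1,q)$-action, which is holomorphic); this is already integrable, and each fibre $\varpi^{-1}(x)$ for $x \ne 0$ is biholomorphic to $\C^\times$ with $d$ marked points. The compactification of such a cylinder to a genus-$0$ curve in the class $F$ is canonical: one adds the two points at $0$ and $\infty$, which are forced to land on the divisor components $\fibrationSectionL$ and $\fibrationSectionR = H - E_0 - \sum_j \mathcal{E}_j$ respectively (these are exactly the two sections of the fibration identified in Proposition~\ref{prop:lefFibXJ} and Corollary~\ref{cor:c00FmoduliDescription}). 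So the plan is: push forward the fibrewise complex structure from $N_0$ to $\iota(N_0) \subset X$, and then extend it over the neighbourhood of $D = D_\infty \cup \{\fibrationSectionL, F, \fibrationSectionR\}$ so that (a) every component of $D'$ is holomorphic — i.e. the extension lies in $\mathcal{J}(D')$ — and (b) the closures of the $\varpi$-fibres are holomorphic. Condition (a) is achieved as in Section~\ref{subsub:acsOnDivisors} (the components of $D'$ meet symplectically orthogonally, so the symplectic neighbourhood theorem lets us prescribe a cylindrical/integrable model near $D'$). Condition (b) is where the work is: near each point where a fibre closure meets $\fibrationSectionL$ or $\fibrationSectionR$ one must choose the extension so that the local picture is the standard $\pi_0(z_1,z_2) = z_1 z_2$ model of Lemma~\ref{lem:lefCoords}, which is compatible with an integrable structure.

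The main obstacle I expect is precisely the compatibility at the two sections $\fibrationSectionL$, $\fibrationSectionR$ and at the exotic fibre: one needs the fibrewise-integrable structure coming from $N_0$ to glue smoothly and symplectic-compatibly to a structure on the divisor neighbourhood in which the divisor components and the fibre closures are simultaneously holomorphic. Since the fibres of $\varpi$ degenerate at the boundary of $\iota(N_0)$ (they run off to $D_\infty$), one has to verify that the closures are genuinely smooth symplectic — in fact $\Jref$-holomorphic — near $D_\infty$, not merely continuous; this is a local normal-crossings computation in the toric model. I would handle it by working in the explicit Delzant corner charts of $X$ near the vertices of the polygon in Figure~\ref{fig:CP2polygon}(b): each such corner is fibred-symplectomorphic to $(\C^2, \omega_{\C^2})$ with its standard toric structure (as in Figure~\ref{fig:divisorComponentsOrthogonal}), and in these coordinates the fibre closures are graphs of the form $z_1 z_2 = \mathrm{const}$, which are manifestly compatible with the standard integrable $J$. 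Interpolating between this standard model near $D$ and the pushed-forward structure on the interior of $\iota(N_0)$ (using that the space of $\omega$-compatible almost complex structures is contractible, hence the interpolation can be done within $\mathcal{J}(X,\omega)$ while keeping the relevant curves holomorphic by an automatic-transversality argument as in Corollary~\ref{cor:EnAlwaysJhol}) yields the desired $\Jref \in \mathcal{J}(D')$. Finally, one checks that with this $\Jref$ the map $\pi_{\Jref}: X \to \overline{\mathcal{M}}_{0,0}(\XJ{\Jref},F;\Jref) \cong \C$ restricts on $\iota(N_0)$ to $\varpi$ up to the identification of the two base copies of $\C$, which follows because a curve in class $F$ through an interior point of $\iota(N_0)$ is uniquely determined and must be the closure of the $\varpi$-fibre through that point (both are genus-$0$ $\Jref$-holomorphic representatives of $F$, and such a representative is unique by the moduli analysis of Section~\ref{sec:universalCurve}).
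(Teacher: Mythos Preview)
Your overall strategy---embed $N_0$ into $X$, push forward the standard complex structure, and extend---is in the right spirit, but the crucial step where you propose to interpolate between the pushed-forward structure and the divisor model ``while keeping the relevant curves holomorphic by an automatic-transversality argument as in Corollary~\ref{cor:EnAlwaysJhol}'' does not work. Automatic transversality (and the non-bubbling arguments of Section~\ref{sec:nonBubbling}) tell you that \emph{some} $J$-holomorphic representative of a given class exists for every $J$ in a connected family; they do not tell you that a \emph{fixed} submanifold remains $J$-holomorphic along a path of $J$'s. So after interpolation you would still have a Lefschetz fibration $\pi_{\Jref}$, but its fibres would have no reason to agree with the closures of the $\varpi$-fibres, which is precisely the content of the lemma.

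The paper avoids this gluing problem entirely by a structural observation you are missing: the Hamiltonian used for the symplectic cuts that produce the sections $\fibrationSectionL$ and $\fibrationSectionR$ can be taken to be $H(z)=\tfrac12(|z_1|^2-|z_2|^2)$, whose circle action $(e^{it}z_1,e^{-it}z_2,z_3)$ \emph{preserves each fibre} of $\varpi$ (it fixes $z_1z_2$ and $z_3$). Consequently the symplectic-cut construction automatically compactifies the $\varpi$-fibres to symplectic spheres, and moreover the standard complex structure on $\C^3$, being $U(3)$-invariant, descends to the cut manifold without any interpolation. The paper then defines $\Jref$ piecewise---equal to this descended structure on a region containing all the Lefschetz critical points, equal to the product $S^2\times S^2$ structure near the $F$-component of $D$, and extended arbitrarily over the rest so that $D'$ is holomorphic---and finishes with a homology computation showing the compactified fibres lie in the class $F=H-\fibrationSectionL$. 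Your Delzant-corner remark is pointing in this direction, but the key is to recognise that the cut Hamiltonian and the fibration are compatible from the outset, so no interpolation is needed. (A minor aside: the generic fibre of $\varpi$ is a single conic $z_1z_2=c$, i.e.\ $\C^\times$, not the $A_{dp-1}$ Milnor fibre.)
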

\begin{proof}
    The existence of this embedding will follow from showing that \(N_0\) has
an almost toric structure with base diagram isomorphic to that shown in
Figure~\ref{fig:bdpqFundamentalDomain}. Indeed, the compactification \(X\) was
constructed using only the almost toric base diagram. After this, we then
investigate how the almost complex structure \(J\) on \(N_0\), inherited from
its embedding in \(\C^3\), and the fibres of \(\varpi\) behave with respect to
the Hamiltonian \(H : N_0 \to \R\) used in the symplectic cut construction.

As in Lemma~7.2 of \cite{evansLTF}, the Hamiltonian system \(\mathbf{H} =
(|z_3|^2,\frac{1}{2}(|z_1|^2-|z_2|^2))\) on \(N_0\) has a fundamental
domain whose image under action coordinates is that shown in
Figure~\ref{fig:bdpqAlternativeFundamentalDomain}(a).
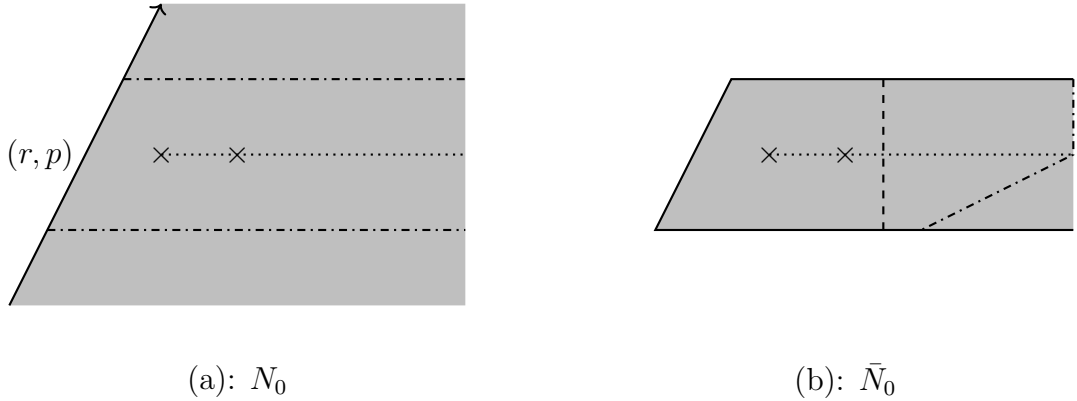
\begin{figure}\centering
\begin{tikzpicture}[
    moment polytope,
]
    \fill (0,0) -- (2,4) -- (6,4) -- (6,0) -- cycle;
    \draw[->] (0,0) -- node [left] {\((r,p)\)} (2,4);
    \draw[dotted] (2,2) node {\(\times\)} -- (3,2) node {\(\times\)} -- (6,2);
    \draw[dash dot] (0.5,1) -- (6,1) (1.5,3) -- (6,3);
    \draw (3,-1) node {(a): \(N_0\)};

\begin{scope}[shift={(8,0)}]
    \filldraw (6,1) -- (0.5,1) -- (1.5,3) -- (6,3);
    \draw[dotted] (2,2) node {\(\times\)} -- (3,2) node {\(\times\)} -- (6,2);
    \draw[dashed] (3.5,3) -- (3.5,1);
    \draw (3,-1) node {(b): \(\bar{N}_0\)};
    \draw[dash dot] (6,3) -- (6,2) -- (4,1);
\end{scope}

\end{tikzpicture}
    \caption{
	A fundamental domain derived from the Hamiltonian system \(\mathbf{H}
	: N_0 \to \R^2\). Here \(r\) is the unique integer \(0 < r < p\) such
	that \(rq \equiv 1 \mod p\). Cuts made are shown in dash dot.
    }
    \label{fig:bdpqAlternativeFundamentalDomain}
\end{figure}
In particular, the \(y\)-coordinate is the Hamiltonian \(H(z) =
\frac{1}{2}(|z_1|^2-|z_2|^2)\) which generates the circle action \(t \cdot
(z_1,z_2,z_3) = (e^{i t}z_1, e^{-i t}z_2,z_3)\).
After applying a \(SL_2(\Z)\) transformation to make the toric boundary
vertical and rotating the branch cut \(180^\circ\) clockwise, one obtains
exactly the fundamental domain of Figure~\ref{fig:bdpqFundamentalDomain}.
Under this correspondence, the vertical cuts made in
Figure~\ref{fig:BdpqCuts}(b) (which correspond to the \(J\)-holomorphic sections
of the Lefschetz fibrations \(\pi_J : X \to \C\)) translate to 
horizontal cuts in the above fundamental domain. This means that the
symplectic cut construction is done with respect to the Hamiltonian \(H\).
Recall that a fibre \(\varpi^{-1}(w)\) is given by the equation
\[
    z_1z_2 = \prod_{i=1}^d(w - \psi(|z|^2)a_i),
\]
which is preserved by the Hamiltonian flow \(\phi^H_t\). Therefore, the
intersection of a fibre with a regular level \(H^{-1}(r)\) is given by an
orbit of \(\phi^H_t\). This implies that, after taking the symplectic cuts,
the fibres of \(\varpi\) become symplectic spheres in \(X\).

As for the almost complex structure, note that in the region where the cut is
made, \(N_0\) is a complex submanifold of \(\C^3/\frac{1}{p}(1,-1,q)\).
Indeed, \(N_0 \cap (\frac{2}{3}\le|z|^2\le 1) = M_0 \cap
(\frac{2}{3}\le|z|^2\le 1)\) and \(M_0\) is holomorphic. Therefore, as the
circle action \(\phi^H_t\) is a subgroup of the \(U(3)\) action, this implies
that \(J\) is invariant under \(\phi^H_t\) and thus descends to give an almost
complex structure \(\bar{J}\) on the cut manifold \(\bar{N}_0\) shown in
Figure~\ref{fig:bdpqAlternativeFundamentalDomain}(b). To produce an almost
complex structure on \(X\) we need to make a cut corresponding to the
horizontal cut shown in Figure~\ref{fig:BdpqCuts}(a). However,
\emph{a priori}, the symplectic sphere introduced by this cut may not be
\(\bar{J}\)-holomorphic. Therefore, we define \(\Jref\) as follows. Fix a
closed subset of \(\bar{N}_0\) containing all of the focus-focus critical
points of the form indicated by the vertical dashed line in
Figure~\ref{fig:bdpqAlternativeFundamentalDomain}(b), and define \(\Jref\) to be
\(\bar{J}\) here. Then, in a neighbourhood of the vertical cut passing through
the branch cut define
\(\Jref\) to be that coming from the standard product complex structure in
\(S^2 \times S^2\) (compare Section~\ref{sec:constructingTheIsotopy} and
Figure~\ref{fig:fixedNeighbourhood}). Finally, on the remaining region pick
\(\Jref\) arbitrarily so that it makes the horizontal toric boundary
\(\Jref\)-holomorphic (which is possible by
Lemma~\ref{app:lem:divisorExtension}).\footnote{Of course, one also needs to
    handle resolving the singularities introduced by the horizontal cuts, as
    in Lemma~\ref{lem:ZCF}. However, this can be done so that the resulting
    resolution loci are \(\Jref\)-holomorphic without affecting the property
that the fibres of \(\varpi : N_0 \to \C\) away from the exotic fibre over
\(0\) compactify to \(\Jref\)-holomorphic spheres in \(X\).}

The final claim that the fibres of \(\varpi\) compactify to fibres of
\(\pi_{\Jref}\) is equivalent to saying that the compactified spheres live in
the homology class \(F = H-S\) (in the basis computed in
Lemma~\ref{lem:compactify}). Let \(A \in H_2(X)\) be the homology class of the
compactified fibres. The claim follows from the fact that \(A \cdot E_i = 0\), \(A \cdot
\mathcal{E}_j = 0\), \(A \cdot F = 0\), and \(A \cdot S = 1\).
\end{proof}

\begin{corollary}[]
    \label{cor:lefFibsCommute}
    The symplectic embedding \(\iota : N_0 \to X\) induces a commutative
square
\[
    \begin{tikzcd}
	N_0 \arrow[r, "\iota"] \arrow[d, "\varpi"] & X \arrow[d,
	"\pi_{\Jref}"] \\
	\C \arrow[r] & \C
    \end{tikzcd}.
\]
Furthermore, this induces a bijection of sets of isotopy classes of matching
paths of \(\varpi\) and \(\pi_{\Jref}\).
\end{corollary}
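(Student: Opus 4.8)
The plan is to read the bottom arrow of the square straight off Lemma~\ref{lem:lefFibLocalModel}, verify commutativity by uniqueness of the fibre curve through a point, and then observe that, since the whole square is induced by a fibre-preserving symplectic embedding, it automatically transports vanishing cycles and parallel transport, so that matching data corresponds. First I would define the bottom map $\beta$: for $w\neq 0$, Lemma~\ref{lem:lefFibLocalModel} says $\varpi^{-1}(w)$ compactifies in $X$ to a (possibly nodal) $\Jref$-holomorphic sphere of class $F = H-\fibrationSectionL$; by Corollary~\ref{cor:c00FmoduliDescription} this is not the exotic curve $\infinityCurve{\Jref}$, hence lies in $\XJ{\Jref}$ and determines a point $\beta(w)\in\overline{\mathcal{M}}_{0,0}(\XJ{\Jref},F;\Jref)\cong\C$. (Over $w=0$ one extends $\beta$ by continuity once one checks, as in the footnote to Lemma~\ref{lem:lefFibLocalModel}, that the $p$-covered fibre over $0$ degenerates onto $\infinityCurve{\Jref}$; this is the identification of bases under which the square is drawn.) Commutativity is then immediate: for $x\in N_0$ with $\varpi(x)=w\neq 0$, the point $\iota(x)$ lies on the compactified fibre $\beta(w)$, and since $\pi_{\Jref}$ is a Lefschetz fibration whose fibres foliate $\XJ{\Jref}$ (Proposition~\ref{prop:lefFibXJ}), that compactified fibre is the \emph{unique} $\Jref$-holomorphic $F$-curve through $\iota(x)$; thus $\pi_{\Jref}\circ\iota=\beta\circ\varpi$ off $\varpi^{-1}(0)$, and everywhere by continuity.

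Next I would show $\beta$ is a homeomorphism onto its image carrying critical values bijectively to critical values. Injectivity holds because $\overline{\varpi^{-1}(w)}\cap\iota(N_0)=\iota(\varpi^{-1}(w))$ recovers the fibre from its compactification; continuity holds because locally $\beta=\pi_{\Jref}\circ\iota\circ s$ for a local section $s$ of $\varpi$; openness follows from invariance of domain, both bases being surfaces. The critical values of $\varpi$ are exactly the $w$ with $\varpi^{-1}(w)$ nodal, and by Lemma~\ref{lem:lefFibLocalModel} these compactify precisely to the $d$ nodal curves of type $(\mathcal{E}_j,F-\mathcal{E}_j)$, i.e.\ the critical values of $\pi_{\Jref}$; since $\beta$ is injective and each fibration has exactly $d$ singular fibres, $\beta$ matches the two critical loci. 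Combining this with Remark~\ref{rem:lefFibMarkedPoints} and the discussion in Section~\ref{sec:constructingTheIsotopy}, under which the critical values of $\pi_{\Jref}$ are confined to an annular region bounded away from $\fibrationSectionL$, $\fibrationSectionR$ and $\infinityCurve{\Jref}$, one gets that $\beta$ restricts to a homeomorphism of the $d$-punctured annulus in which $\varpi$'s matching paths live onto the corresponding $d$-punctured annulus for $\pi_{\Jref}$.

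Finally I would conclude the bijection on isotopy classes. Post-composition with $\beta$ already gives a bijection of isotopy classes of embedded arcs joining pairs of punctures; it remains to see that $\gamma$ is a matching path for $\varpi$ iff $\beta\circ\gamma$ is one for $\pi_{\Jref}$. This holds because $\iota$ is a symplectic embedding sending fibres of $\varpi$ into fibres of $\pi_{\Jref}$, hence carrying the symplectic connection of $\varpi$ (the $\omega$-orthogonal to the fibre tangent spaces) to that of $\pi_{\Jref}$ over $\iota(N_0)$; the vanishing thimbles of $\varpi$ over a path ending at a critical value lie in $\iota(N_0)$, so $\iota$ identifies the vanishing cycles and parallel transport of $\gamma$ with those of $\beta\circ\gamma$. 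Therefore the two vanishing cycles of $\gamma$ agree, after the boundary deformation of \cite[Lemma~15.3]{seidelBook}, precisely when those of $\beta\circ\gamma$ do, and the resulting matching cycles are identified through $\iota$; hence $\gamma\mapsto\beta\circ\gamma$ descends to the asserted bijection. The step I expect to be the main obstacle is the bookkeeping around the exotic fibre: checking that $\varpi^{-1}(0)$ really does degenerate onto $\infinityCurve{\Jref}$ so that $\beta$ extends correctly, and that the vanishing thimbles of $\varpi$ never run out of $\iota(N_0)$ into the sections $\fibrationSectionL,\fibrationSectionR$ or the exotic curve --- this is what licenses transporting the matching data across $\iota$. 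Everything else is formal given Lemma~\ref{lem:lefFibLocalModel}.
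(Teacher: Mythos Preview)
Your proposal is correct and, for the commutative square, tracks the paper's proof exactly: both derive the bottom map directly from Lemma~\ref{lem:lefFibLocalModel} and read off commutativity from uniqueness of the $F$-curve through a point.

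For the bijection of matching paths, your route is more elaborate than the paper's. You construct $\beta$ explicitly, check it is a homeomorphism, and then argue via the symplectic connection that vanishing thimbles and cycles are carried across by $\iota$, so that $\gamma$ is matching iff $\beta\circ\gamma$ is. The paper bypasses all of this with a single observation: by the choice of the closed set in Lemma~\ref{lem:lefFibLocalModel} on which $\Jref$ agrees with $\bar{J}$, the restriction $\pi_{\Jref}|_{X\setminus\im\iota}$ has \emph{only regular fibres}. Hence every critical value of $\pi_{\Jref}$ already lies in the image of the base map, and since the fibres are genus~$0$ spheres (so any arc between critical values is automatically a matching path), the bijection of isotopy classes is a purely topological statement about arcs in the punctured annulus. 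This dissolves the two concerns you flag as the ``main obstacle'': you do not need to track what happens at $\varpi^{-1}(0)$ or worry about thimbles leaving $\iota(N_0)$, because the argument lives entirely in the base. Your parallel-transport argument is valid and buys you the stronger conclusion that matching \emph{cycles} correspond through $\iota$, but for the statement as written the paper's shortcut suffices.
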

\begin{proof}
    The existence of the commutative square is essentially just a rephrasing
of Lemma~\ref{lem:lefFibLocalModel}. Notice that the restriction
\(\pi_{\Jref}|_{X \backslash \im\iota}\) consists only of regular fibres
by the choice of closed set on which \(\Jref\) agrees with the
almost complex structure \(\bar{J}\) coming from symplectic cutting \(N_0\).
This induces the claimed bijection of isotopy classes of matching paths,
which, said another way, means that every matching cycle of \(\pi_{\Jref}\) is
Lagrangian isotopic to a matching cycle of \(\varpi\).
\end{proof}

\begin{proof}[Proof of Theorem~\ref{thm:main}]
    The symplectic completion of \(N_0\) is symplectomorphic to \(B_{d,p,q}\).
Therefore, by applying the negative Liouville flow, we can assume that \(L
\subset N_0\) and thus, after compactifying, \(L \subset X\). Therefore, by
Corollary~\ref{cor:isotopyToMatchingCycle}, \(L\) is Lagrangian isotopic to a
matching cycle of \(\refFib\) in \(\mathring{X}\). The
results of Section~\ref{sec:dpqSymplectomorphisms} and
Corollary~\ref{cor:lefFibsCommute} imply that this matching cycle is Lagrangian isotopic
to a composition of Dehn twists and the \(B_{p,q}\) symplectic monodromy
applied to the matching cycle \(L_0=\Sigma_{\gamma_1}\). Finally, since the
isotopy is supported in \(\mathring{X}\), we can find a contact-type
hypersurface \(M\) in \(X \backslash D'\) such that the isotopy is contained
in the \emph{interior} \(X_ M \subset X \backslash D'\) of \(M\) and so that
the completion of \(X_M\) is symplectomorphic to \(B_{d,p,q}\). See
Figure~\ref{fig:hypersurface} for an example. Thereby we view the isotopy as
taking place in \(B_{d,p,q}\).
\begin{figure}\centering
    \begin{tikzpicture}[
	moment polytope,
    ]
        \filldraw (4,1) -- (0.5,1) -- (1.5,3) -- (6,3) -- (6,2) -- cycle;
	\draw[dotted] (2,2) node {\(\times\)} -- (3,2) node {\(\times\)} -- (6,2);
	\draw[dashed] (0.75,1.5) -- (3.5,1.5);
	\draw[dashed] (3.5,1.5) arc [radius=0.5, start angle=-90, end
	    angle=90];
	\draw[dashed] (3.5,2.5) -- (1.25,2.5);
    \end{tikzpicture}
    \caption{
	The dashed curve represents the image of the contact-type hypersurface
	\(M \subset X \backslash D'\) chosen to ensure that the completion of
	the interior is symplectomorphic to \(B_{d,p,q}\). We can ensure that
	the Lagrangian isotopy is contained in the interior of this
	hypersurface by altering the curve so that it doesn't touch any of the
	boundary edges except the leftmost one.
    }
    \label{fig:hypersurface}
\end{figure}
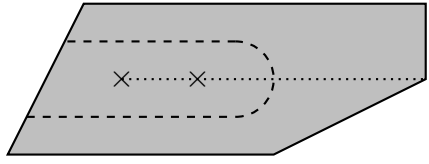
\end{proof}

\appendix
\section{Almost complex structures on symplectic divisors}
\label{app:acsOnDivisor}

We construct the almost complex structures necessary for the
arguments of Section~\ref{sec:fibreModuliSpaces}. Let \((M,\omega)\) be a
closed symplectic 4-manifold and \(S \subset M\) a symplectic divisor. In
particular, throughout the whole of this section, we will assume the
following:
\begin{itemize}
    \item the components \(S_i\) of \(S\) are closed, embedded symplectic
	submanifolds;
    \item intersecting components \(S_i\) and \(S_j\) do so symplectically
	orthogonally;\footnote{This condition is not strictly necessary, but
	it makes the proofs simpler and it is satisfied in the situation we
    handle in this paper.} and,
    \item there are no triple intersections, \emph{i.e.}~for distinct
	components \(S_i,S_j,S_k\), we have \(S_i \cap S_j \cap S_k =
	\emptyset\).
\end{itemize}
We will construct a compatible almost complex structure on \(M\) that
realises each component of \(S\) as a \(J\)-holomorphic submanifold. The idea
is to first construct \(J\) at the points of intersection, then extend in
neighbourhoods of each component, and finally to the whole of \(M\). All the
arguments presented here are variations on standard material, see for example
\cite[Chapters~2--3]{mcduffSalamonIntro} and \cite[Part~III]{daSilvaLectures}.

    First consider the simplest situation where \(S = S_1 \cup S_2\) is
composed of two symplectic surfaces that intersect symplectically
orthogonally exactly once. Note that, by the symplectic neighbourhood theorem
\cite[Theorem~3.4.10]{mcduffSalamonIntro}, a neighbourhood of \(S_1\) in \(M\)
is isomorphic to a neighbourhood of the zero section in the symplectic normal
bundle \(\nu{}S_1\) of \(S_1\). Thus, we may choose a symplectic
trivialisation of \(\nu{}S_1\) in a neighbourhood of the unique intersection
point \(\{x\} = S_1 \cap S_2\). In other words, we have an embedding \(\phi
: U \to M\) of a neighbourhood \(U\) of \(0 \in (\R^4,\omega_0)\), where
\(\omega_0 = \begin{pmatrix} 0 & I \\ -I & 0 \end{pmatrix}\) is the standard
symplectic form, such that \(\phi(\R^2 \times \{0\}) \subset S_1\).
Furthermore, the symplectic orthogonality condition between \(S_1\) and
\(S_2\) ensures that \(\ud\phi(0)(\{0\} \times \R^2) = T_xS_2\). The next step
is to adjust the map \(\phi\) to a symplectic embedding that maps the
symplectic planes \(\R^2 \times \{0\}\) and \(\{0\} \times \R^2\) into \(S_1\)
and \(S_2\) respectively, thus symplectically identifying the intersection
\(S_1 \cap S_2\) with the standard intersection of coordinate planes in
\(\R^4\). This is achieved by the following result.

\begin{lemma}[]
    \label{app:lem:orthogonalDarboux}
    Let \(W \subset (\R^{2n},\omega_0) = (\C^n,\omega_0)\) be an embedded
symplectic submanifold of the standard symplectic vector space that intersects
the coordinate plane \(\C^k \times \{0\}\) exactly once symplectically
orthogonally at \(0 \in \C^n\). Then there exists a symplectomorphism of a
neighbourhood of \(0 \in \C^n\) that maps the planes \(\C^k \times \{0\}\) and
\(\{0\} \times \C^{n-k}\) onto \(\C^k \times \{0\}\) and \(W\) respectively.
\end{lemma}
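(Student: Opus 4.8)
The plan is to reduce the statement to a sequence of Moser-type isotopies that ``straighten out'' $W$ relative to the coordinate plane $P := \C^k \times \{0\}$, all fixing $P$ pointwise (at least near $0$). First I would note that since $W$ is a symplectic submanifold meeting $P$ transversally and symplectically orthogonally at the origin, the tangent space decomposes as $T_0\C^n = T_0P \oplus T_0W$ with $T_0P = \C^k \times \{0\}$ and $T_0W$ a symplectic complement; a linear symplectic change of coordinates (an element of $Sp(2n,\R)$ fixing $\C^k\times\{0\}$, which exists because $Sp$ acts transitively on symplectic complements of a fixed symplectic subspace) arranges $T_0W = \{0\}\times\C^{n-k}$. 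So without loss of generality $W$ is tangent to $Q := \{0\}\times\C^{n-k}$ at the origin, and I must produce a local symplectomorphism, fixing $P$, carrying $Q$ to $W$.

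Next I would build a diffeomorphism germ $\psi$ at $0$ that fixes $P$ pointwise, equals the identity to first order at $0$, and maps $Q$ onto $W$: parametrise $W$ near $0$ as a graph over $\{0\}\times\C^{n-k}$ inside the normal directions, using that $W$ is tangent to $Q$; then extend this graphing map over a neighbourhood by interpolating with a cutoff so that it is the identity on $P$ and near the boundary. Pulling back $\omega_0$ by $\psi$ gives a symplectic form $\omega_1 := \psi^*\omega_0$ that agrees with $\omega_0$ along $P$ (since $\psi$ fixes $P$ and, after a further linear correction, has derivative along $P$ symplectic — here one uses the orthogonality hypothesis to match the conormal data) and for which $Q$ is $\omega_1$-symplectic. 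Now I would apply the relative Moser trick: $\omega_t := (1-t)\omega_0 + t\omega_1$ is a path of symplectic forms on a possibly smaller neighbourhood, cohomologous (both are exact, $\omega_t = d\alpha_t$ with $\alpha_t$ vanishing suitably along $P$ by the relative Poincaré lemma), and one integrates the time-dependent vector field $X_t$ defined by $\iota_{X_t}\omega_t = -\dot\alpha_t$ to get an isotopy $\phi_t$ with $\phi_1^*\omega_1 = \omega_0$ and $\phi_t|_P = \mathrm{id}$. The composition $\psi\circ\phi_1$ is then the desired symplectomorphism sending $P$ to $P$ and $Q$ to $W$. Finally, to get $W$ mapped \emph{from} $\{0\}\times\C^{n-k}$ as stated (rather than $P$ to $P$ and $Q$ to $W$ simultaneously in the stated orientation), I just note the construction produces exactly a map fixing $\C^k\times\{0\}$ and carrying $\{0\}\times\C^{n-k}$ onto $W$, which is the claim after relabelling.

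The main obstacle I anticipate is the \emph{relative} nature of everything: every step — the linear normalisation, the choice of primitives in Moser, the cutoff extension of $\psi$ — must be done so that $P = \C^k\times\{0\}$ is preserved, and ideally fixed pointwise near $0$, \emph{and} so that the symplectic orthogonality of $W$ and $P$ is exploited to guarantee that $\omega_1$ agrees with $\omega_0$ to high enough order along $P$ for the relative Poincaré lemma to yield a primitive $\alpha_t$ with the vanishing needed to keep $\phi_t|_P = \mathrm{id}$. This is where the hypothesis ``intersects $\C^k\times\{0\}$ symplectically orthogonally'' is essential and cannot be dropped. The rest is standard (cf.\ \cite[Chapters~2--3]{mcduffSalamonIntro}, \cite[Part~III]{daSilvaLectures}), so I would present it compactly, citing the relative Moser theorem and the relative symplectic neighbourhood theorem rather than reproving them, and spend the bulk of the written proof on the careful bookkeeping that makes the isotopies respect $P$.
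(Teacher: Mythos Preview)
Your approach has a genuine gap: the Moser isotopy $\phi_t$ you construct is relative to $P = \C^k \times \{0\}$ only, and there is no reason for it to preserve $Q = \{0\}\times\C^{n-k}$. Concretely, with $\psi(p,q) = (p+f(q),q)$ (the graph map for $W$ over $Q$, so $f(0)=0$, $df_0=0$), one indeed has $d\psi = I$ along $P$ and hence $\omega_1 = \psi^*\omega_0 = \omega_0$ at every point of $P$, so relative Moser fixes $P$ pointwise. But along $Q$ one computes $\omega_1-\omega_0$ at $(0,q)$ to involve the nonzero terms $\omega_0^P(\cdot, df_q(\cdot))$, so $\omega_1 \ne \omega_0$ along $Q$ and the Moser vector field $X_t$ is not tangent to $Q$. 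Consequently $\psi\circ\phi_1$ sends $Q$ to $\psi(\phi_1(Q)) \ne \psi(Q) = W$ in general. The orthogonality hypothesis buys you agreement of the forms at the single point $0$, not along all of $Q$; to make a Moser argument work you would need to upgrade $\psi$ so that $\omega_1 = \omega_0$ along $Q$ as well (e.g.\ by first choosing a \emph{symplectic} parametrisation of $W$ over $Q$ and then correcting the normal data), which is exactly the extra structure the paper's proof supplies.

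The paper avoids Moser entirely: it applies the symplectic neighbourhood theorem to $W$ (not to $P$), identifying a neighbourhood of $W$ with the normal bundle $\nu W$; the key observation is that the orthogonality hypothesis makes the normal fibre $\nu_0 W$ equal to $\C^k\times\{0\}$. One then takes the canonical symplectic connection on $\nu W$, parallel-transports a standard symplectic frame of $\nu_0 W$ along $W$, and writes down an explicit bundle map $\varphi:\C^k\times B^{2(n-k)} \to \nu W$ which is checked directly to be a symplectomorphism carrying $\C^k\times\{0\}$ to the fibre $\nu_0 W = P$ and $\{0\}\times B^{2(n-k)}$ to the zero section $W$. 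This sidesteps the relative-to-two-submanifolds issue that trips up your argument.
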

\begin{proof}
    Choose a symplectic embedding \(i : B^{2(n-k)} \subset \C^{n-k} \to \C\) of a
ball parametrising \(W\) and satisfying \(i(0) = 0\). In the following, we will
denote the planes \(\C^k \times \{0\}\) and \(\{0\} \times \C^{n-k}\) by
\(\C_{z_1}\) and \(\C_{z_2}\) respectively. The symplectic neighbourhood
theorem allows us to view a neighbourhood of the image of \(i\) as \(i^*\nu W\). Note
that the normal fibre \(\nu_0 W\) over \(0\) is exactly \(\C_{z_1}\). Moreover,
the symplectic form \(\omega_0\) defines a connection \(H \subset Ti^*\nu W\)
on \(i^*\nu W\) by taking the \(\omega_0\) complement of the vertical
distribution. This implies that the symplectic form \(\omega_0\) splits over
\(i^*\nu W\) into vertical and horizontal components: \({\omega_0 =
\omega' \oplus \omega''}\). Let \(\langle X_j,Y_j \rangle\) be
the standard symplectic basis of \(T_0 \C_{z_1} = \C_{z_1} = (T_0 W)^\omega\)
and extend this to a symplectic frame of \(i^*\nu W\) via parallel transport.
Using complex coordinates \(z_1 = (x_1 + i y_1, \ldots, x_k+i y_k)\),
define a map
\(\varphi : B^{2n} \subset \C^n = \C_{z_1} \times \C_{z_2} \to i^*\nu W\) by
\[
    \varphi(z_1,z_2) = \left(\sum_{j=1}^k x_jX_j(z_2) + y_jY_j(z_2),
	z_2\right).
\]
Note that \(\varphi\) is a bundle isomorphism \(\C^k \times B^{2(n-k)} \cong
i^*\nu W\). Moreover, since \(\nabla X_j = 0\) and \(\nabla Y_j = 0\),
\(\varphi\) pulls back the connection \(H\) on \(i^*\nu W\) to the canonical
one \(TB^{2(n-k)} \subset T\C^k \oplus TB^{2(n-k)}\) on \(\C^k \times
B^{2(n-k)}\). Consequently, \(\varphi\) respects the splitting of \(i^*\nu W\)
induced by \(H\), satisfies \(\varphi^*\omega' = \omega_{\C^k}\), and
\(\varphi^*\omega'' = \omega_{B^{2(n-k)}}\), which yields
\[
    \varphi^*\omega_0 = \varphi^*(\omega' \oplus \omega'')
    = \omega_{\C^k} \oplus \omega_{B^{2(n-k)}} = \omega_0.
\]
Hence, \(\varphi\) is the required symplectomorphism.
\end{proof}

\begin{corollary}[]
    \label{app:cor:acsOnIntersections}
    Let \(x\) be an intersection point of a symplectic divisor \({S \subset
(M,\omega)}\), then there exists an integrable compatible almost complex
structure on a neighbourhood of \(x\) that preserves each of the pieces of the
divisor that intersect at \(x\).
\end{corollary}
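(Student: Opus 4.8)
The plan is to reduce the statement to Lemma~\ref{app:lem:orthogonalDarboux} together with the hypotheses on the divisor (in particular the no-triple-intersection condition), and then to transport the standard integrable complex structure $i$ on $\C^2$ back via the symplectomorphism produced there. First I would use the assumption that there are no triple intersections to fix a neighbourhood $U_x$ of the intersection point $x$ that meets exactly two components, say $S_1$ and $S_2$, of $S$, and in which $S_1 \cap S_2 = \{x\}$. Since $S_1$ is a closed embedded symplectic submanifold, the symplectic neighbourhood theorem identifies a neighbourhood of $x$ in $M$ with a neighbourhood of $0$ in $(\R^4,\omega_0) = (\C^2,\omega_0)$ in such a way that $S_1$ corresponds to the coordinate plane $\C_{z_1} = \C \times \{0\}$; under this identification $S_2$ becomes an embedded symplectic surface $W$ through the origin, and the symplectic orthogonality hypothesis between $S_1$ and $S_2$ gives precisely that $T_0 W = (T_0 \C_{z_1})^{\omega_0} = \{0\} \times \C$, i.e. $W$ meets $\C_{z_1}$ symplectically orthogonally at $0$.

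Next I would apply Lemma~\ref{app:lem:orthogonalDarboux} with $n = 2$, $k = 1$: this provides a symplectomorphism $\varphi$ of a neighbourhood of $0 \in \C^2$ carrying $\C_{z_1} = \C \times \{0\}$ onto $\C_{z_1}$ and $\C_{z_2} = \{0\} \times \C$ onto $W$. Composing the two identifications, we obtain a symplectomorphism $\Phi$ from a neighbourhood of $x$ in $M$ onto a neighbourhood of $0$ in $(\C^2,\omega_0)$ which sends $S_1$ to $\C \times \{0\}$ and $S_2$ to $\{0\} \times \C$. Then $J_x := \Phi^* i$, the pullback of the standard complex structure, is an integrable almost complex structure defined near $x$; it is $\omega$-compatible because $\Phi$ is a symplectomorphism and $i$ is $\omega_0$-compatible (compatibility is preserved under symplectic pullback), and it preserves $TS_1$ and $TS_2$ since $i$ preserves the tangent spaces to the two coordinate planes, which are complex submanifolds of $(\C^2,i)$. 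This is exactly the assertion of the corollary.

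The only mild subtlety — and the step I expect to need the most care — is confirming that the composite identification is genuinely a symplectomorphism onto an open neighbourhood of $0$ compatibly with both normal-form reductions: the symplectic neighbourhood theorem is applied to $S_1$ globally, producing a model in which $S_1$ is a coordinate plane, and only \emph{afterwards} is $W$ straightened by $\varphi$; one must check that $\varphi$ can be taken to fix $\C_{z_1}$ setwise (which Lemma~\ref{app:lem:orthogonalDarboux} explicitly guarantees, since it maps $\C^k \times \{0\}$ to $\C^k \times \{0\}$) so that the straightening of $S_2$ does not disturb the already-achieved normal form for $S_1$. Given that guarantee, the argument is a routine composition. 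I would close by remarking that this local construction is exactly the input needed — at each intersection point of $S$ — for the subsequent extension argument over neighbourhoods of the individual components and then over all of $M$, as outlined at the start of Appendix~\ref{app:acsOnDivisor}.
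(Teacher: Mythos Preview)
Your proposal is correct and follows essentially the same approach as the paper: reduce to the local model via the symplectic neighbourhood theorem, apply Lemma~\ref{app:lem:orthogonalDarboux} to straighten the second piece while fixing the first, and then pull back the standard complex structure $i$ from $\C^2$. Your expanded discussion of why the two straightenings compose compatibly simply makes explicit what the paper packages into the phrase ``satisfy the assumptions of the local result of Lemma~\ref{app:lem:orthogonalDarboux}.''
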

\begin{proof}
    The assumptions on the divisor \(S\) ensure that intersections between
components are isolated and satisfy the assumptions of the local result of
Lemma~\ref{app:lem:orthogonalDarboux}. Therefore, we can push forward the
standard complex structure \(i\) on \(\C^2\) via a chart given by
Lemma~\ref{app:lem:orthogonalDarboux}. Since \(i\) preserves the coordinate
planes, the result follows.
\end{proof}

    We are now a position to choose a compatible almost complex structure
\(J\) in a neighbourhood of all the intersection points of the divisor \(S
\subset (M,\omega)\). Then next step is to extend this to each of the
components \(S_i \subset S\) so that each \(S_i\) is \(J\)-holomorphic.

\begin{lemma}[]
    \label{app:lem:divisorExtension}
    Let \(W \subset (M,\omega)\) be an embedded symplectic
submanifold.\footnote{Here, \(W\) and \(M\) may be of any dimensions.} Then,
by the symplectic neighbourhood theorem, \(W\) has a neighbourhood
symplectomorphic to its symplectic normal bundle \((\nu W,\omega)\). Let
\(H \subset T\nu W\) be the canonical symplectic connection induced by
\(\omega\), and suppose that we are given a compatible almost complex structure
that splits \(J = J_H \oplus J_V\) according to the connection \(H\), and is
defined over an open neighbourhood \(U \subset \nu W\) of an open set of
\(W\). Then, for any open subset \(U' \subset \overline{U'} \subset U\), there
exists a compatible almost complex structure \(J' \in \mathcal{J}(T\nu W,
\omega)\) that agrees with \(J\) on \(U'\) and preserves the splitting
\(T\nu W = H \oplus V\). In particular, \(W\) is a \(J'\)-holomorphic
submanifold.
\end{lemma}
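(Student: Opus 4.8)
The plan is to construct $J'$ by interpolating between the given $J$ on $U'$ and a "generic" split almost complex structure elsewhere, using a cutoff function on the base, while carefully preserving the splitting $T\nu W = H \oplus V$ at every stage. First I would recall the structure: over $\nu W$ we have the symplectic connection $H$ coming from $\omega$ (the $\omega$-orthogonal complement of the vertical distribution $V = \ker d\pi$, where $\pi : \nu W \to W$), and the splitting $\omega = \omega_H \oplus \omega_V$ into horizontal and vertical pieces. The point is that the set of compatible almost complex structures on a symplectic vector bundle is non-empty and contractible (this is the standard fact, e.g.\ \cite[Chapters~2--3]{mcduffSalamonIntro}), and the same holds fibrewise and for the horizontal symplectic bundle $(H,\omega_H) \cong \pi^*(TW,\omega|_W)$.

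The key steps, in order, would be: (1) Fix any compatible almost complex structure $J_W$ on $(TW,\omega|_W)$ and pull it back along $\pi$ to a compatible complex structure $J_H^0$ on the symplectic vector bundle $(H,\omega_H)$; similarly fix a compatible complex structure $J_V^0$ on the symplectic vector bundle $(V,\omega_V)$, and set $J^0 := J_H^0 \oplus J_V^0$, which is compatible with $\omega = \omega_H \oplus \omega_V$ and manifestly preserves the splitting. (2) Choose a smooth cutoff function $\beta : W \to [0,1]$ (pulled back to $\nu W$ via $\pi$) with $\beta \equiv 1$ on $U'$ — or rather on a slightly smaller open set whose closure is in $U'$ — and $\beta \equiv 0$ outside $U$; this is possible since $\overline{U'} \subset U$. (3) On the region where both $J$ and $J^0$ are defined, interpolate: the space of compatible complex structures on a fixed symplectic vector space is contractible, and moreover one has the explicit retraction via the polar decomposition of $\beta$-weighted averages of $\omega$-compatible metrics; concretely, one averages the two $J$-compatible metrics $g_J = \omega(\cdot, J\cdot)$ and $g_{J^0} = \omega(\cdot,J^0\cdot)$ with weights $\beta$ and $1-\beta$, and lets $J' := (A^*A)^{-1/2}A$ where $A$ is the skew-adjoint endomorphism defined by the averaged metric and $\omega$. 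Because both $g_J$ and $g_{J^0}$ respect the $H \oplus V$ splitting (for $g_J$ this is the hypothesis $J = J_H \oplus J_V$; for $g_{J^0}$ it is the construction), so does the averaged metric and hence so does $J'$. (4) Outside $U$, set $J' = J^0$; on the overlap the formulas agree by the choice of $\beta$, giving a globally defined smooth $J' \in \mathcal{J}(T\nu W,\omega)$ that equals $J$ on $U'$, preserves $H \oplus V$, and — since it preserves $H$ and restricts on the zero section to a complex structure on $TW$ — realises $W$ as a $J'$-holomorphic submanifold.

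The main obstacle I anticipate is not any single deep point but rather the bookkeeping needed to verify that the polar-decomposition interpolation genuinely preserves the splitting $T\nu W = H \oplus V$ as a block-decomposition of \emph{endomorphisms}, not merely that $H$ and $V$ are each invariant: one needs that the averaged metric has no mixed $H$-$V$ components, which follows from both input metrics being block-diagonal with respect to the $\omega$-orthogonal splitting, but this requires knowing $H = V^{\perp_\omega}$ so that $\omega$-compatibility of a block-diagonal metric forces block-diagonality of the associated $J$. A secondary technical nuisance is that $H$ is only a distribution, not a subbundle pulled back from $W$ in any canonical trivialisation away from the zero section, so "preserves the splitting" must be interpreted pointwise in $\nu W$; this is fine because the averaging and polar decomposition are pointwise operations. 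Once these are handled the argument is entirely soft, and in fact gives the slightly stronger statement that $J'$ can be taken to agree with $J^0$ outside an arbitrarily small neighbourhood of $\overline{U'}$, which is what is needed for the inductive extension over the components of the divisor $S$ one at a time (handling, by Corollary~\ref{app:cor:acsOnIntersections}, the already-fixed $J$ near the intersection points).
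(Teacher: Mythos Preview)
Your proposal is correct and rests on the same key fact as the paper's proof: contractibility of the space of compatible almost complex structures on a symplectic vector bundle, applied to the splitting $(T\nu W,\omega) = (H,\omega_H)\oplus(V,\omega_V)$. The paper streamlines your argument slightly by extending $J_H$ and $J_V$ \emph{separately} as sections of $\mathcal{J}(H,\omega_H)$ and $\mathcal{J}(V,\omega_V)$ and then setting $J' = J'_H \oplus J'_V$, which sidesteps entirely the need to verify that your polar-decomposition interpolation preserves the block structure; but your explicit interpolation works just as well (and note that the cutoff $\beta$ should live on $\nu W$ rather than be pulled back from $W$, since $U$ need not be $\pi$-saturated---this is harmless for your argument).
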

\begin{proof}
    The non-degeneracy of the symplectic form \(\omega\) splits the bundle
\(T\nu W\) into horizontal and vertical components \((H,\omega_H)\) and
\((V,\omega_V)\), that is,
\[
    {(T\nu W, \omega) = (H,\omega_H) \oplus (V,\omega_V)}.
\]
The hypothesis on \(J\) says that it decomposes into two
compatible almost complex structures \(J_H \in \mathcal{J}(H|_U,\omega_H)\) and
\(J_V \in \mathcal{J}(V|_U,\omega_V)\). Therefore, to extend \(J\) to all of
\(\nu W\), it suffices to extend both \(J_H\) and \(J_V\). This follows from
the non-emptiness and contractibility of the space of compatible almost
complex structures on any symplectic vector bundle, see
\cite[Proposition~2.6.4]{mcduffSalamonIntro} for example. The non-emptiness
and contractibility of \(\mathcal{J}(H,\omega_H)\) and
\(\mathcal{J}(V,\omega_V)\), ensure that we can find global sections \(J'_H \in
\mathcal{J}(H,\omega_H)\) and \(J'_V \in \mathcal{J}(V,\omega_V)\) that
agree with \(J_H\) and \(J_V\) over \(\overline{U'}\). Finally, since \(J' :=
J'_H \oplus J'_V\) preserves the horizontal bundle \(H\), and, for any point
\(x \in W\), \(H_{(x,0)} = T_xW\), we have that \(W\) is a \(J'\)-holomorphic
submanifold. This completes the proof.
\end{proof}

\begin{remark}
    \label{rem:productACS}
    Suppose that the normal bundle \((\nu W, \omega)\) above is trivial,
\emph{i.e.}~isomorphic to \((W \times \R^{2m}, \omega_W \oplus
\omega_{\R^{2m}})\). Then the horizontal distribution is simply \(W \times
\{x\} \subset W \times \R^{2m}\). Therefore, the extended almost complex
structure \(J'\) preserves those subspaces, making the canonical sections
\(s_x : W \to W \times \R^{2m} : s_x(w) = (w,x)\) \(J'\)-holomorphic.

Returning to the situation of Section~\ref{sec:fibreModuliSpaces}, since a
neighbourhood of the \(F\)-component of the divisor \(D\) is symplectomorphic
to a neighbourhood of a horizontal sphere in \(S^2 \times S^2\), we can choose
\(J\) near the points of intersection with the sections to be that coming from
the corresponding intersections of horizontal and vertical spheres in \(S^2
\times S^2\). Hence, the above implies that we can choose \(J\) near the
\(F\)-component of the divisor \(D\) so that nearby curves in
\(\mathcal{M}_{0,0}(F;J)\) all intersect the sections \(\omega\)-orthogonally.
\end{remark}

\begin{corollary}[]
    \label{app:cor:acsOnDivisor}
    Given a symplectic divisor \(S\) in a symplectic 4-manifold
\((M,\omega)\), there exists a compatible almost complex structure \(J \in
\mathcal{J}(M,\omega)\) that realises each component \(S_i\) of \(S\) as a
\(J\)-holomorphic curve.
\end{corollary}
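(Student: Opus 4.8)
The plan is to follow the three-stage scheme already sketched in the appendix: build $J$ at the intersection points, extend over each component, and then extend globally. The ingredients are all in place --- Corollary~\ref{app:cor:acsOnIntersections} gives an integrable compatible $J$ near each intersection point that preserves the local coordinate planes, and Lemma~\ref{app:lem:divisorExtension} (together with Remark~\ref{rem:productACS}) shows how to extend an almost complex structure defined near an open subset of a symplectic submanifold to one defined on the whole symplectic normal bundle, preserving the horizontal splitting and hence making the submanifold $J$-holomorphic.

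First I would invoke Corollary~\ref{app:cor:acsOnIntersections} at each intersection point $x$ of the divisor $S$: since $S$ has no triple intersections, the points where two components meet are isolated, and at each such $x$ we obtain a compatible integrable $J_x$ on a neighbourhood $U_x$ of $x$ that preserves both components of $S$ passing through $x$. Shrinking the $U_x$ we may assume they are pairwise disjoint. This defines a compatible almost complex structure $J_0$ on the open set $\bigcup_x U_x$, preserving $S$ there.

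Next, fix a component $S_i$ of $S$. By the symplectic neighbourhood theorem a neighbourhood of $S_i$ is symplectomorphic to its symplectic normal bundle $\nu S_i$, equipped with the canonical symplectic connection $H$. On the (finitely many) intersection points of $S_i$ with other components, $J_0$ is already defined on an open set; moreover, since the local model of Corollary~\ref{app:cor:acsOnIntersections} is the standard one on $\C^2$, $J_0$ there preserves the splitting $T\nu S_i = H \oplus V$. Applying Lemma~\ref{app:lem:divisorExtension} with $U$ a small neighbourhood of these points (intersected with $\nu S_i$) and $U'$ a slightly smaller one, I obtain a compatible almost complex structure $J_i$ on all of $\nu S_i$ that agrees with $J_0$ on $U'$, preserves the splitting, and hence realises $S_i$ as a $J_i$-holomorphic submanifold. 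Doing this for every $i$, and using that the neighbourhoods of distinct components overlap only inside the neighbourhoods $U_x$ of intersection points where all the $J_i$ agree with $J_0$, patches together into a compatible almost complex structure $J_1$ defined on an open neighbourhood $\mathcal{N}$ of $S$ that makes every component $S_i$ a $J_1$-holomorphic curve.

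Finally, extend $J_1$ from $\mathcal{N}$ to all of $M$. This uses only the contractibility (and non-emptiness) of the space of compatible almost complex structures on the symplectic vector bundle $(TM,\omega)$: choosing a slightly smaller open neighbourhood $\mathcal{N}' \subset \overline{\mathcal{N}'} \subset \mathcal{N}$ of $S$ and interpolating, one produces $J \in \mathcal{J}(M,\omega)$ agreeing with $J_1$ on $\mathcal{N}'$, so that each $S_i$ remains $J$-holomorphic. The only subtle point --- and the step I would be most careful about --- is the patching in the previous paragraph: one must check that the various $J_i$ genuinely agree on overlaps, which is exactly why it is important that the local models near intersection points are the fixed integrable standard structure and that $U'$ in each application of Lemma~\ref{app:lem:divisorExtension} is chosen to contain the relevant intersection neighbourhoods; since there are no triple intersections this bookkeeping is finite and straightforward.
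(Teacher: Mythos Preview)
Your proposal is correct and follows essentially the same approach as the paper: use Corollary~\ref{app:cor:acsOnIntersections} at the intersection points, then Lemma~\ref{app:lem:divisorExtension} (with $U'$ a shrunken version of the intersection neighbourhoods) to extend over each component, and finally extend globally via the contractibility of $\mathcal{J}(TM,\omega)$. Your write-up is in fact more careful than the paper's about the patching step and the verification that the local integrable model respects the $H \oplus V$ splitting, but the strategy is identical.
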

\begin{proof}
    Corollary~\ref{app:cor:acsOnIntersections} and
Lemma~\ref{app:lem:divisorExtension} yield a compatible almost complex
structure \(J'\) defined in a neighbourhood of \(S\) such that each component
\(S_i\) is \(J'\)-holomorphic. Indeed, the set \(U'\) in the statement of
Lemma~\ref{app:lem:divisorExtension} can be chosen to be a shrunken
neighbourhood of that given by Corollary~\ref{app:cor:acsOnIntersections}.
Therefore, all that remains to do is extend \(J'\) to all of \(M\), which
again follows from \cite[Proposition~2.6.4]{mcduffSalamonIntro}.
\end{proof}

\section{The mapping class group of the punctured annulus}
\label{app:annulusMCG}

The following argument is based on Adrien Brochier's MathOverflow answer
\cite{annulusMCGMathOverflow}. Let \(S\) be a topological surface, and let
\(C(S,n)\) denote the configuration space of \(n\) points on \(S\). Denote the
2-disc by \(D\) and recall that the braid group \(Br_n\) on \(n\) strands is
defined to be the fundamental group of \(C(D,n)\). Similarly, denoting the
compact annulus by \(A\), the \emph{annular} braid group on \(n\) strands
\(Br_n(A)\) is defined to be \(\pi_1(C(A,n))\).

\begin{lemma}[]
    The annular braid group has the following presentation:
\begin{equation}
    Br_n(A) = \left\langle \tau, \sigma_1,\ldots,\sigma_{n-1} \,\left|\,
    \begin{array}{c}
	(\tau\sigma_1)^2 = (\sigma_1\tau)^2,\\
	\tau\sigma_i = \sigma_i\tau \; \forall i>1,\\
	\sigma_i\sigma_{i+1}\sigma_i = \sigma_{i+1}\sigma_i\sigma_{i+1},\\
	\sigma_i\sigma_j = \sigma_j\sigma_i \;\forall |i-j|>1 
    \end{array}\right.\right\rangle.
\end{equation}
\end{lemma}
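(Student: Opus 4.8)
The plan is to identify the annular braid group $Br_n(A)$ with a well-understood group and read off the presentation from that identification. The cleanest route is the classical observation that the compact annulus $A$ is homotopy equivalent to a once-punctured disc $D^* = D \setminus \{p_0\}$, so that $C(A,n) \simeq C(D^*, n)$, and hence $Br_n(A) \cong \pi_1(C(D^*,n))$. But a configuration of $n$ points in $D^*$ is nothing but a configuration of $n+1$ points in $D$ in which one distinguished point (the puncture $p_0$) is held fixed. This exhibits $Br_n(A)$ as the subgroup of the ordinary braid group $Br_{n+1}$ on $n+1$ strands consisting of braids that fix the first strand --- equivalently, the stabiliser of the first puncture under the natural $Br_{n+1}$-action, which is the \emph{mixed braid group} $Br_{1,n}$.

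First I would make the homotopy equivalence $C(A,n) \simeq C(D^*,n)$ precise: deformation retract $A$ onto a core circle union a whisker, or more simply use that $A$ and $D^*$ are both $K(\Z,1)$'s with the same inclusion-induced structure on configuration spaces; the key point is that the fibration $C(A,n) \to C(A,1) = A$ (forgetting all but one point) matches up with the corresponding fibration for $D^*$. Then I would invoke the standard presentation of the mixed/framed braid group $Br_{1,n}$: generators $\sigma_1, \dots, \sigma_{n-1}$ (the usual half-twists among the $n$ moving strands) together with one extra generator $\tau$ (the braid that carries the first moving strand once around the fixed puncture $p_0$, i.e. a full loop in the annular direction). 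The braid relations $\sigma_i \sigma_{i+1}\sigma_i = \sigma_{i+1}\sigma_i\sigma_{i+1}$ and $\sigma_i\sigma_j = \sigma_j\sigma_i$ for $|i-j| > 1$ are inherited from $Br_{n+1}$; the relation $\tau \sigma_i = \sigma_i \tau$ for $i > 1$ holds because $\tau$ only involves the first moving strand while $\sigma_i$ ($i>1$) is supported away from it; and the relation $(\tau\sigma_1)^2 = (\sigma_1\tau)^2$ is the one subtle relation --- it encodes how dragging the first two strands around the puncture interacts with swapping them, and is precisely the relation appearing in Lambropoulou's / Kent--Peifer's presentation of the affine braid group of type $A$. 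I would cite the relevant source (e.g. Kent--Peifer, or Crisp, or tom Dieck) for this presentation rather than re-deriving it.

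The main obstacle is verifying that the listed relations are \emph{complete} --- i.e. that no further relations are needed. I would handle this by the standard inductive argument using the Fadell--Neuwirth fibration $C(D^*, n) \to C(D^*, n-1)$ with fibre $D^* \setminus \{n-1 \text{ points}\}$, a $(n+1)$-punctured plane, whose $\pi_1$ is free of rank $n$. The associated long exact sequence gives a short exact sequence $1 \to F_n \to Br_n(A) \to Br_{n-1}(A) \to 1$, and one checks that the candidate presentation is consistent with this extension (the free group $F_n$ is generated by suitable conjugates of $\tau$ and of $\sigma_{n-1}^2$, and the action of $Br_{n-1}(A)$ on it is the expected one). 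Combining the inductive hypothesis with a presentation-of-group-extensions lemma (as in Johnson's \emph{Presentations of Groups}) then forces the presentation to be exactly the one claimed; the base case $n=1$ is $Br_1(A) = \pi_1(A) = \Z = \langle \tau \rangle$, which is immediate. This is routine once set up, so in the write-up I would state the fibration, the exact sequence, and point to the extension lemma, without grinding through the word-by-word verification.
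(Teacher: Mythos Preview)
Your approach is correct and follows essentially the same route as the paper: both identify $Br_n(A)$ with the stabiliser of one strand inside $Br_{n+1}$, via the equivalence between the annulus and the once-punctured disc, and set $\tau = \sigma_0^2$. The paper's argument is terser than yours --- it simply asserts that the image of the embedding $Br_n(A) \hookrightarrow Br_{n+1}$ is $\phi^{-1}(\mathrm{Stab}(0))$ with generators $\sigma_0^2, \sigma_1, \ldots, \sigma_{n-1}$, and checks that $(\tau\sigma_1)^2 = (\sigma_1\tau)^2$ follows from the braid relation $\sigma_0\sigma_1\sigma_0 = \sigma_1\sigma_0\sigma_1$, without spelling out completeness of the relations --- whereas your plan to verify completeness via the Fadell--Neuwirth fibration (or by citing Kent--Peifer/Lambropoulou) is more thorough.
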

\begin{proof}
    Recall the Artin presentation of \(Br_{n+1}\):
\begin{equation}
    Br_{n+1} = \left\langle \sigma_0,\ldots,\sigma_{n-1} \,\left|\,
    \begin{array}{c}
	\sigma_i\sigma_{i+1}\sigma_i = \sigma_{i+1}\sigma_i\sigma_{i+1},\\
	\sigma_i\sigma_j = \sigma_j\sigma_i \;\forall |i-j|>1 
    \end{array}\right.\right\rangle.
\end{equation}
By ``filling in'' the central hole of \(A\) with a punctured disc, we
construct an injective homomorphism \(i: Br_n(A) \to Br_{n+1}\) sending each
annular braid to the corresponding planar one which fixes the 0th
strand.\footnote{The strand associated with the central puncture in \(D\).}
Consider the map \(\phi : Br_{n+1} \to \Sigma_{n+1} = \Sym\{0,\ldots,n\}\)
sending each braid to its permutation of the punctures. The image of \(i\) is
\(\phi^{-1}(\mathrm{Stab}(0))\), which has generators \(\langle \sigma_0^2,
\sigma_1,\ldots,\sigma_{n-1} \rangle\). Setting \(\tau = \sigma_0^2\), we
obtain the claimed generating set for \(Br_n(A)\). The relation
\((\tau\sigma_1)^2 = (\sigma_1\tau)^2\) follows from the braid
relation \(\sigma_0\sigma_1\sigma_0 = \sigma_1\sigma_0\sigma_1\).
\end{proof}

Denote the mapping class group of a surface \(S\) as \(\Mod(S)\), interpreted
as in \cite[\S{}2]{farbMargalit}. As with \(\Mod(D^{\times n})\), where
\(D^{\times n}\) is the \(n\)-punctured disc \(D^{\times n} = D \backslash \{n
\text{ points}\}\), \(\Mod(A^{\times n})\) is closely related to its braid
group \(Br_n(A)\).
\begin{proposition}[]
    Let \(T_\alpha\) denote the isotopy class of the Dehn twist about a simple
closed curve \(\alpha\) near the central boundary of \(A^{\times n}\). Then
\begin{equation}
    \Mod(A^{\times n}) \cong Br_n(A) \times \langle T_\alpha \rangle
       	= Br_n(A) \times \Z.
\end{equation}
\end{proposition}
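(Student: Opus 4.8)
The plan is to analyse the Birman exact sequence relating the mapping class group of the $n$-punctured annulus $A^{\times n}$ to the braid group $Br_n(A) = \pi_1(C(A,n))$, and to show that the extension splits, with the kernel generated by the central Dehn twist $T_\alpha$. First I would recall that $A^{\times n}$ is obtained from the (closed, unpunctured) annulus $A$ by removing $n$ marked points, so there is a fibration-type sequence coming from the fibre bundle $\mathrm{Diff}^+(A,\partial A) \to C(A,n)$ whose fibre over a configuration is $\mathrm{Diff}^+(A^{\times n},\partial A)$. Taking $\pi_0$ (and using that $\mathrm{Diff}^+(A,\partial A)$ is homotopy equivalent to a circle --- the annulus being one of the surfaces whose identity-component diffeomorphism group is well understood, with $\pi_0$ trivial and $\pi_1 \cong \Z$ generated by the full boundary rotation) yields the Birman exact sequence
\[
    1 \longrightarrow \Z \longrightarrow \Mod(A^{\times n}) \longrightarrow Br_n(A) \longrightarrow 1,
\]
where the left-hand $\Z$ is the image of $\pi_1(\mathrm{Diff}^+(A,\partial A)) \cong \Z$ under the connecting map, which geometrically is the ``point-pushing'' along the loop that rotates the annulus once --- this is precisely the Dehn twist $T_\alpha$ about a curve parallel to the central boundary component. (Here one uses that $A^{\times n}$ has negative Euler characteristic for $n \geq 1$, so the relevant higher homotopy vanishes and $\pi_2(C(A,n)) = 0$, making the sequence exact on the left as well.)

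Next I would establish that $T_\alpha$ is central in $\Mod(A^{\times n})$: it is supported in a collar of the central boundary, disjoint up to isotopy from any curve or arc that can be pushed off that collar, and any mapping class can be represented by a diffeomorphism that is the identity on that collar; hence $T_\alpha$ commutes with all of $\Mod(A^{\times n})$. In particular the subgroup $\langle T_\alpha \rangle \cong \Z$ (infinite order, as Dehn twists about essential non-boundary-parallel\footnote{The curve $\alpha$ is essential and not isotopic into the punctures, so $T_\alpha$ has infinite order, e.g.\ by considering its action on the homology of a finite cover, or by the standard fact \cite[\S3.3]{farbMargalit}.} simple closed curves have infinite order) is a central subgroup. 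Then I would produce a splitting $Br_n(A) \to \Mod(A^{\times n})$: the standard generators $\tau, \sigma_1,\ldots,\sigma_{n-1}$ of $Br_n(A)$ are realised by explicit compactly-supported diffeomorphisms of $A^{\times n}$ (the central twist moving one puncture around the core, and the half-twists exchanging adjacent punctures), and one checks the defining relations $(\tau\sigma_1)^2 = (\sigma_1\tau)^2$, $\tau\sigma_i = \sigma_i\tau$ for $i>1$, and the braid relations, hold on the nose in $\Mod(A^{\times n})$ --- these are local computations in a disc or an annular neighbourhood, identical to the verification in $\Mod(D^{\times n})$. Since this section $s$ composed with the projection to $Br_n(A)$ is the identity, the sequence splits, and centrality of the kernel upgrades the semidirect product to a direct product $\Mod(A^{\times n}) \cong Br_n(A) \times \langle T_\alpha\rangle \cong Br_n(A) \times \Z$.

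The main obstacle I anticipate is the careful bookkeeping at the level of $\pi_0$ and $\pi_1$ of the relevant diffeomorphism groups: one must be precise that $\mathrm{Diff}^+(A,\partial A)$ (fixing the boundary \emph{pointwise}) has $\pi_0 = 1$ and $\pi_1 = \Z$, and that the connecting homomorphism in the Birman sequence sends the generator to exactly $T_\alpha$ and not to some other power or a product with a boundary twist on the \emph{outer} component --- the convention that diffeomorphisms fix $\partial A$ pointwise is what makes only the \emph{inner} boundary-parallel twist appear as the kernel. A clean way to sidestep subtleties is to instead use the embedding $i : Br_n(A) \hookrightarrow Br_{n+1}$ from the preceding lemma (filling the central hole with a marked point), together with the analogous known isomorphism $\Mod(D^{\times(n+1)}) \cong Br_{n+1}$, and the fact that capping the central puncture of $D^{\times(n+1)}$ with a once-punctured disc corresponds to adjoining the central Dehn twist; this reduces the claim to the purely combinatorial statement that $\Mod(A^{\times n})$ is the preimage in $\Mod(D^{\times(n+1)})$ of $\mathrm{Stab}(0) \subset \Sigma_{n+1}$ central-extended by the twist, which is then immediate from the braid-group computation already carried out.
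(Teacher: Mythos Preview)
Your derivation of the exact sequence contains a genuine error. You claim that $\mathrm{Diff}^+(A,\partial A)$ is homotopy equivalent to a circle, with $\pi_0$ trivial and $\pi_1\cong\Z$ ``generated by the full boundary rotation''. This is false: diffeomorphisms in this group fix $\partial A$ \emph{pointwise}, so no boundary rotation is available. In fact the identity component of $\mathrm{Diff}^+(A,\partial A)$ is contractible (Earle--Schatz \cite{earleSchatz70}), so $\pi_1=0$, while $\pi_0=\Mod(A)\cong\Z$, generated by the Dehn twist. Feeding the correct homotopy information into the fibration
\[
\mathrm{Diff}^+(A^{\times n},\partial A)\longrightarrow\mathrm{Diff}^+(A,\partial A)\longrightarrow C(A,n)
\]
produces the Birman sequence in the form
\[
1\longrightarrow Br_n(A)\longrightarrow \Mod(A^{\times n})\longrightarrow \Mod(A)=\Z\longrightarrow 1,
\]
the reverse of what you wrote. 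This is precisely the sequence the paper uses; since the quotient $\Z$ is free it splits, a section sends the generator to $T_\alpha$, and centrality of $T_\alpha$ (which you do argue correctly) turns the semidirect product into a direct one.

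It happens that the sequence you wrote down, $1\to\langle T_\alpha\rangle\to\Mod(A^{\times n})\to Br_n(A)\to 1$, is also valid, but it is the \emph{capping} sequence of \cite[Proposition~3.19]{farbMargalit}: cap the inner boundary with a once-marked disc and land in the stabiliser of the new marked point inside $\Mod(D^{\times(n+1)})=Br_{n+1}$, which by the preceding lemma is exactly $Br_n(A)$. That is essentially the ``alternative'' route you sketch at the end, and with that derivation your centrality-plus-explicit-section argument does go through. So your overall strategy is sound, but as written the justification via $\pi_1(\mathrm{Diff}^+(A,\partial A))$ is incorrect and must be replaced either by the capping sequence or by the Birman sequence in the correct direction.
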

\begin{proof}
    The result follows from the Birman exact sequence
\cite[Theorem~9.1]{farbMargalit}:
\begin{equation}
    1 \longrightarrow \pi_1(C(A,n)) = Br_n(A) \longrightarrow \Mod(A^{\times n})
    \longrightarrow \Mod(A) \longrightarrow 1.
\end{equation}
We may apply this theorem since the identity component of
\(\mathrm{Homeo}^+(A,\partial{}A)\) is contractible
\cite[Theorem~1(D)]{earleSchatz70}, and so in particular,
\({\pi_1(\mathrm{Homeo}^+(A,\partial{}A)) = 1}\). Since \(\Mod(A) = \langle
T_\alpha \rangle = \Z\) is free, this sequence splits and we obtain the
result.
\end{proof}

\printbibliography[heading=bibintoc,title=References]

\end{document}